\DeclareMathOperator\Loc{Loc}
\DeclareMathOperator\coker{coker}
\DeclareMathOperator\Naka{\mathcal N}
\DeclareMathOperator\cF{\mathcal F}
\DeclareMathOperator\Imm{Im}
\DeclareMathOperator\M{\mathcal M}
\DeclareMathOperator\Gr{Gr}
\DeclareMathOperator\PPP{\mathbb P}
\DeclareMathOperator\h{\hbar}
\DeclareMathOperator\End{End}
\DeclareMathOperator\DD{\mathcal D}
\def\tDD{\widetilde{\mathcal D}}
\DeclareMathOperator\A{\mathbb A}
\DeclareMathOperator\R{\mathbb R}
\DeclareMathOperator\MM{\mathbb M}
\DeclareMathOperator\NN{\mathbb N}
\DeclareMathOperator\C{\mathbb C}
\DeclareMathOperator\Z{\mathbb Z}
\DeclareMathOperator\T{\mathbb T}
\DeclareMathOperator\HH{\mathcal H}
\DeclareMathOperator\KK{\mathbb K}
\DeclareMathOperator\G{\mathcal G}
\DeclareMathOperator\Hom{\mathrm Hom}
\DeclareMathOperator\pt{pt}
\DeclareMathOperator{\GL}{GL}
\DeclareMathOperator\Fl{\mathcal F}
\DeclareMathOperator\Stab{Stab}
\def\ttt#1{\text{\tt #1}}
\newcommand\bs{{\color{blue} \char`\\}}
\newcommand\fs{{\color{red}/}}
\newcommand\charge{\text{charge}}
\newcommand\mult{d}
\def\rr{r}
\def\cc{c}
\DeclareMathOperator\Ch{\mathcal C}
\DeclareMathOperator\BCT{BCT}
\def\uu{\mathbf u}
\def\vv{\mathbf v}
\def\hb{\mathbf h}
\def\HH{\mathbb H^*}
\DeclareMathOperator\Leaf{Leaf}
\DeclareMathOperator\Slope{Slope}
\DeclareMathOperator\Sym{Sym}
\newtheorem{fact}{Fact}[section]
\newtheorem{lemma}[fact]{Lemma}
\newtheorem{theorem}[fact]{Theorem}
\newtheorem{definition}[fact]{Definition}
\newtheorem{example}[fact]{Example}
\newtheorem{rremark}[fact]{Remark}
\newenvironment{remark}{\begin{rremark} \rm}{\end{rremark}}
\newtheorem{proposition}[fact]{Proposition}
\newtheorem{corollary}[fact]{Corollary}
\newtheorem{conjecture}[fact]{Conjecture}
\newtheorem{assumption}[fact]{Assumption}
\author{R. Rim\'anyi}
\address{Department of Mathematics, University of North Carolina at Chapel Hill, USA}
\email{rimanyi@email.unc.edu}
\author{Y. Shou}
\address{Department of Mathematics, University of North Carolina at Chapel Hill, USA}
\email{yshou@live.unc.edu}
\subjclass[2010]{14C17, 14E15}
\title[Bow varieties]{Bow varieties---geometry, combinatorics, \\ characteristic classes}
\begin{document}

\begin{abstract} 
Cherkis bow varieties are believed to be the set of spaces where 3d mirror symmetry for characteristic classes can be observed. We describe geometric structures on a large class of Cherkis bow varieties by developing the necessary combinatorial presentations, including binary contingency tables and skein diagrams. We make the first steps toward the sought after statement for 3d mirror symmetry for characteristic classes by conjecturing a formula for cohomological stable envelopes. Additionally we provide an account of the full statement, with examples, for elliptic stable envelopes.
\end{abstract}

\maketitle

\tableofcontents

\section{Introduction}
Assigning cohomology classes to geometrically relevant subsets of a variety is an effective technique in enumerative algebraic geometry. The prototype is the assignment of Schubert classes to the Schubert varieties of a Grassmannian. The study of these classes---e.g. their multiplicative properties, their combinatorics, their relation to representation theory and algebraic combinatorics---is an important part of Schubert Calculus.

The notion of Schubert class has natural generalizations. First, the Grassmannian can be replaced with other homogeneous spaces $G/P$, and even further to Nakajima quiver varieties. Second, the underlying spaces come with a torus action, and the Schubert classes can be defined in equivariant cohomology theory. Third, the cohomology theory itself can be generalized to extraordinary cohomologies. The ones proven to be useful are K theory and elliptic cohomology (these are the ones whose corresponding formal group law is in fact a one dimensional algebraic group). Fourth, the notion of {\em fundamental class of the closure} of a, say, Schubert cell, has a one-parameter ($\h$) deformation. The resulting notion in cohomology is called the Chern-Schwartz-MacPherson class (CSM), in K theory the Motivic Chern class (MC), and in elliptic cohomology the elliptic class---see eg. the survey \cite{Rh} and references therein.  

Recently, motivated by relations with quantum integrable systems, Okounkov and his coauthors introduced another characteristic class assigned to torus fixed points of Nakajima quiver varieties: the {\em stable envelope} (class) \cite{MO, O, AO}. Stable envelopes are defined axiomatically, and they exist in cohomology, K theory, elliptic cohomology. They depend on other parameters, and varying those parameters one finds geometric quantum group actions on cohomology, K theory, elliptic cohomology algebras. They govern solutions of differential and q-difference equations associated with the space. 

In certain settings both the CSM, MC, elliptic classes on the one hand, and the three versions of stable envelopes on the other hand, are defined. It turns out that in these situations the two are the same (after appropriate identifications). These coincidence results not only made the study of $\h$-deformed characteristic classes even more interesting than before, but it brought fresh ideas, and new ways to calculate them \cite{RV, FRW, AMSS1, AMSS2, RW1, KRW}.

As a result, a previously hidden structure emerged among characteristic classes. Due to some relations to mirror symmetry in $N=4, d=3$ supersymmetric gauge theory, the new structure is named {\em 3d mirror symmetry}---although maybe {\em S-duality} and {\em symplectic duality} are also related physical phenomena \cite{BFN, BDGH, GW, GMMS, IS}. The 3d mirror symmetry for characteristic classes is the following: for a space $X$ there is a 3d mirror dual space $X'$, such that their torus fixed points are in natural bijection, and the characteristic classes of corresponding fixed points match. The matching is sophisticated. Let $E(\omega,\sigma)$ denote the elliptic class (stable envelope) associated to the fixed point $\omega$, restricted to the fixed point $\sigma$ on $X$, and prime versions are the corresponding objects on $X'$. Then {\em 3d mirror symmetry for elliptic characteristic classes} mean the identity 
\[
E(\omega,\sigma)(u,v,\h)=\pm E(\sigma',\omega')(v,u,\h^{-1}).
\]
Here $u$ is the set of equivariant parameters, and $v$ is the set of new parameters called K\"ahler (or dynamical) parameters. 
Some examples for such statements are proven in \cite{RSVZ1, RSVZ2, RW2, SZ}---with ad hoc methods---but a general theory is not known yet. 

There are indications that the right collection of spaces where 3d mirror symmetry for characteristic classes is well displayed is the collection of Cherkis bow varieties. Bow varieties generalize quiver varieties. For bow varieties the equivariant and K\"ahler parameters emerge on an equal footing. Moreover, bow varieties are closed under a 3d mirror symmetry operation that switches the objects (5-branes) that the equivariant and K\"ahler parameters are assigned to. For bow varieties there is an extra operation---which does not exist for the smaller set of quiver varieties---called Hanany-Witten transition.

The first goal of this paper is to introduce Cherkis bow varieties to enumerative algebraic geometry. We work out many of the combinatorial structures necessary for their enumerative analysis: the tautological and tangent bundles, the combinatorics of torus fixed points, fixed point restriction maps in K theory and cohomology; and we relate all these data with the two new operations: 3d mirror symmetry and Hanany-Witten transition.  The appearance of knot diagram-like figures, as well as {\em binary contingency tables} of statistics are surprising fresh phenomena in Schubert calculus. 

The second goal of the paper is the study of stable envelopes for bow varieties, that we begin in Section \ref{sec:char}. We define cohomological stable envelopes and present a conjectured formula. This formula---while has some new features---generalizes {\em weight functions} of \cite{RTV}. Weight functions were originally constructed in relation with hypergeometric solutions of Knizhnik-Zamolodchikov-type equations \cite{TV1, TV2}. We explore the notion of elliptic stable envelopes through an example, and illustrate their 3d mirror symmetry. We emphasize how the pool of bow varieties is the natural setting for 3d mirror symmetry for characteristic classes.

\bigskip

The authors are indebted to Lev Rozansky, who explained to us bow varieties and many of the known and expected features of them. Without him this paper would not have been born. We also thank A. Okounkov, A. Smirnov, A. Varchenko, A. Weber, A. Yong for discussions on related subjects, and H. Nakajima for valuable comments. The first author was partially supported by the Simons Foundation grant 523882.

\section{Brane combinatorics}

\subsection{Brane diagrams}\label{sec:diagram}
Objects like this

\begin{tikzpicture}[scale=.5]
\draw [thick,red] (0.5,0) --(1.5,2); 
\node[red] at (.5,-.6) {$V_1$};  
\draw[thick] (1.5,1)--(2.5,1) node [above] {$2$} -- (3.5,1);
\draw [thick,blue](4.5,0) --(3.5,2);  
\draw [thick](4.5,1)--(5.5,1) node [above] {$2$} -- (6.5,1);
\draw [thick,red](6.5,0) -- (7.5,2);  
\node[red] at (6.5,-.6) {$V_2$};
\draw [thick](7.5,1) --(8.5,1) node [above] {$2$} -- (9.5,1); 
\draw[thick,blue] (10.5,0) -- (9.5,2);  
\draw[thick] (10.5,1) --(11.5,1) node [above] {$4$} -- (12.5,1); 
\draw [thick,red](12.5,0) -- (13.5,2);   
\draw [thick](13.5,1) --(14.5,1) node [above] {$3$} -- (15.5,1);
\draw[thick,red] (15.5,0) -- (16.5,2);  
\draw [thick](16.5,1) --(17.5,1) node [above] {$3$} -- (18.5,1);  
\draw [thick,red](18.5,0) -- (19.5,2);  
\draw [thick](19.5,1) --(20.5,1) node [above] {$4$} -- (21.5,1);
\draw [thick,blue](22.5,0) -- (21.5,2);
\node[blue] at (21.5,2.4) {$U_3$};
\draw [thick](22.5,1) --(23.5,1) node [above] {$3$} -- (24.5,1);  
\draw[thick,red] (24.5,0) -- (25.5,2); 
\node[blue] at (27.5,2.4) {$U_4$};
\node[blue] at (30.5,2.4) {$U_5$};
\draw[thick] (25.5,1) --(26.5,1) node [above] {$2$} -- (27.5,1);
\draw [thick,blue](28.5,0) -- (27.5,2);  
\draw [thick](28.5,1) --(29.5,1) node [above] {$2$} -- (30.5,1);   
\draw [thick,blue](31.5,0) -- (30.5,2);   

\draw [black,dashed,->](7,-5) to [out=170,in=270] (0.3,-1);
\draw [black,dashed,->](7,-5) to [out=110,in=270] (6.3,-1);
\draw [black,dashed,->](7,-5) to [out=80,in=210] (12.3,-.3);
\draw [black,dashed,->](7,-5) to [out=70,in=210] (15.3,-.3);
\draw [black,dashed,->](7,-5) to [out=40,in=210] (18.3,-.3);
\draw [black,dashed,->](7,-5) to [out=20,in=220] (24.3,-.3);
\node at (7,-5.5) {NS5 branes};

\draw [black,dashed,->](23,6) to [out=190,in=60] (3.7,2.2);
\draw [black,dashed,->](23,6) to [out=230,in=80] (9.6,2.2);
\draw [black,dashed,->](23,6) to [out=250,in=90] (21.7,3.2);
\draw [black,dashed,->](23,6) to [out=270,in=100] (27.2,3.2);
\draw [black,dashed,->](23,6) to [out=280,in=110] (30,3.2);
\node at (25.3, 6) {D5 branes};

\node at (3,6) {D3 branes};
\draw [black,dashed,->](3,5) to [out=-90,in=90] (2,1.3);
\draw [black,dashed,->](3,5) to [out=-80,in=90] (5,1.3);
\draw [black,dashed,->](3,5) to [out=-70,in=100] (8,1.3);
\draw [black,dashed,->](3,5) to [out=-65,in=110] (11,1.3);
\node [color=black] at (3.7,4.7) {$\ldots$};
\end{tikzpicture}

\noindent will be called (type A) brane diagrams. That is, a brane diagram is a finite sequence, drawn from left to right, of 5-branes. A 5-brane is either a D5 brane\footnote{Dirichlet 5-brane} (we draw a SE-NW line \ttt{\bs}), or an NS5 brane\footnote{Neveu-Schwarz 5-brane} (we draw a SW-NE skew line \ttt{{\fs}}). Consecutive 5-branes are connected by D3 branes (we draw a horizontal line). Each D3 brane is decorated with its multiplicity, a non-negative integer. The list of multiplicities can be called the {\em dimension vector} of the diagram.

\subsection{Notations, convention}
In text we will write \ttt{{\fs}2\bs 2{\fs}2\bs 4{\fs}3{\fs}3{\fs}4\bs 3{\fs}2\bs 2\bs} for the brane diagram above. In figures we can also just draw one continuous line for the union of D3 branes. 

We can formally add D3 branes to the left and to the right of the brane diagram with multiplicities 0; we consider this extended brane diagram the same.

The D5 branes will be denoted by $U$, and we number them from left to right as $U_1, U_2, \ldots, U_n$. The NS5 branes will be denoted by $V$, and we number them also from left to right as $V_1, V_2, \ldots, V_m$. The multiplicity of a D3 brane $X$ is denoted by $\mult_X$. The D3 brane directly to the left and right of $U$ (or $V$) will be denoted by $U^-, U^+$ (or $V^-, V^+$). Similarly the 5-branes directly to the left and right of a D3 brane $X$ will be called $X^-$, $X^+$. We will write $U^\ddag$ for $(U^+)^+$ and $U^=$ for $(U^-)^-$. 

\subsection{Different views of the brane diagram}\label{sec:3dimview}
We introduced a brane diagram as a combinatorial code. Yet, the following two points of views are relevant. 

We can regard the brane diagram in the three-dimensional 
\begin{tikzpicture}[baseline=0pt,scale=.4]
\draw[->] (0,0) -- (1,0); 
\node at (1.3,0) {\tiny $x$};
\draw[->] (0,0) -- (.4,.5);
\node at (.7,.5) {\tiny $y$};
\draw[->] (0,0) -- (-.3,1); 
\node at (-0.6,0.8) {\tiny $z$};
\end{tikzpicture} space.
The D3 branes are lines parallel with the $x$ axis, the D5 branes are parallel with the $z$ axis, and the NS5 branes are lines parallel with the $y$ axis. 
In fact, in superstring theory, branes are not 1-dimensional lines, but higher dimensional mem{\em branes}, and the three different types of branes are parallel to complimentary, say orthogonal, subspaces (though we will not need the higher dimensions in this paper).

We will also regard the brane diagram as a diagram of segments in the plane. The union of the D3 branes is the $x$-axis, and the 5-branes indicate permitted positions where some other curves may intersect it. The slope of a 5-brane indicates the slope of the curve that is permitted to intersect there (see Figure \ref{YiyansButterfly} for an example).

\subsection{Brane charge, margin vectors} \label{sec:margin}
A brane diagram determines a non-negative integer for each 5-brane, the {\em brane charge} of that 5-brane. 
\begin{definition}
For an NS5 brane $V$ let
\[ 
\charge(V)=
(\mult_{V^+}-\mult_{V^-})+\#\{\text{D5 branes left of $V$}\}.
\]
For a D5 brane $U$ let
\[ 
\charge(U)=
(\mult_{U^-}-\mult_{U^+})+\#\{\text{NS5 branes right of $U$}\}.
\]
The integer vector of NS5 brane charges (from left to right) is denoted by $\rr$ (``row'') and the integer vector of D5 brane charges (from left to right) is denoted by $\cc$ (``column'').
We call $\rr$ and $\cc$ the margin vectors and we associate components of $\rr$ and $\cc$ to the rows and columns of a matrix of size $m\times n$---where $m$ and $n$ are the numbers of NS5 and D5 branes. 
\end{definition}

\begin{remark} \rm In string theory the brane charge is defined as an integral, and is calculated to be 
$(\mult_{W^+}-\mult_{W^-})+\#\{\text{opposite type 5-branes left of $W$}\}$ 
for both types of 5-branes. Hence, for D5 branes our definition (which will be convenient for us for many reasons, the first is Lemma~\ref{lem:marginsum} below) is not identical with the physics definition, rather it is a simple linear function of it. In physics this can be interpreted as integrating on a different cycle.  
\end{remark}  

For the brane diagram in Section \ref{sec:diagram} we have $r=(2,1,1,2,3,2)$ and $c=(5,2,2,0,2)$ and hence we obtain the `margins' associated with a $6\times 5$ matrix in Figure \ref{fig:margins}(a). 
\begin{figure}
\begin{tikzpicture}[baseline=1.4 cm, scale=.4]
\draw[ultra thin] (0,0) -- (5,0);
\draw[ultra thin]  (0,1) -- (5,1);
\draw[ultra thin]  (0,2) -- (5,2);
\draw[ultra thin]  (0,3) -- (5,3);
\draw[ultra thin]  (0,4) -- (5,4);
\draw[ultra thin]  (0,5) -- (5,5);
\draw[ultra thin]  (0,6) -- (5,6);
\draw[ultra thin]  (0,0) -- (0,6);
\draw[ultra thin]  (1,0) -- (1,6);
\draw[ultra thin]  (2,0) -- (2,6);
\draw[ultra thin]  (3,0) -- (3,6);
\draw[ultra thin]  (4,0) -- (4,6);
\draw[ultra thin]  (5,0) -- (5,6);
\draw[ultra thick] (0,6) -- (0,5) -- (1,5) -- (1,4) -- (2,4) -- (2,1) -- (3,1) -- (3,0) -- (5,0);
\node at (.5,6.4) {\tiny $5$}; \node at (1.5,6.4) {\tiny $2$}; \node at (2.5,6.4) {\tiny $2$}; \node at (3.5,6.4) {\tiny $0$}; \node at (4.5,6.4) {\tiny $2$};
\node at (.5,7.4) {\tiny $U_1$}; \node at (1.5,7.4) {\tiny $U_2$}; \node at (2.5,7.4) {\tiny $U_3$}; \node at (3.5,7.4) {\tiny $U_4$}; \node at (4.5,7.4) {\tiny $U_5$};
\node at (-.5,0.5) {\tiny $2$}; \node at (-.5,1.5) {\tiny $3$}; \node at (-.5,2.5) {\tiny $2$}; \node at (-.5,3.5) {\tiny $1$}; \node at (-.5,4.5) {\tiny $1$}; \node at (-.5,5.5) {\tiny $2$};  
\node at (-1.8,0.5) {\tiny $V_6$}; \node at (-1.8,1.5) {\tiny $V_5$}; \node at (-1.8,2.5) {\tiny $V_4$}; \node at (-1.8,3.5) {\tiny $V_3$}; \node at (-1.8,4.5) {\tiny $V_2$}; \node at (-1.8,5.5) {\tiny $V_1$};  
\node at (2.5,-1.5) {(a)};
\end{tikzpicture}
\qquad
\begin{tikzpicture}[baseline=1.1 cm, scale=.4]
\draw[ultra thin] (0,0) -- (0,5);
\draw[ultra thin]  (1,0) -- (1,5);
\draw[ultra thin]  (2,0) -- (2,5);
\draw[ultra thin]  (3,0) -- (3,5);
\draw[ultra thin]  (4,0) -- (4,5);
\draw[ultra thin]  (5,0) -- (5,5);
\draw[ultra thin]  (6,0) -- (6,5);
\draw[ultra thin]  (0,0) -- (6,0);
\draw[ultra thin]  (0,1) -- (6,1);
\draw[ultra thin]  (0,2) -- (6,2);
\draw[ultra thin]  (0,3) -- (6,3);
\draw[ultra thin]  (0,4) -- (6,4);
\draw[ultra thin]  (0,5) -- (6,5);
\draw[ultra thick] (0,5) -- (1,5) -- (1,4) -- (2,4) -- (2,3) -- (5,3) -- (5,2) -- (6,2) -- (6,0);
\node at (-.5,.5) {\tiny $4$}; \node at (-.5,1.5) {\tiny $6$}; \node at (-.5,2.5) {\tiny $4$}; \node at (-.5,3.5) {\tiny $4$}; \node at (-.5,4.5) {\tiny $1$};
\node at (-1.8,.5) {\tiny $V'_5$}; \node at (-1.8,1.5) {\tiny $V'_4$}; \node at (-1.8,2.5) {\tiny $V'_3$}; \node at (-1.8,3.5) {\tiny $V'_2$}; \node at (-1.8,4.5) {\tiny $V'_1$};
\node at (0.5,5.4) {\tiny $3$}; \node at (1.5,5.4) {\tiny $4$}; \node at (2.5,5.4) {\tiny $4$}; \node at (3.5,5.4) {\tiny $3$}; \node at (4.5,5.4) {\tiny $2$}; \node at (5.5,5.4) {\tiny $3$}; 
\node at (0.5,6.4) {\tiny $U'_1$}; \node at (1.5,6.4) {\tiny $U'_2$}; \node at (2.5,6.4) {\tiny $U'_3$}; \node at (3.5,6.4) {\tiny $U'_4$}; \node at (4.5,6.4) {\tiny $U'_5$}; \node at (5.5,6.4) {\tiny $U'_6$};
\node at (2.5,-2.2) {(b)};
\end{tikzpicture}
\qquad
\begin{tikzpicture}[baseline=1.4 cm, scale=.4]
\draw[ultra thin] (0,0) -- (5,0);
\draw[ultra thin]  (0,1) -- (5,1);
\draw[ultra thin]  (0,2) -- (5,2);
\draw[ultra thin]  (0,3) -- (5,3);
\draw[ultra thin]  (0,4) -- (5,4);
\draw[ultra thin]  (0,5) -- (5,5);
\draw[ultra thin]  (0,6) -- (5,6);
\draw[ultra thin]  (0,0) -- (0,6);
\draw[ultra thin]  (1,0) -- (1,6);
\draw[ultra thin]  (2,0) -- (2,6);
\draw[ultra thin]  (3,0) -- (3,6);
\draw[ultra thin]  (4,0) -- (4,6);
\draw[ultra thin]  (5,0) -- (5,6);
\draw[ultra thick] (0,6) -- (0,5) -- (1,5) -- (1,4) -- (2,4) -- (2,2) -- (3,2) -- (3,0) -- (5,0);
\node at (.5,6.4) {\tiny $5$}; \node at (1.5,6.4) {\tiny $2$}; \node at (2.5,6.4) {\tiny $2$}; \node at (3.5,6.4) {\tiny $0$}; \node at (4.5,6.4) {\tiny $2$};
\node at (.5,7.4) {\tiny $U_1$}; \node at (1.5,7.4) {\tiny $U_2$}; \node at (2.5,7.4) {\tiny $U_3$}; \node at (3.5,7.4) {\tiny $U_4$}; \node at (4.5,7.4) {\tiny $U_5$};
\node at (-.5,0.5) {\tiny $2$}; \node at (-.5,1.5) {\tiny $3$}; \node at (-.5,2.5) {\tiny $2$}; \node at (-.5,3.5) {\tiny $1$}; \node at (-.5,4.5) {\tiny $1$}; \node at (-.5,5.5) {\tiny $2$};  
\node at (-1.8,0.5) {\tiny $V_6$}; \node at (-1.8,1.5) {\tiny $V_5$}; \node at (-1.8,2.5) {\tiny $V_4$}; \node at (-1.8,3.5) {\tiny $V_3$}; \node at (-1.8,4.5) {\tiny $V_2$}; \node at (-1.8,5.5) {\tiny $V_1$};  
\draw[->,  thin, red] (2.1,1.1) -- (2.8,1.8); \draw[->,  thin, red] (2.4,1.1) -- (2.8,1.5); \draw[->,  thin, red] (2.1,1.4) -- (2.5,1.8); 
\node at (2.5,-1.5) {(c)};
\end{tikzpicture}
\qquad
\begin{tikzpicture}[baseline=1.4 cm, scale=.4]
\draw[ultra thin] (0,0) -- (5,0);
\draw[ultra thin]  (0,1) -- (5,1);
\draw[ultra thin]  (0,2) -- (5,2);
\draw[ultra thin]  (0,3) -- (5,3);
\draw[ultra thin]  (0,4) -- (5,4);
\draw[ultra thin]  (0,5) -- (5,5);
\draw[ultra thin]  (0,6) -- (5,6);
\draw[ultra thin]  (0,0) -- (0,6);
\draw[ultra thin]  (1,0) -- (1,6);
\draw[ultra thin]  (2,0) -- (2,6);
\draw[ultra thin]  (3,0) -- (3,6);
\draw[ultra thin]  (4,0) -- (4,6);
\draw[ultra thin]  (5,0) -- (5,6);
\draw[ultra thick] (0,6) -- (0,5) -- (1,5) -- (1,4) -- (2,4) -- (2,1) -- (3,1) -- (3,0) -- (5,0);
\node at (.5,6.4) {\tiny $5$}; \node at (1.5,6.4) {\tiny $2$}; \node at (2.5,6.4) {\tiny $2$}; \node at (3.5,6.4) {\tiny $0$}; \node at (4.5,6.4) {\tiny $2$};
\node at (.5,7.4) {\tiny $U_1$}; \node at (1.5,7.4) {\tiny $U_2$}; \node at (2.5,7.4) {\tiny $U_3$}; \node at (3.5,7.4) {\tiny $U_4$}; \node at (4.5,7.4) {\tiny $U_5$};
\node at (-.5,0.5) {\tiny $2$}; \node at (-.5,1.5) {\tiny $3$}; \node at (-.5,2.5) {\tiny $2$}; \node at (-.5,3.5) {\tiny $1$}; \node at (-.5,4.5) {\tiny $1$}; \node at (-.5,5.5) {\tiny $2$};  
\node at (-1.8,0.5) {\tiny $V_6$}; \node at (-1.8,1.5) {\tiny $V_5$}; \node at (-1.8,2.5) {\tiny $V_4$}; \node at (-1.8,3.5) {\tiny $V_3$}; \node at (-1.8,4.5) {\tiny $V_2$}; \node at (-1.8,5.5) {\tiny $V_1$};  
\node[violet] at (0.5,0.5) {\tiny $1$};\node[violet] at (1.5,0.5) {\tiny $0$};\node[violet] at (2.5,0.5) {\tiny $0$};\node[violet] at (3.5,0.5) {\tiny $0$};\node[violet] at (4.5,0.5) {\tiny $1$};
\node[violet] at (0.5,1.5) {\tiny $1$};\node[violet] at (1.5,1.5) {\tiny $1$};\node[violet] at (2.5,1.5) {\tiny $0$};\node[violet] at (3.5,1.5) {\tiny $0$};\node[violet] at (4.5,1.5) {\tiny $1$};
\node[violet] at (0.5,2.5) {\tiny $1$};\node[violet] at (1.5,2.5) {\tiny $0$};\node[violet] at (2.5,2.5) {\tiny $1$};\node[violet] at (3.5,2.5) {\tiny $0$};\node[violet] at (4.5,2.5) {\tiny $0$};
\node[violet] at (0.5,3.5) {\tiny $0$};\node[violet] at (1.5,3.5) {\tiny $0$};\node[violet] at (2.5,3.5) {\tiny $1$};\node[violet] at (3.5,3.5) {\tiny $0$};\node[violet] at (4.5,3.5) {\tiny $0$};
\node[violet] at (0.5,4.5) {\tiny $1$};\node[violet] at (1.5,4.5) {\tiny $0$};\node[violet] at (2.5,4.5) {\tiny $0$};\node[violet] at (3.5,4.5) {\tiny $0$};\node[violet] at (4.5,4.5) {\tiny $0$};
\node[violet] at (0.5,5.5) {\tiny $1$};\node[violet] at (1.5,5.5) {\tiny $1$};\node[violet] at (2.5,5.5) {\tiny $0$};\node[violet] at (3.5,5.5) {\tiny $0$};\node[violet] at (4.5,5.5) {\tiny $0$};
\node at (2.5,-1.5) {(d)};
\end{tikzpicture}
\caption{} \label{fig:margins}
\end{figure}
We added one more decoration to the shape of the matrix: a down-and-right monotone {\em ``separating line''} connecting the NW corner with the SE corner defined as follows: the $(V_i,U_j)$ box of the matrix is to the right (equivalently, above) of the separating line if and only if the brane $V_i$ is left of the brane $U_j$ in the brane diagram. 
Note that there are $\binom{m+n}{m}$ possible separating lines. 

\begin{lemma} \label{lem:marginsum} We have
\begin{itemize}
\item $\sum r_i=\sum c_j$ for any brane diagram.
\item The combinatorial data consisting of $\rr, \cc$ (with $\sum r_i=\sum c_j$) and the separating line uniquely determine the brane diagram. 
\end{itemize}
\end{lemma}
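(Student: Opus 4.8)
The plan is to handle both bullets by passing to the linear order of all $m+n$ five-branes and reading the charge formulas term by term. First I would fix uniform notation: list the five-branes from left to right as $W_1,\dots,W_{m+n}$, and the D3 branes between and around them as $D_0,D_1,\dots,D_{m+n}$, so that $W_k^-=D_{k-1}$ and $W_k^+=D_k$ for every $k$. By the convention that the diagram may be padded with multiplicity-zero D3 branes on either end, we have $\mult_{D_0}=\mult_{D_{m+n}}=0$.

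\emph{First bullet.} I would compute $\sum_i r_i-\sum_j c_j$ by splitting each brane charge into its ``multiplicity jump'' part and its ``crossing count'' part. The jump parts assemble into a single telescoping sum: because $\charge(V)$ uses $\mult_{V^+}-\mult_{V^-}$ while $\charge(U)$ uses $\mult_{U^-}-\mult_{U^+}$, the D5 jumps enter $-\sum_j c_j$ with exactly the same orientation as the NS5 jumps enter $\sum_i r_i$, so together they give $\sum_{k=1}^{m+n}(\mult_{D_k}-\mult_{D_{k-1}})=\mult_{D_{m+n}}-\mult_{D_0}=0$. The crossing parts contribute $\sum_{V}\#\{\text{D5 left of }V\}-\sum_{U}\#\{\text{NS5 right of }U\}$; but both sums count one and the same set, namely the ordered pairs $(U,V)$ in which the D5 brane $U$ lies to the left of the NS5 brane $V$. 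Hence they cancel, and $\sum_i r_i=\sum_j c_j$.

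\emph{Second bullet.} I would prove uniqueness by reconstructing the diagram and checking that every choice is forced. A separating line is a down-and-right lattice path from the NW to the SE corner of the $m\times n$ grid; such paths are in bijection with shuffles of $m$ row-steps and $n$ column-steps, i.e.\ with the left-to-right sequence of five-brane \emph{types}. Together with the labeling convention (NS5 branes named $V_1,\dots,V_m$ and D5 branes named $U_1,\dots,U_n$ from left to right), this recovers the entire labeled order $W_1,\dots,W_{m+n}$, and in particular all the counts $\#\{\text{D5 left of }V_i\}$ and $\#\{\text{NS5 right of }U_j\}$. Then I would determine the multiplicities by induction from the left: starting from $\mult_{D_0}=0$, each charge equation solves \emph{uniquely} for the next jump $\mult_{D_k}-\mult_{D_{k-1}}$---equal to $r_i-\#\{\text{D5 left of }V_i\}$ at an NS5 brane $V_i$, and to $\#\{\text{NS5 right of }U_j\}-c_j$ at a D5 brane $U_j$---hence determines $\mult_{D_k}$. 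This forces the whole dimension vector, proving uniqueness. Finally, the very same telescoping computation as above shows $\mult_{D_{m+n}}=\sum_i r_i-\sum_j c_j$, so the hypothesis $\sum_i r_i=\sum_j c_j$ is exactly what guarantees the reconstructed multiplicities close up to $0$ at the right end, making the data consistent.

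The computations here are routine. The one genuinely substantive point is the combinatorial identity driving the cancellation in the first bullet---that $\sum_V\#\{\text{D5 left of }V\}$ and $\sum_U\#\{\text{NS5 right of }U\}$ are two tallies of the same crossing pairs---together with verifying that the deliberately asymmetric side/sign conventions in the definition of $\charge$ are precisely what make this cancellation and the telescope align. Setting up the lattice-path $\leftrightarrow$ type-sequence dictionary so that ``box $(V_i,U_j)$ lies above the separating line $\iff V_i$ is left of $U_j$'' respects the intended orientation is the only other thing to check, and that is bookkeeping rather than a real obstacle.
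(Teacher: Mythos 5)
Your proof is correct and takes essentially the same route as the paper's: the paper likewise notes that the separating line fixes the left-to-right order of the 5-branes and then fills in the D3 multiplicities one by one from the left using the charge equations, with $\sum r_i=\sum c_j$ guaranteeing multiplicity $0$ at the right end. Your telescoping-plus-double-counting computation for the first bullet simply spells out in detail what the paper compresses into ``follows from the definition of charges.''
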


\begin{proof} The first statement follows from the definition of charges. To prove the second statement notice that the separating line sets the order of 5-branes. Then reading the charges from $r$ and $c$ we can fill the multiplicities of the D3 branes one by one from left to right. The $\sum r_i=\sum c_j$ condition guarantees that at the right end of the diagram we arrive at multiplicity 0.
\end{proof}

The size of the matrix, the margin vectors, and the separating line together will be called the {\em table-with-margins} code of the brane diagram.

As our vocabulary ``margin vectors'' suggests we will soon consider matrices with the given row and column sums $r, c$. In fact we will consider 01-matrices only, for example the one in Figure~\ref{fig:margins}(d). The D3 brane multiplicities can be read from such a matrix the following way: the D3 branes correspond to the integer points of the separating line (in the order from top down), and the multiplicity corresponding to such an integer point $P$ is $\#\{\text{1's NE of P}\}+\#\{\text{0's SW of P}\}$. Interestingly, this holds for {\em any} 01-matrix with row and column sums $r$, $c$. (For the margins in Figure \ref{fig:margins}(a) there are 123 such matrices, one of them is Figure \ref{fig:margins}(d).) This fact is an alternative proof for the second part of Lemma \ref{lem:marginsum}---in the case when a matrix with row and column sums  $r, c$ {\em exists}. However, we will restrict our attention to those brane diagrams: 

\begin{assumption} \label{assume}
In the whole paper we will assume that part of the definition of {\em brane diagram} is that there exists at least one 01-matrix whose row and column sums are $r$ and $c$. 
\end{assumption}

\noindent The geometric significance of this assumpion will be given in Proposition \ref{prop:existsfix}.

\subsection{3d mirror symmetry of brane diagrams} \label{sec:3d}
Replacing NS5-branes with D5-branes, and D5-branes with NS5-branes in a brane diagram we get its 3d mirror dual brane diagram (referring to the mirror symmetry in $N=4$, $d=3$ superstring theory). 

In the three dimensional view of the brane diagram (of Section \ref{sec:3dimview}) the mirror is obtained by rotating the diagram around the $x$ axis by $90^\circ$ (or, reflection about the $y=z$ plane.) In the two dimensional view of the brane diagram (of Section \ref{sec:3dimview}) the 3d mirror is obtained by reflecting the diagram across the $x$-axis.

The change of the table-with-margins combinatorial code under 3d mirror symmetry is also natural: The table gets transposed, together with the separating line, and the new margin vectors $r', c '$ are obtained from the old ones $r, c$ by
\begin{equation}\label{eq:rc}
c_i'=n-r_i, \qquad r_i'=m-c_i.
\end{equation}
The verification of these statements is straightforward. Figure~\ref{fig:margins}(b) is the table-with-margins associated with the 3d mirror dual of the brane diagram of Section \ref{sec:diagram}.

\subsection{Hanany-Witten transition}\label{sec:HW}

Assume $\mult_2+\tilde{\mult}_2=\mult_1+\mult_3+1$. Carrying out the local change 
\begin{equation}\label{fig:HW}
\begin{tikzpicture}[baseline=(current  bounding  box.center), scale=.5]
\draw[thick] (0,1)--(1,1) node [above] {$\mult_1$} -- (2,1);
\draw[thick,blue] (3,0)--(2,2);
\draw[thick] (3,1)--(4,1) node [above] {$\mult_2$}--(5,1);
\draw[thick,red] (5,0)--(6,2);
\draw[thick] (6,1)--(7,1) node [above] {$\mult_3$}--(8,1);
\draw[ultra thick, <->] (10,1)--(11.5,1) node[above]{HW} -- (13,1);
\draw[thick] (15,1)--(16,1) node [above] {$\mult_1$} -- (17,1);
\draw[thick,red] (17,0)--(18,2);
\draw[thick] (18,1)--(19,1) node [above] {$\tilde{\mult}_2$}--(20,1);
\draw[thick,blue] (21,0)--(20,2);
\draw[thick] (21,1)--(22,1) node [above] {$\mult_3$}--(23,1);
\end{tikzpicture}
\end{equation}
(in either direction) in a brane diagram is called a {\em Hanany-Witten transition} \cite{HW}, \cite[\S7]{NT}. 

The charge of each 5-brane remains the same before and after a Hanany-Witten (HW) transition. Indeed, assume there are $k$ NS5 branes to the right of the portion of the diagrams displayed. Then the charge of the D5 brane on the left is $\mult_1-\mult_2+(k+1)$, and the charge of the D5 brane on the right is $\tilde{\mult}_2-\mult_3+k$. Our assumption $\mult_2+\tilde{\mult}_2=\mult_1+\mult_3+1$ implies that these two expressions are equal. The calculation for the NS5 brane is analogous. In fact, the condition $\mult_2+\tilde{\mult}_2=\mult_1+\mult_3+1$ is set so that the brane charges are invariant under HW transition.

Let us see how a table-with-margins changes under Hanany-Witten transition. The number of NS5 and D5 branes, or their order do not change, and their charges do not change either. What changes is that the separating line will transition from one side of the $(V_i, U_j)$ box to its other side. Figure~\ref{fig:margins}(c) corresponds to 
\ttt{{\fs}2\bs 2{\fs}2\bs 4{\fs}3{\fs}3\bs 3{\fs}3{\fs}2\bs 2\bs},  
which is obtained from the brane diagram of Section \ref{sec:diagram} by a Hanany-Witten transition switching $V_5$ and $U_3$. (The reader may verify that the new D3 brane multiplicity between these two branes involved is indeed $(3+3+1)-4=3$.)

\begin{lemma} \label{lem:noneg}
For any consecutive pair of different 5-branes the HW transition can be carried out, that is, the $\tilde{\mult}_2=\mult_1+\mult_3+1-\mult_2$ assignment does not violate $\tilde{d}_2\geq 0$.
\end{lemma}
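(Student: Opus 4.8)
The plan is to exploit the 01-matrix description of D3-multiplicities recorded just before Assumption~\ref{assume}, whose whole point is that it produces manifestly non-negative numbers. First I would fix, using Assumption~\ref{assume}, a single 01-matrix $M$ whose row and column sums are $r$ and $c$. By the recipe quoted there, for \emph{any} down-and-right monotone separating line with these margins the multiplicity of the D3 brane sitting at a lattice point $P$ of that line equals $\#\{1\text{'s NE of }P\}+\#\{0\text{'s SW of }P\}$, computed from $M$. The crucial observation is that this expression references only $P$ (and $M$), not the remainder of the separating line; so the multiplicity attached to a given vertex is insensitive to how the line continues away from that vertex.

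Next I would record the effect of the Hanany--Witten move on the separating line. As explained in Section~\ref{sec:HW}, the HW transition for a consecutive pair of different 5-branes interchanges one horizontal and one vertical edge, i.e.\ it flips a single unit corner: the two endpoints $A$ and $B$ of the two-edge segment are unchanged, while the middle vertex jumps from one corner $P$ of the flipped box to the opposite corner $P'$, and all other vertices of the line stay put. The result is again a down-and-right monotone path, hence a legitimate separating line, and its margins are still $r,c$ since the charges are HW-invariant (already verified in Section~\ref{sec:HW}). Consequently $d_1$ (the multiplicity at $A$) and $d_3$ (at $B$) are unchanged, and the new middle multiplicity $\tilde{\mult}_2$ is exactly the multiplicity that the recipe attaches to the vertex $P'$ of the new line, computed from the same $M$.

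Combining the two steps finishes the argument: $\tilde{\mult}_2=\#\{1\text{'s NE of }P'\}+\#\{0\text{'s SW of }P'\}$ is a sum of two non-negative integer counts, hence $\tilde{\mult}_2\ge 0$. I regard the content of the lemma as residing entirely in the existence hypothesis of Assumption~\ref{assume}: without a 01-matrix of margins $r,c$ the quantity $\mult_1+\mult_3+1-\mult_2$ could a priori be negative, and it is precisely the existence of $M$ that rewrites it as a count of boxes, which cannot be negative.

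As a consistency check---and the one place where a short local computation is needed---I would verify directly that the recipe, evaluated at the four corners of the flipped box, satisfies $\mult_2+\tilde{\mult}_2-\mult_1-\mult_3=1$ regardless of whether the flipped entry $\varepsilon$ of $M$ is $0$ or $1$. Writing $P=(x,y)$, $P'=(x+1,y+1)$, and $A,B$ for the remaining two corners (the opposite orientation is symmetric), the alternating sum of the four ``1's NE'' counts telescopes to $\varepsilon$, while the alternating sum of the four ``0's SW'' counts telescopes to $1-\varepsilon$, so the total is $1$; this reproduces the defining HW relation $\tilde{\mult}_2=\mult_1+\mult_3+1-\mult_2$ and confirms the recipe is compatible with the transition. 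The only delicate point in the whole proof is the bookkeeping of the NE/SW conventions at the boundary of the flipped box, i.e.\ ensuring the box itself is counted on exactly one side; once that is pinned down the telescoping is routine and the non-negativity is automatic.
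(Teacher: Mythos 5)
Your proof is correct, and it is essentially the ``simple combinatorial proof'' that the paper alludes to after Lemma~\ref{lem:noneg} but deliberately does not write out; the paper instead proves the lemma in Section~\ref{sec:HWfix} by a picture. Concretely, the paper's argument runs through fixed-point combinatorics: Assumption~\ref{assume} guarantees at least one BCT, hence at least one tie diagram, and Figure~\ref{fig:HWfixpoints} shows that a Hanany--Witten transition performs a Reidemeister-III-like surgery turning a tie diagram for $\DD$ into one for $\tDD$; since every multiplicity is then a count of covering ties, $\tilde{\mult}_2\geq 0$ comes for free. Your route stays entirely inside the Section~2 combinatorics: fix one 01-matrix $M$, invoke the recipe $\mult(P)=\#\{1\text{'s NE of }P\}+\#\{0\text{'s SW of }P\}$ (which the paper asserts holds for \emph{any} separating line with the given margins), observe that HW transition is a single corner flip of the separating line, and conclude non-negativity because the recipe value at the new corner is a sum of counts. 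The two arguments are of course the same idea in two languages---your recipe count at $P$ is exactly the number of ties covering that D3 brane---but your version has two virtues the paper's lacks: it is self-contained at the point where the lemma is stated (no forward reference to the fixed-point theory), and your telescoping computation, showing that the alternating sum of the four ``NE'' counts is $\varepsilon$ and of the four ``SW'' counts is $1-\varepsilon$, is an explicit verification of the compatibility $\mult_2+\tilde{\mult}_2=\mult_1+\mult_3+1$ that the paper leaves as an exercise (``it is instructive to verify\ldots''). The paper's picture proof, in exchange, simultaneously establishes the bijection of fixed points under HW transition, which is needed later anyway.
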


One of the reasons we made Assumption \ref{assume} is to make this lemma true. The reader may construct a simple combinatorial proof at this point, but we prefer to prove the lemma by just one picture in Section \ref{sec:HWfix}. 

\medskip

Given a brane diagram with $m$ NS5 branes and $n$ D5 branes one can carry out Hanany-Witten moves to re-order them to any of their $\binom{m+n}{m}$ orders (recall that we do not allow switching the same type of 5-branes). The codes of these $\binom{m+n}{m}$ brane diagrams of this Hanany-Witten equivalence class correspond to the $\binom{m+n}{m}$ ways of drawing the separating line that we mentioned in Section~\ref{sec:3dimview}.

\begin{remark}\label{rem:OtherTransitions}
It is tempting to consider the analogous transitions interchanging two 5-branes of the same type in such a way that their brane charges do not change, that is,  
\begin{equation*}
\begin{tikzpicture}[baseline=(current  bounding  box.center), scale=.35]
\draw[thick] (0,1)--(1,1) node [above] {$d_1$} -- (2,1);
\draw[thick,blue] (3,0)--(2,2);
\draw[thick] (3,1)--(4,1) node [above] {$d_2$}--(5,1);
\draw[thick,blue] (6,0)--(5,2);
\draw[thick] (6,1)--(7,1) node [above] {$d_3$}--(8,1);
\draw[ultra thick, <->] (10,1)--(11.5,1) node[above]{} -- (13,1);
\draw[thick] (15,1)--(16,1) node [above] {$d_1$} -- (17,1);
\draw[thick,blue] (18,0)--(17,2);
\draw[thick] (18,1)--(19,1) node [above] {$\tilde{d}_2$}--(20,1);
\draw[thick,blue] (21,0)--(20,2);
\draw[thick] (21,1)--(22,1) node [above] {$d_3$}--(23,1);
\begin{scope}[yshift=-3cm]
\draw[thick] (0,1)--(1,1) node [above] {$d_1$} -- (2,1);
\draw[thick,red] (2,0)--(3,2);
\draw[thick] (3,1)--(4,1) node [above] {$d_2$}--(5,1);
\draw[thick,red] (5,0)--(6,2);
\draw[thick] (6,1)--(7,1) node [above] {$d_3$}--(8,1);
\draw[ultra thick, <->] (10,1)--(11.5,1) node[above]{} -- (13,1);
\draw[thick] (15,1)--(16,1) node [above] {$d_1$} -- (17,1);
\draw[thick,red] (17,0)--(18,2);
\draw[thick] (18,1)--(19,1) node [above] {$\tilde{d}_2$}--(20,1);
\draw[thick,red] (20,0)--(21,2);
\draw[thick] (21,1)--(22,1) node [above] {$d_3$}--(23,1);
\end{scope}
\end{tikzpicture}
\qquad\quad \text{for } d_2+\tilde{d}_2=d_1+d_3.
\end{equation*}
In fact, these transitions make sense and are important in physics, but not for our purposes, until Section \ref{sec:OtherTransitions}.  
\end{remark}

\section{Bow varieties}
Associated with a brane diagram $\DD$ there is a smooth holomorphic symplectic variety $\Ch(\DD)$, called the Cherkis bow variety. It is equipped with rich structures, such as a torus action, tautological complex vector bundles, holomorphic symplectic form.

There are some equivalent definitions of $\Ch(\DD)$. In the original definition \cite{Ch09, Ch10, Ch11} \cite[Sec 2.1]{NT} $\Ch(\DD)$ is the moduli spaces of $U(n)$-instantons on multi-Taub-NUT spaces. That is, points of $\Ch(\DD)$ are gauge equivalence classes of so-called bow-solutions, tuples of linear maps and a Hermitian connection satisfying some constraints (in particular, one called Nahm's equation). Another definition is through quivers \cite[Sec 2.2]{NT}.

\subsection{The Nakajima-Takayama description of bow varieties} \label{sec:bowdef}
In this section we repeat the quiver-based construction of Nakajima-Takayama \cite[Section 2.2]{NT} for bow varieties, with the only novelty of introducing an extra $\C^{\times}$-action that will be convenient for us later. Readers familiar with \cite{NT} can use this picture 
\[
\begin{tikzpicture}[scale=.3]
\draw[thick] (0,1)--(30,1) ;
\draw[thick,red] (-.5,0)--(.5,2);
\draw[thick,blue] (3.5,0)--(2.5,2);
\draw[thick,blue] (6.5,0)--(5.5,2);
\draw[thick,blue] (9.5,0)--(8.5,2);
\draw[thick,red] (11.5,0)--(12.5,2);
\draw[thick,red] (14.5,0)--(15.5,2);
\draw[thick,blue] (18.5,0)--(17.5,2);
\draw[thick,blue] (21.5,0)--(20.5,2);
\draw[thick,red] (23.5,0)--(24.5,2);
\draw[thick,red] (26.5,0)--(27.5,2);
\draw[thick,blue] (30.5,0)--(29.5,2);

\draw[<-, thick] (-1.5,-1) -- (1.5,-1);
\draw[decorate, decoration={snake, segment length=1mm, amplitude=.5mm}] (1.5,-1) -- (10.5,-1);
\draw[<-, thick] (10.5,-1) -- (12.5,-1);
\draw[decorate, decoration={snake, segment length=1mm, amplitude=.5mm}] (12.5,-1) -- (14.5,-1);
\draw[<-, thick] (14.5,-1) -- (16.5,-1);
\draw[decorate, decoration={snake, segment length=1mm, amplitude=.5mm}] (16.5,-1) -- (22.5,-1);
\draw[<-,thick] (22.5,-1) -- (24.5,-1);
\draw[decorate, decoration={snake, segment length=1mm, amplitude=.5mm}] (24.5,-1) -- (26.5,-1);
\draw[<-,thick] (26.5,-1) -- (28.5,-1);
\draw[decorate, decoration={snake, segment length=1mm, amplitude=.5mm}] (28.5,-1) -- (31.5,-1);

\node at (3,-1) {$\bigtimes$};\node at (6,-1) {$\bigtimes$};\node at (9,-1) {$\bigtimes$};
\node at (18,-1) {$\bigtimes$};\node at (21,-1) {$\bigtimes$};\node at (30,-1) {$\bigtimes$};

\end{tikzpicture}
\]
for comparison between diagrams of this paper (top) and diagrams of \cite{NT} (bottom). We will not use the diagram in the bottom any further.

Let $\DD$ be a brane diagram. For each D3 brane $X$ consider a vector space $W_X$ of dimension $\mult_{X}$. Also, for each D5 brane $U$ we fix a one-dimensional vector space $\C_U$. 

We are going to define some vector spaces associated with branes and brane diagrams. These vector spaces in fact will be representations of $\C^{\times}$, which we denote by $\C^{\times}_{\h}$. For a vector space $W$ we write $\hb W$ if $\C^{\times}_{\h}$ acts on it by multiplication (dilation), or $\hb^kW$ if $\C^{\times}_{\h}$ acts through the $k$'th power.

\begin{itemize}
\item For a D5 brane $U$ define the ``three-way part''
\begin{align*}
\MM_U= &\Hom(W_{U^+},W_{U^-}) \oplus
\hb\Hom(W_{U^+},\C_U) \oplus \Hom(\C_U,W_{U^-}) \\
&  \oplus\hb\End(W_{U^-}) \oplus \hb\End(W_{U^+}),
\end{align*} 
whose elements will  be denoted by $(A_U, b_U, a_U, B_U, B'_U)$, see the diagram
\[
\begin{tikzcd}
W_{U^-} \arrow[loop,out=200,in=160,distance=2em, "B_U", "\circ" marking] & & W_{U^+} \arrow[ld, "\circ" marking, "b_U" ] \arrow[ll, "A_U"] \arrow[loop,out=20,in=-20,distance=2em, "B'_U", "\circ" marking] \\
& \C_U.\arrow[ul, "a_U"] & 
\end{tikzcd}
\]
\noindent The $\circ$ marking on an arrow refers to the $\C^\times_{\h}$ action.
\item For an NS5 brane $V$ define the ``two-way part''
\[
\MM_V= \hb\Hom(W_{V^+},W_{V^-}) \oplus
\Hom(W_{V^-},W_{V^+}),
\]
whose elements will  be denoted by $(C_V, D_V)$, see the diagram
\[
\begin{tikzcd}
W_{V^-} \ar[rr,bend right, "D_V"] & & W_{V^+}. \ar[ll, "C_V", bend right, "\circ" marking] 
\end{tikzcd}
\]
\item For a D5 brane $U$ define 
\[
\NN_U=\hb\Hom(W_{U^+},W_{U^-}), \qquad \text{illustrated by }
\begin{tikzcd}
W_{U^-} & W_{U^+}. \ar[l,  "\circ" marking] 
\end{tikzcd}
\]
\item For a D3 brane $X$ define
\[
\NN_X=\hb\End(W_X), \qquad \text{illustrated by }
\begin{tikzcd}
W_X \ar[loop,out=200,in=160,distance=1.6em, "\circ" marking]
\end{tikzcd}.
\]
\end{itemize}

\noindent For the brane diagram let
\[
\MM=\bigoplus_{U \ \text{D5}} \MM_U \ \oplus\  \bigoplus_{V \ \text{NS5}} \MM_V,
\qquad\qquad
\NN=\bigoplus_{U \ \text{D5}} \NN_U \ \oplus \bigoplus_{X \  \text{D3}} \NN_X.
\]
We define the moment map $\mu:\MM\to\NN$ component-wise as follows. 
\begin{itemize}
\item 
The $\NN_U$-component ($U$ is D5) of $\mu$ is 
$B_U A_U -A_U B_U+a_U b_U$.
\item
The $\NN_X$-components ($X$ is D3) of $\mu$ depend on the diagram:
\begin{itemize}
\item[\ttt{\bs -\bs}] If $X$ is in between two D5 branes then it is $B'_{X^-}-B_{X^+}$. 
\item[\ttt{{\fs}-{\fs}}] If $X$ is in between two NS5 branes then it is $C_{X^+}D_{X^+}-D_{X^-}C_{X^-}$.
\item[\ttt{{\fs}-\bs}] If $X^-$ is an NS5 brane and $X^+$ is a D5 brane then it is $-D_{X^-}C_{X^-}-B_{X^+}$.
\item[\ttt{\bs -{\fs}}] If $X^-$ is a  D5 brane and $X^-$ is an NS5 brane then it is $C_{X^+}D_{X^+}+B'_{X^-}$.
\end{itemize}
\end{itemize}

\begin{remark}\label{nuC}
In \cite{NT} more general bow varieties are considered, with parameters $\nu^{\C}$ (associated with segments of the brane diagram separated by NS5 branes). For those general bow varieties the second and forth line above would read $C_{X^+}D_{X^+}-D_{X^-}C_{X^-}-\nu^{\C}$, and $C_{X^+}D_{X^+}+B'_{X^-}-\nu^{\C}$ respectively. We choose these $\nu^{\C}$ parameters to be 0 in the whole paper, so that we have $\C^{\times}_{\h}$-equivariance of $\mu$.  
\end{remark}

\begin{lemma} 
The map $\mu:\MM \to \NN$ is $\C^{\times}_{\h}$-equivariant.
\end{lemma}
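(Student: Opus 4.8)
The plan is to reduce the statement to a purely combinatorial weight count. Since the spaces $W_X$ and $\C_U$ carry the trivial $\C^{\times}_{\h}$-action, every summand of $\MM$ and of $\NN$ is a $\Hom$- or $\End$-space sitting in a single $\C^{\times}_{\h}$-weight: weight $1$ exactly when it carries an $\hb$, and weight $0$ otherwise. So first I would record, once and for all, the weights of the coordinate maps read off from the definitions: $A_U, a_U, D_V$ have weight $0$, while $b_U, B_U, B'_U, C_V$ have weight $1$. Concretely, $t\in\C^{\times}_{\h}$ acts by fixing the weight-$0$ coordinates and scaling each weight-$1$ coordinate by $t$.

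Next I would observe that $\mu$ is homogeneous component by component: each $\NN_U$- or $\NN_X$-component of $\mu$ is a sum of monomials, each monomial being either a single coordinate or a composition of two coordinates. For a map of this shape, $\C^{\times}_{\h}$-equivariance is equivalent to the statement that every monomial occurring in a given component has total weight (the sum of the weights of its factors) equal to the weight of the target summand. Because each target summand $\NN_U=\hb\Hom(W_{U^+},W_{U^-})$ and $\NN_X=\hb\End(W_X)$ carries an $\hb$, the required weight is $1$ in every component. Hence the whole lemma collapses to the assertion: \emph{every monomial appearing in $\mu$ has $\C^{\times}_{\h}$-weight exactly $1$.}

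I would then verify this case by case using the recorded weights. In the D5-component $B_U A_U - A_U B'_U + a_U b_U$ the three monomials have weights $1+0$, $0+1$, $0+1$, all equal to $1$. For D3 branes: in case \ttt{\bs -\bs} the terms $B'_{X^-}$ and $B_{X^+}$ are single weight-$1$ coordinates; in case \ttt{{\fs}-{\fs}} the terms $C_{X^+}D_{X^+}$ and $D_{X^-}C_{X^-}$ have weights $1+0$ and $0+1$; and the mixed cases \ttt{{\fs}-\bs} and \ttt{\bs -{\fs}} each combine one such quadratic term (weight $1$) with a single weight-$1$ linear term. In every instance the total weight is $1$, matching the $\hb$ on the target, so each component of $\mu$ intertwines the two $\C^{\times}_{\h}$-actions.

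The only genuine obstacle is bookkeeping rather than substance: one must confirm that each monomial is assigned to the correct target summand and, crucially, that the compositions typecheck before the weight count is meaningful (for instance, that in the D5-component it is $B'_U$, the endomorphism of $W_{U^+}$, which pairs with $A_U$ so that the composite lands in $\Hom(W_{U^+},W_{U^-})=\NN_U$). Once the domains and codomains are matched correctly, the weight grading does all the work and equivariance is immediate; there is no analytic or geometric content beyond this grading, which is exactly why the $\nu^{\C}=0$ choice of Remark~\ref{nuC} is what makes $\mu$ equivariant (a nonzero constant term would have weight $0$ and break homogeneity).
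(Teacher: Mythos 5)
Your proof is correct and is essentially the paper's own argument in systematized form: the paper likewise verifies equivariance by a direct weight computation (checking one representative case, the \ttt{\bs -{\fs}} component, where $\mu(\hb.(C,D,B'))=\hb CD+\hb B'=\hb.\mu(C,D,B')$) and declares the rest straightforward, while you make the same $\hb$-weight bookkeeping explicit and run through all cases. Your parenthetical observation that the D5-component must read $B_UA_U-A_UB'_U+a_Ub_U$ for the compositions to typecheck is a good catch and consistent with relation (i) in the paper's Section \ref{sec:HWproof}.
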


\begin{proof} Straightforward calculation: e.g. for the $\NN_X$-component in the \ttt{\bs -{\fs}} case we have (with economical notation)
\[
\mu\left( \hb.(C,D,B') \right) =
\mu\left( (\hb C, D, \hb B') \right) =
\hb CD+\hb B' =
\hb(CD+B') =
\hb.\left(\mu(C,D,B')\right).
\]
\end{proof}

\noindent Let $\widetilde{\M}$ consist of points of $\mu^{-1}(0)$ for which the stability conditions  
\begin{itemize}
\item[(S1)]  If $S\leq W_{U^+}$ is a subspace for a D5 brane $U$ with $B'_U(S)\subset S$, $A_U(S)=0$, $b_U(S)=0$ then $S=0$. 
\item[(S2)]  If $T \leq W_{U^-}$ is a subspace for a D5 brane $U$ with $B_U(T)\subset T$, $\Imm(A_U)+\Imm(a_U)\subset T$ then $T=W_{U^-}$.
\end{itemize}
hold. Let $\G=\prod_{X \text{ D3}} \GL(W_X)$ and consider the character 
\begin{equation}\label{eq:character}
\chi: \G \to \C^{\times}, 
\qquad\qquad
(g_X)_X \mapsto \prod_{X'} \det(g_{X'})
\end{equation}
where the product runs for D3 branes $X'$ such that ${X'}^-$ is an NS5 brane (in picture \ttt{{\fs}X'}). Let $\G$ act on $\widetilde{\M} \times \C$ by $g.(m,z)=(gm,\chi^{-1}(g)z)$. We say that $m\in \widetilde{\M}$ is stable (notation $m\in \widetilde{\M}^s$) if the orbit $\G(m,z)$ is closed and the stabilizer of $(m,z)$ is finite for $z\not=0$. Define the bow variety $\Ch(\DD)$ associated with the brane diagram  $\DD$ to be $\widetilde{\M}^s/\G$. By this definition $\Ch(\DD)$ is an orbifold, but in fact the stabilizers of stable points are trivial \cite[Lemma 2.10]{NT}, hence $\Ch(\DD)$ is a smooth variety.

\begin{remark} \label{nuR}
Again, in \cite{NT} more general bow varieties are considered, with parameters $\nu^{\R}$ (associated with segments of the brane diagram separated by NS5 branes, just like the $\nu^{\C}$ parameters from Remark \ref{nuC}). For those bow varieties the character $\chi$ in \eqref{eq:character} is $\prod_{X'} \det(g_{X'})^{-\nu^{\R}}$. For the purposes of this paper we chose all $\nu^{\R}=-1$.
\end{remark}

The smooth varieties $\Ch(\DD)$ are the main objects of this paper. They come with the following structures:
\begin{description}
\item[tautological bundles] For D3 branes $X$ the vector spaces $W_X$ induce ``tautological'' vector bundles $\xi_X$ of rank $\mult_{X}$ over $\Ch(\DD)$.
\item[torus action] For D5 branes $U$ the reparametrizations of the lines $\C_U$ induce an action of the torus $\A=(\C^{\times})^{\{\text{D5 branes}\}}$ on $\Ch(\DD)$.
\item[extra $\C^{\times}$ action] The action of $\C^{\times}_{\h}$ on $\MM$ descends to an action on $\Ch(\DD)$. Define the torus $\T=\A\times \C^{\times}_{\h}$.
\item[holomorphic symplectic structure] $\Ch(\DD)$ is equipped with a holomorphic symplectic form \cite[Section 5]{NT} preserved by $\A$ and scaled by $\C^\times_{\h}$.
\end{description}

The listed structures are also present for cotangent bundles of (type A) homogeneous spaces, and more generally for (type A) Nakajima quiver varieties. In fact, for those spaces the listed structures (together with the combinatorial description of the tangent bundle, fixed points, and cohomological fixed point restrictions) are the basis of their algebraic combinatorial study, as well as the study of their characteristic classes. 

\subsection{Tangent bundle in terms of tautological bundles}\label{sec:tangent}

Recall that tautological bundles $\xi_X$ of rank $\mult_{X}$ are associated with the D3 branes. For a bundle $\xi$ we will use the notation $\hb \xi$ to denote the bundle with $\C^\times_{\h}$ acting by multiplication in the fibers. 

\begin{definition} (Cf. three-way and two-way parts in Section \ref{sec:bowdef}.)
\begin{itemize}
\item For a D5 brane $U$ define  
\begin{align*}
T_U =& 
\Hom(\xi_{U^+},\xi_{U^-})\oplus 
\hb\Hom(\xi_{U^+},\C_U) \oplus
\Hom(\C_U,\xi_{U^-}) 
\\
& \oplus \hb \End(\xi_{U^-})\oplus \hb\End(\xi_{U^+}).
\end{align*}
\item
For an NS5 brane $V$ define  
$
T_V =\hb\Hom(\xi_{V^+},\xi_{V^-}) \oplus \Hom(\xi_{V^-},\xi_{V^+})
$.
\end{itemize}
\end{definition}
From the definition of bow varieties we obtain that the tangent bundle of $\Ch(\DD)$ 
is expressed in terms of the tautological bundles as 
\begin{align*}
T\Ch(\DD)=&
\left(\bigoplus_{U \text{ D5}} T_U \right)\oplus 
\left( \bigoplus_{V \text{ NS5}} T_V \right) \\
& 
\ominus \left( \bigoplus_{U \text{ D5}} \hb \Hom(\xi_{U^+},\xi_{U^-})  \right)
\ominus \left(\bigoplus_{X \text{ D3}} (1+\hb)\End(\xi_{X}) \right).
\end{align*}
As a consequence, we have a formula for the dimension of $\Ch(\DD)$ 
\begin{align} \label{eq:dimensionformula}
\dim(\Ch(\DD))=&
\sum_{U\text{ D5}} \left( (\mult_{U^-} +1)\mult_{U^-} + (\mult_{U^+}+1)\mult_{U^+} \right)
\\
& 
+ 2\sum_{V\text{ NS5}}  \mult_{V^+} \mult_{V^-} 
- 2\sum_{X\text{ D3}} \mult_{X}^2. \notag
\end{align}
As expected, this expression is even for any dimension vector.

\begin{remark} \label{rem:negatives}
The appearance of $\ominus$ in the formula for the tangent bundle in terms of the tautological bundles has the following classical analogue. Let $S$ and $Q$ be the tautological sub- and quotient bundles on a Grassmannian. Then the tangent bundle is $\Hom(S,Q)$ which expression has no negative sign. However, if $Q$ is not among our ``permitted'' tautological bundles then we need to express the tangent bundle as $\Hom(S,\C^n \ominus  S)=\Hom(S,\C^n) \ominus \End(S)$. 
\end{remark}

\begin{example}\rm \label{ex:P1a} 
Let the three tautological line bundles over $\Ch( \ttt{{\fs}1\bs 1\bs 1{\fs}} )$
be called $\alpha,\beta,\gamma$ (from left to right). Then the tangent bundle in $K_{\T}(\Ch(\DD))$ is 
\begin{equation}\label{eq:TP1}
(1-\hb)\left( \frac{\alpha}{\beta}+\frac{\beta}{\gamma}\right)+\frac{\alpha}{\uu_1}+\frac{\hb \uu_1}{\beta} +\frac{\beta}{\uu_2}+\frac{\hb \uu_2}{\gamma}+\hb-3,
\end{equation}
where $\uu_1, \uu_2$ are the line bundles induced by $\C_{U_1}$, $\C_{U_2}$ (wit the action of $\A$), and $\hb$ is the trivial line bundle with $\C^\times_{\h}$ acting by multiplication. We have $\dim(\Ch(\DD))=2$ (cf. Example \ref{ex:P1b}.)
\end{example}

\begin{example} \rm For the brane diagram \ttt{{\fs}a{\fs}b\bs c\bs} let us denote the tautological bundles of rank $a,b,c$ by $\alpha, \beta, \gamma$. The tangent bundle of the associated bow variety can be illustrated by 
\[
\begin{tikzcd}
\alpha \ar[rr, bend right]  \arrow[loop,out=120,in=60,distance=2em, "-1-\hb"]
& & 
\beta \ar[ll, bend right, "\hb"]  \arrow[loop,out=120,in=60,distance=2em, "-1"]
& &  
\gamma \ar[ll,"1-\hb"] \ar[ld, "\hb"]  \arrow[loop,out=120,in=60,distance=2em, "\hb-1"]
\\
&&&\C \ar[ul] && \C\ar[ul]
\end{tikzcd}
\]
where each arrow represents a $\Hom$-bundle, and the decoration is its coefficient (eg. $(\hb-1)\End(\gamma)$ is one of the terms).
Hence the dimension of the variety is $2(ab+c-a^2)+b-b^2$ (One may verify combinatorially that our Assumption \ref{assume} makes this number non-negative; alternatively it will follow trivially from Proposition \ref{prop:existsfix}.) 
\end{example}

\begin{example}  \rm
The bow variety associated with the brane diagram \ttt{{\fs}2\bs 2{\fs}2\bs 4{\fs}3{\fs}3{\fs}4\bs 3{\fs}2\bs 2\bs} from Section \ref{sec:diagram} has dimension 16.
\end{example}

\subsection{Hanany-Witten transition induces isomorphism}\label{sec:HWproof}

Let $\mult_2+\tilde{\mult}_2=\mult_1+\mult_3+1$ and consider the brane diagrams $\DD$ and $\tDD$ which only differ locally as in the picture
\begin{equation}\label{fig:HW2}
\begin{tikzpicture}[baseline=(current  bounding  box.center), scale=.5]
\draw[thick] (0,1)--(1,1) node [above] {$\mult_1$} -- (2,1);
\draw[thick,blue] (3,0)--(2,2);
\draw[thick] (3,1)--(4,1) node [above] {$\mult_2$}--(5,1);
\draw[thick,red] (5,0)--(6,2);
\draw[thick] (6,1)--(7,1) node [above] {$\mult_3$}--(8,1);
\node at (2.7,-.45) {$U$}; \node at (5.3,-.45) {$V$};
\node at (4,.3) {$X$};
\draw[thick] (15,1)--(16,1) node [above] {$\mult_1$} -- (17,1);
\draw[thick,red] (17,0)--(18,2);
\draw[thick] (18,1)--(19,1) node [above] {$\tilde{\mult}_2$}--(20,1);
\draw[thick,blue] (21,0)--(20,2);
\draw[thick] (21,1)--(22,1) node [above] {$\mult_3$}--(23,1);
\node at (20.7,-.4) {$\tilde{U}$}; \node at (17.3,-.45) {$\tilde{V}$};
\node at (19,.3) {$\tilde{X}$};
\end{tikzpicture}.
\end{equation}
The three tautological bundles  over $\Ch(\DD)$ and $\Ch(\tDD)$ corresponding to the pictured D3 branes will be denoted by $\xi_1, \xi_2, \xi_3$ and $\xi_1, \tilde{\xi_2}, \xi_3$. 
Recall that the tori 
\[
\T=(\C^{\times})^{\text{D5 branes in } \DD} \times \C_{\h}^\times
\qquad\qquad\text{and}\qquad\qquad
\tilde{\T}=(\C^{\times})^{\text{D5 branes in } \tDD} \times \C_{\h}^\times
\] 
act on $\Ch(\DD)$ and $\Ch(\tDD)$ respectively. Define $\rho: \T \to \tilde{\T}$ to be the identity in most components, except in the components
\begin{equation}\label{eq:reparametrize}
\C^\times_{U} \times \C^{\times}_{\h} \to \C^{\times}_{\tilde{U}} \times \C^{\times}_{\h}
\qquad\qquad\text{by}\qquad\qquad
(x,y)\mapsto (xy, y).
\end{equation}
The trivial line bundles over the two spaces with the multiplication action of $\C^{\times}_U$ and $\C^{\times}_{\tilde{U}}$ will be denoted by $\C_U$ and $\C_{\tilde{U}}$. 

\begin{theorem} \cite[Proposition 8.1]{NT} \label{thm:HW}
We have 
\begin{enumerate}
\item \label{egy} $\Ch(\DD)$ isomorphic to $\Ch(\tDD)$.
\item The isomorphism in \eqref{egy} is $\rho$-equivariant between the $\T$-variety $\Ch(\DD)$ and the $\tilde{\T}$-variety $\Ch(\tDD)$. 
\item After identification of the two spaces, we have the exact sequence of  bundles
\[
0 \to \xi_2 \to \xi_3\oplus\xi_1\oplus \C_U \to \tilde{\xi}_2 \to 0.
\]
\end{enumerate}
\end{theorem}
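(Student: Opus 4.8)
The plan is to establish all three parts at once by realizing the isomorphism directly on the Nakajima--Takayama linear data of \eqref{fig:HW2}, arranging the construction so that the sequence in part (3) is its defining feature; this is in essence the content of \cite[\S8]{NT}. Write $W_1,W_2,W_3,\tilde W_2$ for the fibers of $\xi_1,\xi_2,\xi_3,\tilde\xi_2$. Locally the $\DD$-data consist of $A_U\colon W_2\to W_1$, $b_U\colon W_2\to\C_U$, $a_U\colon\C_U\to W_1$, $B_U\in\End(W_1)$, $B'_U\in\End(W_2)$ at the D5 brane $U$, together with $C_V\colon W_3\to W_2$ and $D_V\colon W_2\to W_3$ at the NS5 brane $V$; the relevant moment-map relation at the D3 brane $X$ reads $C_VD_V+B'_U=0$, i.e.\ $B'_U=-C_VD_V$.

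First I would form
\[
\iota=\begin{pmatrix}D_V\\ A_U\\ b_U\end{pmatrix}\colon W_2\longrightarrow W_3\oplus W_1\oplus\C_U
\]
and check that it is injective: if $\iota(v)=0$ then $D_Vv=A_Uv=b_Uv=0$, and $B'_U=-C_VD_V$ gives $B'_Uv=0$, so the line $\C v$ is $B'_U$-stable and annihilated by $A_U$ and $b_U$; by (S1) it must vanish, whence $v=0$. I would then set $\tilde W_2:=\coker(\iota)$ with projection $\pi$, keep all other $W_X$ fixed, and define the $\tDD$-data from $\pi$: take $A_{\tilde U}$, $D_{\tilde V}$, $a_{\tilde U}$ to be $\pi$ restricted to the summands $W_3$, $W_1$, $\C_U$ (identifying $\C_{\tilde U}\cong\C_U$), and read off $C_{\tilde V}$, $b_{\tilde U}$, $B_{\tilde U}=-D_{\tilde V}C_{\tilde V}$, $B'_{\tilde U}$ from $C_V,a_U,A_U,B_U$ through $\pi$. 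The substantive step is to verify that this transported data lies in $\mu^{-1}(0)$ for $\tDD$, satisfies (S1) and (S2) at $\tilde U$, and is $\G$-equivariant, so that it descends to a morphism $\Ch(\DD)\to\Ch(\tDD)$.

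For the inverse I would run the mirror construction on $\tDD$. There the map $\sigma=(D_{\tilde V},A_{\tilde U},a_{\tilde U})\colon W_1\oplus W_3\oplus\C_{\tilde U}\to\tilde W_2$ is surjective: its image contains $\Imm A_{\tilde U}+\Imm a_{\tilde U}$ and is $B_{\tilde U}$-invariant, since $B_{\tilde U}=-D_{\tilde V}C_{\tilde V}$ has image inside $\Imm D_{\tilde V}\subseteq\Imm\sigma$, so (S2) forces $\Imm\sigma=\tilde W_2$; then $\ker\sigma$ recovers $W_2$. Thus (S1) controls injectivity one way and (S2) surjectivity the other, and verifying that the two transitions compose to the identity yields part (1). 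Tracking the $\T$-weights along the arrows of $\iota$---the $\hb$-twist carried by the $b_U$ arrow is what shifts the $\C^\times_U$-weight relative to $\C^\times_{\tilde U}$---produces precisely the reparametrization $(x,y)\mapsto(xy,y)$ of \eqref{eq:reparametrize}, which is part (2). Finally, because $\iota$ and $\pi$ are built $\G$-equivariantly from tautological data, the pointwise sequence $0\to W_2\to W_3\oplus W_1\oplus\C_U\to\tilde W_2\to0$ globalizes to the asserted exact sequence of bundles, giving part (3).

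The hard part will be the equation- and stability-checking for the transported data, not the homological skeleton: one must pin down the ``backward'' maps $C_{\tilde V}$ and $b_{\tilde U}$ canonically---rather than through a non-natural splitting of $\pi$---and confirm that the forward and reverse transitions are genuinely inverse. This is exactly where the relations $B'_U=-C_VD_V$ and $B_{\tilde U}=-D_{\tilde V}C_{\tilde V}$ must be deployed together with (S1) and (S2) in a compatible way, and where the $\C^\times_{\h}$-bookkeeping must be kept consistent, so that $\C_U$ enters the bundle sequence untwisted while $\rho$ absorbs the $\hb$-twist.
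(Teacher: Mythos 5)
Your skeleton coincides with the paper's (which itself repeats \cite[Prop.~8.1]{NT}): your $\iota$ is the paper's $\alpha$ with components $A_U, D_V, b_U$, the new fiber is $\tilde W_2=\coker(\alpha)$, the forward maps $A_{\tilde U}, D_{\tilde V}, a_{\tilde U}$ are the quotient map restricted to the summands, (S1) gives injectivity, and (S2) gives surjectivity of the reverse-direction map; in fact your (S1)/(S2) arguments are spelled out in more detail than the paper's, and they are correct. However, there is one genuine gap, which you flag yourself but do not close: you never construct $C_{\tilde V}$. This is not a routine bookkeeping step, because $C_{\tilde V}\colon \tilde W_2\to W_1$ is a map \emph{out of} the cokernel, so it cannot be obtained by restricting, projecting, or otherwise pushing the old data ``through $\pi$''; it must be induced from a map defined on $W_1\oplus W_3\oplus\C_{\tilde U}$ that annihilates $\Imm(\alpha)$, and producing such a map is the one nontrivial idea in the whole construction.

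The paper's resolution is the second map $\beta=(B_U,\ A_UC_V,\ a_U)\colon W_1\oplus W_3\oplus\C_{\tilde U}\to W_1$. One computes
\[
\beta\alpha \;=\; B_UA_U+A_UC_VD_V+a_Ub_U \;=\; B_UA_U-A_UB'_U+a_Ub_U \;=\;0,
\]
where the second equality uses the moment-map relation $C_VD_V+B'_U=0$ and the third uses $B_UA_U-A_UB'_U+a_Ub_U=0$; thus $\beta$ descends to $\coker(\alpha)=\tilde W_2$ and \emph{canonically} defines $C_{\tilde V}$, with no splitting. The other ``backward'' map poses no such problem: $b_{\tilde U}$ has source $W_3$, not $\tilde W_2$, and is simply $b_{\tilde U}:=b_UC_V$ (its $\hb$-degree is consistent precisely because of the twist $\C_{\tilde U}=\hb^{-1}\C_U$ that you correctly identified as the source of $\rho$). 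One then sets $B_{\tilde U}:=-D_{\tilde V}C_{\tilde V}$ and takes $B'_{\tilde U}$ to be the value forced by the neighboring moment-map relation, after which (ii'), (iii'), (iv') hold by construction and the remaining relation (i') is verified using the analogous second sequence $W_3\to W_1\oplus W_3\oplus\C_U\to\tilde W_2$. Without exhibiting $\beta$ (or an equivalent canonical device), your transported tuple is not a well-defined point of $\MM_{\tDD}$, so none of parts (1)--(3) can yet be concluded from your argument.
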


\begin{proof} The isomorphism (without the torus actions) described in the proof of \cite[Prop. 8.1]{NT} satisfies all the assertions of the theorem. For completeness, we repeat the main construction, for details see \cite{NT}.

Consider a representative of $\Ch(\DD)$ in $\MM_{\DD}$, in particular the components corresponding to the portion of $\DD$ in \eqref{fig:HW2}, namely
\begin{equation}\label{hw-left}
\begin{tikzcd}[scale=.6]
W_1 \arrow[loop,out=120,in=60,distance=2em, "B_U"]
& &  
W_2 \ar[ll,"A_U"] \ar[ld, "b_U"]  \arrow[loop,out=120,in=60,distance=2em, "B'_U"]  \ar[rr, bend right, "D_V"] 
& &
W_3. \ar[ll, bend right, "C_V"]
\\
&\C_U \ar[ul, "a_U"] &&
\end{tikzcd}
\end{equation}
By construction these components satisfy four relations
\begin{enumerate}[(i)]
\item \label{i} $B_UA_U-A_UB'_U+a_Ub_U=0$,
\item \label{ii} $C_VD_V+B'_U=0$,
\item \label{iii} $D_VC_V=\begin{cases} 
-B_{V^{\ddag}} & \text{ if } V^{\ddag} \text{ is a D5 brane} \\
C_{V^{\ddag}}D_{V^{\ddag}} & \text{ if } V^{\ddag} \text{ is an NS5 brane},
\end{cases}$
\item \label{iv} $B_U=
\begin{cases} 
-B_{U^{=}} & \text{ if } U^{=} \text{ is a D5 brane} \\
-D_{U^{=}}C_{U^{=}} & \text{ if } U^{=} \text{ is an NS5 brane},
\end{cases}$
\end{enumerate}
and some stability conditions.

Define $\C_{\tilde{U}}=\hb^{-1}\!\C_U$ and consider the sequence of maps
\[ 
\begin{tikzcd}[scale=.6]
W_2 \ar[rr, "\alpha"] & & W_1\oplus W_3 \oplus \C_{\tilde{U}} \ar[rr, "\beta"] & & W_1,
\end{tikzcd}
\]
where the components of $\alpha$ are $A_U, D_V, b_U$ and those of $\beta$ are $B_U, A_UC_V, a_U$. 

One can verify that $\alpha$ is injective, and $\beta\alpha=0$. Define $\tilde{W}_2=\coker(\alpha)$, and consider 
\begin{equation}\label{hw-right}
\begin{tikzcd}[scale=.6]
W_1   \ar[rr, bend right, "D_{\tilde{V}}"] & & \tilde{W}_2 \arrow[loop,out=120,in=60,distance=2em, "B_{\tilde{U}}"]\ar[ll, bend right, "C_{\tilde{V}}"]
& &  W_3, \arrow[loop,out=120,in=60,distance=2em, "B'_{\tilde{U}}"] \ar[ld,"b_{\tilde{U}}"] \ar[ll,"A_{\tilde{U}}"]
& &
\\
& & &\C_{\tilde{U}} \ar[ul, "a_{\tilde{U}}"] &&
\end{tikzcd}
\end{equation}
where $A_{\tilde{U}}, a_{\tilde{U}}, D_{\tilde{U}}$ are the compositions of the natural embedding to $W_1\oplus W_3 \oplus \C_U$ and the quotient map to $\tilde{W}_2$, except for $D_{\tilde{U}}$ we multiply this composition with $-1$. Let $C_{\tilde{V}}$ be induced by $\beta$, and set $B_{\tilde{U}}=-D_{\tilde{V}}C_{\tilde{V}}$, $b_{\tilde{U}}=b_UC_V$. Define $B'_{\tilde{U}}$ to be the right hand side of (\ref{iii}), ie. its value depends whether $V^\ddag=\tilde{U}^\ddag$ is a D5 brane or an NS5 brane. [As a ``reality check'' one can verify that the $\C^\times_{\h}$-degree of each defined map is the required one, for example $\deg(b_{\tilde{U}})=$(degree at target)-(degree at source)+(degree of $b_UC_V$)$=(-1)-0+2=1$.]

Replacing the linear maps in \eqref{hw-left} with those in \eqref{hw-right} we get an element of $\MM_{\tDD}$. This represents an element in $\Ch(\tDD)$ if the new maps satisfy
\begin{enumerate}[(i')]
\item \label{ip} $B_{\tilde{U}}A_{\tilde{U}}-A_{\tilde{U}}B'_{\tilde{U}}+a_{\tilde{U}}b_{\tilde{U}}=0$,
\item \label{iip} $D_{\tilde{V}}C_{\tilde{V}}+B_{\tilde{U}}=0$,
\item \label{iiip} $C_{\tilde{V}}D_{\tilde{V}}=$RHS of (\ref{iv}),
\item \label{ivp} $B'_{\tilde{U}}=$RHS of (\ref{iii}),
\end{enumerate}
as well as some stability conditions. Equations (ii'), (iv'), and (iii') hold by the definitions of $B_{\tilde{U}}$, $B'_{\tilde{U}}$, and those of $C_{\tilde{V}}, D_{\tilde{V}}$. 
Equation (i') is proved by studying the sequence  of maps
\[ 
\begin{tikzcd}[scale=.6]
W_3 \ar[rr, "\begin{pmatrix} C_{\tilde{V}}A_{\tilde{U}} \\ -B'_{\tilde{U}} \\ b_{\tilde{U}} \end{pmatrix}"] & & W_1\oplus W_3 \oplus \C_U 
\ar[rrr, "\begin{pmatrix} -D_{\tilde{V}}\ \  A_{\tilde{U}} \ \  a_{\tilde{U}} \end{pmatrix}"] & & & \tilde{W}_2,
\end{tikzcd}
\]
and the needed stability conditions can be proved using the stability conditions for \eqref{hw-left}, see details in \cite{NT}. The inverse construction is analogous.

All the statements of the theorem can be read from the construction of \eqref{hw-right}.
\end{proof}

Observe that since $\rho$ is not the identity, the identification of the equivariant K theory  (or cohomology) of the isomorphic varieties includes an $\hb$-shift in one of the equivariant parameters.

\subsection{The  $\C^\times_{\h}$ action of \cite{NT}---comparison}
In this section we show the torus reparametrization that would identify the torus action of this paper with that of \cite[\S 6.9.3]{NT}. Assume that the generator of $K_{\C^\times_{\h}}(\pt)$, denoted by $\hb$, has a formal square root and denote it by $\hb^{1/2}$. Let $k_X$ be the number of NS5 branes left of the D3 brane~$X$. 
If we reparameterized 
\begin{itemize}
\item $W_X$ by $\hb^{k_X/2}$ ($X$ is a D3 brane), and
\item $\C_U$ by $\hb^{(-1+d_{U^-}-d_{U^+}+k_{U^+})/2}$ ($U$ is a D5 brane),
\end{itemize}
then the $\C^\times_{\h}$-degrees would change from
\[
\deg_{\hb}(A)=0, \qquad
\deg_{\hb}(B)=1, \qquad
\deg_{\hb}(B')=1, \qquad
\deg_{\hb}(C)=1, \qquad
\deg_{\hb}(D)=0,
\]
\[\deg_{\hb}(a)=0, \qquad
\deg_{\hb}(b)=1 \qquad \text{(the values of this paper)}
\]
to the values 
\[
\deg_{\hb^{1/2}}(A)=0, \qquad
\deg_{\hb^{1/2}}(B)=2, \qquad
\deg_{\hb^{1/2}}(B')=2, \qquad
\deg_{\hb^{1/2}}(C)=1, \qquad
\deg_{\hb^{1/2}}(D)=1,
\]
\[\deg_{\hb^{1/2}}(a_U)=1+d_{U^+}-d_{U^-}, \qquad
\deg_{\hb^{1/2}}(b_U)=1+d_{U^-}-d_{U^+},
\]
that agree with the degrees in \cite[\S 6.9.3]{NT}.  Although the \cite{NT} convention has conceptual advantages, in this paper we will stick with our convention.

\section{Torus fixed points}
The fixed points of the torus $\T=\A\times \C^\times=(\C^\times)^{\{\text{D5 branes}\}} \times \C_{\h}^\times$ acting on $\Ch(\DD)$ can be described by combinatorial codes.

\subsection{First combinatorial code: tie-diagram}\label{sec:FirstComb}

For a pair of 5-branes let us say that it {\em covers} the D3 branes that are in between the two 5-branes. Our first combinatorial code of a $\T$ fixed point of $\Ch(\DD)$ is a set of pairs of 5-branes such that 
\begin{itemize}
\item each pair consists of a D5 brane and an NS5 brane,
\item each D3 brane is covered by these pairs as many times as its multiplicity.
\end{itemize}
Let us emphasize that we mean a {\em set} of pairs, that is, a pair cannot be repeated.

We will depict a pair by a dotted curve connecting the two 5-branes in the diagram. For aesthetic reasons we draw this curve above the brane diagram if the NS5 brane is on the left, and below the brane diagram if the NS5 brane is on the right. A fixed point is thus encoded by a collection of such dotted curves that cover each D3 brane as many times as its multiplicity. This collection of curves we will call the tie-diagram of the fixed point.

\begin{example} \label{ex:P3} \rm
Consider the brane diagram $\DD=$\ttt{\bs 1\bs 2{\fs}2\bs 2\bs 2{\fs}}. The corresponding bow variety has six fixed points, the tie-diagram of two of them are 
\begin{equation}\label{eq:2fixes}
\begin{tikzpicture}[baseline=0pt,scale=.3]
\draw[thick] (0,1)--(15,1) ;
\draw[thick,blue] (.5,0)--(-.5,2);
\draw[thick,blue] (3.5,0)--(2.5,2);
\draw[thick,red] (5.5,0)--(6.5,2);
\draw[thick,blue] (9.5,0)--(8.5,2);
\draw[thick,blue] (12.5,0)--(11.5,2);
\draw[thick,red] (14.5,0)--(15.5,2);
\draw [dashed, black](0.5,-.2) to [out=-45,in=225] (14.5,-.2);
\draw [dashed, black](3.5,-.2) to [out=-45,in=220] (14.5,-.2);
\node at (1.5,1.5) {\tiny 1};\node at (4.5,1.5) {\tiny 2};\node at (7.5,1.5) {\tiny 2};\node at (10.5,1.5) {\tiny 2};\node at (13.5,1.5) {\tiny 2};
\end{tikzpicture}
\qquad\qquad
\begin{tikzpicture}[baseline=0pt,scale=.3]
\draw[thick] (0,1)--(15,1) ;
\draw[thick,blue] (.5,0)--(-.5,2);
\draw[thick,blue] (3.5,0)--(2.5,2);
\draw[thick,red] (5.5,0)--(6.5,2);
\draw[thick,blue] (9.5,0)--(8.5,2);
\draw[thick,blue] (12.5,0)--(11.5,2);
\draw[thick,red] (14.5,0)--(15.5,2);
\draw [dashed, black](0.5,-.2) to [out=-45,in=225] (5.5,-.2);
\draw [dashed, black](3.5,-.2) to [out=-45,in=220] (14.5,-.2);
\draw [dashed, black](6.5,2.2) to [out=45,in=-220] (11.5,2.2);
\draw [dashed, black](12.5,-.2) to [out=-45,in=220] (14.5,-.2);
\node at (1.5,1.5) {\tiny 1};\node at (4.5,1.5) {\tiny 2};\node at (7.5,1.5) {\tiny 2};\node at (10.5,1.5) {\tiny 2};\node at (13.5,1.5) {\tiny 2};
\end{tikzpicture}
\end{equation}
and the reader is invited to find the other four. For a more conceptual solution of this exercise, see Example \ref{ex:marginsVSties}. We will also see later that $\Ch(\DD)$ is isomorphic to $T^*\!\Gr(2,4)$.
\end{example}

\begin{remark} \rm
Let us explain the physical intuition of the combinatorial codes. Consider the three dimensional view of brane diagrams (see Section \ref{sec:3dimview}). Imagine that the 5-branes can be moved in their respective planes orthogonal to the $x$-axis. We want to realize the D3 branes as ``ties'', parallel with the $x$-axis, connecting exactly two 5-branes of opposite type (the `rails'). (If the tie connected 5-branes of the same type, it would not be ``tight'', it could move in the direction of the 5-branes.) Ties may span more than one D3 brane positions, but a D3 position needs to be covered by as many ties as its multiplicity.  The combinatorial type of such tie arrangements is the same as a tie diagram. The tie arrangement corresponding to the picture on the right of~\eqref{eq:2fixes} is shown below.
\begin{equation*}
\begin{tikzpicture}[scale=.2]
\draw[ultra thick,blue] (2,1)--(0,19);
\draw[ultra thick,blue] (7,6)--(6.6,9.6); \draw[ultra thick,blue] (6.5,10.5)--(5,24); 
\draw[ultra thick,red] (8,7)--(12,13);
\draw[ultra thick,blue] (16,3)--(15.5,7.5);\draw[ultra thick,blue] (15.4,8.4)--(14,21);
\draw[ultra thick,blue] (19.5,0)--(17.5,18);
\draw[ultra thick,red] (22,12)--(26,18);

\draw[thick, black] (1,10)--(10,10);
\draw[thick, black] (8.8,8)--(18.8,8);
\draw[thick, black] (6,17)--(13.8,17);\draw[thick, black] (15,17)--(17,17);\draw[thick, black] (18.2,17)--(25.2,17);
\draw[thick, black] (17.7,13.3)--(23,13.3); 

\draw[fill] (1,10) circle [radius=.4];
\draw[fill] (10,10) circle [radius=.4];
\draw[fill] (8.75,8) circle [radius=.4];
\draw[fill] (18.6,8) circle [radius=.4];
\draw[fill] (17.95,13.3) circle [radius=.4];
\draw[fill] (25.25,17) circle [radius=.4];
\draw[fill] (5.8,17) circle [radius=.4];
\draw[fill] (22.8,13.3) circle [radius=.4];

\end{tikzpicture}
\end{equation*}
\end{remark}

\subsection{Second combinatorial code: binary contingency tables}\label{sec:SecondComb}

\begin{definition} For given margin vectors $r\in \Z_{\geq 0}^m$, $c\in \Z_{\geq 0}^n$ an $m\times n$ matrix $M$ is called a {\em binary contingency table} (BCT) with margins $r,c$, if 
\[
M_{ij}\in \{0,1\}\text{ for all $i,j$, and }\qquad\qquad \sum_j M_{ij}=r_i, \qquad \sum_i M_{ij}=c_j.
\]
\end{definition}

\begin{remark} \rm
The set $\BCT(r,c)$ of BCTs with margins $r,c$ is studied in combinatorics and statistics. Here is a list of some relations between BCTs and other algebraic combinatorial notions.  
Elements of $\BCT(r,c)$ can be interpreted as $(m,n)$-bipartite graphs with degrees $(r,c)$ an vertices ordered in the parts. 
Clearly
\[
\prod_{i=1}^m \prod_{j=1}^n (1+x_iy_j)=\sum_{r,c} \#\BCT(r,c) \cdot x_1^{r_1}x_2^{r_2}\ldots x_m^{r_m} y_1^{c_1}y_2^{c_2}\ldots y_n^{c_n}.
\]
The number $\#\BCT(r,c)$ can be evaluated in terms of the Kostka numbers associated with Young tableaux. The Gale-Ryser theorem is a simple numerical criterion on $r,c$ determining whether $\BCT(r,c)$ is empty (cf. Assumption \ref{assume}). The Robinson-Schensted-Knuth correspondence establishes a bijection between $\BCT(r,c)$ and pairs of certain Young tableaux. For exact statements and more on the relevance of BCTs see e.g.  \cite{brualdi, barvinok} and references therein. 
\end{remark}

\begin{proposition}
There is a natural bijection between 
\begin{itemize} 
\item tie-diagrams of $\DD$;
\item BCTs of the table-with-margins corresponding to $\DD$.
\end{itemize}
\end{proposition}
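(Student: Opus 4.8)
The plan is to construct an explicit bijection and verify it preserves the relevant data. First I would set up the dictionary between the two objects. A tie-diagram is a set of pairs, each pair consisting of one D5 brane $U_j$ and one NS5 brane $V_i$; a BCT is an $m\times n$ matrix $M$ with $M_{ij}\in\{0,1\}$ and prescribed row/column sums $r,c$. The natural map sends a tie-diagram to the matrix $M$ with $M_{ij}=1$ precisely when the pair $(V_i,U_j)$ occurs in the diagram. Since ties are an unordered \emph{set} of pairs (a pair cannot repeat), this assignment is well-defined and manifestly injective, and any $01$-matrix arises from exactly one such set of pairs. So the content is not the bijectivity of this raw assignment but that it matches the two defining constraints: the covering condition on tie-diagrams must correspond exactly to the margin conditions $\sum_j M_{ij}=r_i$, $\sum_i M_{ij}=c_j$.

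Next I would translate the covering condition. The defining property of a tie-diagram is that each D3 brane is covered by exactly $\mult_X$ ties, where a pair $(V_i,U_j)$ covers the D3 branes lying strictly between $V_i$ and $U_j$. The key step is to reconcile this single ``local'' covering constraint at every D3 brane with the two ``global'' row- and column-sum constraints of a BCT. Here I would invoke the structure already established in Section~\ref{sec:margin}: the separating line encodes the order of the 5-branes, its integer points are in order-preserving bijection with the D3 branes, and the multiplicity at an integer point $P$ of the separating line was computed as $\#\{\text{1's NE of }P\}+\#\{\text{0's SW of }P\}$ for \emph{any} $01$-matrix with margins $r,c$. I would show that, under the tie$\leftrightarrow$matrix correspondence, ``the pair $(V_i,U_j)$ covers the D3 brane at $P$'' translates precisely into ``the entry $M_{ij}$ lies NE of $P$ (if $V_i$ is left of $U_j$) or SW of $P$ (if $V_i$ is right of $U_j$),'' matching the position of the box relative to the separating line. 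Thus the covering count at $P$ equals exactly $\#\{\text{1's NE of }P\}+\#\{\text{0's SW of }P\}=\mult_X$ automatically, so the covering condition is \emph{equivalent} to $M$ being a $01$-matrix whose margins are the charges $r,c$.

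The main obstacle I anticipate is the bookkeeping in this last translation, in particular handling the two drawing conventions for ties (above the diagram when the NS5 brane is on the left, below when it is on the right) and showing they correspond uniformly to the NE/SW dichotomy relative to the separating line. I would argue this by partitioning, for a fixed D3 brane at integer point $P$, the pairs $(V_i,U_j)$ into those with $V_i$ left of $U_j$ and those with $V_i$ right of $U_j$, and checking in each case that ``covers $P$'' is recorded by a $1$ in the NE region or a $1$ in the complementary region of the matrix. The identity $\mult_X=\#\{\text{1's NE of }P\}+\#\{\text{0's SW of }P\}$ already proven in the text is precisely the combinatorial input that closes the argument: once ``covering'' is matched to this count, no separate verification of the margin sums is needed, because the charges $r,c$ were defined so that a $01$-matrix realizing them exists (Assumption~\ref{assume}) and the table-with-margins code determines $\DD$ by Lemma~\ref{lem:marginsum}. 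Assembling these observations, the raw set-theoretic bijection between sets of pairs and $01$-matrices restricts to a bijection between tie-diagrams (covering constraint) and BCTs (margin constraint), which is the claim.
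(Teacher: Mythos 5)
There is a genuine gap, and it occurs at the very first step: the map you propose, $M_{ij}=1$ exactly when $(V_i,U_j)$ is a tie, is not the correct correspondence, and the rest of the argument inherits this error. The encoding must be different on the two sides of the separating line: for boxes above the line ($V_i$ left of $U_j$) a tie is recorded as a $1$, but for boxes below the line ($V_i$ right of $U_j$) a tie must be recorded as a $0$ (equivalently, $M_{ij}=1$ there means \emph{no} tie). With your uniform encoding the margins come out wrong. Concretely, for $\DD=\ttt{\bs 1\bs 2{\fs}2\bs 2\bs 2{\fs}}$ (Example \ref{ex:P3}) one has $r=(2,2)$, $c=(1,1,1,1)$, and the fixed point whose ties are $(V_2,U_1)$ and $(V_2,U_2)$ is sent by your map to the matrix with rows $(0,0,0,0)$ and $(1,1,0,0)$, whose row sums $(0,2)$ and column sums $(1,1,0,0)$ are not $(r,c)$; the flipped encoding gives rows $(1,1,0,0)$ and $(0,0,1,1)$, which is a BCT (and is the matrix in Example \ref{ex:marginsVSties}). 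The structural reason your encoding cannot work: under it the $i$-th row sum is simply the number of ties at $V_i$, whereas $r_i$ equals the number of ties going right from $V_i$ \emph{plus} the number of D5 branes left of $V_i$ that are \emph{not} tied to it.

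The same flaw appears as an internal inconsistency in your key step. You correctly observe that a tie $(V_i,U_j)$ covers the D3 brane at the integer point $P$ iff the box $(i,j)$ lies NE of $P$ (box above the line) or SW of $P$ (box below the line). But under your encoding, ties below the line are $1$'s, so the covering count at $P$ equals $\#\{1\text{'s NE of }P\}+\#\{1\text{'s SW of }P\}$, which is \emph{not} the quantity $\#\{1\text{'s NE of }P\}+\#\{0\text{'s SW of }P\}$ from Section \ref{sec:margin} that you invoke to close the argument. Once you flip the encoding below the separating line, your computation becomes consistent, and your overall strategy — identifying the covering condition with the multiplicity formula — recovers exactly the paper's bijection, with your reduction to the Section \ref{sec:margin} formula serving as a way to make the paper's ``straightforward to verify'' explicit. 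One further caution: to conclude that the covering condition forces the margins to be $(r,c)$ you need the converse of that formula, i.e., that a $01$-matrix satisfying the multiplicity count at every integer point of the separating line has margins $(r,c)$; this follows by taking differences of the counts at consecutive integer points (a down-step yields the row sum $r_i$, a right-step the column sum $c_j$), but it is not a consequence of Assumption \ref{assume} or Lemma \ref{lem:marginsum}, to which you appeal instead.
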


\begin{proof}
Suppose we have a tie diagram. If the NS5 brane $V_i$ is left of the D5 brane $U_j$ (ie. if their potential tie would be drawn above the diagram) then put
\[
M_{ij}=\begin{cases}
1 & \text{ if } V_i, U_j \text{ are connected by a tie} \\
0 & \text{ if } V_i, U_j \text{ are not connected by a tie,}
\end{cases}
\]
and if the NS5 brane $V_i$ is right of the D5 brane $U_j$ (ie. if their potential tie would be drawn below the diagram) then put
\[
M_{ij}=\begin{cases}
0 & \text{ if } V_i, U_j \text{ are connected by a tie} \\
1 & \text{ if } V_i, U_j \text{ are not connected by a tie.}
\end{cases}
\]
It is straightforward to verify that the obtained table $M_{ij}$ is in $\BCT(r,c)$. Conversely, given a BCT for $r,c$ together with the separating line (recall that the separating line is part of the combinatorial code {\em table-with-margins}) then interpret 1s (0s) above the separating line as tying (not tying) the corresponding branes, and interpret 0s (1s) below the separating line as tying (not tying) the corresponding branes. The obtained tie-diagram is consistent with the D3 brane multiplicities. 

The two operations are clearly inverses of each other.
\end{proof}

\begin{example}\rm \label{ex:marginsVSties}
The BCTs corresponding to the two tie diagrams in \eqref{eq:2fixes} are
\[
\begin{tikzpicture}[scale=.4]
\draw[ultra thin] (0,0) -- (4,0);
\draw[ultra thin]  (0,1) -- (4,1);
\draw[ultra thin]  (0,2) -- (4,2);
\draw[ultra thin]  (0,0) -- (0,2);
\draw[ultra thin]  (1,0) -- (1,2);
\draw[ultra thin]  (2,0) -- (2,2);
\draw[ultra thin]  (3,0) -- (3,2);
\draw[ultra thin]  (4,0) -- (4,2);
\draw[ultra thick] (0,2) -- (2,2) -- (2,1) -- (4,1) -- (4,0);
\node at (.5,2.4) {\tiny $1$}; \node at (1.5,2.4) {\tiny $1$}; \node at (2.5,2.4) {\tiny $1$}; \node at (3.5,2.4) {\tiny $1$}; 
\node at (.5,3.4) {\tiny $U_1$}; \node at (1.5,3.4) {\tiny $U_2$}; \node at (2.5,3.4) {\tiny $U_3$}; \node at (3.5,3.4) {\tiny $U_4$}; 
\node at (-.5,0.5) {\tiny $2$}; \node at (-.5,1.5) {\tiny $2$};
\node at (-1.8,0.5) {\tiny $V_2$}; \node at (-1.8,1.5) {\tiny $V_1$};
\node[violet] at (0.5,0.5) {\tiny $0$};\node[violet] at (1.5,0.5) {\tiny $0$};\node[violet] at (2.5,0.5) {\tiny $1$};\node[violet] at (3.5,0.5) {\tiny $1$};
\node[violet] at (0.5,1.5) {\tiny $1$};\node[violet] at (1.5,1.5) {\tiny $1$};\node[violet] at (2.5,1.5) {\tiny $0$};\node[violet] at (3.5,1.5) {\tiny $0$};
\end{tikzpicture}
\qquad\qquad
\begin{tikzpicture}[scale=.4]
\draw[ultra thin] (0,0) -- (4,0);
\draw[ultra thin]  (0,1) -- (4,1);
\draw[ultra thin]  (0,2) -- (4,2);
\draw[ultra thin]  (0,0) -- (0,2);
\draw[ultra thin]  (1,0) -- (1,2);
\draw[ultra thin]  (2,0) -- (2,2);
\draw[ultra thin]  (3,0) -- (3,2);
\draw[ultra thin]  (4,0) -- (4,2);
\draw[ultra thick] (0,2) -- (2,2) -- (2,1) -- (4,1) -- (4,0);
\node at (.5,2.4) {\tiny $1$}; \node at (1.5,2.4) {\tiny $1$}; \node at (2.5,2.4) {\tiny $1$}; \node at (3.5,2.4) {\tiny $1$}; 
\node at (.5,3.4) {\tiny $U_1$}; \node at (1.5,3.4) {\tiny $U_2$}; \node at (2.5,3.4) {\tiny $U_3$}; \node at (3.5,3.4) {\tiny $U_4$}; 
\node at (-.5,0.5) {\tiny $2$}; \node at (-.5,1.5) {\tiny $2$};
\node at (-1.8,0.5) {\tiny $V_2$}; \node at (-1.8,1.5) {\tiny $V_1$};
\node[violet] at (0.5,0.5) {\tiny $1$};\node[violet] at (1.5,0.5) {\tiny $0$};\node[violet] at (2.5,0.5) {\tiny $1$};\node[violet] at (3.5,0.5) {\tiny $0$};
\node[violet] at (0.5,1.5) {\tiny $0$};\node[violet] at (1.5,1.5) {\tiny $1$};\node[violet] at (2.5,1.5) {\tiny $0$};\node[violet] at (3.5,1.5) {\tiny $1$};
\end{tikzpicture},
\]
and the tie-diagram corresponding to Figure \ref{fig:margins}(d) is 
\begin{equation}\label{YiyansExample}
\begin{tikzpicture}[baseline=0,scale=.45]
\draw [thick,red] (0.5,0) --(1.5,2); 
\draw[thick] (1,1)--(2.5,1) node [above] {$2$} -- (31,1);
\draw [thick,blue](4.5,0) --(3.5,2);  
\draw [thick](4.5,1)--(5.5,1) node [above] {$2$} -- (6.5,1);
\draw [thick,red](6.5,0) -- (7.5,2);  
\draw [thick](7.5,1) --(8.5,1) node [above] {$2$} -- (9.5,1); 
\draw[thick,blue] (10.5,0) -- (9.5,2);  
\draw[thick] (10.5,1) --(11.5,1) node [above] {$4$} -- (12.5,1); 
\draw [thick,red](12.5,0) -- (13.5,2);   
\draw [thick](13.5,1) --(14.5,1) node [above] {$3$} -- (15.5,1);
\draw[thick,red] (15.5,0) -- (16.5,2);  
\draw [thick](16.5,1) --(17.5,1) node [above] {$3$} -- (18.5,1);  
\draw [thick,red](18.5,0) -- (19.5,2);  
\draw [thick](19.5,1) --(20.5,1) node [above] {$4$} -- (21.5,1);
\draw [thick,blue](22.5,0) -- (21.5,2);
\draw [thick](22.5,1) --(23.5,1) node [above] {$3$} -- (24.5,1);  
\draw[thick,red] (24.5,0) -- (25.5,2); 
\draw[thick] (25.5,1) --(26.5,1) node [above] {$2$} -- (27.5,1);
\draw [thick,blue](28.5,0) -- (27.5,2);  
\draw [thick](28.5,1) --(29.5,1) node [above] {$2$} -- (30.5,1);   
\draw [thick,blue](31.5,0) -- (30.5,2);   

\draw [dashed, black](4.5,-.2) to [out=-45,in=225] (12.5,-.2);
\draw [dashed, black](10.5,-.2) to [out=-45,in=225] (12.5,-.2);
\draw [dashed, black](10.5,-.2) to [out=-45,in=225] (15.5,-.2);
\draw [dashed, black](10.5,-.2) to [out=-45,in=225] (24.5,-.2);
\draw [dashed, black](22.5,-.2) to [out=-45,in=225] (24.5,-.2);

\draw [dashed, black](1.5,2.2) to [out=45,in=-225] (3.5,2.2);
\draw [dashed, black](1.5,2.2) to [out=45,in=-225] (9.5,2.2);
\draw [dashed, black](13.5,2.2) to [out=45,in=-225] (21.5,2.2);
\draw [dashed, black](16.5,2.2) to [out=45,in=-225] (21.5,2.2);
\draw [dashed, black](19.5,2.2) to [out=45,in=-225] (30.5,2.2);
\draw [dashed, black](25.5,2.2) to [out=45,in=-225] (30.5,2.2);
\end{tikzpicture}.
\end{equation}
\end{example}

\subsection{Third combinatorial code: butterfly diagram} \label{sec:butterfly}
In Sections \ref{sec:FirstComb}, \ref{sec:SecondComb} we just named combinatorial objects, but we have not identified them with representatives in $\MM^s$. If we want to name representatives in $\MM^s$, namely of the $\T$ fixed points,  we need to name linear maps $A_U, B_U, B'_U, a_U, b_U, C_V, D_V$ between appropriate $W_X$ or $\C_U$ vector spaces. 

Our first remark is that there is no need to name both $B$ and $B'$ maps. At a given D3 brane $X$ we have the following endomorphisms of $W_X$: (i) both $B$ and $B'$ if \ttt{\bs X\bs}, (ii) only $B$ if \ttt{{\fs}X\bs}, (iii) only $B'$ if \ttt{\bs X{\fs}}, (iv) no $B$ or $B'$ if \ttt{{\fs}X{\fs}}. In case (i) $B$ and $B'$ are identified (see the $\NN_X$ component of the moment map) so we can call the common value $B$. In cases (ii) and (iii) we just call the only endomorphism there (we call it $B$), and in (iv) there is no $B$ or $B'$.

Our second remark is that we will drop the subscript $U, V$ if they are clear from the context.

Hence, to name a representative of, say a $\T$ fixed point, we need to name an appropriate tuple of $A, B, a, b, C, D$ linear maps. That can be cumbersome, hence, instead, we illustrate such a tuple with diagrams similar to the type A quiver lace diagrams of \cite{abeasis} see also \cite[\S2]{BFR}. 
These multiply laced diagrams will be called butterfly diagrams.

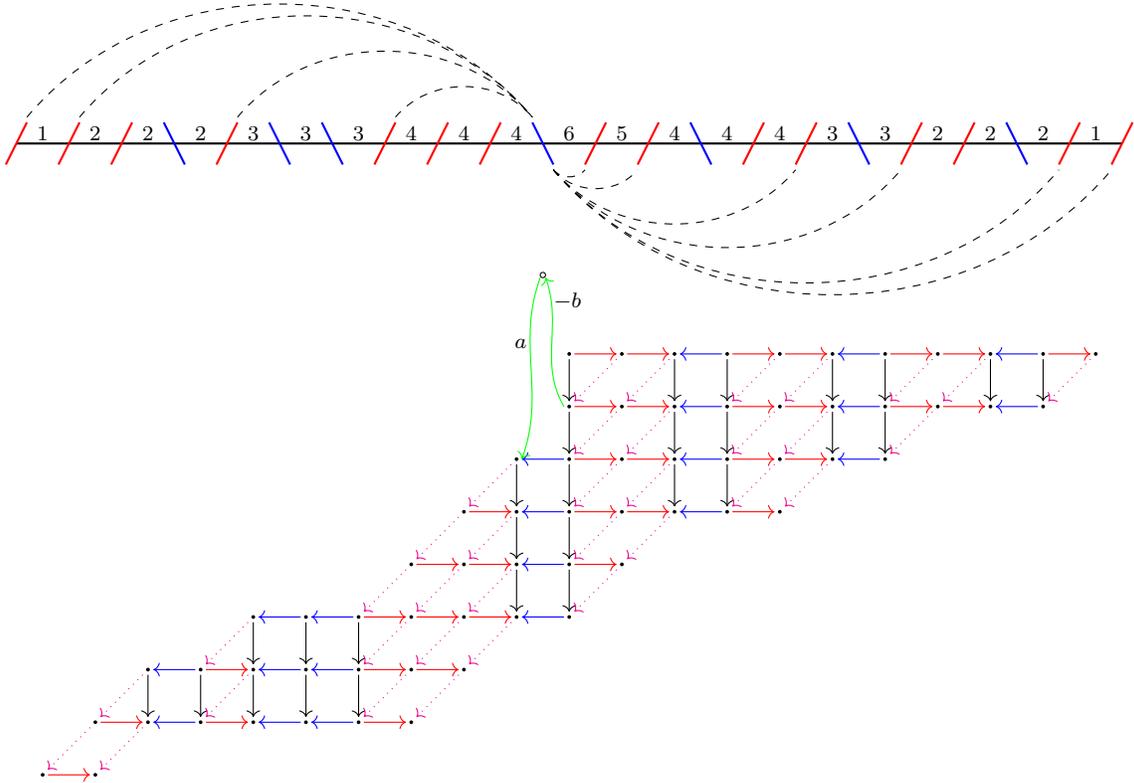
\begin{figure}
\begin{tikzpicture}[scale=.7]
\draw[fill] (0,-3) circle [radius=.025];
\draw[fill] (1,-3) circle [radius=.025]; \draw[fill] (1,-2) circle [radius=.025];
\draw[fill] (2,-2) circle [radius=.025]; \draw[fill] (2,-1) circle [radius=.025];
\draw[fill] (3,-2) circle [radius=.025]; \draw[fill] (3,-1) circle [radius=.025];
\draw[fill] (4,0) circle [radius=.025]; \draw[fill] (4,-1) circle [radius=.025]; \draw[fill] (4,-2) circle [radius=.025];
\draw[fill] (5,0) circle [radius=.025]; \draw[fill] (5,-1) circle [radius=.025]; \draw[fill] (5,-2) circle [radius=.025];
\draw[fill] (6,0) circle [radius=.025]; \draw[fill] (6,-1) circle [radius=.025]; \draw[fill] (6,-2) circle [radius=.025];
\draw[fill] (7,1) circle [radius=.025]; \draw[fill] (7,0) circle [radius=.025]; \draw[fill] (7,-1) circle [radius=.025]; \draw[fill] (7,-2) circle [radius=.025];
\draw[fill] (8,2) circle [radius=.025]; \draw[fill] (8,1) circle [radius=.025]; \draw[fill] (8,0) circle [radius=.025]; \draw[fill] (8,-1) circle [radius=.025];
\draw[fill] (9,3) circle [radius=.025]; \draw[fill] (9,2) circle [radius=.025]; \draw[fill] (9,1) circle [radius=.025]; \draw[fill] (9,0) circle [radius=.025];
\draw[fill] (10,0) circle [radius=.025]; \draw[fill] (10,1) circle [radius=.025]; \draw[fill] (10,2) circle [radius=.025]; \draw[fill] (10,3) circle [radius=.025]; \draw[fill] (10,4) circle [radius=.025]; \draw[fill] (10,5) circle [radius=.025]; 
\draw[fill] (11,1) circle [radius=.025]; \draw[fill] (11,2) circle [radius=.025]; \draw[fill] (11,3) circle [radius=.025]; \draw[fill] (11,4) circle [radius=.025]; \draw[fill] (11,5) circle [radius=.025]; 
\draw[fill] (12,2) circle [radius=.025]; \draw[fill] (12,3) circle [radius=.025]; \draw[fill] (12,4) circle [radius=.025]; \draw[fill] (12,5) circle [radius=.025]; 
\draw[fill] (13,2) circle [radius=.025]; \draw[fill] (13,3) circle [radius=.025]; \draw[fill] (13,4) circle [radius=.025]; \draw[fill] (13,5) circle [radius=.025]; 
\draw[fill] (14,2) circle [radius=.025]; \draw[fill] (14,3) circle [radius=.025]; \draw[fill] (14,4) circle [radius=.025]; \draw[fill] (14,5) circle [radius=.025]; 
\draw[fill] (15,3) circle [radius=.025]; \draw[fill] (15,4) circle [radius=.025]; \draw[fill] (15,5) circle [radius=.025]; 
\draw[fill] (16,3) circle [radius=.025]; \draw[fill] (16,4) circle [radius=.025]; \draw[fill] (16,5) circle [radius=.025]; 
\draw[fill] (17,4) circle [radius=.025]; \draw[fill] (17,5) circle [radius=.025]; 
\draw[fill] (18,4) circle [radius=.025]; \draw[fill] (18,5) circle [radius=.025]; 
\draw[fill] (19,4) circle [radius=.025]; \draw[fill] (19,5) circle [radius=.025]; 
\draw[fill] (20,5) circle [radius=.025]; 

\draw [->] (2,-1.1) -- (2,-1.9);
\draw [->] (3,-1.1) -- (3,-1.9);
\draw [->] (4,-1.1) -- (4,-1.9); \draw [->] (4,-0.1) -- (4,-0.9); 
\draw [->] (5,-1.1) -- (5,-1.9); \draw [->] (5,-0.1) -- (5,-0.9); 
\draw [->] (6,-1.1) -- (6,-1.9); \draw [->] (6,-0.1) -- (6,-0.9); 
\draw [->] (9,2.9) -- (9,2.1); \draw [->] (9,1.9) -- (9,1.1); \draw [->] (9,0.9) -- (9,0.1); 
\draw [->] (10,4.9) -- (10,4.1); \draw [->] (10,3.9) -- (10,3.1); \draw [->] (10,2.9) -- (10,2.1); \draw [->] (10,1.9) -- (10,1.1); \draw [->] (10,0.9) -- (10,0.1); 
\draw [->] (12,4.9) -- (12,4.1); \draw [->] (12,3.9) -- (12,3.1); \draw [->] (12,2.9) -- (12,2.1);
\draw [->] (13,4.9) -- (13,4.1); \draw [->] (13,3.9) -- (13,3.1); \draw [->] (13,2.9) -- (13,2.1);
\draw [->] (15,4.9) -- (15,4.1); \draw [->] (15,3.9) -- (15,3.1); 
\draw [->] (16,4.9) -- (16,4.1); \draw [->] (16,3.9) -- (16,3.1); 
\draw [->] (18,4.9) -- (18,4.1);
\draw [->] (19,4.9) -- (19,4.1);

\draw[red, ->] (10.1,5)--(10.9,5); \draw[red, ->] (10.1,4)--(10.9,4); \draw[red, ->] (10.1,3)--(10.9,3); \draw[red, ->] (10.1,2)--(10.9,2); \draw[red, ->] (10.1,1)--(10.9,1);
\draw[red, ->] (11.1,5)--(11.9,5); \draw[red, ->] (11.1,4)--(11.9,4); \draw[red, ->] (11.1,3)--(11.9,3); \draw[red, ->] (11.1,2)--(11.9,2); 
\draw[blue, ->] (12.9,5)--(12.1,5);\draw[blue, ->] (12.9,4)--(12.1,4); \draw[blue, ->] (12.9,3)--(12.1,3); \draw[blue, ->] (12.9,2)--(12.1,2);
\draw[red, ->] (13.1,5)--(13.9,5);\draw[red, ->] (13.1,4)--(13.9,4); \draw[red, ->] (13.1,3)--(13.9,3); \draw[red, ->] (13.1,2)--(13.9,2); 
\draw[red, ->] (14.1,5)--(14.9,5);\draw[red, ->] (14.1,4)--(14.9,4); \draw[red, ->] (14.1,3)--(14.9,3);
\draw[blue, ->] (15.9,5)--(15.1,5);\draw[blue, ->] (15.9,4)--(15.1,4); \draw[blue, ->] (15.9,3)--(15.1,3); 
\draw[red, ->] (16.1,5)--(16.9,5);\draw[red, ->] (16.1,4)--(16.9,4); 
\draw[red, ->] (17.1,5)--(17.9,5);\draw[red, ->] (17.1,4)--(17.9,4); 
\draw[blue, ->] (18.9,5)--(18.1,5);\draw[blue, ->] (18.9,4)--(18.1,4); 
\draw[red, ->] (19.1,5)--(19.9,5);

\draw [magenta, dotted,->] (10.9,4.9) -- (10.1,4.1);\draw [magenta, dotted,->] (10.9,3.9) -- (10.1,3.1);\draw [magenta, dotted,->] (10.9,2.9) -- (10.1,2.1);\draw [magenta, dotted,->] (10.9,1.9) -- (10.1,1.1);\draw [magenta, dotted,->] (10.9,0.9) -- (10.1,0.1);
\draw [magenta, dotted,->] (11.9,4.9) -- (11.1,4.1);\draw [magenta, dotted,->] (11.9,3.9) -- (11.1,3.1);\draw [magenta, dotted,->] (11.9,2.9) -- (11.1,2.1);\draw [magenta, dotted,->] (11.9,1.9) -- (11.1,1.1);
\draw [magenta, dotted,->] (13.9,4.9) -- (13.1,4.1);\draw [magenta, dotted,->] (13.9,3.9) -- (13.1,3.1);\draw [magenta, dotted,->] (13.9,2.9) -- (13.1,2.1);
\draw [magenta, dotted,->] (14.9,4.9) -- (14.1,4.1);\draw [magenta, dotted,->] (14.9,3.9) -- (14.1,3.1);\draw [magenta, dotted,->] (14.9,2.9) -- (14.1,2.1);
\draw [magenta, dotted,->] (16.9,4.9) -- (16.1,4.1);\draw [magenta, dotted,->] (16.9,3.9) -- (16.1,3.1);
\draw [magenta, dotted,->] (17.9,4.9) -- (17.1,4.1);
\draw [magenta, dotted,->] (19.9,4.9) -- (19.1,4.1);

\draw[blue, ->] (9.9,0)--(9.1,0); \draw[blue, ->] (9.9,1)--(9.1,1); \draw[blue, ->] (9.9,2)--(9.1,2); \draw[blue, ->] (9.9,3)--(9.1,3); 
\draw[blue, ->] (5.9,0)--(5.1,0); \draw[blue, ->] (5.9,-1)--(5.1,-1); \draw[blue, ->] (5.9,-2)--(5.1,-2); 
\draw[blue, ->] (4.9,0)--(4.1,0); \draw[blue, ->] (4.9,-1)--(4.1,-1); \draw[blue, ->] (4.9,-2)--(4.1,-2); 
\draw[blue, ->] (2.9,-2)--(2.1,-2); \draw[blue, ->] (2.9,-1)--(2.1,-1); 

\draw [magenta, dotted,->] (8.9,0.9) -- (8.1,0.1);\draw [magenta, dotted,->] (8.9,-0.1) -- (8.1,-0.9);\draw [magenta, dotted,->] (8.9,1.9) -- (8.1,1.1);\draw [magenta, dotted,->] (8.9,2.9) -- (8.1,2.1);
\draw [magenta, dotted,->] (7.9,0.9) -- (7.1,0.1);\draw [magenta, dotted,->] (7.9,-0.1) -- (7.1,-0.9);\draw [magenta, dotted,->] (7.9,-1.1) -- (7.1,-1.9);\draw [magenta, dotted,->] (7.9,1.9) -- (7.1,1.1);
\draw [magenta, dotted,->] (6.9,0.9) -- (6.1,0.1);\draw [magenta, dotted,->] (6.9,-0.1) -- (6.1,-0.9);\draw [magenta, dotted,->] (6.9,-1.1) -- (6.1,-1.9);
\draw [magenta, dotted,->] (3.9,-0.1) -- (3.1,-0.9);\draw [magenta, dotted,->] (3.9,-1.1) -- (3.1,-1.9);
\draw [magenta, dotted,->] (1.9,-2.1) -- (1.1,-2.9);\draw [magenta, dotted,->] (1.9,-1.1) -- (1.1,-1.9);
\draw [magenta, dotted,->] (0.9,-2.1) -- (0.1,-2.9);

\draw[red, ->] (0.1,-3)--(0.9,-3); \draw[red, ->] (1.1,-2)--(1.9,-2); \draw[red, ->] (3.1,-2)--(3.9,-2);\draw[red, ->] (3.1,-1)--(3.9,-1); 
\draw[red, ->] (6.1,0)--(6.9,0);\draw[red, ->] (6.1,-1)--(6.9,-1);\draw[red, ->] (6.1,-2)--(6.9,-2);
\draw[red, ->] (7.1,1)--(7.9,1);\draw[red, ->] (7.1,0)--(7.9,0);\draw[red, ->] (7.1,-1)--(7.9,-1);
\draw[red, ->] (8.1,2)--(8.9,2);\draw[red, ->] (8.1,1)--(8.9,1);\draw[red, ->] (8.1,0)--(8.9,0);

\draw (9.5,6.5) circle [radius=.05];
\draw[<-, green] (9.55,6.45) to [out=-70,in=120] (9.9,4); \draw[->,green] (9.45,6.45) to [out=-110,in=70] (9.1,3);
\node [left] at (9.4,5.2) {\tiny $a$};\node [left] at (10.43,6) {\tiny $-b$};

\draw [thick] (-.5,9)--(20.5,9);
\draw [thick, blue] (2.7,8.6)--(2.3,9.4);\draw [thick,blue] (4.7,8.6)--(4.3,9.4);\draw [blue,thick] (5.7,8.6)--(5.3,9.4);
\draw [thick,blue] (9.7,8.6)--(9.3,9.4);\draw [thick,blue] (12.7,8.6)--(12.3,9.4);\draw [thick,blue] (15.7,8.6)--(15.3,9.4);
\draw [thick,blue] (18.7,8.6)--(18.3,9.4);
\draw [thick,red] (-.7,8.6)--(-0.3,9.4);\draw [thick,red] (0.3,8.6)--(0.7,9.4);
\draw [thick,red] (1.3,8.6)--(1.7,9.4);\draw [thick,red] (3.3,8.6)--(3.7,9.4);\draw [thick,red] (6.3,8.6)--(6.7,9.4);
\draw [thick,red] (7.3,8.6)--(7.7,9.4);\draw [thick,red] (8.3,8.6)--(8.7,9.4);\draw [thick,red] (10.3,8.6)--(10.7,9.4);
\draw [thick,red] (11.3,8.6)--(11.7,9.4);\draw [thick,red] (13.3,8.6)--(13.7,9.4);\draw [thick,red] (14.3,8.6)--(14.7,9.4);
\draw [thick,red] (16.3,8.6)--(16.7,9.4);\draw [thick,red] (17.3,8.6)--(17.7,9.4);\draw [thick,red] (19.3,8.6)--(19.7,9.4);\draw [thick,red] (20.3,8.6)--(20.7,9.4);

\draw [black,dashed]( 9.7,8.5) to [out=-50,in=230] (20.3,8.5);
\draw [black,dashed]( 9.7,8.5) to [out=-50,in=230] (19.3,8.5);
\draw [black,dashed]( 9.7,8.5) to [out=-50,in=230] (16.3,8.5);
\draw [black,dashed]( 9.7,8.5) to [out=-50,in=230] (14.3,8.5);
\draw [black,dashed]( 9.7,8.5) to [out=-50,in=230] (11.3,8.5);
\draw [black,dashed]( 9.7,8.5) to [out=-50,in=230] (10.3,8.5);

\draw [black,dashed]( -.3,9.5) to [out=50,in=130] (9.3,9.5);
\draw [black,dashed]( 0.7,9.5) to [out=50,in=130] (9.3,9.5);
\draw [black,dashed]( 3.7,9.5) to [out=50,in=130] (9.3,9.5);
\draw [black,dashed]( 6.7,9.5) to [out=50,in=130] (9.3,9.5);

\node at (0,9.2) {\tiny 1};\node at (1,9.2) {\tiny 2};\node at (2,9.2) {\tiny 2};\node at (3,9.2) {\tiny 2};\node at (4,9.2) {\tiny 3};\node at (5,9.2) {\tiny 3};\node at (6,9.2) {\tiny 3};
\node at (7,9.2) {\tiny 4};\node at (8,9.2) {\tiny 4};\node at (9,9.2) {\tiny 4};\node at (10,9.2) {\tiny 6};\node at (11,9.2) {\tiny 5};\node at (12,9.2) {\tiny 4};\node at (13,9.2) {\tiny 4};
\node at (14,9.2) {\tiny 4};\node at (15,9.2) {\tiny 3};\node at (16,9.2) {\tiny 3};\node at (17,9.2) {\tiny 2};\node at (18,9.2) {\tiny 2};\node at (19,9.2) {\tiny 2};\node at (20,9.2) {\tiny 1};
 \end{tikzpicture}
 \caption{A butterfly corresponding to the D5 brane $U_4$. The numbers in the brane diagram mean the number of times ties connected to $U_4$ (the only ties drawn in this picture) cover that D3 brane. Dots represent basis vectors. Action of $-B$: vertical arrows. Action of $A$: blue arrows. Action of $C$: skew magenta dotted arrows. Action of $D$: red arrows. }\label{fig:butterly}
 \end{figure}

Now we describe a butterfly diagram associated with a tie-diagram. A butterfly diagram has components---called butterflies---corresponding to the D5 branes. To describe the butterfly of the D5 brane $U$ consider the ties with one end at $U$, e.g. the one on top of Figure \ref{fig:butterly}. Below every D3 brane we create a column of dots whose height is the number of times that D3 brane is covered by the ties adjacent to $U$. Align these columns of dots the following way:
\begin{itemize}
\item If the 5-brane in between two consecutive columns is a D5 brane, align the columns of dots from the bottom.
\item Right of $U$: if the 5-brane in between two consecutive columns is an NS5 brane, align the columns of dots from the top.
\item Left of $U$: if the 5-brane in between two consecutive columns is an NS5 brane, align the columns of dots from the top {\em in a $45^\circ$ angle}. (That is, for a pair of adjacent columns, the top dot for the column on the left is one unit below the top dot for the column on the right.) 
\end{itemize}
Having this diagram of dots, now we add arrows. 
\begin{itemize}
\item[$A$] For a pair of consecutive columns under a D5 brane we add horizontal left pointing length 1 (blue) arrows. 
\item[$B$] If on either side of a column there is an D5 brane then we add downward pointing length 1 (black) arrows between dots in that column. 
\item[$C$] For a pair of consecutive columns under an NS5 brane we add (magenta dotted) arrows pointing left and down (ie. representing the vector $(-1,-1)$). 
\item[$D$] For a pair of consecutive columns under an NS5 brane we add horizontal right pointing length 1 (red) arrows. 
\end{itemize}
The obtained diagram represents a sequence of vector spaces and linear maps among them as follows. Consider a vector space for each D3 brane with basis the dots in the column below that D3 brane. The linear maps $A,-B,C,D$ are maps among these spaces, and they map the basis vectors according to the arrows itemized in $A,B,C,D$ above. If for a particular color there is no such arrow pointing out of a dot, then that basis vector is mapped to 0. 

We still need to describe the linear maps $a_U, b_U$.  The map $a_U$ maps $\C_U$ to the basis vector represented by the {\em top} dot under $U^-$. The map $-b_U$ is represented by an arrow from the dot under $U^+$ which is one higher then the top dot under $U^-$ (if there is such dot, otherwise $b_U=0$), see the figure below.
\[
\begin{tikzpicture}
\draw[fill] (0,0) circle [radius=.025]; \draw[fill] (0,.2) circle [radius=.025]; \draw[fill] (0,.4) circle [radius=.025]; \draw[fill] (0,0.6) circle [radius=.025]; 
\draw[fill] (1,0) circle [radius=.025]; \draw[fill] (1,.2) circle [radius=.025]; \draw[fill] (1,.4) circle [radius=.025]; \draw[fill] (1,0.6) circle [radius=.025]; \draw[fill] (1,0.8) circle [radius=.025]; \draw[fill] (1,1) circle [radius=.025]; \draw[fill] (1,1.2) circle [radius=.025]; 
\draw (0.5,1.5) circle [radius=.05];
\draw[<-,green] (0.6,1.4) to [out=-70,in=120] (1,0.8); 
\draw[<-,green] (0,0.6) to [out=70,in=-100] (0.4,1.4);
\node [left] at (0.3,1) {\tiny $a$};\node [left] at (0.93,1) {\tiny $-b$};
\end{tikzpicture}
\qquad\qquad\qquad
\begin{tikzpicture}
\draw[fill] (0,0) circle [radius=.025]; \draw[fill] (0,.2) circle [radius=.025]; \draw[fill] (0,.4) circle [radius=.025]; \draw[fill] (0,0.6) circle [radius=.025]; 
\draw[fill] (1,0) circle [radius=.025]; \draw[fill] (1,.2) circle [radius=.025]; \draw[fill] (1,.4) circle [radius=.025]; \draw[fill] (1,0.6) circle [radius=.025]; 
\draw (0.5,1.5) circle [radius=.05];
\draw[<-,green] (0,0.63) to [out=70,in=-100] (0.43,1.43);
\node [left] at (0.3,1) {\tiny $a$};
\end{tikzpicture}
\qquad\qquad\qquad
\begin{tikzpicture}
\draw[fill] (0,0) circle [radius=.025]; \draw[fill] (0,.2) circle [radius=.025]; \draw[fill] (0,.4) circle [radius=.025]; \draw[fill] (0,0.6) circle [radius=.025]; \draw[fill] (0,0.8) circle [radius=.025]; \draw[fill] (0,1) circle [radius=.025]; 
\draw[fill] (1,0) circle [radius=.025]; \draw[fill] (1,.2) circle [radius=.025]; \draw[fill] (1,.4) circle [radius=.025]; \draw[fill] (1,0.6) circle [radius=.025]; 
\draw (0.5,1.5) circle [radius=.05];
\draw[<-,green] (0,1.03) to [out=70,in=-100] (0.43,1.43);
\node [left] at (0.4,1.35) {\tiny $a$};
\end{tikzpicture}
\qquad\qquad\qquad
\begin{tikzpicture}
\node[] at (0,.3) {\tiny $\emptyset$};
\draw[fill] (1,0) circle [radius=.025]; \draw[fill] (1,.2) circle [radius=.025]; \draw[fill] (1,.4) circle [radius=.025]; \draw[fill] (1,0.6) circle [radius=.025]; 
\draw (0.5,1.5) circle [radius=.05];
\draw[<-,green] (0.6,1.4) to [out=-70,in=120] (1,0); 
\node [left] at (1.2,1) {\tiny $-b$};
\end{tikzpicture}
\]

The disjoint union of butterflies for each D5 brane is the butterfly diagram associated with the tie diagram. The direct sum of the corresponding vector spaces and linear maps is a representative in $\MM^s$ of the fixed point (proved in the next section).

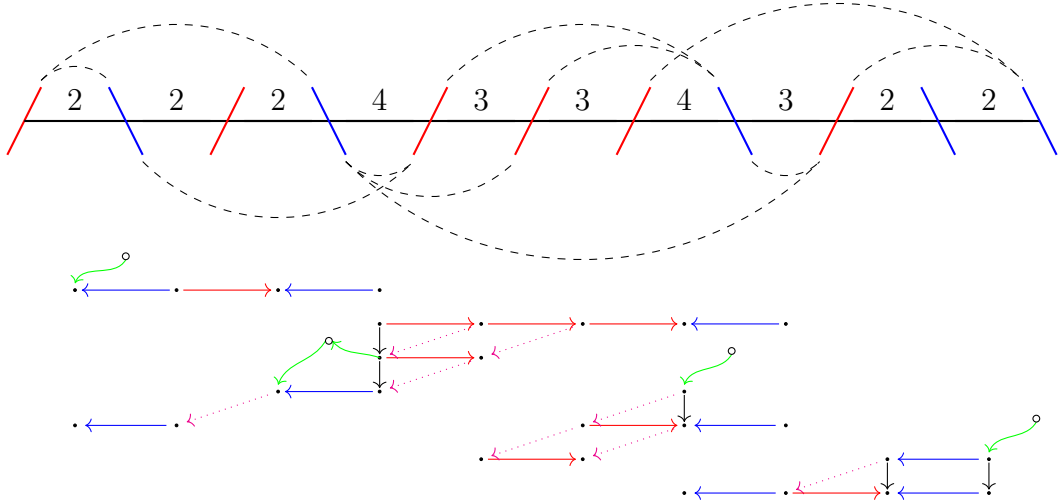
\begin{figure}
\[
\begin{tikzpicture}[scale=.45]
\draw [thick, red] (0.5,0) --(1.5,2); 
\draw[thick] (1,1)--(2.5,1) node [above] {$2$} -- (31,1);
\draw [thick,blue](4.5,0) --(3.5,2);  
\draw [thick](4.5,1)--(5.5,1) node [above] {$2$} -- (6.5,1);
\draw [thick,red](6.5,0) -- (7.5,2);  
\draw [thick](7.5,1) --(8.5,1) node [above] {$2$} -- (9.5,1); 
\draw[thick,blue] (10.5,0) -- (9.5,2);  
\draw[thick] (10.5,1) --(11.5,1) node [above] {$4$} -- (12.5,1); 
\draw [thick,red](12.5,0) -- (13.5,2);   
\draw [thick](13.5,1) --(14.5,1) node [above] {$3$} -- (15.5,1);
\draw[thick,red] (15.5,0) -- (16.5,2); 
\draw [thick](16.5,1) --(17.5,1) node [above] {$3$} -- (18.5,1);  
\draw [thick,red](18.5,0) -- (19.5,2); 
\draw [thick](19.5,1) --(20.5,1) node [above] {$4$} -- (21.5,1);
\draw [thick,blue](22.5,0) -- (21.5,2);
\draw [thick](22.5,1) --(23.5,1) node [above] {$3$} -- (24.5,1);  
\draw[thick,red] (24.5,0) -- (25.5,2); 
\draw[thick] (25.5,1) --(26.5,1) node [above] {$2$} -- (27.5,1);
\draw [thick,blue](28.5,0) -- (27.5,2); 
\draw [thick](28.5,1) --(29.5,1) node [above] {$2$} -- (30.5,1);   
\draw [thick,blue](31.5,0) -- (30.5,2);   

\draw [dashed, black](4.5,-.2) to [out=-45,in=225] (12.5,-.2);
\draw [dashed, black](10.5,-.2) to [out=-45,in=225] (12.5,-.2);
\draw [dashed, black](10.5,-.2) to [out=-45,in=225] (15.5,-.2);
\draw [dashed, black](10.5,-.2) to [out=-45,in=225] (24.5,-.2);
\draw [dashed, black](22.5,-.2) to [out=-45,in=225] (24.5,-.2);

\draw [dashed, black](1.5,2.2) to [out=45,in=-225] (3.5,2.2);
\draw [dashed, black](1.5,2.2) to [out=45,in=-225] (9.5,2.2);
\draw [dashed, black](13.5,2.2) to [out=45,in=-225] (21.5,2.2);
\draw [dashed, black](16.5,2.2) to [out=45,in=-225] (21.5,2.2);
\draw [dashed, black](19.5,2.2) to [out=45,in=-225] (30.5,2.2);
\draw [dashed, black](25.5,2.2) to [out=45,in=-225] (30.5,2.2);

\draw[fill] (2.5,-4) circle [radius=.04]; \draw[fill] (5.5,-4) circle [radius=.04]; \draw[fill] (8.5,-4) circle [radius=.04]; \draw[fill] (11.5,-4) circle [radius=.04];
\draw[blue, <-] (2.7,-4)--(5.3,-4); \draw[red, ->] (5.7,-4)--(8.3,-4); \draw[blue, <-] (8.7,-4)--(11.3,-4);
\draw[<-,green] (2.5,-3.8) to [out=70,in=-120] (4,-3.1); \draw[] (4,-3) circle [radius=.1];

\draw[fill] (11.5,-5) circle [radius=.04]; \draw[fill] (14.5,-5) circle [radius=.04]; \draw[fill] (17.5,-5) circle [radius=.04]; \draw[fill] (20.5,-5) circle [radius=.04]; \draw[fill] (23.5,-5) circle [radius=.04];
\draw[red, ->] (11.7,-5)--(14.3,-5);\draw[red, ->] (14.7,-5)--(17.3,-5);\draw[red, ->] (17.7,-5)--(20.3,-5);\draw[blue,<-] (20.7,-5)--(23.3,-5);
\draw[fill] (11.5,-6) circle [radius=.04]; \draw[fill] (14.5,-6) circle [radius=.04];
\draw[red, ->] (11.7,-6)--(14.3,-6);
\draw[fill] (11.5,-7) circle [radius=.04];
\draw [magenta, dotted,->] (14.2,-5.1) -- (11.8,-5.9);\draw [magenta, dotted,->] (17.2,-5.1) -- (14.8,-5.9);\draw [magenta, dotted,->] (14.2,-6.1) -- (11.8,-6.9);
\draw[blue,<-] (8.7,-7)--(11.3,-7);
\draw[fill] (8.5,-7) circle [radius=.04];\draw[fill] (5.5,-8) circle [radius=.04];\draw[fill] (2.5,-8) circle [radius=.04];
\draw [magenta, dotted,->] (8.2,-7.1) -- (5.8,-7.9);\draw [blue, ->] (5.2,-8) -- (2.8,-8);
\draw [->] (11.5,-6.1) -- (11.5,-6.9);\draw [->] (11.5,-5.1) -- (11.5,-5.9);
\draw[<-,green] (8.5,-6.8) to [out=70,in=-120] (9.9,-5.5); \draw[<-,green] (10.1,-5.5) to [out=-50,in=160] (11.5,-6); \draw[] (10,-5.5) circle [radius=.1];

\draw[fill] (17.5,-8) circle [radius=.04]; \draw[fill] (20.5,-8) circle [radius=.04]; \draw[fill] (23.5,-8) circle [radius=.04];\draw[fill] (20.5,-7) circle [radius=.04];\draw[fill] (17.5,-9) circle [radius=.04];\draw[fill] (14.5,-9) circle [radius=.04];
\draw[red, ->] (14.7,-9)--(17.3,-9);\draw[red, ->] (17.7,-8)--(20.3,-8);\draw [blue, ->] (23.2,-8) -- (20.8,-8);
\draw [magenta, dotted,->] (20.2,-7.1) -- (17.8,-7.9);\draw [magenta, dotted,->] (20.2,-8.1) -- (17.8,-8.9);\draw [magenta, dotted,->] (17.2,-8.1) -- (14.8,-8.9);
\draw [->] (20.5,-7.1) -- (20.5,-7.9);
\draw[<-,green] (20.5,-6.8) to [out=70,in=-120] (21.9,-5.9);\draw[] (21.9,-5.8) circle [radius=.1];

\draw[fill] (20.5,-10) circle [radius=.04];\draw[fill] (23.5,-10) circle [radius=.04];\draw[fill] (26.5,-10) circle [radius=.04];\draw[fill] (29.5,-10) circle [radius=.04];\draw[fill] (26.5,-9) circle [radius=.04];\draw[fill] (29.5,-9) circle [radius=.04];
\draw [->] (29.5,-9.1) -- (29.5,-9.9);\draw [->] (26.5,-9.1) -- (26.5,-9.9);
\draw [blue, ->] (29.2,-10) -- (26.8,-10);\draw [blue, ->] (29.2,-9) -- (26.8,-9);\draw [blue, ->] (23.2,-10) -- (20.8,-10);
\draw [magenta, dotted,->] (26.2,-9.1) -- (23.8,-9.9);
\draw[red, ->] (23.7,-10)--(26.3,-10);
\draw[<-,green] (29.5,-8.8) to [out=70,in=-120] (30.9,-7.9);\draw[] (30.9,-7.8) circle [radius=.1];
\end{tikzpicture}
\]
\caption{The tie and the butterfly diagrams of a fixed point.}\label{YiyansButterfly}
\end{figure}

\begin{example} \rm 
The butterfly diagram of the fixed point whose tie diagram is \eqref{YiyansExample} is in Figure \ref{YiyansButterfly}.
\end{example}

\subsection{The proof of the combinatorial codes for fixed points}

So far we have presented three different combinatorial objects (tie diagrams, BCTs, butterfly diagrams) associated with a brane diagram $\DD$, and we showed the natural bijections among them. We still need to show that they are in bijection with the $\T$ fixed points of $\Ch(\DD)$. We will need an alternative formulation of the $\nu^{\R}$ stability condition for this purpose. By \cite[Proposition 2.8]{NT}, $(A,B,C,D,a,b)\in \widetilde{\M}$ is stable if and only if the only $A,B,C,D$-invariant subspace $S \subset\mathbf{W}=\bigoplus_{X\text{ D3}}W_X$ such that $\mathrm{im}(a)\subset S$ and $A$ induces isomorphisms on $\mathbf{W}/S$ is the whole space $\mathbf{W}$.

\begin{theorem}
Every $\T$ fixed point in $\Ch(\DD)$ is represented by a butterfly diagram for $\DD$. Every butterfly diagram is a stable bow representation (ie. an element of $\MM^s$) representing a fixed point.
\end{theorem}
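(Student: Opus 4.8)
The plan is to pass between a $\T$-fixed point and a cocharacter of $\G$, and to match the resulting weight data with the combinatorics of a butterfly. I would prove the two assertions separately, starting with the easier one, that every butterfly diagram is a stable fixed point. Given a butterfly, the tuple $(A,B,C,D,a,b)$ is read off the arrows; since each map sends a basis dot either to a basis dot or to $0$, the vanishing of the moment map can be checked componentwise by evaluating the two relevant compositions on each dot, the required cancellations being exactly the content of the alignment rules. For stability I would use the reformulation of \cite[Proposition 2.8]{NT} recalled above: if $S\subset\mathbf W$ is $A,B,C,D$-invariant, contains $\mathrm{im}(a)$, and $A$ induces isomorphisms on $\mathbf W/S$, I claim $S=\mathbf W$. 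Indeed $\mathrm{im}(a)$ consists of the top dots beneath the various $U^-$, and following the $B$-, $A$-, $C$- and $D$-arrows out of these dots, together with the hypothesis that $A$ is an isomorphism on the quotient, forces every dot into $S$, so $S=\mathbf W$.

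To see that such a butterfly representation is $\T$-fixed, I would assign to each dot the character of $\T$ determined by its position: the $\A$-weight is the character $u_U$ of the unique D5 brane $U$ whose ties produced the dot, and the $\C^\times_\h$-weight records the height of the dot in its column. These weights define a cocharacter $\phi\colon\T\to\G$. Because the $\C^\times_\h$-degrees of $A,B,C,D,a,b$ coincide with the vertical shifts of the corresponding arrows and the $\A$-action only rescales the lines $\C_U$, every structure map is equivariant for $\phi$; hence $t\cdot m=\phi(t)\cdot m$ for all $t\in\T$ and the class $[m]$ is fixed.

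For the converse, let $[m]$ be a $\T$-fixed point. Since stable points have trivial stabilizer \cite[Lemma 2.10]{NT} and the commuting $\T$- and $\G$-actions satisfy $t\cdot m=\phi(t)\cdot m$ for a unique $\phi(t)\in\G$, the assignment $t\mapsto\phi(t)$ is a homomorphism $\phi\colon\T\to\G$. It turns each $W_X$ into a $\T$-module and makes all structure maps equivariant, with $\C^\times_\h$-shifts prescribed by their $\C^\times_\h$-degrees. I would then split $\mathbf W=\bigoplus_X W_X$ into $\A$-weight spaces. As $\C_U$ has weight $u_U$, the maps $A,B,C,D$ preserve the $\A$-weight, and $a_U,b_U$ couple only the $u_U$-isotypic part to $\C_U$, each sector $\mathbf W[u_U]$ together with $\C_U$ is a sub-bow-representation framed only at $U$. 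Using (S1), (S2) and the global stability condition I would show that no other $\A$-weights occur and that $[m]$ is the direct sum of these single-framing pieces.

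It remains to identify each single-framing summand with the butterfly of its D5 brane $U$, and this is where the real work lies. Decomposing the sector by $\C^\times_\h$-weight, condition (S2) shows $W_{U^-}$ is spanned by the vectors $B^k a_U(\C_U)$ and $B^k A_U(W_{U^+})$, while (S1) constrains $W_{U^+}$; together with the global condition these force every $\C^\times_\h$-weight space to be at most one-dimensional, so the occupied weights form columns of dots whose heights are the tie-covering numbers. Equivariance of $A$ (degree $0$), $C$ (degree $1$) and $D$ (degree $0$) then dictates how neighbouring columns are registered against one another---bottom-aligned across a D5 brane, top-aligned across an NS5 brane to the right of $U$, and top-aligned with the $45^\circ$ offset across an NS5 brane to the left---reproducing precisely the alignment rules. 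The main obstacle is exactly this matching: proving that stability forces the one-dimensionality of every weight space and that the three kinds of arrows realize these specific alignments, rather than some other equivariant representation on the same weights. Once the multiplicities and alignments are pinned down, the dots and arrows of the sector coincide with the butterfly of $U$, and reassembling the sectors recovers the butterfly diagram, giving the desired bijection with $\T$-fixed points.
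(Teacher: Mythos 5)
Your plan follows the same route as the paper's proof: the forward direction (a butterfly is a stable representation, fixed via the cocharacter $\phi\colon\T\to\G$ matching arrow shifts with $\C^\times_{\h}$-degrees) is handled the same way, and for the converse you likewise use triviality of stabilizers to get a $\T$-module structure on $\mathbf{W}$, decompose into $\A$-weight sectors indexed by the D5 branes, and aim to identify each sector with one butterfly. Up to that point there is nothing to object to.

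The gap is exactly the step you yourself flag as ``the main obstacle'': you assert that (S1), (S2) and the global $\nu^{\R}$-stability ``force every $\C^\times_{\h}$-weight space to be at most one-dimensional'' with the correct alignments, but you give no mechanism, and the ingredients as you cite them do not suffice brane-by-brane. (S2) only gives $W_{U^-}=\sum_k B_U^k\bigl(\mathrm{im}\,A_U+\mathrm{im}\,a_U\bigr)$, and $\mathrm{im}\,A_U$ could a priori carry weight spaces of any dimension; controlling it requires controlling the $u$-sector of $W_{U^+}$ at the same time, and this cannot be closed up locally at $U$. The paper supplies the missing construction: (a) for every other D5 brane $U_0\neq U$ the momentum equation on the $u$-sector degenerates to $A_{U_0}B'_{U_0}=B_{U_0}A_{U_0}$, so (S1)/(S2) make $A_{U_0}$ an isomorphism there; (b) one splits into the cases $l\geq r$ and $l<r$, where $l,r$ are the dimensions of the $u$-sectors of $W_{U^-}$ and $W_{U^+}$; (c) when $l\geq r$ one forms the chain $E=\mathrm{span}\{B_U^i(a_U(1))\}$, uses $B_UA_U=A_UB'_U-a_Ub_U$ to show $\mathrm{im}\,A_U+E$ is $B_U$-invariant, and applies (S2) to get $W_{U^-}=\mathrm{im}\,A_U+E$; (d) crucially, one then extends $E\oplus A_U^{-1}(E)$ under all of $A,A^{-1},B,C,D$ to an invariant subspace $\hat E$, adds every other $\A$-sector to form $\mathbf{E}$, checks that the $A$ maps induce isomorphisms on $\mathbf{W}/\mathbf{E}$, and invokes $\nu^{\R}$-stability to conclude $\mathbf{E}=\mathbf{W}$. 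Only after this global step does one know that the $u$-sector of $W_{U^-}$ is the single chain $E$ (hence all $\hbar$-weight spaces are one-dimensional), that $b_U=0$ when $l\geq r$, and which alignment each pair of adjacent columns realizes; the case $l<r$ needs a parallel argument built from $\ker A_U$, and the two cases are precisely what produce the different alignment rules. Without this construction your proposal names the right tools but leaves the hard direction unproved.
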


\begin{proof}
Let $f$ be a fixed point and identify each vector space $W$ with the fibre of the corresponding tautological bundle over $f$. This endows each $W$ with the structure of an $\mathbb T = \mathbb{A} \oplus \mathbb{C}_{\hbar}$ representation. The direct sum of all $W$ spaces $\mathbf{W}$ also inherits a $\mathbb T$ action. Hence, each of these spaces decomposes into weight spaces. The $A, B, C, D$ maps send weight spaces to weight spaces. In particular, the $B$ and $C$ maps decrease the $\hbar$-weights by 1, while the $A$ and $D$ maps are homomorphisms of $\mathbb{T}$-representations. The $\nu^{\R}$ stability condition implies that the $a$ maps are all nonzero. Given a D5 brane $U$, the vector $a_U(1)$ has weight $u$ under the action of $\mathbb{A}$. It is clear from the structure of $\mathbb{M}$ that the weights of the $\mathbb{A}$-action on $\mathbf{W}$ are all of the form $u$ for some D5 brane $U$. We will show that each of these weight spaces corresponds to a butterfly.

Fix a D5 brane $U$. Let $U_0\neq U$ be another D5 brane. By comparing weights, we see that $a_{U_0}$ and $b_{U_0}$ vanish on the $u$-weight space of $\mathbf{W}$ under the action of $\mathbb{A}$. The 0-momentum condition reduces to $A_{U_0} B'_{U_0} = B_{U_0} A_{U_0}$ on this weight space. This relation implies that the portions of $\ker A_{U_0}$ and $\mathrm{im } A_{U_0}$ in the $u$-weight space are $B$-invariant. The (S1) and (S2) conditions imply that $A_{U_0}$ is an isomorphism on the $u$-weight space. Let $l$ be the dimension of the $u$-weight space of $W_{U^-}$ and $r$ be the dimension of the $u$-weight space of $W_{U^+}$.

We first consider the case where $l \geq r$. Since $A_U$ is full rank, $A_U$ is injective on the $u$-weight space of $W_{U^+}$ under the $\mathbb{A}$-action. Let $w = a_U(1)$. Let $E\subset W_{U^-}$ be the subspace spanned by vectors of the form $B_U^i(w)$ for $i\geq 0$ and $E' = A_U^{-1}(E)$. From the 0-momentum condition, we have the relation $ A_U B'_U - a_U b_U = B_U A_U$. This relation implies that $\mathrm{im} A_U + E$ is $B_U$-invariant. The (S2) condition implies that $W_{U^-} = \mathrm{im} A_U + E$. Hence, $A_U$ induces an isomorphism $W_{U^+}{/}E' \to W_{U^-}{/}E$. We may extend $E\oplus E'$ to a $A,A^{-1},B,C,D$-invariant subspace $\hat{E} \subset \mathbf{W}$ by acting by all available maps. Let $\mathbf{E}$ be the direct sum of $\hat{E}$ with all $u_0$-weight spaces of the $\mathbb{A}$-action where $u_0\neq u$. The $A$ maps induce isomorphisms on the relevant subspaces of $\mathbf{W}/\mathbf{E}$, so the $\nu^{\R}$ stability condition implies that $\mathbf{W} = \mathbf{E}$.

The $u$-weight spaces of $W_{U^-}$ and $W_{U^+}$ under the $\mathbb{A}$-action are, therefore, $E$ and $E'$ respectively. It follows that the $(u+i\h)$-weight spaces of $W_{U^-}$ and $W_{U^+}$ are 1-dimensional and connected by $B$ maps. Moreover, the highest $\hbar$-weight of $E$ is 0, and $w$ is a vector of highest $\hbar$-weight. By comparing weights on both sides of the 0-momentum relation $a_U b_U = A_U B'_U - B_U A_U$, we see that $b_U$ vanishes on all weight spaces except the $(u+\h)$-weight space. The space $W_{U^+}$ does not have $u+\h$ as a weight, so $b_U = 0$. By the argument above, $\mathrm{im} A_U$ is $B_U$-invariant, so the lowest $\hbar$-weight of $E$ is the same as that of $E'$. Representing the $\hbar$-weight spaces of $E$ and $E'$ by dots aligned at the bottom and acting by all available $A, A^{-1}, B, C, D$ maps generates a butterfly.

Next, we consider the case $l <  r$. Then, $A_U$ is surjective on the $u$-weight spaces of the $\mathbb{A}$-action. Like before, we have a nonzero vector $w = a(1)$ of weight $u$ and $b_U$ vanishes on all weight spaces except the $(u+\h)$-weight space. Let $E\subset W_{U^-}$ be the subspace above generated by acting upon $w$ repeatedly by $B_U$. Let $E''\subset W_{U^+}$ be the direct sum of $(u+i\h)$-weight spaces where $i\leq 0$. The map $b_U$ vanishes on vectors of $\hbar$-weight greater than 1, so the 0-momentum condition implies that the portion of $\ker A_U$ with $\hbar$-weight greater than 1 is $B'_U$-invariant. The portion of $\ker A_U$ with weight $u+i\hbar$ where $i\leq 1$ is mapped to $E''$ by $B_U'$. Therefore, $E' = \ker A_U + E''$ is $B'_U$-invariant. As before, we extend $E'$ to a $A,A^{-1}, B,C, D$-invariant subspace $\mathbf{E} \subset \mathbf{W}$ with the property that $\mathbf{E}$ contains the $u_0$-weight space of the $\mathbb{A}$-action for all $u_0\neq u$, and the $A$ maps induce isomorphisms on $\mathbf{W}{/}\mathbf{E}$. The $\nu^{\R}$ stability condition implies that $E'$ is the $u$-weight space of $W_{U^+}$ under the $\mathbb{A}$-action. It follows that $A_U$ vanishes on vectors of weight $u+i\h$ where $i>0$. Since $A_U$ is surjective, $w\in E$ is a vector of highest $\hbar$-weight, and $E$ is the $u$-weight space of $W_{U^-}$ under the $\mathbb{A}$-action. The remainder of the argument is similar to the previous paragraph.

The converse direction follows easily by verifying the commutativity relations and stability conditions for the representative illustrated by a butterfly diagram.
\end{proof}

\subsection{The geometry of Assumption \ref{assume}}
Since torus fixed points on $\Ch(\DD)$ are in bijection with BCTs of the corresponding table-with-margins, we obtained

\begin{proposition} \label{prop:existsfix}
Assumption \ref{assume} on $\DD$ is equivalent with $\Ch(\DD)$ having at least one torus fixed point. \qed
\end{proposition}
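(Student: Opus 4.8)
\emph{The plan is to} read the proposition off the dictionary of combinatorial codes built in the previous subsections, so that essentially no new geometric work is required. \emph{First I would} recall that Assumption~\ref{assume} is by definition nothing but the statement $\BCT(r,c)\neq\emptyset$: it asks for the existence of at least one $01$-matrix whose row and column sums are the margin vectors $r$ and $c$ of $\DD$.

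\emph{Next I would} chain together the bijections already established. The theorem of the preceding subsection identifies the $\T$-fixed points of $\Ch(\DD)$ with butterfly diagrams for $\DD$: every fixed point is represented by a butterfly, and conversely every butterfly diagram is a stable representative $\widetilde{\M}^s$ producing a fixed point. The construction of Section~\ref{sec:butterfly}, together with the proposition of Section~\ref{sec:SecondComb} and the tie-diagram code of Section~\ref{sec:FirstComb}, supplies natural bijections among butterfly diagrams, tie-diagrams, and $\BCT(r,c)$. Composing these yields a bijection between the set of $\T$-fixed points of $\Ch(\DD)$ and $\BCT(r,c)$.

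\emph{Then} the equivalence is immediate: $\Ch(\DD)$ has a torus fixed point exactly when $\BCT(r,c)\neq\emptyset$, that is, exactly when Assumption~\ref{assume} holds. This is why the proposition reduces to a formal corollary of the preceding theorem, as the surrounding text already anticipates.

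\emph{The only point to be careful about} --- and it is bookkeeping rather than a genuine obstacle --- is that the three bijections compose cleanly, and that Assumption~\ref{assume} is literally ``a BCT exists'' and nothing stronger. Both are guaranteed by the earlier theorem, which simultaneously extracts a butterfly from any fixed point and builds an honest element of $\widetilde{\M}^s$ from any butterfly; so no gap remains between ``nonempty fixed locus'' and ``nonempty $\BCT(r,c)$,'' and the statement follows.
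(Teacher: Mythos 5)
Your proposal is correct and follows exactly the paper's own route: the paper derives Proposition \ref{prop:existsfix} as an immediate corollary of the bijection between $\T$-fixed points and BCTs (obtained by composing the fixed point $\leftrightarrow$ butterfly $\leftrightarrow$ tie-diagram $\leftrightarrow$ BCT correspondences), together with the fact that Assumption \ref{assume} is literally the nonemptiness of $\BCT(r,c)$. Nothing is missing; your extra remark about the bijections composing cleanly is precisely the content the paper leaves implicit in its one-line \qed.
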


\noindent As a consequence we have that the dimension formula \eqref{eq:dimensionformula} is always non-negative.

\subsection{Specializations of Chern roots of tautological bundles at fixed points}\label{sec:fixrest}

Let $\uu_j$ denote $\C$ acted upon by the $j$'s $\C^\times$-component of $\A\subset \T$, or its class in $K_{\C^{\times}}(\pt)\subset K_{\T}(\pt)$. Its first Chern class will be denoted by $u_j\in H^*_{\C^\times}(\pt) \subset H^*_{\T}(\pt)$. Let $\hb$ denote $\C$ acted upon by $\C^\times_{\h}$, or its class in $K_{\C^{\times}}(\pt)\subset K_{\T}(\pt)$. Its first Chern class will be denoted by $\h\in H^*_{\C^\times_{\h}}(\pt)\subset H^*_{\T}(\pt)$.
For a $\T$ fixed point $f\in \Ch(\DD)$ we have the restriction homomorphisms
\begin{align*}
\Loc^{K}_f: K_{\T}(\Ch(\DD)) & \to K_{\T}(f)=\C[\uu_1^{\pm 1},\uu_2^{\pm 1},\ldots,\uu_n^{\pm 1},\hb^{\pm 1}] \\
\Loc_f: H_{\T}^*(\Ch(\DD)) & \to H_{\T}^*(f)=\C[u_1,u_2,\ldots,u_n,\h]
\end{align*}
induced by the $\T$ equivariant inclusion of $f$ into $\Ch(\DD)$. 

Consider the butterfly diagram of a fixed point, and the non-empty butterfly associated with the D5 brane $U_j$. We will associate a {\em base level} to this butterfly. If there are dots below $U_j^-$ then the vertical position of the top dot in that column (that is, the dot representing $a(\C_{U_j})$) is the base level. If there are no dots under $U_j^-$ then one position below the bottom dot under $U_j^+$ is the base level. Associate the monomial $\uu_j\hb^s$ ($\in K_{\T}(\pt)$)  to every dot in this butterfly that is $s$ units above the base level. Figure~\ref{fig:butterflyMONS} is an example of how to associate monomials to the dots of Figure~\ref{fig:butterly}.

\newcommand{\ufha}{\scalebox{.6}[.6]{${\uu}_4\hb^2$}}
\newcommand{\ufhb}{\scalebox{.6}[.6]{${\uu}_4\hb$}}
\newcommand{\ufhc}{\scalebox{.6}[.6]{${\uu}_4$}}
\newcommand{\ufhd}{\scalebox{.6}[.6]{${\uu}_4\hb^{-1}$}}
\newcommand{\ufhe}{\scalebox{.6}[.6]{${\uu}_4\hb^{-2}$}}
\newcommand{\ufhf}{\scalebox{.6}[.6]{${\uu}_4\hb^{-3}$}}
\newcommand{\ufhg}{\scalebox{.6}[.6]{${\uu}_4\hb^{-4}$}}
\newcommand{\ufhh}{\scalebox{.6}[.6]{${\uu}_4\hb^{-5}$}}
\newcommand{\ufhi}{\scalebox{.6}[.6]{${\uu}_4\hb^{-6}$}}
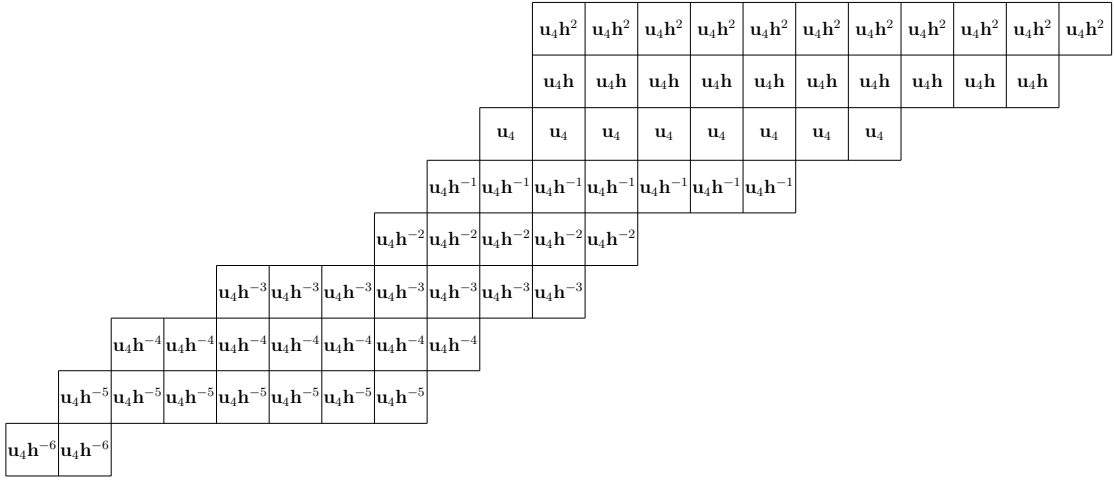
\begin{figure}
\[
\begin{tikzpicture}[scale=.7]
\node at  (0,-3) {\ufhi};
\node at (1,-3) {\ufhi}; \node at (1,-2) {\ufhh};
\node at (2,-2)  {\ufhh}; \node at (2,-1)  {\ufhg};
\node at (3,-2)  {\ufhh}; \node at (3,-1)  {\ufhg};
\node at (4,0)  {\ufhf}; \node at (4,-1)  {\ufhg}; \node at (4,-2)  {\ufhh};
\node at (5,0)  {\ufhf}; \node at (5,-1)  {\ufhg}; \node at (5,-2)  {\ufhh};
\node at (6,0)  {\ufhf}; \node at (6,-1)  {\ufhg}; \node at (6,-2)  {\ufhh};
\node at (7,1)  {\ufhe}; \node at (7,0)  {\ufhf}; \node at (7,-1)  {\ufhg}; \node at (7,-2)  {\ufhh};
\node at (8,2)  {\ufhd}; \node at (8,1)  {\ufhe}; \node at (8,0)  {\ufhf}; \node at (8,-1)  {\ufhg};
\node at (9,3)  {\ufhc}; \node at (9,2)  {\ufhd}; \node at (9,1)  {\ufhe}; \node at (9,0)  {\ufhf};
\node at (10,0)  {\ufhf}; \node at (10,1)  {\ufhe}; \node at (10,2)  {\ufhd}; \node at (10,3)  {\ufhc}; \node at (10,4)  {\ufhb}; \node at (10,5)  {\ufha}; 
\node at (11,1)  {\ufhe}; \node at (11,2)  {\ufhd}; \node at (11,3)  {\ufhc}; \node at (11,4)  {\ufhb}; \node at (11,5)  {\ufha}; 
\node at (12,2)  {\ufhd}; \node at (12,3)  {\ufhc}; \node at (12,4)  {\ufhb}; \node at (12,5)  {\ufha}; 
\node at (13,2)  {\ufhd}; \node at (13,3)  {\ufhc}; \node at (13,4)  {\ufhb}; \node at (13,5)  {\ufha}; 
\node at (14,2)  {\ufhd}; \node at (14,3)  {\ufhc}; \node at (14,4)  {\ufhb}; \node at (14,5)  {\ufha}; 
\node at (15,3)  {\ufhc}; \node at (15,4)  {\ufhb}; \node at (15,5)  {\ufha}; 
\node at (16,3)  {\ufhc}; \node at (16,4)  {\ufhb}; \node at (16,5)  {\ufha}; 
\node at (17,4)  {\ufhb}; \node at (17,5)  {\ufha}; 
\node at (18,4)  {\ufhb}; \node at (18,5)  {\ufha}; 
\node at (19,4)  {\ufhb}; \node at (19,5)  {\ufha}; 
\node at (20,5)  {\ufha}; 

\draw [ultra thin] (-0.5,-3.5) --(1.5,-3.5);\draw [ultra thin] (-0.5,-2.5) --(7.5,-2.5);\draw [ultra thin] (0.5,-1.5) --(8.5,-1.5);\draw [ultra thin] (1.5,-0.5) --(10.5,-0.5); \draw [ultra thin] (3.5,0.5) --(11.5,0.5);\draw [ultra thin] (6.5,1.5) --(14.5,1.5);\draw [ultra thin] (7.5,2.5) --(16.5,2.5);\draw [ultra thin] (8.5,3.5) --(19.5,3.5);\draw [ultra thin] (9.5,4.5) --(20.5,4.5);\draw [ultra thin] (9.5,5.5) --(20.5,5.5);

\draw [ultra thin] (-0.5,-3.5) --(-0.5,-2.5);\draw [ultra thin] (0.5,-3.5) --(0.5,-1.5);\draw [ultra thin] (1.5,-3.5) --(1.5,-.5);\draw [ultra thin] (2.5,-2.5) --(2.5,-.5);\draw [ultra thin] (3.5,-2.5) --(3.5,0.5);\draw [ultra thin] (4.5,-2.5) --(4.5,0.5);\draw [ultra thin] (5.5,-2.5) --(5.5,0.5);\draw [ultra thin] (6.5,-2.5) --(6.5,1.5);\draw [ultra thin] (7.5,-2.5) --(7.5,2.5);\draw [ultra thin] (8.5,-1.5) --(8.5,3.5);\draw [ultra thin] (9.5,-0.5) --(9.5,5.5);\draw [ultra thin] (10.5,-0.5) --(10.5,5.5);\draw [ultra thin] (11.5,0.5) --(11.5,5.5);\draw [ultra thin] (12.5,1.5) --(12.5,5.5);\draw [ultra thin] (13.5,1.5) --(13.5,5.5);\draw [ultra thin] (14.5,1.5) --(14.5,5.5);\draw [ultra thin] (15.5,2.5) --(15.5,5.5);\draw [ultra thin] (16.5,2.5) --(16.5,5.5);\draw [ultra thin] (17.5,3.5) --(17.5,5.5);\draw [ultra thin] (18.5,3.5) --(18.5,5.5);\draw [ultra thin] (19.5,3.5) --(19.5,5.5);\draw [ultra thin] (20.5,4.5) --(20.5,5.5);
\end{tikzpicture}
\]
\caption{The monomials associated to the basis vectors in Figure \ref{fig:butterly}.}\label{fig:butterflyMONS}
\end{figure}

Recall that tautological vector bundles $\xi_X$ of rank $\mult_X$ are associated with the D3 branes. The Grothendieck roots, a.k.a. K theoretic Chern roots, of a bundle $\xi$ are the virtual classes in $K_{\T}(\pt)$ whose sum is $\xi$.

\begin{theorem}
Consider the butterfly diagram of a fixed point $f$, together with the associated monomials. The restriction map $K_{\T}(\Ch(\DD))\to K_{\T}(f)$ maps the Grothendieck roots of $\xi_X$ to the monomials directly below the D3 brane $X$.
\end{theorem}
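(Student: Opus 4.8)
The plan is to reduce the statement to an identification of the $\T$-representation structure on the fibre of $\xi_X$ at $f$. By construction of the tautological bundles the fibre of $\xi_X$ over the fixed point $f$ is canonically the vector space $W_X$, and because $f$ is $\T$-fixed this fibre is a $\T$-module. Consequently $\Loc^K_f(\xi_X)$ is exactly the $\T$-character of $W_X$, and the Grothendieck roots of $\xi_X$, being a splitting of $\xi_X$ into virtual line classes, restrict at $f$ to the individual $\T$-weights of $W_X$ with multiplicity. So the theorem amounts to the assertion that the multiset of $\T$-weights of $W_X$ equals the multiset of monomials attached to the dots in the column below $X$.

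First I would invoke the previous theorem: $f$ is represented by the butterfly diagram, $\mathbf{W}=\bigoplus_X W_X$ is the direct sum of the butterflies, and the butterfly attached to the D5 brane $U_j$ lies entirely in the single $\A$-weight space of weight $\uu_j$. Hence every dot in the column below $X$ that belongs to the $U_j$-butterfly contributes a weight vector of $\A$-weight $\uu_j$, so the $\uu$-factor of each assigned monomial is automatically correct; it remains only to match the $\hb$-exponents with the vertical positions.

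The heart of the argument, and the step I expect to be the main obstacle, is to show that the $\C^\times_{\h}$-weight of the basis vector sitting $s$ units above the base level is exactly $s$. I would argue this by connectivity. The same theorem shows each butterfly is generated from the distinguished vector $w=a_{U_j}(1)$ by repeatedly applying the available maps $A,A^{-1},B,C,D$, and $w$ has $\hb$-weight $0$ (it is the image of the $\h$-trivial line $\C_{U_j}$ under the degree-zero map $a$) while sitting, by definition, at the base level. Now I track how each elementary move changes the pair (vertical position, $\hb$-weight). The maps $A$ and $D$ are $\T$-equivariant, hence preserve $\hb$-weight, and in the diagram they are drawn as horizontal arrows, hence preserve vertical position; the alignment conventions (from the bottom under D5 branes, from the top or at $45^\circ$ under NS5 branes) are precisely what make these arrows horizontal. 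The maps $B$ and $C$ lower the $\hb$-weight by $1$, and they are drawn as downward, respectively down-and-left, arrows, hence lower the vertical coordinate by exactly $1$. Thus along every generating path the $\hb$-weight and the vertical offset from the base level change in lockstep, and since every dot of $W_X$ is reached from $w$ through such paths, the $\hb$-weight of any dot equals its offset $s$.

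Finally I would assemble the pieces: each dot below $X$ carries the weight $\uu_j\hb^{s}$, which is precisely the monomial the construction attaches to it, so $\Loc^K_f(\xi_X)=\sum \uu_j\hb^{s}$ over the dots below $X$ and the Grothendieck roots restrict to the individual monomials. The one point of bookkeeping I would be careful about is the two cases $l\geq r$ and $l<r$ of the fixed-point theorem, which govern whether the base level lies at the top dot under $U_j^-$ or one step below the bottom dot under $U_j^+$; in each case one checks directly that $w$ indeed sits at the declared base level, so that the offset computation is anchored correctly.
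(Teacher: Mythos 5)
Your proposal is correct and follows essentially the same route as the paper's (much terser) proof: the butterfly is built so that the height change along each $A,B,C,D$ arrow equals its $\hbar$-degree, so heights and $\hbar$-weights move in lockstep, and the base level anchors the absolute weight $\uu_j\hb^0$. The only refinement worth noting is in your final bookkeeping step: when there are no dots under $U_j^-$ the vector $w=a_{U_j}(1)$ is zero, so the anchor there is supplied by the $\hbar$-degree of $b$ (mapping the bottom dot under $U_j^+$, at height $1$, to $\C_{U_j}$) rather than by $w$ itself--exactly the ``degree of $a$ \emph{or} $b$'' alternative the paper invokes.
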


The cohomological restriction map then follows: the Chern roots of $\xi_X$ are mapped to the ``logarithms'' of the named monomials. By logarithm we mean e.g. $\ln( \uu_4 \hb^{-3})=u_4-3\h$.

\begin{proof}
The butterfly diagrams are set up in such a way that an $A, B, C, D$ arrow that maps from height $l_1$ to height $l_2$ represents a map whose $\hb$ degree is $l_1-l_2$. The base level, ie. the level where the action is $\uu_i\hb^0$, is determined using the $\hb$ degree of $a$ or $b$. 
\end{proof}

\begin{example} \rm \label{ex:P1b}
Continuing Example \ref{ex:P1a}, consider the two torus fixed point codes 
\[
\begin{tikzpicture}[baseline=0pt,scale=.3]
\draw[thick] (0,1)--(9,1) ;
\draw[thick,red] (-.5,0)--(.5,2);
\draw[thick,blue] (3.5,0)--(2.5,2);
\draw[thick,blue] (6.5,0)--(5.5,2);
\draw[thick,red] (8.5,0)--(9.5,2);
\draw [dashed, black](0.5,2.2) to [out=45,in=-225] (2.5,2.2);
\draw [dashed, black](3.5,-.2) to [out=-45,in=220] (8.5,-.2);
\node at (1.5,1.5) {\tiny 1};\node at (4.5,1.5) {\tiny 1};\node at (7.5,1.5) {\tiny 1};
\end{tikzpicture}
\qquad\qquad
\begin{tikzpicture}[baseline=0pt,scale=.3]
\draw[thick] (0,1)--(9,1) ;
\draw[thick,red] (-.5,0)--(.5,2);
\draw[thick,blue] (3.5,0)--(2.5,2);
\draw[thick,blue] (6.5,0)--(5.5,2);
\draw[thick,red] (8.5,0)--(9.5,2);
\draw [dashed, black](0.5,2.2) to [out=45,in=-225] (5.5,2.2);
\draw [dashed, black](6.5,-.2) to [out=-45,in=220] (8.5,-.2);
\node at (1.5,1.5) {\tiny 1};\node at (4.5,1.5) {\tiny 1};\node at (7.5,1.5) {\tiny 1};
\end{tikzpicture}.
\]
The corresponding decorated butterfly diagrams are 
$\begin{tabular}{|c|c|c|}
\hline
$\uu_1$ & $\uu_1$ & $\uu_1$ \\
\hline
\end{tabular}$ and $\begin{tabular}{|c|c|c|}
\hline
$\uu_2$ & $\uu_2$ & $\uu_2$ \\
\hline
\end{tabular}
$.
Therefore, for the restriction maps in K theory we have
\begin{equation*}\label{eq:themaps}
\alpha \mapsto \uu_1, \beta\mapsto \uu_1, \gamma\mapsto \uu_1,
\qquad\text{and}\qquad
\alpha \mapsto \uu_2, \beta\mapsto \uu_2, \gamma\mapsto\uu_2.
\end{equation*}
Making these substitutions into the formula for $T(\Ch(\DD))$ (see Example \ref{ex:P1a}) we find the tangent spaces at the two fixed points to be
\begin{equation}\label{eq:P1fix}
\frac{\uu_1}{\uu_2}+\frac{\uu_2}{\uu_1}\hb
\qquad\qquad \text{and} \qquad\qquad
\frac{\uu_2}{\uu_1}+\frac{\uu_1}{\uu_2}\hb.
\end{equation}
Hence, $\Ch(\DD)$ is a holomorphic symplectic manifold of dimension 2, with a $(\C^\times)^2\times \C^\times$ action having two fixed points, where the tangent spaces are \eqref{eq:P1fix}. The reader by now probably have---correctly---guessed that this variety is $T^*\PPP^1$, cf. Section \ref{sec:QB}.
\end{example}

\begin{remark} \rm 
The expression for the tangent bundle of $\Ch(\DD)$ in terms of the tautological bundles involved negative signs, cf. Remark \ref{rem:negatives}.
Yet, the restriction of the tangent bundle to a fixed point, an element of $K_{\T}(\pt)$, is a representation of $\T$---not a virtual representation. Hence it is a Laurent polynomial in $\uu_j$'s and $\hb$ {\em with non-negative coefficients}. That is, the disappearance of negative signs going from \eqref{eq:TP1} to \eqref{eq:P1fix} is a general phenomenon, and hence a useful reality check in calculations.
\end{remark}

\subsection{Torus fixed point matching under Hanany-Witten transition}\label{sec:HWfix}

In Section \ref{sec:HWproof} we described an isomorphism between $\Ch(\DD)$ and $\Ch(\tDD)$ if $\DD$ and $\tDD$ are obtained from each other by a Hanany-Witten transistion, as in  \eqref{fig:HW2}. In turn, we have a bijection between the torus fixed points of $\Ch(\DD)$ and $\Ch(\tDD)$. This bijection turns out to be very natural in terms of two of our combinatorial codes of fixed points---we will describe these combinatorial bijections now.

The bijection of fixed points on tie diagrams is illustrated in the Riedemeister-III-looking Figure \ref{fig:HWfixpoints}. It is instructive to verify the $\mult_2+\tilde{\mult}_2=\mult_1+\mult_3+1$ rule (see Sections \ref{sec:HW}, \ref{sec:HWproof}) of multiplicities.
As a consequence, Figure \ref{fig:HWfixpoints} provides a proof of Lemma \ref{lem:noneg}.

\begin{figure}
\[
\begin{tikzpicture}[scale=.5]
\draw [thick] (0,1) --(2,1);  
\draw [thick, blue] (3,0) -- (2,2);
\draw[thick] (3,1)--(5,1);
\draw[thick,red] (5,0)--(6,2);
\draw[thick] (6,1)--(8,1);
\draw [dashed](2,2) to [out=120,in=0] (0,3) ;
\draw [dashed](2,2) to [out=120,in=0] (0,3.2) node [left] {$A$} ;
\draw [dashed](2,2) to [out=120,in=0] (0,3.4) ;
\draw [dashed](5,0) to [out=240,in=0] (0,-2)  ;
\draw [dashed](5,0) to [out=240,in=0] (0,-2.2)  node [left] {$B$} ;
\draw [dashed](5,0) to [out=240,in=0] (0,-2.4);
\draw [dashed](3,0) to [out=-60,in=180] (8,-2);
\draw [dashed](3,0) to [out=-60,in=180] (8,-2.2) node [right]{$C$};
\draw [dashed](3,0) to [out=-60,in=180] (8,-2.4) ;
\draw [dashed](6,2) to [out=60,in=180] (8,3) ;
\draw [dashed](6,2) to [out=60,in=180] (8,3.2) node [right]{$D$};
\draw [dashed](6,2) to [out=60,in=180] (8,3.4);
\draw[dashed] (3,0) to [out=-40,in=-140](5,0);
\node at (4,0.2) {$E$};
\end{tikzpicture}
\qquad
\begin{tikzpicture}[scale=.5, baseline=-30pt]
\draw[ultra thick, <->] (0,1)--(2,1) node[above]{HW transition} -- (4,1);
\draw[ultra thick, <->] (0,1)--(2,1) node[below]{on fixpoints} -- (4,1);
\end{tikzpicture}
\qquad
\begin{tikzpicture}[scale=.5]
\draw [thick] (0,1) --(2,1);  
\draw [thick, red] (2,0) -- (3,2);
\draw[thick] (3,1)--(5,1);
\draw[thick, blue] (6,0)--(5,2);
\draw[thick] (6,1)--(8,1);
\draw [dashed](2,0) to [out=240,in=0] (0,-.7);
\draw [dashed](2,0) to [out=240,in=0] (0,-.9)  node [left] {$B$};
\draw [dashed](2,0) to [out=240,in=0] (0,-1.1) ;
\draw [dashed](5,2) to [out=120,in=0] (0,4)  ;
\draw [dashed](5,2) to [out=120,in=0] (0,4.2)  node [left] {$A$};
\draw [dashed](5,2) to [out=120,in=0] (0,4.4) ;
\draw [dashed](3,2) to [out=60,in=180] (8,4);
\draw [dashed](3,2) to [out=60,in=180] (8,4.2) node [right]{$D$};
\draw [dashed](3,2) to [out=60,in=180] (8,4.4) ;
\draw [dashed](6,0) to [out=-60,in=180] (8,-.7) ;
\draw [dashed](6,0) to [out=-60,in=180] (8,-.9)node [right]{$C$};
\draw [dashed](6,0) to [out=-60,in=180] (8,-1.1);
\draw[dashed] (3,2) to [out=40,in=140](5,2);
\node at (4,1.8) {$\lnot E$};
\end{tikzpicture}
\]
\caption{The bijection induced by a Hanany-Witten transition on tie diagrams of fixed points. The signs $E, \lnot E$ mean that if there is a tie at $E$ then there is no tie at $\lnot E$, and vice versa. Ties not ending in at least one of the 5-branes that are interchanged are not affected.} \label{fig:HWfixpoints}
\end{figure}
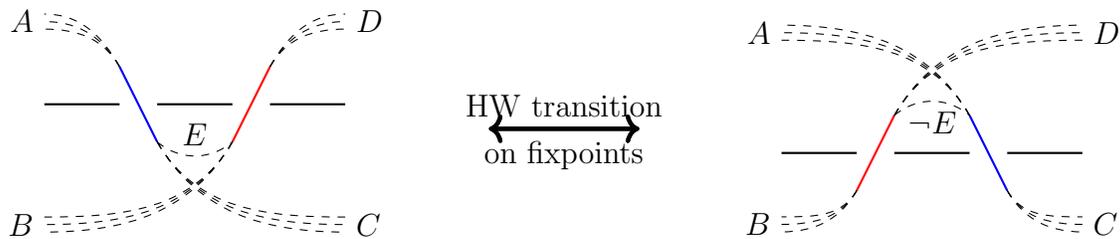

The bijection on BCT codes of fixed points is even simpler. Recall that the tables-with-margins associated with two brane diagrams differing only by a Hanany-Witten transition are essentially identical, the only difference is that the separating line is changed ``at one corner,'' see Figure~\ref{fig:margins}(a),(c). Hence a BCT for one such table is also a BCT for the other, the separating line plays no role in the BCT property. Hence the bijection between the BCT codes is: {\em identical} fillings of the tables-with-margins.

The combinatorial bijection between butterfly diagrams of fixed points can also be described, but it is less illuminating than the two we presented.

\subsection{Torus fixed point matching under 3d mirror symmetry}

\begin{theorem}
Let $\DD$ and $\DD'$ be 3d mirror dual brane diagrams.  There is a natural bijection between the torus fixed points of $\Ch(\DD)$ and $\Ch(\DD')$.
\end{theorem}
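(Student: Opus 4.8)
The plan is to reduce the statement to a purely combinatorial bijection between binary contingency tables, using the identifications of torus fixed points established above in this section. Indeed, the bijections $\text{(fixed points)}\leftrightarrow\text{(butterfly diagrams)}\leftrightarrow\text{(tie-diagrams)}\leftrightarrow\text{(BCTs)}$ show that the $\T$ fixed points of $\Ch(\DD)$ are in natural bijection with $\BCT(r,c)$, where $r,c$ are the margin vectors of $\DD$; likewise the $\T$ fixed points of $\Ch(\DD')$ are in bijection with $\BCT(r',c')$, where by \eqref{eq:rc} the dual margins are $c_i'=n-r_i$ and $r_i'=m-c_i$. Here $m,n$ are the numbers of NS5 and D5 branes of $\DD$, so $\DD'$ has $n$ NS5 branes and $m$ D5 branes, and an element of $\BCT(r',c')$ is an $n\times m$ matrix. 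Thus it suffices to exhibit a natural bijection $\BCT(r,c)\to\BCT(r',c')$.

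First I would define the map by transposition and complementation: to $M\in\BCT(r,c)$ associate the $n\times m$ matrix $M'$ with $M'_{ji}=1-M_{ij}$. To check that $M'\in\BCT(r',c')$ I compute its margins: the $j$-th row sum is $\sum_{i=1}^m (1-M_{ij})=m-c_j=r_j'$, and the $i$-th column sum is $\sum_{j=1}^n (1-M_{ij})=n-r_i=c_i'$, both matching the required values, so the map is well defined. Applying the same recipe to $\DD'$ (whose 3d mirror dual is $\DD$, again by \eqref{eq:rc}) recovers the original matrix, since $1-(1-M_{ij})=M_{ij}$; hence transpose-and-complement is an involution identifying $\BCT(r,c)$ with $\BCT(r',c')$, and in particular a bijection. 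Composing the three bijections produces the desired natural bijection between the torus fixed points of $\Ch(\DD)$ and those of $\Ch(\DD')$.

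The argument is short precisely because the BCT encoding of fixed points depends only on the pair $(r,c)$ and not on the separating line, as observed in Section~\ref{sec:HWfix}: the separating line merely dictates how one translates between the $0$/$1$ entries and the ties, and it is itself transposed under 3d mirror symmetry, so it does not interfere. I therefore do not expect a serious obstacle in establishing the bijection as stated. The genuinely interesting content, which lies beyond this statement, is that this combinatorial matching is the correct one for the conjectural 3d mirror symmetry of characteristic classes: one would want to verify that it intertwines the fixed-point restriction maps $\Loc_f$ with their primed counterparts after the swap of equivariant and K\"ahler parameters, and that it is compatible with the tangent-space weight data (as in \eqref{eq:P1fix}) at dual fixed points. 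Establishing that compatibility, rather than the bare bijection, is where the real work would be.
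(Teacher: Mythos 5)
Your proposal is correct, and every step checks out: the margin computation $\sum_{i=1}^m(1-M_{ij})=m-c_j=r'_j$ and $\sum_{j=1}^n(1-M_{ij})=n-r_i=c'_i$ is exactly what is needed, and transpose-and-complement is indeed an involution matching $\BCT(r,c)$ with $\BCT(r',c')$. The route differs from the paper's proof, though: the paper proves the theorem in a single sentence by working with tie diagrams rather than BCTs --- a tie diagram for $\DD$ reflected across the line of the D3 branes \emph{is} a tie diagram for $\DD'$, since 3d mirror symmetry is precisely that reflection of the brane diagram. The paper then records your transpose-and-negate formula $M_{i,j}(\DD')=1-M_{j,i}(\DD)$ afterwards, as a description of the \emph{same} bijection in BCT language, not as the proof. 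The comparison is instructive: the reflection argument is immediate and makes naturality manifest (no encoding or choice of separating line enters at all), while your BCT argument trades that immediacy for an explicit closed formula and a verification that the dual margins \eqref{eq:rc} are exactly what complementation produces --- which is the form of the bijection one actually uses in computations (e.g.\ in Theorem \ref{thm:QuiverDuals}). One small point worth making explicit in your write-up: to know that your BCT bijection agrees with the geometrically natural one, you should observe (as you implicitly do when discussing the separating line) that a tie between $V_i$ and $U_j$ drawn above the diagram for $\DD$ becomes a tie drawn below the diagram for $\DD'$, so the $1\leftrightarrow 0$ convention flip in the tie-to-BCT dictionary is precisely compensated by your complementation; without that remark you have \emph{a} bijection, but not visibly the one induced by mirror symmetry of tie diagrams.
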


\begin{proof}
A tie diagram for $\DD$ reflected across the line of the D3 branes is a tie diagram of $\DD'$, and vice versa. 
\end{proof}

Let us describe the same bijection in the language of BCTs. Recall the relation between the table-with-margins for $\DD$ and that of $\DD'$ from Section \ref{sec:3d}: The table of $\DD'$ is obtained by transposing the table of $\DD$, together with the separating line, and setting the new margin vectors $r', c'$ to be 
$c_i'=n-r_i, r_i'=m-c_i$, see Figure~\ref{fig:margins}(a),(b). Thus the bijection between the BCTs is obtained by transposing and {\em negating} (switching 1 and 0) the BCTs:
\[
M_{i,j}(\DD')=1-M_{j,i}(\DD).
\]

Since the tie diagram of a torus fixed point in $\Ch(\DD)$ and that of the corresponding torus fixed point in the 3d mirror dual are (essentially) the {\em same}, it is worth decorating this common tie diagram with the monomials that encode both restriction maps $K_{\T}(\Ch(\DD)) \to K_{\T}(\pt)$ and $K_{\T}(\Ch(\DD')) \to K_{\T}(\pt)$, see Figure \ref{piroskek}.

\newcommand{\vehe}{\scalebox{.6}[.6]{${\vv}_1\hb$}}
\newcommand{\vehk}{\scalebox{.6}[.6]{${\vv}_1\hb^2$}}
\newcommand{\vkhme}{\scalebox{.6}[.6]{${\vv}_2\hb^{-1}$}}
\newcommand{\vk}{\scalebox{.6}[.6]{${\vv}_2$}}
\newcommand{\vkh}{\scalebox{.6}[.6]{${\vv}_2\hb$}}
\newcommand{\vhhmk}{\scalebox{.6}[.6]{${\vv}_3\hb^{-2}$}}
\newcommand{\vhhme}{\scalebox{.6}[.6]{${\vv}_3\hb^{-1}$}}
\newcommand{\vh}{\scalebox{.6}[.6]{${\vv}_3$}}
\newcommand{\vnhmk}{\scalebox{.6}[.6]{${\vv}_4\hb^{-2}$}}
\newcommand{\vnhme}{\scalebox{.6}[.6]{${\vv}_4\hb^{-1}$}}
\newcommand{\vn}{\scalebox{.6}[.6]{${\vv}_4$}}
\newcommand{\ue}{\scalebox{.6}[.6]{${\uu}_1$}}
\newcommand{\uk}{\scalebox{.6}[.6]{${\uu}_2$}}
\newcommand{\ukh}{\scalebox{.6}[.6]{${\uu}_2\hb$}}
\newcommand{\ukhk}{\scalebox{.6}[.6]{${\uu}_2\hb^2$}}
\newcommand{\un}{\scalebox{.6}[.6]{${\uu}_4$}}
\newcommand{\uo}{\scalebox{.6}[.6]{${\uu}_5$}}
\newcommand{\uohme}{\scalebox{.6}[.6]{${\uu}_5\hb^{-1}$}}
\newcommand{\uohmk}{\scalebox{.6}[.6]{${\uu}_5\hb^{-2}$}}
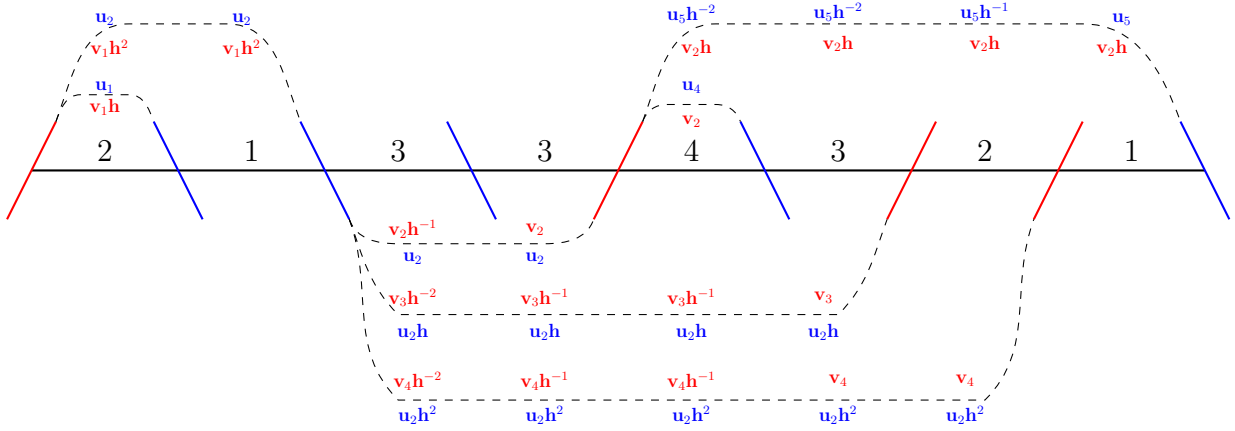
\begin{figure}
\[
\begin{tikzpicture}[scale=.65]
\draw [thick] (0,0)--(24,0);
\draw [red, thick] (-.5,-1) --(0.5,1); 
\draw [blue, thick] (3.5,-1) --(2.5,1); 
\draw [blue,thick] (6.5,-1) --(5.5,1); 
\draw [blue,thick] (9.5,-1) --(8.5,1); 
\draw [red,thick] (11.5,-1) --(12.5,1); 
\draw [blue,thick] (15.5,-1) --(14.5,1); 
\draw [red,thick] (17.5,-1) --(18.5,1); 
\draw [red,thick] (20.5,-1) --(21.5,1); 
\draw [blue,thick] (24.5,-1) --(23.5,1); 

\node at (1.5,.4) {2}; \node at (4.5,.4) {1}; \node at (7.5,.4) {3}; \node at (10.5,.4) {3}; 
\node at (13.5,.4) {4}; \node at (16.5,.4) {3}; \node at (19.5,.4) {2}; \node at (22.5,.4) {1}; 

\draw [dashed](0.5,1) to [out=70,in=180] (1,1.55) to (2,1.55) to [out=0,in=110] (2.5,1);
\node[red] at (1.5,1.3) {\vehe} ;
\node[blue] at (1.5,1.7) {\ue} ;

\draw [dashed](0.5,1) to [out=70,in=180] (2,3) to (4,3) to [out=0,in=110] (5.5,1);
\node[red] at (1.6,2.5) {\vehk} ;
\node[blue] at (1.5,3.1) {\uk} ;
\node[red] at (4.3,2.5) {\vehk} ;
\node[blue] at (4.3,3.1) {\uk} ;

\draw [dashed](6.5,-1) to [out=-70] (7.5,-4.7) to (19.5,-4.7) to [in=-110] (20.5,-1);
\node[red] at (7.9,-4.3) {\vnhmk} ;
\node[blue] at (7.9,-5) {\ukhk} ;
\node[red] at (10.5,-4.3) {\vnhme} ;
\node[blue] at (10.5,-5) {\ukhk} ;
\node[red] at (13.5,-4.3) {\vnhme} ;
\node[blue] at (13.5,-5) {\ukhk} ;
\node[red] at (16.5,-4.3) {\vn} ;
\node[blue] at (16.5,-5) {\ukhk} ;
\node[red] at (19.1,-4.3) {\vn} ;
\node[blue] at (19.1,-5) {\ukhk} ;

\draw [dashed](6.5,-1) to [out=-70]  (7.5,-2.95) to (16.5,-2.95) to [in=-110] (17.5,-1);
\node[red] at (7.8,-2.6) {\vhhmk} ;
\node[blue] at (7.8,-3.3) {\ukh} ;
\node[red] at (10.5,-2.6) {\vhhme} ;
\node[blue] at (10.5,-3.3) {\ukh} ;
\node[red] at (13.5,-2.6) {\vhhme} ;
\node[blue] at (13.5,-3.3) {\ukh} ;
\node[red] at (16.2,-2.6) {\vh} ;
\node[blue] at (16.2,-3.3) {\ukh} ;

\draw [dashed](6.5,-1) to [out=-70,in=180] (7.5,-1.5) to (10.5,-1.5) to [out=0,in=-110] (11.5,-1);
\node[red] at (7.8,-1.2) {\vkhme} ;
\node[blue] at (7.8,-1.8) {\uk} ;
\node[red] at (10.3,-1.2) {\vk} ;
\node[blue] at (10.3,-1.8) {\uk} ;

\draw [dashed](12.5,1) to [out=70,in=180]  (13,1.35) to (14,1.35) to [out=0,in=110] (14.5,1);
\node[red] at (13.5,1) {\vk} ;
\node[blue] at (13.5,1.7) {\un} ;

\draw [dashed](12.5,1) to [out=70,in=180] (14,3) to (21.5,3) to [out=0,in=110] (23.5,1);
\node[red] at (13.6,2.5) {\vkh} ;
\node[blue] at (13.5,3.2) {\uohmk} ;
\node[red] at (16.5,2.6) {\vkh} ;
\node[blue] at (16.5,3.25) {\uohmk} ;
\node[red] at (19.5,2.6) {\vkh} ;
\node[blue] at (19.5,3.25) {\uohme} ;
\node[red] at (22.1,2.5) {\vkh};
\node[blue] at (22.3,3.1) {\uo} ;
\end{tikzpicture}
\]
\caption{The blue decorations on the tie diagram indicate the monomials the Grothendieck roots of tautological bundles are mapped under the restriction map $K_{\T}(\Ch(\DD))\to K_{\T}(\pt)$. The monomials in red are analogous for the 3d mirror of $\DD$.} \label{piroskek}
\end{figure}

\section{Quiver varieties as bow varieties}

\subsection{Nakajima quiver varieties} \cite{NakajimaDuke, NakajimaBook}
First we recall the definition of the type A Nakajima quiver variety associated with dimension vector $v\in \Z_{\geq 0}^n$ and framing vector $w\in \Z_{\geq 0}^n$, equivalently, with the quiver $Q=Q(v,w)$
\[
\begin{tikzpicture}[baseline=-45pt, scale=.6]
 \draw[thick] (0,-1) -- (4,-1);  \draw[thick] (5,-1) -- (8.5,-1); 
 
 \draw[fill] (0,-1) circle (3pt);  
 \draw[fill] (1.5,-1) circle (3pt);  
 \draw[fill] (3,-1) circle (3pt);  
    \node at (4.5,-1) {$\cdots$};  
 \draw[fill] (5.5,-1) circle (3pt); 
 \draw[fill] (7,-1) circle (3pt);  
 \draw[fill] (8.5,-1) circle (3pt);  
 
 \draw[thick] (-0.1,-2.1) rectangle (0.1,-1.9);
 \draw[thick] (1.4,-2.1) rectangle (1.6,-1.9);
 \draw[thick] (2.9,-2.1) rectangle (3.1,-1.9);
 \draw[thick] (5.4,-2.1) rectangle (5.6,-1.9);
 \draw[thick] (6.9,-2.1) rectangle (7.1,-1.9);
 \draw[thick] (8.4,-2.1) rectangle (8.6,-1.9);

 \draw[thick] (0,-1) -- (0,-1.9);
 \draw[thick] (1.5,-1) -- (1.5,-1.9);
 \draw[thick] (3,-1) -- (3,-1.9);
 \draw[thick] (5.5,-1) -- (5.5,-1.9);
 \draw[thick] (7,-1) -- (7,-1.9);
 \draw[thick] (8.5,-1) -- (8.5,-1.9);

 \node at (0,-.5) {$\scriptscriptstyle v_1$} ;
 \node at (1.5,-.5) {$\scriptscriptstyle v_2$} ;
 \node at (3,-.5) {$\scriptscriptstyle v_3$} ;
 \node at (5.5,-.5) {$\scriptscriptstyle v_{n-2}$} ;
 \node at (7,-.5) {$\scriptscriptstyle v_{n-1}$} ;
 \node at (8.5,-.5) {$\scriptscriptstyle v_n$} ;
 
\node at (0,-2.6) {$\scriptscriptstyle w_1$} ;   \node at (1.5,-2.6) {$\scriptscriptstyle w_2$} ;   \node at (3,-2.6) {$\scriptscriptstyle w_3$} ;   \node at (5.5,-2.6) {$\scriptscriptstyle w_{n-2}$} ;   \node at (7,-2.6) {$\scriptscriptstyle w_{n-1}$} ;   \node at (8.5,-2.6) {$\scriptscriptstyle w_n$} ;
    
\end{tikzpicture}.
\]
\noindent Let 
\[
R=\bigoplus_{i=1}^{n-1}\Hom(\C^{v_i},\C^{v_{i+1}})\oplus \bigoplus_{i=1}^n \Hom(\C^{w_i},\C^{v_i})
\] 
and let the moment map
\[
\mu: R\oplus \hb R^\vee\to \bigoplus_{i=1}^n \hb \End(\C^{v_i})
\]
be defined by $\mu=[a,b]+kl$ for $(a,k)\in R$, $(b,l)\in R^\vee$. The quiver variety $\Naka(Q)=\Naka(v,w)$ is defined to be
\[
\mu^{-1}(0)^{ss} \Big/  \bigtimes_{i=1}^n \!\GL_{v_i}(\C),
\]
where an element $(a,k,b,l)$ is semistable if the images $k_i(\C^{w_i})$ generate $\oplus \C^{v_i}$ via the action of $a,b$.

Quiver varieties are smooth and they come with the following structures: (a) rank $v_i$ tautological bundles associated with the vertexes in the top row, (b) $\A=(\C^\times)^{\sum w_i}$ torus action (in fact a larger group acts as well, but we will not need it), (c) an extra $\C^\times_{\h}$ action on $R^\vee$ (as indicated above); set $\T=\A\times \C^{\times}_{\h}$, (d) a holomorphic symplectic structure. 

\smallskip

Let $\cF_{v_1,v_2,\ldots,v_{n+1}}$ denote the partial flag variety parametrizing nested subspaces 
\[
0\subset {\mathcal V}_1^{v_1} \subset {\mathcal V}_2^{v_2} \subset \ldots \subset {\mathcal V}_{n-1}^{v_{n-1}} \subset {\mathcal V}_n^{v_n}\subset \C^{v_{n+1}}.
\]
The variety $T^*\!\cF_{v_1,v_2,\ldots,v_{n+1}}$ is equipped with a standard holomorphic symplectic structure and a $\T=(\C^\times)^{v_{n+1}}\times \C^{\times}_{\h}$ action ($\C^{\times}_{\h}$ scales the fiber directions). It is easy to verify that $T^*\!\cF_{v_1,v_2,\ldots,v_{n+1}}$ with all these structures is the same as the quiver variety associated with 
\begin{tikzpicture}[baseline=-20, scale=.4]
 \draw[thick] (1.5,-1) -- (3.5,-1);  \draw[thick] (5.6,-1) -- (9,-1); 
  \draw[fill] (1.5,-1) circle (3pt);  
 \draw[fill] (3,-1) circle (3pt);  
    \node at (4.5,-1) {$\cdots$};  
 \draw[fill] (6,-1) circle (3pt); 
 \draw[fill] (7.5,-1) circle (3pt);  
 \draw[fill] (9,-1) circle (3pt);  

 \draw[thick] (8.9,-2.1) rectangle (9.1,-1.9);
 \draw[thick] (9,-1) -- (9,-1.9);
 \node at (1.5,-.5) {$\scriptscriptstyle v_1$} ;
 \node at (3,-.5) {$\scriptscriptstyle v_2$} ;
 \node at (6,-.5) {$\scriptscriptstyle v_{n-2}\ $} ;
 \node at (7.5,-.5) {$\scriptscriptstyle v_{n-1}$} ;
 \node at (9,-.5) {$\scriptscriptstyle v_{n}$} ;
 \node at (9,-2.6) {$\scriptscriptstyle v_{n+1}$} ;
\end{tikzpicture}.

\begin{remark}\rm
The Nakajima quiver variety $\Naka\left(
\begin{tikzpicture}[baseline=-20, scale=.4]
 \draw[thick] (1.5,-1) -- (3.5,-1);  \draw[thick] (5.6,-1) -- (9,-1); 
  \draw[fill] (1.5,-1) circle (3pt);  
 \draw[fill] (3,-1) circle (3pt);  
    \node at (4.5,-1) {$\cdots$};  
 \draw[fill] (6,-1) circle (3pt); 
 \draw[fill] (7.5,-1) circle (3pt);  
 \draw[fill] (9,-1) circle (3pt);  

 \draw[thick] (1.4,-2.1) rectangle (1.6,-1.9);
 \draw[thick] (1.5,-1) -- (1.5,-1.9);
 \node at (1.5,-.5) {$\scriptscriptstyle v_n$} ;
 \node at (3,-.5) {$\scriptscriptstyle v_{n-1}$} ;
 \node at (6,-.5) {$\scriptscriptstyle v_{3}\ $} ;
 \node at (7.5,-.5) {$\scriptscriptstyle v_{2}$} ;
 \node at (9,-.5) {$\scriptscriptstyle v_{1}$} ;
 \node at (1.5,-2.6) {$\scriptscriptstyle v_{n+1}$} ;
\end{tikzpicture}\right)$
is also isomorphic with $T^*\!\cF_{v_1,v_2,\ldots,v_{n+1}}$, but here $\C^\times_{\h}$ acts in the tangent directions instead of the fiber directions; so the torus action needs to be reparametrized for an identification with  $T^*\!\cF_{v_1,v_2,\ldots,v_{n+1}}$ and its ``usual'' structures.
\end{remark}

\subsection{Quiver varieties as bow varieties} \label{sec:QB}
Starting with a quiver $Q$, replacing the $i$'th 
 \begin{tikzpicture}[baseline=-25pt,scale=.5]
 \draw[fill] (3,-1) circle (3pt);  
 \draw[thick] (2.9,-2.1) rectangle (3.1,-1.9);
 \draw[thick] (3,-1) -- (3,-1.9);
 \node at (3.6,-1) {$\scriptscriptstyle v_i$} ;
 \node at (3.6,-2) {$\scriptscriptstyle w_i$} ;
\end{tikzpicture}
portion with 
\begin{tikzpicture}[baseline=5,scale=.3]
\draw[thick] (0,1)--(7,1) ;\draw[thick] (11,1)--(18,1) ;
\draw[thick,red] (-.5,0)--(.5,2);
\draw[thick,blue] (3.5,0)--(2.5,2);
\draw[thick,blue] (6.5,0)--(5.5,2);
\node at (9.5,1) {$\cdots$};
\draw[thick,blue] (12.5,0)--(11.5,2);
\draw[thick,blue] (15.5,0)--(14.5,2);
\draw[thick,red] (17.5,0)--(18.5,2);
\node at (1.5,1.5) {\tiny $v_i$};\node at (4.5,1.5) {\tiny $v_i$};\node at (6.7,1.5) {\tiny $v_i$};
\node at (11,1.5) {\tiny $v_i$};\node at (13.5,1.5) {\tiny $v_i$};\node at (16.5,1.5) {\tiny $v_i$};
\draw  [thin](2,0) to [out=290,in=90] (9,-2);
\draw [ thin](9,-2) to [out=90,in=250] (16,0);
\node at (9,-2.5) {\tiny $w_i$};
\end{tikzpicture}
and gluing these brane diagrams along the boundary NS5 branes we obtain a brane diagram $\DD$. 

\begin{theorem} \cite[Section 2.6]{NT} We have
$\Naka(Q)=\Ch(\DD)$, and the identification respects the torus actions and the holomorphic symplectic forms on the two sides. \qed
\end{theorem}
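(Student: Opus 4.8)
The plan is to reconstruct the defining data of $\Naka(Q)$ directly out of the bow-variety data of $\Ch(\DD)$. Label the NS5-branes $V_1,\dots,V_{n+1}$ from left to right, so that between $V_i$ and $V_{i+1}$ lies the $i$-th \emph{segment}: a chain of $w_i$ D5-branes $U_{i,1},\dots,U_{i,w_i}$ separating D3-branes $X_{i,0},\dots,X_{i,w_i}$, all of the constant multiplicity $v_i$. The torus $\A=(\C^\times)^{\{\text{D5 branes}\}}$ has rank $\sum_i w_i$, matching the quiver framing torus, and the lines $\C_{U_{i,1}},\dots,\C_{U_{i,w_i}}$ assemble into the framing space $\C^{w_i}$ at vertex $i$. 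I would set up the candidate dictionary: the quiver arrow $\C^{v_i}\to\C^{v_{i+1}}$ and its dual correspond to the interior NS5 maps $D_{V_{i+1}}$ and $C_{V_{i+1}}$, while the framing maps $k_i,l_i$ are to be assembled from the $a_{U_{i,j}},b_{U_{i,j}}$. The $\C^\times_\h$-weights recorded in Section~\ref{sec:bowdef} ($\deg A=\deg D=\deg a=0$, $\deg B=\deg B'=\deg C=\deg b=1$) match those of the quiver picture, where $\C^\times_\h$ scales $R^\vee$; this is exactly what will make the identification $\T$-equivariant.

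The heart of the argument, and the main obstacle, is the \emph{collapse of each D5-chain}. Within the $i$-th segment all the spaces $W_{X_{i,j}}$ have dimension $v_i$, and I claim that stability forces every $A_{U_{i,j}}\colon W_{X_{i,j}}\to W_{X_{i,j-1}}$ to be an isomorphism. This is the delicate point; I would prove it by exactly the weight-space analysis used in the proof of the fixed-point theorem above, combining the $\NN_U$-relation $B_UA_U-A_UB'_U+a_Ub_U=0$ with (S1), (S2), or more cleanly with the reformulated stability of \cite[Proposition 2.8]{NT}. Once the $A$-maps are invertible, the compositions $\Phi_{j}=A_{U_{i,1}}\cdots A_{U_{i,j}}$ give canonical identifications $W_{X_{i,j}}\cong W_{X_{i,0}}=:\C^{v_i}$, and the residual gauge group collapses from $\prod_{i,j}\GL(W_{X_{i,j}})$ to the diagonal $\prod_i\GL_{v_i}(\C)$, precisely the quiver gauge group.

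With these identifications the remaining steps become bookkeeping. The D3-brane moment relations of type \ttt{\bs -\bs} identify $B'_{U_{i,j}}=B_{U_{i,j+1}}$ across each interior D5-brane, and transporting $A_UB'_U=B_UA_U+a_Ub_U$ along the $\Phi_j$ shows that the transported endomorphisms satisfy $\tilde B_{j+1}=\tilde B_j+\tilde a_j\tilde b_j$. Telescoping over the segment and inserting the boundary relations at $V_i$ and $V_{i+1}$ (which read $B_{U_{i,1}}=-D_{V_i}C_{V_i}$ and $B'_{U_{i,w_i}}=-C_{V_{i+1}}D_{V_{i+1}}$) collapses the entire moment-map constraint of the segment into the single equation
\[
a_{i-1}b_{i-1}-b_ia_i-k_il_i=0
\]
on $\C^{v_i}$, where $a_i,b_i$ are $D_{V_{i+1}},C_{V_{i+1}}$ conjugated by $\Phi_{w_i}$, and $k_i=(\tilde a_1,\dots,\tilde a_{w_i})\colon\C^{w_i}\to\C^{v_i}$, $l_i$ is built from the $\tilde b_j$. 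This is exactly the Nakajima relation $[a,b]+kl=0$ at vertex $i$ after the evident sign normalization, with the end segments $i=1,n$ contributing the expected vanishing terms.

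Finally I would check that the reformulated bow stability becomes quiver semistability: once the $A$-maps are inverted, the clause ``$A$ induces isomorphisms on $\mathbf{W}/S$'' is automatic, so the condition reduces to ``the only $C,D$-invariant subspace containing $\mathrm{im}(a)$ is $\mathbf{W}$,'' i.e.\ that the images of the $k_i$ generate $\bigoplus_i\C^{v_i}$ under the arrow maps. The holomorphic symplectic forms agree because both are reductions of the standard pairing on the ambient representation spaces, and the dictionary above is a linear symplectomorphism of the matching $\Hom\oplus\hb\Hom^\vee$ summands. I expect the ``$A$-maps are isomorphisms'' step to be the only genuinely hard point; everything downstream is routine.
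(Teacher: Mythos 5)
The first thing to say is that the paper does not prove this theorem at all: it is imported wholesale from \cite[Section 2.6]{NT}, which is why the statement carries that citation and is closed with a tombstone rather than a proof. So the real comparison is against Nakajima--Takayama's argument, and your proposal is in outline a faithful reconstruction of exactly that argument: co-balancedness plus stability forces every $A_U$ to be an isomorphism, each D5-chain then collapses via the compositions $\Phi_j$, the gauge group reduces to the diagonal $\prod_i\GL_{v_i}(\C)$, the momentum relations telescope (using \ttt{\bs-\bs}, \ttt{{\fs}-\bs}, \ttt{\bs-{\fs}} relations) to the Nakajima relation, and bow stability becomes quiver semistability. Your singling out of the ``$A_U$ is an isomorphism'' lemma as the only genuinely hard point is also accurate --- that is the content of the key lemma in \cite[Section 2.6]{NT}.

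Three details need repair before this is a proof. First, the crux cannot be established ``by exactly the weight-space analysis used in the proof of the fixed-point theorem'': that analysis uses the $\T$-weight decomposition of the fibers at a fixed point, which simply does not exist at a general stable point of $\mu^{-1}(0)$. The correct route is your fallback: play the relation $B_UA_U-A_UB'_U+a_Ub_U=0$ directly against (S1), (S2) (or the reformulated stability), as in \cite{NT}; the weight-space phrasing should be dropped. Second, the clause ``$A$ induces isomorphisms on $\mathbf{W}/S$'' is \emph{not} automatic once the $A$'s are invertible: an $A,B,C,D$-invariant submodule need not have components of equal dimension along a segment. For example, on $\Ch(\ttt{{\fs}1\bs 1{\fs}})$ at the stable point with $b_U=0$, the submodule generated by $\Imm(a_U)$ is $W_{U^-}\oplus 0$ --- proper, invariant, containing $\Imm(a_U)$ --- and it is exactly the failure of the iso-on-quotient clause that keeps it from contradicting stability. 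That clause is what cuts invariant subspaces down to those constant along each D5-chain, i.e.\ to quiver subrepresentations $(S_i\subset\C^{v_i})$; with that correction your final translation (images of the $k_i$ generate under the arrow maps) is the right one. Third, the symplectic statement is not just a matching of $\Hom\oplus\hb\Hom^\vee$ summands: the identifications $\Phi_j$ are built from the $A$'s and hence vary over the moduli space, so the collapse is not a linear map of ambient representation spaces, and matching the two forms requires the computation of \cite[Section 5]{NT} rather than being ``routine.'' None of these sinks the strategy, but as written they are genuine gaps in the execution.
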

For example we have
\[
\begin{tabular}{lclcl}
& & $\Naka\Big(
 \begin{tikzpicture}[baseline=-25pt,scale=.5]
 \draw[thick] (0,-1) -- (3,-1); 
  \draw[fill] (0,-1) circle (3pt);  
 \draw[fill] (1.5,-1) circle (3pt);  
 \draw[fill] (3,-1) circle (3pt);  
  \draw[thick] (1.4,-2.1) rectangle (1.6,-1.9);
 \draw[thick] (2.9,-2.1) rectangle (3.1,-1.9);
 \draw[thick] (1.5,-1) -- (1.5,-1.9);
 \draw[thick] (3,-1) -- (3,-1.9);
 \node at (0,-.5) {$\scriptscriptstyle 3$} ;
 \node at (1.5,-.5) {$\scriptscriptstyle 2$} ;
 \node at (3,-.5) {$\scriptscriptstyle 5$} ;
\node at (1.5,-2.6) {$\scriptscriptstyle 4$} ;   \node at (3,-2.6) {$\scriptscriptstyle 2$} ;   
\end{tikzpicture}
\Big)$&$=$ & 
$\Ch(\ttt{{\fs}3{\fs}2\bs 2\bs 2\bs 2\bs 2{\fs}5\bs 5\bs 5{\fs}})$,
\\
 $T^*\!\Gr(2,5)$ & $=$ & 

 $\Naka\Big(
  \begin{tikzpicture}[baseline=-25pt,scale=.5]
 \draw[fill] (3,-1) circle (3pt);  
 \draw[thick] (2.9,-2.1) rectangle (3.1,-1.9);
 \draw[thick] (3,-1) -- (3,-1.9);
 \node at (3.45,-1) {$\scriptscriptstyle 2$} ;
 \node at (3.45,-2) {$\scriptscriptstyle 5$} ;   
\end{tikzpicture}
\Big)$
  & $=$
 & 
 $\Ch(\ttt{{\fs}2\bs 2\bs 2\bs 2\bs 2\bs 2{\fs}})$, \\
$T^*\!\cF_{1,2,4,6}$ & $=$ &
$\Naka\Big(
   \begin{tikzpicture}[baseline=-25pt,scale=.5]
 \draw[thick] (0,-1) -- (3,-1); 
  \draw[fill] (0,-1) circle (3pt);  
 \draw[fill] (1.5,-1) circle (3pt);  
 \draw[fill] (3,-1) circle (3pt);  
  \draw[thick] (2.9,-2.1) rectangle (3.1,-1.9);
 \draw[thick] (3,-1) -- (3,-1.9);
 \node at (0,-.5) {$\scriptscriptstyle 1$} ;
 \node at (1.5,-.5) {$\scriptscriptstyle 2$} ;
 \node at (3,-.5) {$\scriptscriptstyle 4$} ;
   \node at (3.45,-2) {$\scriptscriptstyle 6$} ;   
\end{tikzpicture}
\Big)$&$=$ &
$\Ch(\ttt{{\fs}1{\fs}2{\fs}4\bs 4\bs 4\bs 4\bs 4\bs 4\bs 4{\fs}})$,
\end{tabular}
\]
and the bow variety considered in Examples \ref{ex:P1a} and \ref{ex:P1b} is $T^*\!\PPP^1$.

Observe that if $\DD$ is obtained from a quiver $Q$ as described, then it is {\em co-balanced}: $\mult_{U^+}=\mult_{U^-}$ holds for all D5 branes $U$. Most brane diagrams are not co-balanced, nor are Hanany-Witten equivalent with a co-balanced one; we will present a numerical criterion for this in Section~\ref{sec:recognize}.

Since quiver varieties are bow varieties, and for bow varieties the torus fixed points are in bijection with certain BCTs, we have a BCT-formula for the number of torus fixed points of Nakajima quiver varieties.

\begin{corollary}\label{cor:chiQ}
For $v,w\in \Z_{\geq 0}^n$ we have
\[
\chi(Q(v,w))=
\# \BCT\left( \left(v_i-v_{i-1}+\sum_{j=1}^{i-1} w_j\right)_{i=1,\ldots,n+1}, (n^{w_1},(n-1)^{w_2},(n-2)^{w_3},\ldots,1^{w_n})\right)
\]
where the notation $a^b$ means $a,a,\ldots,a$ ($b$ times) and we set $v_0=v_{n+1}=0$.\qed
\end{corollary}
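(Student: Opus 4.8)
The plan is to combine three facts established earlier in the paper: the isomorphism $\Naka(Q)\cong\Ch(\DD)$ from Section \ref{sec:QB}, the identification of the $\T$-fixed points of $\Ch(\DD)$ with BCTs, and the principle that the Euler characteristic of a smooth variety carrying a torus action with isolated fixed points equals the number of those fixed points. First I would pass from the quiver side to the bow side: by the cited theorem $\Naka(Q)=\Ch(\DD)$ as $\T$-varieties, where $\DD$ is the brane diagram produced by the gluing recipe of Section \ref{sec:QB}. Hence $\chi(Q(v,w))=\chi(\Ch(\DD))$, and the problem is reduced to a count of fixed points on $\Ch(\DD)$.

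Next I would argue $\chi(\Ch(\DD))=\#\{\T\text{-fixed points}\}$. The fixed points are finite in number (they are indexed by tie diagrams, equivalently BCTs) and isolated (the tangent-weight computation of Section \ref{sec:fixrest} shows no zero weights occur at a fixed point), so a generic one-parameter subgroup $\C^\times\hookrightarrow\T$ has the same finite fixed locus and contracts $\Ch(\DD)$ onto it; the resulting Bialynicki-Birula decomposition into affine attracting cells, one per fixed point, gives $\chi=\#\{\text{fixed points}\}$ by additivity of Euler characteristic. Then, by the Proposition identifying tie diagrams with BCTs together with the Theorem identifying fixed points with tie diagrams, $\#\{\T\text{-fixed points}\}=\#\BCT(\rr,\cc)$, where $\rr,\cc$ are the margin vectors of $\DD$.

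It remains to compute $\rr$ and $\cc$ from $(v,w)$; this is the only genuine computation. The recipe produces $n+1$ NS5 branes $V_1,\dots,V_{n+1}$ (the shared block boundaries, with the extend-by-zero convention giving boundary multiplicities $v_0=v_{n+1}=0$) and $\sum_i w_i$ D5 branes, the $w_i$ of them inside the $i$-th block all sitting between D3 branes of common multiplicity $v_i$. Applying the charge definition of Section \ref{sec:margin}: for $V_i$ one has $\mult_{V_i^-}=v_{i-1}$, $\mult_{V_i^+}=v_i$, and $\#\{\text{D5 left of }V_i\}=\sum_{j=1}^{i-1}w_j$, giving $r_i=v_i-v_{i-1}+\sum_{j=1}^{i-1}w_j$; for a D5 brane $U$ in block $i$ one has $\mult_{U^-}=\mult_{U^+}=v_i$, so the multiplicity difference vanishes, and $\#\{\text{NS5 right of }U\}=n+1-i$, giving $\cc=(n^{w_1},(n-1)^{w_2},\dots,1^{w_n})$. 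These are exactly the margin vectors in the statement, so $\#\BCT(\rr,\cc)$ is the asserted count; the condition $\sum r_i=\sum c_j$ needed for potential nonemptiness is automatic by Lemma \ref{lem:marginsum}.

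The main obstacle is not conceptual but a matter of bookkeeping: getting the block indexing and the left/right counts of 5-branes exactly right, and handling the degenerate end blocks (where $v_0=v_{n+1}=0$) consistently with the extend-by-zero convention of Section \ref{sec:margin}. A sanity check on the single-vertex case $\Naka(2,5)=T^*\Gr(2,5)$ — where the recipe gives $\rr=(2,3)$, $\cc=(1,1,1,1,1)$ and $\#\BCT(\rr,\cc)=\binom{5}{2}=10=\chi(\Gr(2,5))$ — confirms the normalization.
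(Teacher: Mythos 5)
Your proposal follows exactly the route the paper intends: the corollary carries a \qed{} there precisely because it is the concatenation of the quiver-to-bow identification of Section \ref{sec:QB}, the fixed-points-equal-BCTs results, and the margin-vector computation. Your computation of the margins is correct, including the end conventions: for $V_i$ one gets $r_i=v_i-v_{i-1}+\sum_{j=1}^{i-1}w_j$ with $v_0=v_{n+1}=0$, and each of the $w_i$ D5 branes in block $i$ is co-balanced so its charge is just the count $n+1-i$ of NS5 branes to its right, giving $c=(n^{w_1},(n-1)^{w_2},\ldots,1^{w_n})$.

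The one step whose justification is wrong as stated is the claim that a \emph{generic} one-parameter subgroup of $\T$ ``contracts $\Ch(\DD)$ onto'' the fixed locus, so that the Bialynicki-Birula cells cover the variety. Bow varieties are non-compact, and for a general cocharacter the attracting sets do not cover: for the cocharacter $\sigma$ used in the paper (trivial $\h$-component) the union of the $\Leaf(f)$'s is a proper subvariety --- this is exactly why $\Slope(f)$ is a nontrivial notion --- and already for $T^*\PPP^1$, whose tangent weights at the two fixed points are $\frac{\uu_1}{\uu_2}+\frac{\uu_2}{\uu_1}\hb$ and $\frac{\uu_2}{\uu_1}+\frac{\uu_1}{\uu_2}\hb$, any cocharacter with $a_1-a_2>h>0$ is generic yet has two one-dimensional attracting cells, which cannot cover a surface. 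To make the BB argument work one needs the $\h$-weight of the cocharacter to dominate \emph{and} a properness argument (existence of all limits via the projective morphism from $\Ch(\DD)$ to its affinization, on which such a cocharacter acts with positive weights). Alternatively, and more simply, drop BB altogether: for any complex algebraic variety $X$ with a $\C^\times$-action one has $\chi(X)=\chi(X^{\C^\times})$, because the complement of the fixed locus is a union of nontrivial orbits and so has vanishing compactly supported Euler characteristic, which for complex algebraic varieties agrees with the ordinary one. With that substitution your argument is complete and coincides with the paper's.
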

\noindent The special case of $v=(v_1\leq \ldots \leq v_n), w=(0,\ldots,0,v_{n+1})\in \Z_{\geq 0}^n$ ($v_n\leq v_{n+1}$) recovers the trivial
\begin{multline*}
\chi(\cF_{v_1,\ldots,v_n,v_{n+1}}) =
\chi(T^*\!\cF_{v_1,\ldots,v_n,v_{n+1}}) = \\
\#\BCT( (v_1,v_2-v_1,\ldots,v_{n}-v_{n-1},v_{n+1}-v_n),(1^{v_{n+1}})) =
\frac{v_{n+1}!}{ v_1! (v_2-v_1)! \ldots (v_{n+1}-v_n)!}.
\end{multline*}

Formulas or various generating functions are known for some special cases of $\#\BCT(r,c)$s. 
For example, according to Corollary \ref{cor:chiQ} and \cite[Cor. 5.5.11]{stanley2} we have
\[
\sum_{n=0}^{\infty} \chi\left(\Naka\left(
\begin{tikzpicture}[baseline=-20, scale=.4]
 \draw[thick] (1.5,-1) -- (5,-1);  \draw[thick] (7.1,-1) -- (9,-1); 
  \draw[fill] (1.5,-1) circle (3pt);  
 \draw[fill] (3,-1) circle (3pt);  
    \node at (6,-1) {$\cdots$};  
 \draw[fill] (4.5,-1) circle (3pt); 
 \draw[fill] (7.5,-1) circle (3pt);  
 \draw[fill] (9,-1) circle (3pt);  

 \draw[thick] (7.4,-2.1) rectangle (7.6,-1.9);
 \draw[thick] (7.5,-1) -- (7.5,-1.9);
 \node at (1.5,-.5) {$\scriptscriptstyle 2$} ;
 \node at (3,-.5) {$\scriptscriptstyle 4$} ;
 \node at (4.5,-.5) {$\scriptscriptstyle 6$} ;
 \node at (7.5,-.5) {$\scriptscriptstyle 2n-2$} ;
 \node at (9,-.5) {$\ \scriptscriptstyle n-1$} ;
 \node at (8.4,-2) {$\scriptscriptstyle n+1$} ;
\end{tikzpicture}
\right)\right)
\frac{x^n}{(n!)^2}= 
\sum_{n=0}^{\infty} \#\BCT((2^n),(2^n)) \frac{x^n}{(n!)^2}=
\frac{ e^{-\frac{x}{2}}}{\sqrt{1-x}}.
\]

\subsection{Recognizing quiver varieties from their margin vectors} \label{sec:recognize}
We saw that among the bow varieties those are the quiver varieties whose brane diagram is co-balanced. The following question remains: given a brane diagram, is it Hanany-Witten equivalent with a co-balanced brane diagram?

\begin{theorem} \label{thm:cobalanced}
The brane diagram $\DD$ is Hanany-Witten equivalent with a co-balanced one (ie. $\Ch(\DD)$ is HW isomorphic to a quiver variety) if and only if its margin vector $c$ is weakly decreasing.
\end{theorem}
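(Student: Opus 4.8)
The plan is to reduce both directions to the bookkeeping of charges and separating lines, using facts already established: Hanany-Witten (HW) transitions preserve both margin vectors $r,c$ and the validity of the diagram (Lemma~\ref{lem:noneg}), and within a fixed HW-equivalence class the $\binom{m+n}{m}$ members are in bijection with the choices of separating line for the common table-with-margins (Section~\ref{sec:HW}). The dictionary I would record first is that, for any brane diagram, the number of NS5 branes lying to the right of the D5 brane $U_j$ equals the number of boxes of column $j$ lying \emph{below} the separating line, so that the charge satisfies
\[
c_j = (\mult_{U_j^-}-\mult_{U_j^+}) + \#\{\text{boxes below the line in column } j\}.
\]
Hence $\DD$ is co-balanced (i.e. $\mult_{U_j^-}=\mult_{U_j^+}$ for every $j$) if and only if, for every column $j$, exactly $c_j$ of its $m$ boxes lie below the separating line.

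For the forward direction, suppose $\DD$ is HW-equivalent to a co-balanced diagram $\DD'$. Since HW transitions fix $c$ and preserve the left-to-right order of the D5 branes among themselves, $\DD$ and $\DD'$ share the same column vector $c$. In $\DD'$ co-balancedness gives $c_j=\#\{\text{NS5 branes right of } U'_j\}$; as $U'_1,\dots,U'_n$ are read left to right, passing to a later D5 brane can only weakly decrease the count of NS5 branes to its right, so $c$ is weakly decreasing.

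For the converse, assume $c$ is weakly decreasing and set $a_j:=m-c_j$. Because $\DD$ is a brane diagram, Assumption~\ref{assume} furnishes a BCT with column sums $c$, forcing $0\le c_j\le m$; together with monotonicity this gives $a_1\le a_2\le\dots\le a_n$ with $0\le a_j\le m$. Such a weakly increasing, bounded sequence is exactly the data of a down-right monotone lattice path from the NW to the SE corner having $a_j$ boxes above it in column $j$, and I would take this path as a new separating line. By Lemma~\ref{lem:marginsum} the triple $(r,c,\text{this line})$ determines a valid brane diagram $\DD'$, which by the description of HW-equivalence classes lies in the HW class of $\DD$. In $\DD'$ each column $j$ now has $m-a_j=c_j$ boxes below the line, so the displayed charge identity yields $\mult_{U_j^-}=\mult_{U_j^+}$ for all $j$; thus $\DD'$ is co-balanced, completing the equivalence.

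The main thing to get right is the orientation conventions in the translation step: one must verify that ``below the separating line in column $j$'' genuinely counts the NS5 branes to the \emph{right} of $U_j$ (not to the left), and that down-right monotonicity of the path corresponds to $a_j$ \emph{weakly increasing}, equivalently $c_j$ weakly decreasing. Once this dictionary is pinned down, everything else is routine bookkeeping; the upper bound $c_j\le m$ is the only place Assumption~\ref{assume} is genuinely used.
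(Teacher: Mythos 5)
Your proposal is correct and takes essentially the same route as the paper's own (much terser) proof: the forward direction rests on HW-invariance of the charges plus the observation that for a co-balanced diagram $c_j$ counts the NS5 branes to the right of $U_j$, and the converse constructs the staircase separating line with exactly $c_j$ boxes below it in column $j$, which Lemma~\ref{lem:marginsum} together with the identification of HW classes with choices of separating line turns into a co-balanced diagram HW-equivalent to $\DD$. The only difference is presentational: the paper encodes the staircase in a picture and invokes Assumption~\ref{assume} implicitly, whereas you make the charge-versus-boxes-below-the-line dictionary and the bound $0\le c_j\le m$ explicit.
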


\begin{proof}
The margin vector $c$ is constant in the HW equivalence class, and for co-balanced ones $c$ is indeed weakly decreasing. Conversely, if a weakly decreasing margin vector $c$ is given then (due the Assumption \ref{assume}) we can consider the ``separating line'' illustrated by the picture
\[
\begin{tikzpicture}[scale=.4]
\draw[thin] (0,2) -- (0,9) -- (10,9) -- (10,2)--(0,2); 
\draw[ultra thick]  (4,7) -- (4,6) -- (5,6)--(5,4) -- (6,4) ; 
\draw[blue,<->] (4.5,2.1) -- (4.5,5.9);
\draw[blue,<->] (5.5,2.1) -- (5.5,3.9);
 \node[blue] at (3.9,4) {\scriptsize $c_i$};
 \node[blue] at (4.5,9.34) {\scriptsize $c_i$};
 \node[blue] at (6.4,3) {\scriptsize $c_{i+1}$};
 \node[blue]  at (5.8,9.3) {\scriptsize $c_{i+1}$};
\end{tikzpicture}
\]
which in turn corresponds to a co-balanced brane diagram.
\end{proof}

If the 3d mirror dual of $\DD$ is co-balanced, we call it {\em balanced}, that is, if for all NS5 branes $V$ we have $\mult_{V^-}=\mult_{V^+}$. 

\begin{corollary} \label{cor:balanced}
The brane diagram $\DD$ is Hanany-Witten equivalent with a balanced one (ie. the mirror dual of $\Ch(\DD)$ is HW isomorphic to a quiver variety) if and only if its margin vector $r$ is weakly increasing. \qed
\end{corollary}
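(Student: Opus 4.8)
The plan is to reduce Corollary \ref{cor:balanced} to Theorem \ref{thm:cobalanced} by passing to the 3d mirror dual. By the definition immediately preceding the corollary, a brane diagram is \emph{balanced} exactly when its 3d mirror dual is co-balanced; equivalently, a diagram $\mathcal E$ is balanced if and only if $\mathcal E'$ is co-balanced. So the whole statement is really the mirror image of Theorem \ref{thm:cobalanced}, and the task is just to transport the "co-balanced $\Leftrightarrow$ $c$ weakly decreasing" criterion across the mirror.

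First I would record that the 3d mirror operation $\DD \mapsto \DD'$ intertwines Hanany-Witten equivalence. A Hanany-Witten equivalence class is precisely the set of brane diagrams sharing fixed margin vectors $r,c$, one diagram for each of the $\binom{m+n}{m}$ separating lines (Section \ref{sec:HW}). Since the mirror transposes the table-with-margins together with its separating line and sends $(r,c)$ to $(r',c')$ with $c'_i=n-r_i,\ r'_i=m-c_i$ by \eqref{eq:rc}, it carries the HW equivalence class of $\DD$ bijectively onto that of $\DD'$, matching separating lines under transposition. Hence $\DD$ is Hanany-Witten equivalent to a balanced diagram if and only if $\DD'$ is Hanany-Witten equivalent to a co-balanced one.

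Then I would apply Theorem \ref{thm:cobalanced} to $\DD'$: the diagram $\DD'$ is HW equivalent with a co-balanced one if and only if its column margin vector $c'$ is weakly decreasing. Finally, substituting $c'_i=n-r_i$ shows that $c'$ is weakly decreasing precisely when $r$ is weakly increasing, which is exactly the asserted criterion.

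The one point needing care---the main obstacle, such as it is---is justifying that mirror symmetry genuinely commutes with HW transitions, i.e.\ that reflecting $\DD$ and then re-ordering its branes produces the same class as re-ordering $\DD$ and then reflecting. This is bookkeeping rather than computation: one checks that the local move \eqref{fig:HW} of a D5 past an NS5 becomes, after reflecting across the $x$-axis (Section \ref{sec:3d}), the move of an NS5 past a D5, with the relation $\mult_2+\tilde{\mult}_2=\mult_1+\mult_3+1$ preserved under the mirror relabeling. Cleanest is to bypass the geometric move entirely and argue at the level of table-with-margins codes, where an HW transition flips the separating line at one corner while the mirror transposes the whole grid; these two grid operations manifestly commute, and with that in hand the chain of equivalences above is immediate.
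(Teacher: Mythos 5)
Your proposal is correct and is essentially the paper's intended argument: the corollary is stated with an immediate \qed precisely because, by the definition of \emph{balanced} (mirror dual is co-balanced), the transformation rule \eqref{eq:rc}, and Theorem \ref{thm:cobalanced} applied to $\DD'$, the criterion ``$c'$ weakly decreasing'' translates directly into ``$r$ weakly increasing.'' Your extra care in checking that 3d mirror symmetry commutes with Hanany-Witten equivalence (via the table-with-margins picture, where transposition and corner-flips of the separating line visibly commute) is exactly the bookkeeping the paper leaves implicit.
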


It also follows that in a HW equivalance class there is at most one co-balanced diagram, and at most one balanced one.

None of the brane diagrams 
\[
\ttt{{\fs}1{\fs}3{\fs}4\bs 3\bs 1\bs},\quad  \ttt{{\fs}2{\fs}3{\fs}4\bs 3\bs 1\bs},\quad  \ttt{{\fs}2{\fs}3{\fs}5\bs 3\bs 2\bs},\quad \ttt{{\fs}2{\fs}4{\fs}5\bs 3\bs 2\bs}
\] 
are HW equivalent with a balanced or co-balanced brane diagram (however, c.f. Proposition \ref{prop:HWTUTV}). The opposite case is discussed in the next section.

\subsection{3d mirror symmetry among quiver varieties}
Consider a quiver variety, and its brane diagram $\DD$. Taking the 3d mirror dual $\DD'$ of the brane diagram is not co-balanced, so {\em a priori} its associated bow variety is not a quiver variety. However, $\DD'$ may be Hanany-Witten equivalent to a co-balanced brane diagram. In this case we found two quiver varieties that are 3d mirror duals of each other. For example, $T^*\!\Gr(2,5)=
(\ttt{{\fs}2\bs 2\bs 2\bs 2\bs 2\bs 2{\fs}})$. Carrying out the sequence of Hanany-Witten transitions 

\smallskip

\centerline{
\ttt{\bs 2{\fs}2{\fs}2{\fs}2{\fs}2{\fs}2\bs}  $\leftrightarrow$ \ttt{{\fs}1\bs 2{\fs}2{\fs}2{\fs}2{\fs}2\bs}  $\leftrightarrow$  \ttt{{\fs}1{\fs}2\bs 2{\fs}2{\fs}2{\fs}2\bs}  $\leftrightarrow$ 
\ttt{{\fs}1{\fs}2\bs 2{\fs}2{\fs}2\bs 1{\fs}}  $\leftrightarrow$ \ttt{{\fs}1{\fs}2\bs 2{\fs}2\bs 2{\fs}1{\fs}} 
}
\smallskip

\noindent on the dual diagram  we obtained a co-balanced brane diagram; and in turn that 
\[
\Naka\Big(
  \begin{tikzpicture}[baseline=-25pt,scale=.5]
 \draw[fill] (3,-1) circle (3pt);  
 \draw[thick] (2.9,-2.1) rectangle (3.1,-1.9);
 \draw[thick] (3,-1) -- (3,-1.9);
 \node at (3.45,-1) {$\scriptscriptstyle 2$} ;
 \node at (3.45,-2) {$\scriptscriptstyle 5$} ;   
\end{tikzpicture}
\Big)
\qquad\qquad 
\text{is 3d mirror dual to}
\qquad\qquad 
\Naka\Big(
 \begin{tikzpicture}[baseline=-25pt,scale=.5]
 \draw[thick] (0,-1) -- (4.5,-1); 
  \draw[fill] (0,-1) circle (3pt);  
 \draw[fill] (1.5,-1) circle (3pt);  
 \draw[fill] (3,-1) circle (3pt);  
 \draw[fill] (4.5,-1) circle (3pt);
  \draw[thick] (1.4,-2.1) rectangle (1.6,-1.9);
 \draw[thick] (2.9,-2.1) rectangle (3.1,-1.9);
 \draw[thick] (1.5,-1) -- (1.5,-1.9);
 \draw[thick] (3,-1) -- (3,-1.9);
 \node at (0,-.5) {$\scriptscriptstyle 1$} ;
 \node at (1.5,-.5) {$\scriptscriptstyle 2$} ;
 \node at (3,-.5) {$\scriptscriptstyle 2$} ;
  \node at (4.5,-.5) {$\scriptscriptstyle 1$} ;
\node at (1.5,-2.6) {$\scriptscriptstyle 1$} ;   \node at (3,-2.6) {$\scriptscriptstyle 1$} ;   
\end{tikzpicture}
\Big)
\]
up to Hanany-Witten isomorphism. More generally, the same procedure gives that for $2k\leq n$ the variety $T^*\!\Gr(k,n)=\Naka\Big(
  \begin{tikzpicture}[baseline=-25pt,scale=.5]
 \draw[fill] (3,-1) circle (3pt);  
 \draw[thick] (2.9,-2.1) rectangle (3.1,-1.9);
 \draw[thick] (3,-1) -- (3,-1.9);
 \node at (3.45,-1) {$\scriptscriptstyle k$} ;
 \node at (3.45,-2) {$\scriptscriptstyle n$} ;   
\end{tikzpicture}
\Big)$
is 3d mirror dual---up to HW isomorphism---to $\Naka(Q(v,w))$, where 
\begin{equation}\label{eq:quiverGr}
v=(1,2,\ldots,k-1,\underbrace{k,k,\ldots,k}_{n-2k+1},k-1,\ldots,2,1),
\qquad
w=e_k+e_{n-k}.
\end{equation}
(Here $e_i$ is the $i$th standard basis vector in $\Z^{n-1}$.) This pair of 3d mirror symmetric varieties is explored in terms of elliptic characteristic classes in \cite{RSVZ1}.

Similarly, $\Naka(
\begin{tikzpicture}[baseline=-20pt,scale=.4]
 \draw[fill] (0,-1) circle (3pt);  
 \draw[fill] (1,-1) circle (3pt);  
 \draw[thick] (.9,-2.1) rectangle (1.1,-1.9);  
 \draw[thick] (0,-1) -- (1,-1) -- (1,-1.9);
\node at (0,-.5) {$\scriptscriptstyle 2$} ;
\node at (1,-.5) {$\scriptscriptstyle 6$} ;
\node at (.4,-2) {$\scriptscriptstyle 10$} ;
\end{tikzpicture})$ and 
$\Naka(
\begin{tikzpicture}[baseline=-20pt,scale=.4]
 \draw[fill] (0,-1) circle (3pt);  
 \draw[fill] (1,-1) circle (3pt);  
 \draw[fill] (2,-1) circle (3pt);  
 \draw[fill] (3,-1) circle (3pt);  
 \draw[fill] (4,-1) circle (3pt);  
 \draw[fill] (5,-1) circle (3pt);  
 \draw[fill] (6,-1) circle (3pt);  
 \draw[fill] (7,-1) circle (3pt);  
 \draw[fill] (8,-1) circle (3pt);  
 \draw[thick] (3.1,-2.1) rectangle (2.9,-1.9);
 \draw[thick] (1.1,-2.1) rectangle (0.9,-1.9);
 \draw[thick] (8,-1) -- (0,-1);
 \draw[thick] (3,-1) -- (3,-1.9);
 \draw[thick] (1,-1) -- (1,-1.9);
 \node at (8,-.5) {$\scriptscriptstyle 1$} ;
 \node at (7,-.5) {$\scriptscriptstyle 2$} ;
 \node at (6,-.5) {$\scriptscriptstyle 3$} ;
 \node at (5,-.5) {$\scriptscriptstyle 4$} ;
 \node at (4,-.5) {$\scriptscriptstyle 5$} ;
 \node at (3,-.5) {$\scriptscriptstyle 6$} ;
 \node at (2,-.5) {$\scriptscriptstyle 5$} ;
 \node at (1,-.5) {$\scriptscriptstyle 4$} ;
 \node at (0,-.5) {$\scriptscriptstyle 2$} ; 
   \node at (3.5,-2) {$\scriptscriptstyle 2$} ;
   \node at (.5,-2) {$\scriptscriptstyle 1$} ;
 \end{tikzpicture}
 )$ are 3d mirror duals. This can be proved by a sequence of Hanany-Witten transitions, or alternatively, by verifying that their margin vectors
\begin{align*}
r=(2,4,4), \qquad & \qquad c=(1,1,1,1,1,1,1,1,1,1), \\
r'=(2,2,2,2,2,2,2,2,2,2), \qquad & \qquad c'=(8,6,6)
\end{align*}
satisfy \eqref{eq:rc} from Section \ref{sec:3d}.
The argument of comparing margin vectors via \eqref{eq:rc} has the following straightforward generalization. For $v,w\in \Z_{\geq 0}^n$ define
\[
r(v,w)=(v_i-v_{i-1}+\sum_{j=1}^{i-1} w_j)_{i=1,\ldots,n+1},
\qquad\qquad
c(v,w)=(n^{w_1}, (n-1)^{w_2},\ldots,2^{w_{n-1}},1^{w_n})
\]
where we set $v_0=v_{n+1}=0$. These are the margin vectors of $\Naka(Q(v,w))$ as a bow variety. Hence we obtain

\begin{theorem} \label{thm:QuiverDuals}
Suppose $v,w\in \Z_{\geq 0}^{a-1}$, $v',w'\in \Z_{\geq 0}^{b-1}$ with $\sum w_i=b, \sum w'_i=a$. The quiver varieties $\Naka(Q(v,w))$ and $\Naka(Q(v',w'))$ are 3d mirror duals---up to HW isomorphism---if and only if 
\[
r(v,w)+c(v',w')=(b^a),
\qquad\qquad
c(v,w)+r(v',w')=(a^b).
\] \qed
\end{theorem}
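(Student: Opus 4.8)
The plan is to translate the entire statement into the combinatorics of margin vectors, leaning on three facts established earlier: the identification $\Naka(Q)=\Ch(\DD)$ for the co-balanced brane diagram $\DD$ attached to $Q$ (Section~\ref{sec:QB}), the explicit margin vectors $r(v,w),c(v,w)$ recorded just before the theorem, and the characterization of Hanany--Witten equivalence classes. The key structural input is that, by Lemma~\ref{lem:marginsum} and the discussion of separating lines in Section~\ref{sec:HW}, two brane diagrams with the same numbers of NS5 and D5 branes are Hanany--Witten equivalent \emph{if and only if} they have the same margin vectors $r$ and $c$: their table-with-margins codes then differ only in the separating line, and every separating line is reachable by HW moves.

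First I would fix the bookkeeping of brane counts. Let $\DD$ be the co-balanced diagram with $\Ch(\DD)=\Naka(Q(v,w))$. Since $v,w\in\Z_{\geq 0}^{a-1}$, the quiver has $a-1$ vertices, so $\DD$ has $m=a$ NS5 branes (the length of $r(v,w)$) and, because the length of $c(v,w)=(n^{w_1},\ldots,1^{w_{a-1}})$ is $\sum w_i=b$, exactly $n=b$ D5 branes. Symmetrically, the co-balanced diagram $\DD''$ with $\Ch(\DD'')=\Naka(Q(v',w'))$ has $b$ NS5 branes and $\sum w'_i=a$ D5 branes. Next I would compute the margins of the 3d mirror $\DD'$ of $\DD$: by \eqref{eq:rc}, with $m=a$ and $n=b$ the diagram $\DD'$ has $b$ NS5 branes with row margins $r(\DD')_i=a-c(v,w)_i$ and $a$ D5 branes with column margins $c(\DD')_i=b-r(v,w)_i$, the separating line being the transpose of that of $\DD$. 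In particular $\DD'$ and $\DD''$ have equal brane counts, so comparing their margins is meaningful.

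Finally I would invoke the definition of the duality. The varieties $\Naka(Q(v,w))$ and $\Naka(Q(v',w'))$ are 3d mirror duals up to HW isomorphism precisely when $\DD''$ is Hanany--Witten equivalent to the mirror $\DD'$; by the characterization above this happens exactly when $r(v',w')=r(\DD')$ and $c(v',w')=c(\DD')$. Substituting the formulas from the previous step, these two equalities read $r(v',w')+c(v,w)=(a^b)$ and $c(v',w')+r(v,w)=(b^a)$, which (after reordering the summands) are precisely the two stated identities. The forward and reverse implications are then simultaneous, since each step above is an equivalence. Here one uses that $\DD'$ and $\DD''$ are genuine brane diagrams in the sense of Assumption~\ref{assume}, which holds because they are the mirror, respectively the quiver diagram, of valid diagrams.

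The one place demanding care --- and the step I expect to be the main (if modest) obstacle --- is tracking the transposition in \eqref{eq:rc}: one must confirm that the length-$b$ row-margin vector of $\DD'$ is matched against $r(v',w')$ and the length-$a$ column-margin vector of $\DD'$ against $c(v',w')$, and not the reverse. It is exactly the hypotheses $\sum w_i=b$ and $\sum w'_i=a$ that force the brane counts of $\DD'$ and $\DD''$ to coincide, so that HW equivalence can even be contemplated; verifying this compatibility of lengths is what makes the two displayed identities the correct --- and the only possible --- reformulation of the duality.
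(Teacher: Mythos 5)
Your proposal is correct and follows essentially the same route as the paper: the paper derives this theorem directly from the margin-vector bookkeeping $r(v,w), c(v,w)$ together with the mirror relation \eqref{eq:rc} and the fact that a Hanany--Witten equivalence class is determined by its margin vectors (Lemma \ref{lem:marginsum} plus the reachability of all separating lines). Your write-up merely spells out the brane-count and transposition checks that the paper leaves implicit.
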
 

As a corollary we obtain that the 3d mirror dual of $T^*\!\cF_\lambda$ (for certain $\lambda$'s) 
 have a 3d mirror dual that is isomorphic to a quiver variety, as follows. Let $(\lambda_1,\lambda_2,\ldots,\lambda_N)\in \Z_{\geq 0}^N$ and assume $\lambda_1\leq \lambda_2\leq \ldots \leq \lambda_N$. Consider 
\begin{equation}\label{eq:TF}
X=T^*\!\cF_{\lambda_1,\lambda_1+\lambda_2,\ldots,\lambda_1+\lambda_2+\ldots+\lambda_N}.
\end{equation}
Define 
\[
\mu=( (N-1)^{\lambda_1}, (N-2)^{\lambda_2-\lambda_1}, (N-3)^{\lambda_3-\lambda_2}, \ldots,1^{\lambda_{N-1}-\lambda_{N-2}},0^{\lambda_N-\lambda_{N-1}},
(-1)^{k-1})
\]
where $k=\sum_{j=1}^N (\lambda_j-\lambda_{j-1})(N-j)$ (that is, the sum of the first $\lambda_N$ entries of $\mu$). Define $v,w\in \Z_{\geq 0}^{\lambda_N+k-1}$ by
\[
v_i=\sum_{j=1}^i \mu_{j},
\qquad\qquad
w=e_{\lambda_1}+e_{\lambda_2}+\ldots+e_{\lambda_N},
\]
and $X'=\Naka(Q(v,w))$.

\begin{corollary} \label{cor:DTF}
The 3d mirror dual of $X$ is isomorphic to $X'$. 
\end{corollary}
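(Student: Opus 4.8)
The plan is to deduce the statement from Theorem \ref{thm:QuiverDuals}: I will realize $X$ itself as a quiver variety $\Naka(Q(v^X,w^X))$, compute the four margin vectors $r(v^X,w^X)$, $c(v^X,w^X)$, $r(v,w)$, $c(v,w)$ from the definitions preceding Theorem \ref{thm:QuiverDuals}, and then verify the two numerical identities $r(v^X,w^X)+c(v,w)=(b^a)$ and $c(v^X,w^X)+r(v,w)=(a^b)$ that characterize 3d mirror duality among quiver varieties. Throughout write $L=\sum_{i=1}^N \lambda_i$.

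First I would identify $X$. By the flag-to-quiver dictionary of Section \ref{sec:QB}, $X=T^*\cF_{\lambda_1,\,\lambda_1+\lambda_2,\,\ldots,\,L}$ is the type $A_{N-1}$ quiver variety with dimension vector $v^X=(\lambda_1,\lambda_1+\lambda_2,\ldots,\lambda_1+\cdots+\lambda_{N-1})$ and a single framing $L$ at the last vertex. Feeding this into the definitions of $r,c$ (with the convention $v^X_0=v^X_N=0$), the telescoping $v^X_i-v^X_{i-1}=\lambda_i$ together with the fact that the only nonzero framing sits at vertex $N-1$ give $r(v^X,w^X)=(\lambda_1,\ldots,\lambda_N)$ and $c(v^X,w^X)=(1^L)$. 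Thus for $X$ we have $a=N$ and $b=L$, consistently with the count $\chi(X)=\#\BCT((\lambda_1,\ldots,\lambda_N),(1^L))$ of Corollary \ref{cor:chiQ}.

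Next I would compute the margins of $X'=\Naka(Q(v,w))$. The first key point is a one-line Abel-summation identity, $k=\sum_{j=1}^N(\lambda_j-\lambda_{j-1})(N-j)=\sum_{j=1}^{N-1}\lambda_j$, which gives $\lambda_N+k-1=L-1$, so that $v,w\in\Z_{\geq 0}^{L-1}$; this is precisely what makes the sizes match ($b=L$, $a=N$). The entries of $\mu$ sum to $k+(1-k)=1$, so $v_{L-1}=1$ and $v$ is a genuine (positive) dimension vector. Now $r(v,w)_i=\mu_i+\#\{q:\lambda_q<i\}$: for an index $i$ lying in the $\mu$-block of value $N-p$ (i.e. $\lambda_{p-1}<i\le\lambda_p$) one has $\#\{q:\lambda_q<i\}=p-1$, whence $r(v,w)_i=(N-p)+(p-1)=N-1$; in the trailing $(-1)$-block all $N$ framings have already been counted, giving $-1+N=N-1$ again, and the boundary term is $r_L=0-v_{L-1}+N=N-1$. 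Hence $r(v,w)=((N-1)^L)$. For the columns, $c(v,w)=((L-1)^{w_1},(L-2)^{w_2},\ldots)$, and since $w=\sum_i e_{\lambda_i}$ places its mass at the vertices $p=\lambda_i$, the listed values are exactly $L-\lambda_i$; the monotonicity $\lambda_1\le\cdots\le\lambda_N$ guarantees they appear in the order $(L-\lambda_1,L-\lambda_2,\ldots,L-\lambda_N)$, not merely as a multiset.

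Finally I would assemble the two identities: $r(v^X,w^X)+c(v,w)=(\lambda_i+(L-\lambda_i))_i=(L^N)=(b^a)$ and $c(v^X,w^X)+r(v,w)=(1^L)+((N-1)^L)=(N^L)=(a^b)$. By Theorem \ref{thm:QuiverDuals} this proves that $X$ and $X'$ are 3d mirror duals up to Hanany-Witten isomorphism. I expect the only real obstacle to be bookkeeping: pinning down the index conventions ($v_0=v_{n+1}=0$, $\lambda_0=0$), establishing the Abel-summation value of $k$, and---most delicately---checking that $c(v,w)$ equals $(L-\lambda_1,\ldots,L-\lambda_N)$ as an \emph{ordered} vector, since the identities of Theorem \ref{thm:QuiverDuals} are componentwise; this uses the sortedness of $\lambda$ and the correct handling of repeated values via $w_p=\#\{i:\lambda_i=p\}$.
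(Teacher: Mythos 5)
Your proposal is correct and takes essentially the same route as the paper: the paper's entire proof is the one-liner that the statement ``is a special case of Theorem \ref{thm:QuiverDuals},'' and your computations simply supply the bookkeeping that the paper leaves implicit. In particular, your identifications $r(v^X,w^X)=(\lambda_1,\ldots,\lambda_N)$, $c(v^X,w^X)=(1^L)$, $k=\sum_{j=1}^{N-1}\lambda_j$, $r(v,w)=((N-1)^L)$, and $c(v,w)=(L-\lambda_1,\ldots,L-\lambda_N)$ are all accurate (including the ordering argument using the sortedness of $\lambda$), and they verify exactly the two identities $r(v^X,w^X)+c(v,w)=(L^N)$ and $c(v^X,w^X)+r(v,w)=(N^L)$ required by that theorem.
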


\begin{proof} 
This is a special case of Theorem \ref{thm:QuiverDuals}.
\end{proof}

Another proof---instead of comparing margin vectors---would be to list the necessary Hanany-Witten transitions that connect the 3d mirror dual of the brane diagram of $X$ with the brane diagram of $X'$. For example for $\lambda=(2,3,3)$ it is 

\ttt{{\bs}2{\bs}5{\fs}5{\fs}5{\fs}5{\fs}5{\fs}5{\fs}5{\fs}5{\fs}5{\bs}}  $\leftrightarrow$ \ttt{{\bs}2{\fs}3{\bs}5{\fs}5{\fs}5{\fs}5{\fs}5{\fs}5{\fs}5{\fs}5{\bs}} $\leftrightarrow$  \ttt{{\fs}2{\bs}3{\bs}5{\fs}5{\fs}5{\fs}5{\fs}5{\fs}5{\fs}5{\fs}5{\bs}} $\leftrightarrow$

\ttt{{\fs}2{\bs}3{\fs}4{\bs}5{\fs}5{\fs}5{\fs}5{\fs}5{\fs}5{\fs}5{\bs}} $\leftrightarrow$  \ttt{{\fs}2{\fs}4{\bs}4{\bs}5{\fs}5{\fs}5{\fs}5{\fs}5{\fs}5{\fs}5{\bs}} $\leftrightarrow$ \ttt{{\fs}2{\fs}4{\bs}4{\fs}5{\bs}5{\fs}5{\fs}5{\fs}5{\fs}5{\fs}5{\bs}} $\leftrightarrow$

\ttt{{\fs}2{\fs}4{\bs}4{\fs}5{\bs}5{\fs}5{\fs}5{\fs}5{\fs}5{\bs}1{\fs}} $\leftrightarrow$ \ttt{{\fs}2{\fs}4{\bs}4{\fs}5{\bs}5{\fs}5{\fs}5{\fs}5{\bs}2{\fs}1{\fs}} $\leftrightarrow$  \ttt{{\fs}2{\fs}4{\bs}4{\fs}5{\bs}5{\fs}5{\fs}5{\bs}3{\fs}2{\fs}1{\fs}} $\leftrightarrow$

\ttt{{\fs}2{\fs}4{\bs}4{\fs}5{\bs}5{\fs}5{\bs}4{\fs}3{\fs}2{\fs}1{\fs}} $\leftrightarrow$ \ttt{{\fs}2{\fs}4{\bs}4{\fs}5{\bs}5{\bs}5{\fs}4{\fs}3{\fs}2{\fs}1{\fs}}.

\noindent A remarkable even further special case is the pair of quivers 
\begin{equation}\label{eq:fullflag}
\begin{tikzpicture}[scale=.4]
 \draw[thick] (1.5,-1) -- (3.5,-1);  \draw[thick] (5.6,-1) -- (9,-1); 
  \draw[fill] (1.5,-1) circle (3pt);  
 \draw[fill] (3,-1) circle (3pt);  
    \node at (4.5,-1) {$\cdots$};  
 \draw[fill] (6,-1) circle (3pt); 
 \draw[fill] (7.5,-1) circle (3pt);  
 \draw[fill] (9,-1) circle (3pt);  

 \draw[thick] (8.9,-2.1) rectangle (9.1,-1.9);
 \draw[thick] (9,-1) -- (9,-1.9);
 \node at (1.5,-.5) {$\scriptscriptstyle 1$} ;
 \node at (3,-.5) {$\scriptscriptstyle 2$} ;
 \node at (6,-.5) {$\scriptscriptstyle n-3\ $} ;
 \node at (7.5,-.5) {$\scriptscriptstyle n-2$} ;
 \node at (9,-.5) {$\scriptscriptstyle \ n-1$} ;
 \node at (9.5,-2) {$\scriptscriptstyle n$} ;
\end{tikzpicture}
\qquad\qquad
\begin{tikzpicture}[scale=.4]
 \draw[thick] (1.5,-1) -- (3.5,-1);  \draw[thick] (5.6,-1) -- (9,-1); 
  \draw[fill] (1.5,-1) circle (3pt);  
 \draw[fill] (3,-1) circle (3pt);  
    \node at (4.5,-1) {$\cdots$};  
 \draw[fill] (6,-1) circle (3pt); 
 \draw[fill] (7.5,-1) circle (3pt);  
 \draw[fill] (9,-1) circle (3pt);  

 \draw[thick] (1.4,-2.1) rectangle (1.6,-1.9);
 \draw[thick] (1.5,-1) -- (1.5,-1.9);
 \node at (1.5,-.5) {$\scriptscriptstyle n-1$} ;
 \node at (3,-.5) {$\scriptscriptstyle n-2$} ;
 \node at (6,-.5) {$\scriptscriptstyle 3$} ;
 \node at (7.5,-.5) {$\scriptscriptstyle 2$} ;
 \node at (9,-.5) {$\scriptscriptstyle 1$} ;
 \node at (1,-2) {$\scriptscriptstyle n$} ;
\end{tikzpicture}.
\end{equation}
According to Corollary \ref{cor:DTF} (or direct calculation of HW moves) they are 3d mirror duals of each other (up to Hanany-Witten transitions). Both of them are isomorphic to the cotangent bundle of a full flag variety of $\C^n$ (the right hand one with non-standard torus action). This 3d mirror self-symmetry is explored in terms of elliptic characteristic classes in \cite{RSVZ2, RW2}.

\begin{remark} \rm
In this section we named pairs of quiver varieties $X$ and $X'$ such that $X'$ is isomorphic to the 3d mirror dual of $X$. However, the isomorphism was through Hanany-Witten transitions, and under such the torus action gets reparametrized. Hence in future equivariant cohomological studies 
those torus-reparameterizations need to be taken into account. In fact---without the knowledge of bow varieties or HW transition---such a reparameterization was artificially introduced in \cite{RSVZ1, RW2}.
\end{remark}

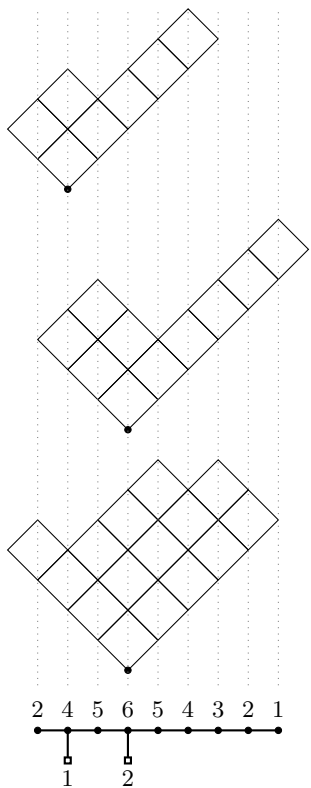
\begin{figure}
\[
\begin{tikzpicture}[scale=.4]
 \draw[fill] (0,-1) circle (3pt);  
 \draw[fill] (1,-1) circle (3pt);  
 \draw[fill] (2,-1) circle (3pt);  
 \draw[fill] (3,-1) circle (3pt);  
 \draw[fill] (4,-1) circle (3pt);  
 \draw[fill] (5,-1) circle (3pt);  
 \draw[fill] (6,-1) circle (3pt);  
 \draw[fill] (7,-1) circle (3pt);  
 \draw[fill] (8,-1) circle (3pt);  
 \draw[thick] (3.1,-2.1) rectangle (2.9,-1.9);
 \draw[thick] (1.1,-2.1) rectangle (0.9,-1.9);
 \draw[thick] (8,-1) -- (0,-1);
 \draw[thick] (3,-1) -- (3,-1.9);
 \draw[thick] (1,-1) -- (1,-1.9);
 \node at (8,-.3) {\scriptsize $1$} ;
 \node at (7,-.3) {\scriptsize $2$} ;
 \node at (6,-.3) {\scriptsize $3$} ;
 \node at (5,-.3) {\scriptsize $4$} ;
 \node at (4,-.3) {\scriptsize $5$} ;
 \node at (3,-.3) {\scriptsize $6$} ;
 \node at (2,-.3) {\scriptsize $5$} ;
 \node at (1,-.3) {\scriptsize $4$} ;
 \node at (0,-.3) {\scriptsize $2$} ; 
   \node at (3,-2.6) {\scriptsize $2$} ;
   \node at (1,-2.6) {\scriptsize $1$} ;
 
\draw[fill] (3,1) circle (3pt); 
        \coordinate (a) at (3,1);
        \draw ($(a)$) -- ($(a) + (1,1)$) -- ($(a) + (0,2)$)--($(a) + (-1,1)$) -- ($(a)$);
        \coordinate (a) at (2,2);
        \draw ($(a)$) -- ($(a) + (1,1)$) -- ($(a) + (0,2)$)--($(a) + (-1,1)$) -- ($(a)$);
        \coordinate (a) at (1,3);
        \draw ($(a)$) -- ($(a) + (1,1)$) -- ($(a) + (0,2)$)--($(a) + (-1,1)$) -- ($(a)$);
        \coordinate (a) at (0,4);
        \draw ($(a)$) -- ($(a) + (1,1)$) -- ($(a) + (0,2)$)--($(a) + (-1,1)$) -- ($(a)$);
        \coordinate (a) at (4,2);
        \draw ($(a)$) -- ($(a) + (1,1)$) -- ($(a) + (0,2)$)--($(a) + (-1,1)$) -- ($(a)$);
        \coordinate (a) at (3,3);
        \draw ($(a)$) -- ($(a) + (1,1)$) -- ($(a) + (0,2)$)--($(a) + (-1,1)$) -- ($(a)$);
        \coordinate (a) at (2,4);
        \draw ($(a)$) -- ($(a) + (1,1)$) -- ($(a) + (0,2)$)--($(a) + (-1,1)$) -- ($(a)$);
        \coordinate (a) at (5,3);
        \draw ($(a)$) -- ($(a) + (1,1)$) -- ($(a) + (0,2)$)--($(a) + (-1,1)$) -- ($(a)$);
        \coordinate (a) at (4,4);
        \draw ($(a)$) -- ($(a) + (1,1)$) -- ($(a) + (0,2)$)--($(a) + (-1,1)$) -- ($(a)$);
        \coordinate (a) at (3,5);
        \draw ($(a)$) -- ($(a) + (1,1)$) -- ($(a) + (0,2)$)--($(a) + (-1,1)$) -- ($(a)$);
        \coordinate (a) at (6,4);
        \draw ($(a)$) -- ($(a) + (1,1)$) -- ($(a) + (0,2)$)--($(a) + (-1,1)$) -- ($(a)$);
        \coordinate (a) at (5,5);
        \draw ($(a)$) -- ($(a) + (1,1)$) -- ($(a) + (0,2)$)--($(a) + (-1,1)$) -- ($(a)$);
        \coordinate (a) at (4,6);
        \draw ($(a)$) -- ($(a) + (1,1)$) -- ($(a) + (0,2)$)--($(a) + (-1,1)$) -- ($(a)$);
        \coordinate (a) at (7,5);
        \draw ($(a)$) -- ($(a) + (1,1)$) -- ($(a) + (0,2)$)--($(a) + (-1,1)$) -- ($(a)$);
        \coordinate (a) at (6,6);
        \draw ($(a)$) -- ($(a) + (1,1)$) -- ($(a) + (0,2)$)--($(a) + (-1,1)$) -- ($(a)$);
\draw[fill] (3,9) circle (3pt); 
        \coordinate (a) at (3,9);
        \draw ($(a)$) -- ($(a) + (1,1)$) -- ($(a) + (0,2)$)--($(a) + (-1,1)$) -- ($(a)$);
         \coordinate (a) at (2,10);
        \draw ($(a)$) -- ($(a) + (1,1)$) -- ($(a) + (0,2)$)--($(a) + (-1,1)$) -- ($(a)$);
        \coordinate (a) at (1,11);
        \draw ($(a)$) -- ($(a) + (1,1)$) -- ($(a) + (0,2)$)--($(a) + (-1,1)$) -- ($(a)$);
        \coordinate (a) at (4,10);
        \draw ($(a)$) -- ($(a) + (1,1)$) -- ($(a) + (0,2)$)--($(a) + (-1,1)$) -- ($(a)$);
        \coordinate (a) at (3,11);
        \draw ($(a)$) -- ($(a) + (1,1)$) -- ($(a) + (0,2)$)--($(a) + (-1,1)$) -- ($(a)$);
        \coordinate (a) at (2,12);
        \draw ($(a)$) -- ($(a) + (1,1)$) -- ($(a) + (0,2)$)--($(a) + (-1,1)$) -- ($(a)$);
        \coordinate (a) at (5,11);
        \draw ($(a)$) -- ($(a) + (1,1)$) -- ($(a) + (0,2)$)--($(a) + (-1,1)$) -- ($(a)$);
        \coordinate (a) at (6,12);
        \draw ($(a)$) -- ($(a) + (1,1)$) -- ($(a) + (0,2)$)--($(a) + (-1,1)$) -- ($(a)$);
        \coordinate (a) at (7,13);
        \draw ($(a)$) -- ($(a) + (1,1)$) -- ($(a) + (0,2)$)--($(a) + (-1,1)$) -- ($(a)$);
        \coordinate (a) at (8,14);
        \draw ($(a)$) -- ($(a) + (1,1)$) -- ($(a) + (0,2)$)--($(a) + (-1,1)$) -- ($(a)$);
\draw[fill] (1,17) circle (3pt); 
        \coordinate (a) at (1,17);
        \draw ($(a)$) -- ($(a) + (1,1)$) -- ($(a) + (0,2)$)--($(a) + (-1,1)$) -- ($(a)$);
         \coordinate (a) at (0,18);
        \draw ($(a)$) -- ($(a) + (1,1)$) -- ($(a) + (0,2)$)--($(a) + (-1,1)$) -- ($(a)$);
        \coordinate (a) at (2,18);
        \draw ($(a)$) -- ($(a) + (1,1)$) -- ($(a) + (0,2)$)--($(a) + (-1,1)$) -- ($(a)$);
        \coordinate (a) at (1,19);
        \draw ($(a)$) -- ($(a) + (1,1)$) -- ($(a) + (0,2)$)--($(a) + (-1,1)$) -- ($(a)$);
        \coordinate (a) at (3,19);
        \draw ($(a)$) -- ($(a) + (1,1)$) -- ($(a) + (0,2)$)--($(a) + (-1,1)$) -- ($(a)$);
        \coordinate (a) at (4,20);
        \draw ($(a)$) -- ($(a) + (1,1)$) -- ($(a) + (0,2)$)--($(a) + (-1,1)$) -- ($(a)$);
        \coordinate (a) at (5,21);
        \draw ($(a)$) -- ($(a) + (1,1)$) -- ($(a) + (0,2)$)--($(a) + (-1,1)$) -- ($(a)$);
        \coordinate (a) at (6,22);

\draw [dotted, gray] (0,0.5) -- (0,23);
\draw [dotted, gray] (1,0.5) -- (1,23);
\draw [dotted, gray] (2,0.5) -- (2,23);
\draw [dotted, gray] (3,0.5) -- (3,23);
\draw [dotted, gray] (4,0.5) -- (4,23);
\draw [dotted, gray] (5,0.5) -- (5,23);
\draw [dotted, gray] (6,0.5) -- (6,23);
\draw [dotted, gray] (7,0.5) -- (7,23);
\draw [dotted, gray] (8,0.5) -- (8,23);
 \end{tikzpicture}
\]
\caption{The combinatorial code for one of the 3,150 fixed points of the quiver variety $\Naka((2,4,5,6,5,4,3,2,1),(0,1,0,2,0,0,0,0,0,0))$.} \label{fig:Qfix}
\end{figure}

\subsection{Combinatorial codes for the $\T$ fixed points of $\Naka(Q)$} \label{sec:QFix}
Let us identify a partition with its Young diagram {\em standing on its corner}. That is we rotate the usual pictures of Young diagrams by $45^\circ$.  Above each position of the quiver, consider an ordered $w_i$-tuple of partitions (whose box diagonals have the same length as the segments of the quiver), drawn disjoint and ordered from bottom to top (see Figure \ref{fig:Qfix}). If  such a  $\sum w_i$-tuple of partitions is given so that above each position of the quiver the total number of partition-boxes is $v_i$ then this tuple is a combinatorial code of a $\T$ fixed point on $\Naka(Q)$---see \cite[\S 5.2]{NakajimaBook} for a special case that generalizes to type A quivers. See Figure \ref{fig:Qfix} for the code of a fixed point of a quiver variety.

\subsection{Fixed point codes on quiver vs bow varieties}
As quiver varieties are bow varieties, a fixed point code like Figure \ref{fig:Qfix} has a combinatorial code as a fixed point in a bow variety. The process of going from the sequence of partitions to the corresponding tie diagram is easy, but instead of writing complicated formulas, we leave the details to the reader---the process can be read from comparing Figures \ref{fig:Qfix} and \ref{fig:parts2ties}.

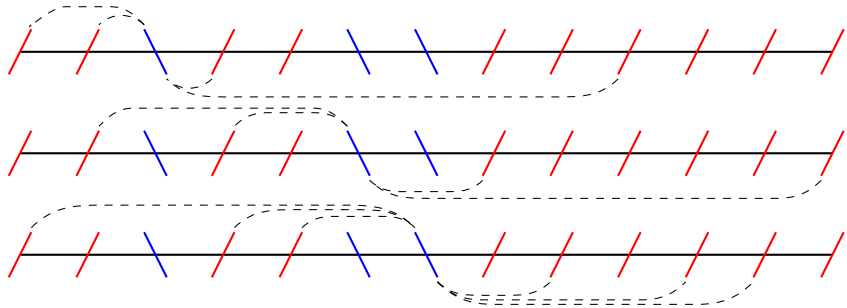
\begin{figure}
\[
\begin{tikzpicture}[scale=.3]
\draw[thick] (0,1)--(36,1) ;
\draw[thick,red] (-.5,0)--(.5,2);
\draw[thick,red] (2.5,0)--(3.5,2);
\draw[thick,blue] (6.5,0)--(5.5,2);
\draw[thick,red] (8.5,0)--(9.5,2);
\draw[thick,red] (11.5,0)--(12.5,2);
\draw[thick,blue] (15.5,0)--(14.5,2);
\draw[thick,blue] (18.5,0)--(17.5,2);
\draw[thick,red] (20.5,0)--(21.5,2);
\draw[thick,red] (23.5,0)--(24.5,2);
\draw[thick,red] (26.5,0)--(27.5,2);
\draw[thick,red] (29.5,0)--(30.5,2);
\draw[thick,red] (32.5,0)--(33.5,2);
\draw[thick,red] (35.5,0)--(36.5,2);

\draw [dashed, black](26.5,-.2) to [out=180+45,in=0] (23.5,-1) to (9.5,-1) to [out=180,in=-45] (6.5,-.2);
\draw [dashed, black](8.5,-.2) to [out=180+45,in=180-225] (6.5,-.2);
\draw [dashed, black](5.5,2.2) to [out=180-45,in=180+225] (3.5,2.2);
\draw [dashed, black](5.5,2.2) to [out=180-45,in=0]  (4,3) to (2,3) to [out=180,in=180+45] (.5,2.2);

\begin{scope}[yshift=-4.5cm]
\draw[thick] (0,1)--(36,1) ;
\draw[thick,red] (-.5,0)--(.5,2);
\draw[thick,red] (2.5,0)--(3.5,2);
\draw[thick,blue] (6.5,0)--(5.5,2);
\draw[thick,red] (8.5,0)--(9.5,2);
\draw[thick,red] (11.5,0)--(12.5,2);
\draw[thick,blue] (15.5,0)--(14.5,2);
\draw[thick,blue] (18.5,0)--(17.5,2);
\draw[thick,red] (20.5,0)--(21.5,2);
\draw[thick,red] (23.5,0)--(24.5,2);
\draw[thick,red] (26.5,0)--(27.5,2);
\draw[thick,red] (29.5,0)--(30.5,2);
\draw[thick,red] (32.5,0)--(33.5,2);
\draw[thick,red] (35.5,0)--(36.5,2);
\draw [dashed, black](35.5,-.2) to [out=180+45,in=0] (33.5,-1) to (18.5,-1) to [out=180,in=-45] (15.5,-.2);
\draw [dashed, black](20.5,-.2) to [out=180+45,in=0] (18.5,-.7) to (17.5,-.7) to [out=180,in=-45]  (15.5,-.2);
\draw [dashed, black](14.5,2.2) to [out=135,in=0] (13,2.8) to (11,2.8) to [out=180,in=45] (9.5,2.2);
\draw [dashed, black](14.5,2.2) to [out=135,in=0]  (13,3) to (5,3) to [out=180,in=45] (3.5,2.2);
\end{scope}

\begin{scope}[yshift=-9cm]
\draw[thick] (0,1)--(36,1) ;
\draw[thick,red] (-.5,0)--(.5,2);
\draw[thick,red] (2.5,0)--(3.5,2);
\draw[thick,blue] (6.5,0)--(5.5,2);
\draw[thick,red] (8.5,0)--(9.5,2);
\draw[thick,red] (11.5,0)--(12.5,2);
\draw[thick,blue] (15.5,0)--(14.5,2);
\draw[thick,blue] (18.5,0)--(17.5,2);
\draw[thick,red] (20.5,0)--(21.5,2);
\draw[thick,red] (23.5,0)--(24.5,2);
\draw[thick,red] (26.5,0)--(27.5,2);
\draw[thick,red] (29.5,0)--(30.5,2);
\draw[thick,red] (32.5,0)--(33.5,2);
\draw[thick,red] (35.5,0)--(36.5,2);
\draw [dashed, black](.5,2.2) to [out=45,in=180] (3.5,3.2) to (14.5,3.2) to [out=0,in=-225] (17.5,2.2);
\draw [dashed, black](9.5,2.2) to [out=45,in=180] (12.5,3) to (14.5,3) to [out=0,in=-225]  (17.5,2.2);
\draw [dashed, black](12.5,2.2) to [out=45,in=180] (14.5,2.7) to (15.5,2.7) to [out=0,in=-225]  (17.5,2.2);
\draw [dashed, black](18.5,-.2) to [out=-45,in=180] (20.5,-0.8) to (21.5,-0.8) to [out=0,in=225] (23.5,-.2);
\draw [dashed, black](18.5,-.2) to [out=-45,in=180] (20.5,-1) to (27.5,-1) to [out=0,in=225] (29.5,-.2);
\draw [dashed, black](18.5,-.2) to [out=-45,in=180]  (21,-1.2) to (30,-1.2) to [out=0,in=225] (32.5,-.2);
\end{scope}
\end{tikzpicture}
\]
\caption{The union of these three diagrams is the tie diagram encoding the fixed point of Figure \ref{fig:Qfix} in bow variety language.} \label{fig:parts2ties}
\end{figure}

\subsection{Fixed point identification between $T^*\!\cF_v$ and its dual}
In Corollary \ref{cor:DTF} we identified two quiver varieties---namely the cotangent bundle of a partial flag variety and its dual---up to 3d mirror symmetry and Hanany-Witten transition. Under each of these operations there is a natural matching of the torus fixed points. The net effect is a bijection between the torus fixed points of $X$ and those of $X'$. Carefully walking through the steps of the identification we get a bijection between the combinatorial codes of the fixed points. This is illustrated in Figure \ref{fig:QFixBijection}.
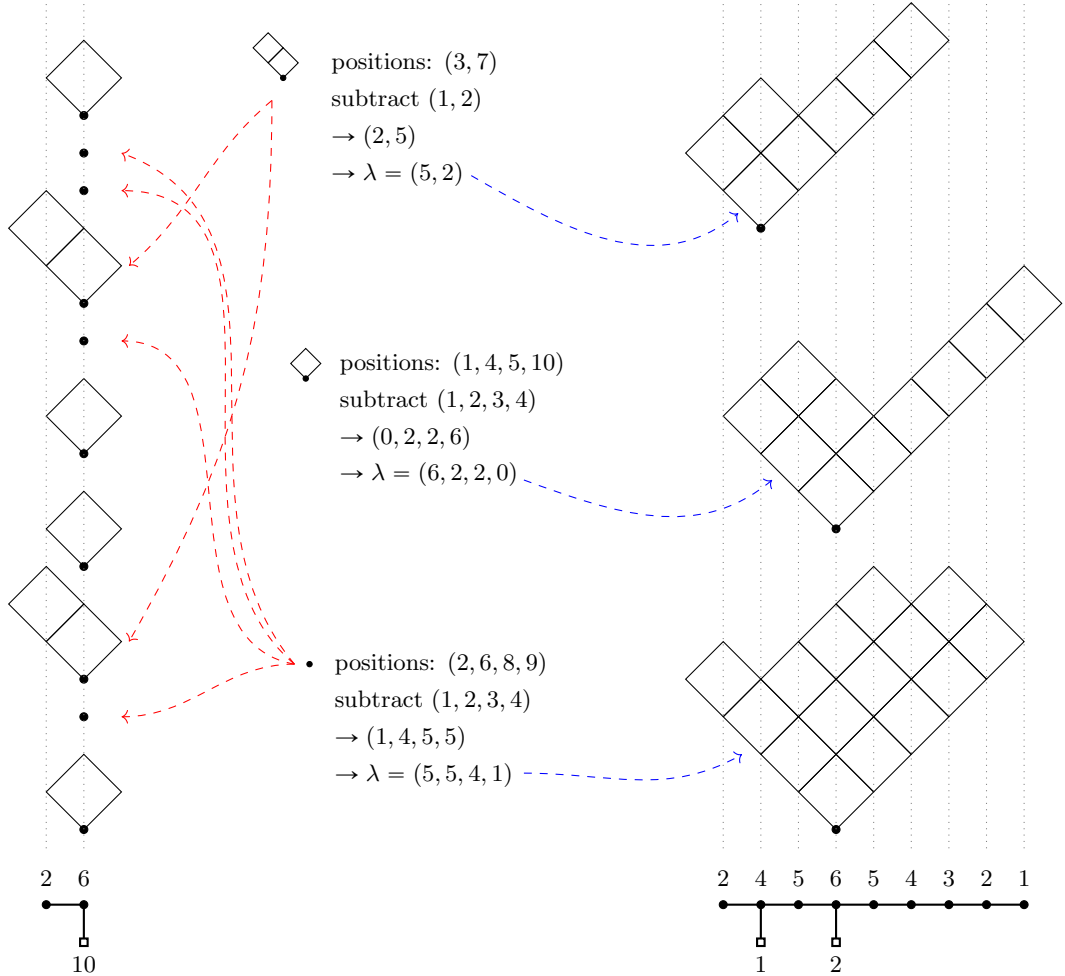
\begin{figure}
\[
 \begin{tikzpicture}[scale=.5]
  
 \draw[fill] (0,-1) circle (3pt);  
 \draw[fill] (1,-1) circle (3pt);  
 \draw[thick] (.9,-2.1) rectangle (1.1,-1.9);  
 \draw[thick] (0,-1) -- (1,-1) -- (1,-1.9);
\node at (0,-.3) {\scriptsize $2$} ;
\node at (1,-.3) {\scriptsize $6$} ;
\node at (1,-2.6) {\scriptsize $10$} ;

 \draw[fill] (1,1) circle (3pt);  
        \coordinate (a) at (1,1);
        \draw ($(a)$) -- ($(a) + (1,1)$) -- ($(a) + (0,2)$)--($(a) + (-1,1)$) -- ($(a)$);
 \draw[fill] (1,4) circle (3pt);  
 \draw[fill] (1,5) circle (3pt); 
        \coordinate (a) at (1,5);
        \draw ($(a)$) -- ($(a) + (1,1)$) -- ($(a) + (0,2)$)--($(a) + (-1,1)$) -- ($(a)$);
        \coordinate (a) at (0,6);
        \draw ($(a)$) -- ($(a) + (1,1)$) -- ($(a) + (0,2)$)--($(a) + (-1,1)$) -- ($(a)$);
 \draw[fill] (1,8) circle (3pt); 
        \coordinate (a) at (1,8);
        \draw ($(a)$) -- ($(a) + (1,1)$) -- ($(a) + (0,2)$)--($(a) + (-1,1)$) -- ($(a)$);
\draw[fill] (1,11) circle (3pt); 
        \coordinate (a) at (1,11);
        \draw ($(a)$) -- ($(a) + (1,1)$) -- ($(a) + (0,2)$)--($(a) + (-1,1)$) -- ($(a)$);
\draw[fill] (1,14) circle (3pt); 
\draw[fill] (1,15) circle (3pt); 
        \coordinate (a) at (1,15);
        \draw ($(a)$) -- ($(a) + (1,1)$) -- ($(a) + (0,2)$)--($(a) + (-1,1)$) -- ($(a)$);
        \coordinate (a) at (0,16);
        \draw ($(a)$) -- ($(a) + (1,1)$) -- ($(a) + (0,2)$)--($(a) + (-1,1)$) -- ($(a)$);
\draw[fill] (1,18) circle (3pt); 
\draw[fill] (1,19) circle (3pt); 
\draw[fill] (1,20) circle (3pt); 
        \coordinate (a) at (1,20);
        \draw ($(a)$) -- ($(a) + (1,1)$) -- ($(a) + (0,2)$)--($(a) + (-1,1)$) -- ($(a)$);

\draw [dotted, gray] (1,0.5) -- (1,23);
\draw [dotted, gray] (0,0.5) -- (0,23);

\coordinate (a) at (7,5.4);
\draw[fill] (a) circle (2pt); 
\node[align=left, below] at (10.5,6)
{\scriptsize positions: \scriptsize $(2,6,8,9)$ \\ \scriptsize subtract $(1,2,3,4)$ \\ \scriptsize$\to (1,4,5,5)$ \\ \scriptsize $\to \lambda=(5,5,4,1)$};
\draw [red,dashed,->](6.6,5.4) to [out=180,in=0] (2,4);
\draw [red,dashed,->](6.6,5.4) to [out=170,in=0] (2,14);
\draw [red,dashed,->](6.6,5.4) to [out=140,in=0] (2,18);
\draw [red,dashed,->](6.6,5.4) to [out=130,in=340] (2,19);
\draw [blue,dashed,->](12.7,2.5) to [out=0,in=210] (18.5,3);

\coordinate (a) at (6.9,13);
\draw[fill] (a) circle (2pt); 
\draw ($(a)$) -- ($(a) + (.4,.4)$) -- ($(a) + (0,.8)$)--($(a) + (-.4,.4)$) -- ($(a)$);
\node[align=left, below] at (10.8,14)
{\scriptsize positions: \scriptsize $(1,4,5,10)$ \\ \scriptsize subtract $(1,2,3,4)$ \\ \scriptsize$\to (0,2,2,6)$ \\ \scriptsize $\to \lambda=(6,2,2,0)$};
\draw [blue,dashed,->](12.7,10.3) to [out=-20,in=220] (19.3,10.3);

\coordinate (a) at (6.3,21);
\draw[fill] (a) circle (2pt); 
\draw ($(a)$) -- ($(a) + (.4,.4)$) -- ($(a) + (0,.8)$)--($(a) + (-.4,.4)$) -- ($(a)$);
\coordinate (a) at (5.9,21.4);
\draw ($(a)$) -- ($(a) + (.4,.4)$) -- ($(a) + (0,.8)$)--($(a) + (-.4,.4)$) -- ($(a)$);
\node[align=left, below] at (9.8,22)
{\scriptsize positions: \scriptsize $(3,7)$ \\ \scriptsize subtract $(1,2)$ \\ \scriptsize$\to (2,5)$ \\ \scriptsize $\to \lambda=(5,2)$};
  \draw [red,dashed,->](6,20.4) to [out=270,in=60] (2.2,6);
  \draw [red,dashed,->](6,20.4) to [out=220,in=40] (2.2,16);
  \draw [blue,dashed,->](11.3,18.4) to [out=-30,in=220] (18.4,17.4);

\begin{scope}[xshift=18cm]
 \draw[fill] (0,-1) circle (3pt);  
 \draw[fill] (1,-1) circle (3pt);  
 \draw[fill] (2,-1) circle (3pt);  
 \draw[fill] (3,-1) circle (3pt);  
 \draw[fill] (4,-1) circle (3pt);  
 \draw[fill] (5,-1) circle (3pt);  
 \draw[fill] (6,-1) circle (3pt);  
 \draw[fill] (7,-1) circle (3pt);  
 \draw[fill] (8,-1) circle (3pt);  
 \draw[thick] (3.1,-2.1) rectangle (2.9,-1.9);
 \draw[thick] (1.1,-2.1) rectangle (0.9,-1.9);
 \draw[thick] (8,-1) -- (0,-1);
 \draw[thick] (3,-1) -- (3,-1.9);
 \draw[thick] (1,-1) -- (1,-1.9);
 \node at (8,-.3) {\scriptsize $1$} ;
 \node at (7,-.3) {\scriptsize $2$} ;
 \node at (6,-.3) {\scriptsize $3$} ;
 \node at (5,-.3) {\scriptsize $4$} ;
 \node at (4,-.3) {\scriptsize $5$} ;
 \node at (3,-.3) {\scriptsize $6$} ;
 \node at (2,-.3) {\scriptsize $5$} ;
 \node at (1,-.3) {\scriptsize $4$} ;
 \node at (0,-.3) {\scriptsize $2$} ; 
   \node at (3,-2.6) {\scriptsize $2$} ;
   \node at (1,-2.6) {\scriptsize $1$} ;
 
\draw[fill] (3,1) circle (3pt); 
        \coordinate (a) at (3,1);
        \draw ($(a)$) -- ($(a) + (1,1)$) -- ($(a) + (0,2)$)--($(a) + (-1,1)$) -- ($(a)$);
        \coordinate (a) at (2,2);
        \draw ($(a)$) -- ($(a) + (1,1)$) -- ($(a) + (0,2)$)--($(a) + (-1,1)$) -- ($(a)$);
        \coordinate (a) at (1,3);
        \draw ($(a)$) -- ($(a) + (1,1)$) -- ($(a) + (0,2)$)--($(a) + (-1,1)$) -- ($(a)$);
        \coordinate (a) at (0,4);
        \draw ($(a)$) -- ($(a) + (1,1)$) -- ($(a) + (0,2)$)--($(a) + (-1,1)$) -- ($(a)$);
        \coordinate (a) at (4,2);
        \draw ($(a)$) -- ($(a) + (1,1)$) -- ($(a) + (0,2)$)--($(a) + (-1,1)$) -- ($(a)$);
        \coordinate (a) at (3,3);
        \draw ($(a)$) -- ($(a) + (1,1)$) -- ($(a) + (0,2)$)--($(a) + (-1,1)$) -- ($(a)$);
        \coordinate (a) at (2,4);
        \draw ($(a)$) -- ($(a) + (1,1)$) -- ($(a) + (0,2)$)--($(a) + (-1,1)$) -- ($(a)$);
        \coordinate (a) at (5,3);
        \draw ($(a)$) -- ($(a) + (1,1)$) -- ($(a) + (0,2)$)--($(a) + (-1,1)$) -- ($(a)$);
        \coordinate (a) at (4,4);
        \draw ($(a)$) -- ($(a) + (1,1)$) -- ($(a) + (0,2)$)--($(a) + (-1,1)$) -- ($(a)$);
        \coordinate (a) at (3,5);
        \draw ($(a)$) -- ($(a) + (1,1)$) -- ($(a) + (0,2)$)--($(a) + (-1,1)$) -- ($(a)$);
        \coordinate (a) at (6,4);
        \draw ($(a)$) -- ($(a) + (1,1)$) -- ($(a) + (0,2)$)--($(a) + (-1,1)$) -- ($(a)$);
        \coordinate (a) at (5,5);
        \draw ($(a)$) -- ($(a) + (1,1)$) -- ($(a) + (0,2)$)--($(a) + (-1,1)$) -- ($(a)$);
        \coordinate (a) at (4,6);
        \draw ($(a)$) -- ($(a) + (1,1)$) -- ($(a) + (0,2)$)--($(a) + (-1,1)$) -- ($(a)$);
        \coordinate (a) at (7,5);
        \draw ($(a)$) -- ($(a) + (1,1)$) -- ($(a) + (0,2)$)--($(a) + (-1,1)$) -- ($(a)$);
        \coordinate (a) at (6,6);
        \draw ($(a)$) -- ($(a) + (1,1)$) -- ($(a) + (0,2)$)--($(a) + (-1,1)$) -- ($(a)$);
\draw[fill] (3,9) circle (3pt); 
        \coordinate (a) at (3,9);
        \draw ($(a)$) -- ($(a) + (1,1)$) -- ($(a) + (0,2)$)--($(a) + (-1,1)$) -- ($(a)$);
         \coordinate (a) at (2,10);
        \draw ($(a)$) -- ($(a) + (1,1)$) -- ($(a) + (0,2)$)--($(a) + (-1,1)$) -- ($(a)$);
        \coordinate (a) at (1,11);
        \draw ($(a)$) -- ($(a) + (1,1)$) -- ($(a) + (0,2)$)--($(a) + (-1,1)$) -- ($(a)$);
        \coordinate (a) at (4,10);
        \draw ($(a)$) -- ($(a) + (1,1)$) -- ($(a) + (0,2)$)--($(a) + (-1,1)$) -- ($(a)$);
        \coordinate (a) at (3,11);
        \draw ($(a)$) -- ($(a) + (1,1)$) -- ($(a) + (0,2)$)--($(a) + (-1,1)$) -- ($(a)$);
        \coordinate (a) at (2,12);
        \draw ($(a)$) -- ($(a) + (1,1)$) -- ($(a) + (0,2)$)--($(a) + (-1,1)$) -- ($(a)$);
        \coordinate (a) at (5,11);
        \draw ($(a)$) -- ($(a) + (1,1)$) -- ($(a) + (0,2)$)--($(a) + (-1,1)$) -- ($(a)$);
        \coordinate (a) at (6,12);
        \draw ($(a)$) -- ($(a) + (1,1)$) -- ($(a) + (0,2)$)--($(a) + (-1,1)$) -- ($(a)$);
        \coordinate (a) at (7,13);
        \draw ($(a)$) -- ($(a) + (1,1)$) -- ($(a) + (0,2)$)--($(a) + (-1,1)$) -- ($(a)$);
        \coordinate (a) at (8,14);
        \draw ($(a)$) -- ($(a) + (1,1)$) -- ($(a) + (0,2)$)--($(a) + (-1,1)$) -- ($(a)$);
\draw[fill] (1,17) circle (3pt); 
        \coordinate (a) at (1,17);
        \draw ($(a)$) -- ($(a) + (1,1)$) -- ($(a) + (0,2)$)--($(a) + (-1,1)$) -- ($(a)$);
         \coordinate (a) at (0,18);
        \draw ($(a)$) -- ($(a) + (1,1)$) -- ($(a) + (0,2)$)--($(a) + (-1,1)$) -- ($(a)$);
        \coordinate (a) at (2,18);
        \draw ($(a)$) -- ($(a) + (1,1)$) -- ($(a) + (0,2)$)--($(a) + (-1,1)$) -- ($(a)$);
        \coordinate (a) at (1,19);
        \draw ($(a)$) -- ($(a) + (1,1)$) -- ($(a) + (0,2)$)--($(a) + (-1,1)$) -- ($(a)$);
        \coordinate (a) at (3,19);
        \draw ($(a)$) -- ($(a) + (1,1)$) -- ($(a) + (0,2)$)--($(a) + (-1,1)$) -- ($(a)$);
        \coordinate (a) at (4,20);
        \draw ($(a)$) -- ($(a) + (1,1)$) -- ($(a) + (0,2)$)--($(a) + (-1,1)$) -- ($(a)$);
        \coordinate (a) at (5,21);
        \draw ($(a)$) -- ($(a) + (1,1)$) -- ($(a) + (0,2)$)--($(a) + (-1,1)$) -- ($(a)$);
        \coordinate (a) at (6,22);

\draw [dotted, gray] (0,0.5) -- (0,23);
\draw [dotted, gray] (1,0.5) -- (1,23);
\draw [dotted, gray] (2,0.5) -- (2,23);
\draw [dotted, gray] (3,0.5) -- (3,23);
\draw [dotted, gray] (4,0.5) -- (4,23);
\draw [dotted, gray] (5,0.5) -- (5,23);
\draw [dotted, gray] (6,0.5) -- (6,23);
\draw [dotted, gray] (7,0.5) -- (7,23);
\draw [dotted, gray] (8,0.5) -- (8,23);
 
\end{scope}
  \end{tikzpicture}
\]
\caption{Combinatorial description of the bijection between the torus fixed points of $T^*\!\Fl_{2,4,4}$ and its 3d mirror dual. (``Positions'' on the left are counted from below.)} \label{fig:QFixBijection}
\end{figure}
Namely, according to Section \ref{sec:QFix}, the combinatorial code for a torus fixed point on \eqref{eq:TF} is a length $\lambda_1+\ldots+\lambda_N$ sequence $S$ of partitions, such that $\lambda_i$ of them is $(1^{N-i})$. The code of the fixed points of $X'$ is an $N$-tuple of partitions. The $N-i$th component is then the partition 
\[
(p_{\lambda_i}-\lambda_i, p_{\lambda_i-1}-(\lambda_i-1),\ldots,p_3-3, p_2-2,p_1-1)
\]
where $p_1,p_2,\ldots,p_{\lambda_i}$ are the positions of the partition $1^{N-i}$ in $S$, see Figure \ref{fig:QFixBijection}. For more details see \cite{Sh}.

\section{Characteristic classes of bow varieties} \label{sec:char}

\subsection{Cohomology, K theory, localization}
Our motivation is to study the enumerative geometry of varieties through the characteristic classes of their subvarieties. These characteristic classes can be considered in various extraordinary cohomology theories, typically (equivariant) cohomology, K theory, or elliptic cohomology. 
In each of these theories, an effective method is equivariant localization. This tool usually has three steps of arguments (here we phrase them for~$H^*_{\T}$):
\begin{itemize}
\item Optimally, the (equivariant) characteristic classes of the tautological bundles over the space $M$ generate $H^*_{\T}(M)$.
\item Optimally, the localization (restriction to the fixed points) map $\Loc: H^*_{\T}(M)\to H^*_{\T}(M^{\T})$ is injective. 
\item Optimally, the image of the localization map is described by `simple' relations among its components $H^*_{\T}(M^{\T})=\oplus H^*_{\T}($connected components of $M^{\T})$. 
\end{itemize}

The first step, under the name of ``Kirwan surjectivity'' is studied extensively: it generally holds for GIT quotients (and more), but may or may not hold for hyperk\"ahler quotients. We do not know if it holds for our class of bow varieties or not, cf. \cite{JKF, MG1, MG2}.

The second step holds in very general topological circumstances, but only up to $H^*_{\T}(\pt)$-torsion, and only under some (generalized, equivariant) compactness (or properness) assumptions, see e.g. \cite[Thm 2.3]{HHH}. Further studies are needed to verify whether bow varieties are covered. 

The third step holds for so-called GKM spaces \cite{GKM}. The assumptions that make a space a GKM space include that there are finitely many torus invariant curves on $M$. For GKM spaces  the image of the localization map is described by relations of the following flavor:  ``neighboring''  fixed point components agree under some substitutions of variables. For non-GKM spaces these coincidences must hold not only for the components themselves, but also for {\em some of their higher derivatives} as well. Bow varieties are typically not GKM spaces (an example is in Figure \ref{fig:GKM2}), hence GKM theory does not apply. This fact is both a disadvantage (Remark \ref{rem:nonGKM}) and an advantage (Remark \ref{rem:resonance}).

\smallskip

To circumvent some of the aforementioned obstacles, we make the following definitions.

\begin{definition}
In the algebra $H^*_{\T}(\Ch(\DD))$ (over $H^*_{\T}(\pt)=\C[u_1,\ldots,u_n,\h]$) consider the subalgebra generated by the Chern classes of the tautological bundles. Define $\HH_{\T}(\Ch(\DD))$ to be the $\Loc$-image of this subalgebra in $H_{\T}^*(\Ch(\DD)^{\T})=\C[u_1,\ldots,u_n,\h]^N$ ($N$ is the number of fixed points).
\end{definition}

\begin{definition}
In the algebra $K^*_{\T}(\Ch(\DD))$ (over $K^*_{\T}(\pt)=\C[\uu_1^{\pm 1},\ldots,\uu_n^{\pm 1},\hb^{\pm 1}]$) consider the subalgebra generated by the classes of the tautological bundles. Define $\KK_{\T}(\Ch(\DD))$ to be the $\Loc$-image of this subalgebra in $K_{\T}^*(\Ch(\DD)^{\T})=\C[\uu_1^{\pm 1},\ldots,\uu_n^{\pm 1},\hb^{\pm 1}]^N$ ($N$ is the number of fixed points).
\end{definition}

\begin{figure}
\[
\begin{tikzpicture}[scale=1.9]

\draw[fill=pink,pink] (1,0) to (1.2,-0.2) to  [out=-150, in=-30]  (.8,-0.2) to (1,0);
\draw[fill=pink,pink] (0,1) to (.2,.8) to  [out=-150, in=-30]  (-.2,.8) to (0,1);
\draw[fill=pink,pink] (2,3) to (2.2,2.8) to  [out=-150, in=-30]  (1.8,2.8) to (2,3);
\draw[fill=pink,pink] (1,2) to (1.1,1.8) to  [out=-150, in=-30]  (.8,1.8) to (1,2);
\draw[fill=pink,pink] (1,4) to (1.2,3.8) to  [out=-150, in=-30]  (.9,3.8) to (1,4);

\draw[] (1,0) -- (0,1);\draw[] (1,0) to [out=60,in=-60] (1,2);\draw[] (0,1) -- (1,2);\draw[] (1,2) -- (2,3);\draw[] (1,2) to [out=120,in=-120] (1,4);\draw[] (2,3) -- (1,4);
\draw[] (1,0)--(1.5,-.5);\draw[] (1,0)--(.5,-.5);
\draw[] (0,1)--(-.5,1.5);\draw[] (0,1)--(-.5,.5);
\draw[] (2,3)--(2.5,2.5);\draw[] (2,3)--(2.5,3.5);
\draw[] (1,4)--(.5,4.5);\draw[] (1,4)--(1.5,4.5);

\node at (1.5,-.3) {\scalebox{.6}[.6]{$\frac{{\uu}_3}{{\uu}_2}\hb$}};
\node at (0.8,-.4) {\scalebox{.6}[.6]{$\frac{{\uu}_3}{{\uu}_1}\hb$}};
\node at (1.2,.2) {\scalebox{.6}[.6]{$\frac{{\uu}_1}{{\uu}_3}$}};
\node at (0.8,.35) {\scalebox{.6}[.6]{$\frac{{\uu}_2}{{\uu}_3}$}};

\node at (.35,.8) {\scalebox{.6}[.6]{$\frac{{\uu}_3}{{\uu}_2}$}};
\node at (-.2,.6) {\scalebox{.6}[.6]{$\frac{{\uu}_2}{{\uu}_1}\hb$}};
\node at (.2,1.35) {\scalebox{.6}[.6]{$\frac{{\uu}_1}{{\uu}_2}$}};
\node at (-.5,1.3) {\scalebox{.6}[.6]{$\frac{{\uu}_2}{{\uu}_3}\hb$}};

\node at (1.3,1.7) {\scalebox{.6}[.6]{$\frac{{\uu}_3}{{\uu}_1}$}};
\node at (.8,1.6) {\scalebox{.6}[.6]{$\frac{{\uu}_2}{{\uu}_1}$}};
\node at (1.2,2.35) {\scalebox{.6}[.6]{$\frac{{\uu}_1}{{\uu}_2}\hb$}};
\node at (.7,2.3) {\scalebox{.6}[.6]{$\frac{{\uu}_1}{{\uu}_3}\hb$}};

\node at (2.5,2.7) {\scalebox{.6}[.6]{$\frac{{\uu}_3}{{\uu}_2}\hb$}};
\node at (1.8,2.55) {\scalebox{.6}[.6]{$\frac{{\uu}_2}{{\uu}_1}\hb^{-1}$}};
\node at (2.2,3.35) {\scalebox{.6}[.6]{$\frac{{\uu}_1}{{\uu}_2}\hb^2$}};
\node at (1.7,3.15) {\scalebox{.6}[.6]{$\frac{{\uu}_2}{{\uu}_3}$}};

\node at (1.45,3.7) {\scalebox{.6}[.6]{$\frac{{\uu}_3}{{\uu}_2}$}};
\node at (.6,3.6) {\scalebox{.6}[.6]{$\frac{{\uu}_3}{{\uu}_1}\hb^{-1}$}};
\node at (1.2,4.35) {\scalebox{.6}[.6]{$\frac{{\uu}_1}{{\uu}_3}\hb^2$}};
\node at (.35,4.4) {\scalebox{.6}[.6]{$\frac{{\uu}_2}{{\uu}_3}\hb$}};

\node[fill,white,draw,circle,minimum size=.5cm,inner sep=0pt] at (1,0) {\footnotesize $1$};
\node[fill,white,draw,circle,minimum size=.5cm,inner sep=0pt] at (0,1) {\footnotesize$2$};
\node[fill,white,draw,circle,minimum size=.5cm,inner sep=0pt] at (1,2) {\footnotesize$3$};
\node[fill,white,draw,circle,minimum size=.5cm,inner sep=0pt] at (2,3) {\footnotesize$4$};
\node[fill,white,draw,circle,minimum size=.5cm,inner sep=0pt] at (1,4) {\footnotesize$5$};
\node[draw,circle,minimum size=.5cm,inner sep=0pt] at (1,0) {\footnotesize $1$};
\node[draw,circle,minimum size=.5cm,inner sep=0pt] at (0,1) {\footnotesize$2$};
\node[draw,circle,minimum size=.5cm,inner sep=0pt] at (1,2) {\footnotesize$3$};
\node[draw,circle,minimum size=.5cm,inner sep=0pt] at (2,3) {\footnotesize$4$};
\node[draw,circle,minimum size=.5cm,inner sep=0pt] at (1,4) {\footnotesize$5$};

\end{tikzpicture}
\]
\caption{Illustration of $\T$ fixed points, invariant curves (with their $\T$ weights), poset structure, $\Leaf$s, $\Slope$s, and $N_f^+$ spaces on $\Ch($\ttt{/1$\backslash$1/2$\backslash$2$\backslash$2/}$)$ $=$ $\Naka((1,2),(1,2))$. See Section \ref{sec:ex1}.}  \label{fig:GKM1}
\end{figure}
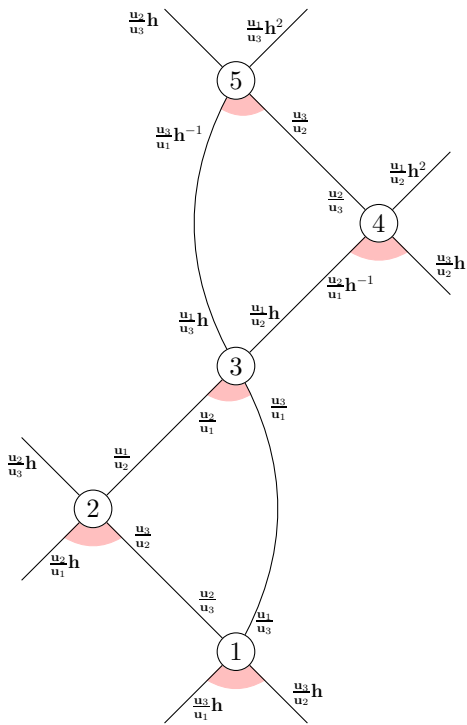

\subsection{Cohomology stable envelopes}
Our goal is to associate a characteristic class to every fixed point on $\Ch(\DD)$, which generalizes the notion of {\em stable envelope} for quiver varieties, and in turn, the {\em Chern-Schwartz-MacPherson class} of Schubert varieties in partial flag varieties (which itself is a generalization of the notion {\em Schubert class}). 

Let $\sigma:\C^\times \to \T=\A\times \C^{\times}_{\h}$ be the one-parameter subgroup $\sigma(z)=(z,z^2,z^3,\ldots,z^n,1)$ (cf.~Remark~\ref{rem:polariz}).

\begin{definition} 
\begin{itemize}
\item 
For a fixed point $f\in \Ch(\DD)^{\T}$ define 
\[
\Leaf(f)=\{x\in \Ch(\DD) : \lim_{z\to 0} \sigma(z)x=f\}.
\]
\item Define the partial order on $\Ch(\DD)^{\T}$ by 
\[f'\leq f \qquad\text{if}\qquad f'\in\overline{\Leaf(f)}.\]
\item Define the slope (the ``stable leaf'' or ``full attracting set'') of a fixed point $f$ by
\[
\Slope(f)=\bigcup_{f'\leq f} \Leaf(f').
\]
\item Let $N_f^+\oplus N_f^-$ denote the $\T$ invariant decomposition of $T_f\Ch(\DD)$ to positive and negative $\sigma$ weight spaces. 
\end{itemize}
\end{definition}

At this point it is worth looking at examples, that is, Figures \ref{fig:GKM1}--\ref{fig:GKM3}. In these figures the vertices represent fixed points. The edges represent $\T$ invariant curves, and the decoration on the edge at a vertex is the $\T$ weight on the tangent line of the curve at that fixed point. The $\Leaf$s of the fixed points are cells, the tangent space of the $\Leaf$ at its own fixed point is indicated in the figures by the pink shading. The fixed points are positioned in such a way that the poset structure is illustrated the usual way: in the figures the vertexes are $\leq$-growing from bottom up. Hence, at a fixed point $f$ the weights of the lines spanning $N_f^+$ (respectively $N_f^-$) are the labels on the edges in the pink (respectively non-pink) region.

First let us recall the general axiomatic definition of cohomological stable envelopes of Maulik-Okounkov.  
\begin{definition}[\cite{MO}]
Let $f\in X^{\T}$. The cohomology class $\Stab(f)\in H^*_{\T}(X)$ of homogeneous degree $\dim_{\C}X$ is called the stable envelope of $f$, if it satisfies the axioms:
\begin{enumerate}
\item (support) it is supported on $Slope(f)$;
\item (normalization) $\Stab(f)|_f=e(N^-_f)$;
\item (boundary) $\Stab(f)|_{f'}$ is divisible by $\h$, for $f'\not= f$ .
\end{enumerate}
 \end{definition}

\noindent Since we are working with $\HH_{\T}$ instead of $H^*_{\T}$ we need a local reformulation of the support condition. For that we need the notion of normal bundle of $\Slope(f)$ at $f'$. 
\begin{definition}
We say that a $\T$ invariant line bundle $\ell$ ``belongs to $T_{f'}(\Slope(f))$''  either if it is tangent to an invariant curve connecting $f'$ with $f''$, for an $f''$ satisfying $f'< f'' \leq f$, or, if $\ell \subset N^+_{f'}$. The span of these line bundles is called $T_{f'}(\Slope(f))$. A $\T$ invariant complement of $T_{f'}(\Slope(f))$ in $T_{f'}\Ch(\DD)$ is called $N_{f'}(\Slope(f))$.
\end{definition}
 
\begin{example} \rm \label{ex:strangeN}
In Figure \ref{fig:GKM1} let $f$ be the fixed point denoted by 5. Then for $f'=4$ we have that $N_{f'}(\Slope(f))$ is one dimensional, with $\T$ weight $u_1-u_2+2\h$.  For $f'=3$ we have that $N_{f'}(\Slope(f))$ is zero dimensional.  For $f'=2$ we have that $N_{f'}(\Slope(f))$ is one dimensional, with weight $u_2-u_3+\h$. For $f'=1$ we have that $N_{f'}(\Slope(f))$ is zero dimensional.
\end{example}

\begin{definition}\label{def:SE}
Let $f\in \Ch(\DD)^{\T}$. The cohomology class $\Stab(f)\in \HH_{\T}(\Ch(\DD))$ of homogeneous degree $\dim_{\C}\Ch(\DD)$ is called the stable envelope of $f$, if it satisfies the axioms:
\begin{enumerate}
\item (support-1) $\Stab(f)|_{f'}=0$ if $f'\not\in \Slope(f)$;
\item (support-2) $\Stab(f)|_{f'}$ is divisible by $e(N_{f'}(\Slope(f)))$.
\item (normalization) $\Stab(f)|_f=e(N^-_f)$;
\item (boundary) $\Stab(f)|_{f'}$ is divisible by $\h$, for $f'\not= f$.
\end{enumerate}
 \end{definition}

The relation between the global support condition and the local ``support-2'' condition is the well known argument combining the Gysin sequence argument and a Mayer-Vietoris induction, see eg. \cite[Section 5.2]{RTV_EllK}. The local ``support-1'' condition is in fact a corollary of the rest of the axioms, yet, we listed it for clarity. If a stable envelope exists, then it is unique, the original proof \cite[Section 3.3.4]{MO} carries over to this case (see also \cite[Section 3.1]{RTVtrig}, \cite[Section 7.8]{RTV_EllK}). In the next three sections we show examples of stable envelopes.

\begin{remark} \label{rem:polariz}
The general notion of cohomological stable envelope \cite{MO} depends on other parameters: a `chamber', and a `polarization'. We fixed natural choices of these, e.g. our chamber choice is $u_1>u_2>\ldots>u_n$ (essentially $\sigma$ above). In future studies, the stable envelopes should be considered for different chambers as well. The comparison of stable envelopes for different chambers is what endows  $\HH_{\T}$ with the structure of a quantum group representation. K theory and elliptic cohomology stable envelopes depend on an additional set of parameters, the K\"ahler (or dynamical) parameters (locally constant dependence in $K_{\T}$, meromorphic dependence in Ell${}_{\T}$) \cite{O, AO}, cf. Section \ref{sec:elliptic}.
\end{remark}

\subsection{Stable envelopes for $\Ch($\ttt{\fs 1\bs 1\fs 2\bs 2\bs 2\fs}$)=$ $\Naka((1,2),(1,2))$.} \label{sec:ex1}
The ``skeleton'' of the $\dim_{\C}=4$ bow variety $\Ch(\ttt{\fs 1\bs 1\fs 2\bs 2\bs 2\fs})$ is in Figure \ref{fig:GKM1}, where the fixed points named 1, 2, 3, 4, 5 are 
\[
\begin{tikzpicture}[baseline=0pt,scale=.2]
\draw[thick] (0,1)--(15,1) ;
\draw[thick,red] (-.5,0)--(.5,2);
\draw[thick,blue] (3.5,0)--(2.5,2);
\draw[thick,red] (5.5,0)--(6.5,2);
\draw[thick,blue] (9.5,0)--(8.5,2);
\draw[thick,blue] (12.5,0)--(11.5,2);
\draw[thick,red] (14.5,0)--(15.5,2);
\draw [dashed, black](3.5,-.2) to [out=-45,in=225] (14.5,-.2);
\draw [dashed, black](9.5,-.2) to [out=-45,in=225] (14.5,-.2);
\draw [dashed, black](0.5,2.2) to [out=45,in=-225] (2.5,2.2);
\draw [dashed, black](6.5,2.2) to [out=45,in=-225] (8.5,2.2);
\end{tikzpicture}
\qquad
\begin{tikzpicture}[baseline=0pt,scale=.2]
\draw[thick] (0,1)--(15,1) ;
\draw[thick,red] (-.5,0)--(.5,2);
\draw[thick,blue] (3.5,0)--(2.5,2);
\draw[thick,red] (5.5,0)--(6.5,2);
\draw[thick,blue] (9.5,0)--(8.5,2);
\draw[thick,blue] (12.5,0)--(11.5,2);
\draw[thick,red] (14.5,0)--(15.5,2);
\draw [dashed, black](3.5,-.2) to [out=-45,in=225] (14.5,-.2);
\draw [dashed, black](12.5,-.2) to [out=-45,in=225] (14.5,-.2);
\draw [dashed, black](0.5,2.2) to [out=45,in=-225] (2.5,2.2);
\draw [dashed, black](6.5,2.2) to [out=45,in=-225] (11.5,2.2);
\end{tikzpicture}
\qquad
\begin{tikzpicture}[baseline=0pt,scale=.2]
\draw[thick] (0,1)--(15,1) ;
\draw[thick,red] (-.5,0)--(.5,2);
\draw[thick,blue] (3.5,0)--(2.5,2);
\draw[thick,red] (5.5,0)--(6.5,2);
\draw[thick,blue] (9.5,0)--(8.5,2);
\draw[thick,blue] (12.5,0)--(11.5,2);
\draw[thick,red] (14.5,0)--(15.5,2);
\draw [dashed, black](12.5,-.2) to [out=-45,in=225] (14.5,-.2);
\draw [dashed, black](9.5,-.2) to [out=-45,in=225] (14.5,-.2);
\draw [dashed, black](3.5,-.2) to [out=-45,in=225] (5.5,-.2);
\draw [dashed, black](0.5,2.2) to [out=45,in=-225] (2.5,2.2);
\draw [dashed, black](6.5,2.2) to [out=45,in=-225] (8.5,2.2);
\draw [dashed, black](6.5,2.2) to [out=45,in=-225] (11.5,2.2);
\end{tikzpicture}
\]
\[
\begin{tikzpicture}[baseline=0pt,scale=.2]
\draw[thick] (0,1)--(15,1) ;
\draw[thick,red] (-.5,0)--(.5,2);
\draw[thick,blue] (3.5,0)--(2.5,2);
\draw[thick,red] (5.5,0)--(6.5,2);
\draw[thick,blue] (9.5,0)--(8.5,2);
\draw[thick,blue] (12.5,0)--(11.5,2);
\draw[thick,red] (14.5,0)--(15.5,2);
\draw [dashed, black](9.5,-.2) to [out=-45,in=225] (14.5,-.2);
\draw [dashed, black](12.5,-.2) to [out=-45,in=225] (14.5,-.2);
\draw [dashed, black](0.5,2.2) to [out=45,in=-225] (8.5,2.2);
\draw [dashed, black](6.5,2.2) to [out=45,in=-225] (11.5,2.2);
\end{tikzpicture}
\qquad
\begin{tikzpicture}[baseline=0pt,scale=.2]
\draw[thick] (0,1)--(15,1) ;
\draw[thick,red] (-.5,0)--(.5,2);
\draw[thick,blue] (3.5,0)--(2.5,2);
\draw[thick,red] (5.5,0)--(6.5,2);
\draw[thick,blue] (9.5,0)--(8.5,2);
\draw[thick,blue] (12.5,0)--(11.5,2);
\draw[thick,red] (14.5,0)--(15.5,2);
\draw [dashed, black](9.5,-.2) to [out=-45,in=225] (14.5,-.2);
\draw [dashed, black](12.5,-.2) to [out=-45,in=225] (14.5,-.2);
\draw [dashed, black](0.5,2.2) to [out=45,in=-225] (11.5,2.2);
\draw [dashed, black](6.5,2.2) to [out=45,in=-225] (8.5,2.2);
\end{tikzpicture}
\]
respectively. The stable envelopes are in the table (*)
\begin{equation*}
\begin{tabular}{|c||c|c|c|c|c|} 
\hline
& 1 & 2 & 3 & 4 & 5 \\
\hline\hline
1 & \tiny $(u_1\!-\!u_3)(u_2\!-\!u_3)$ & \tiny 0 & \tiny 0 & \tiny0 & \tiny 0  \\
\hline
2 & \tiny $(u_1\!-\!u_3)\h$ & \tiny $(u_1\!-\!u_2)(u_2\!-\!u_3\!+\!\h)$ & \tiny 0 & \tiny0 & \tiny 0  \\
\hline
3 & \tiny $(u_3\!-\!u_2\!+\!\h)\h$ & \tiny $(u_2\!-\!u_3\!+\!\h)\h$ & \tiny $(u_1\!-\!u_3\!+\!\h)(u_1\!-\!u_2\!+\!\h)$ & \tiny 0 & \tiny 0  \\
\hline
4 & \tiny $\h^2$ & \tiny $(u_2\!-\!u_3\!+\!\h)\h$ & \tiny $(u_1\!-\!u_3\!+\!\h)\h$ & \tiny $(u_2\!-\!u_3)(u_1\!-\!u_2\!+\!2\h)$ & \tiny 0  \\
\hline
5 & \tiny $(u_2\!-\!u_3)\h$ & \tiny 0 & \tiny $(u_2\!-\!u_1)\h$ & \tiny$(u_1\!-\!u_2\!+\!2\h)\h$ & \tiny $(u_1\!-\!u_3\!+\!2\h)(u_2\!-\!u_3\!+\!\h)$  \\
\hline
\end{tabular}.
\end{equation*}
In this, and similar tables in the whole paper, each row is a stable envelope for a fixed point. The entries in one row are the fixed point restrictions of that stable envelope.

To verify that this table is correct we need to verify that
\begin{enumerate}[(i)]
\item each row (as a five-tuple) is an element of $\HH_{\T}(\Ch(\DD))$; and that
\item the axioms of Definition \ref{def:SE} are satisfied.
\end{enumerate}

\noindent Property (i) is proved by applying the five fixed point restriction homomorphisms $\Loc_1, \ldots,\Loc_5$ (as described in Section \ref{sec:fixrest})
\[
\begin{tabular}{llllllll}
$x_{11}\mapsto u_1$ & $x_{21}\mapsto u_1$ & $x_{31}\mapsto u_1$ & $x_{32}\mapsto u_2$ & $x_{41} \mapsto u_1$ & $x_{42}\mapsto u_2$ & $x_{51}\mapsto u_1$ & $x_{52}\mapsto u_2$, \\
$x_{11}\mapsto u_1$ & $x_{21}\mapsto u_1$ & $x_{31}\mapsto u_1$ & $x_{32}\mapsto u_3$ & $x_{41} \mapsto u_1$ & $x_{42}\mapsto u_3$ & $x_{51}\mapsto u_1$ & $x_{52}\mapsto u_3$, \\
$x_{11}\mapsto u_1$ & $x_{21}\mapsto u_1$ & $x_{31}\mapsto u_2$ & $x_{32}\mapsto u_3$ & $x_{41} \mapsto u_2$ & $x_{42}\mapsto u_3$ & $x_{51}\mapsto u_2$ & $x_{52}\mapsto u_3$, \\
$x_{11}\mapsto u_2-\h$ & $x_{21}\mapsto u_2-\h$ & $x_{31}\mapsto u_2$ & $x_{32}\mapsto u_3$ & $x_{41} \mapsto u_2$ & $x_{42}\mapsto u_3$ & $x_{51}\mapsto u_2$ & $x_{52}\mapsto u_3$, \\
$x_{11}\mapsto u_3-\h$ & $x_{21}\mapsto u_3-\h$ & $x_{31}\mapsto u_2$ & $x_{32}\mapsto u_3$ & $x_{41} \mapsto u_2$ & $x_{42}\mapsto u_3$ & $x_{51}\mapsto u_2$ & $x_{52}\mapsto u_3$ \\
\end{tabular}
\]

\noindent to the concrete formulas 
\begin{equation}
\begin{aligned} 
F_1=& (x_{31}-u_3)(x_{32}-u_3), \\
F_2=& (x_{31}+x_{32}-u_2-u_3)(u_1+u_2-x_{31}-x_{32}+\h),  \\
F_3=& (x_{11}-x_{31}-x_{32}+u_3+\h)(x_{11}-x_{31}-x_{32}+u_2+\h), \label{eq:polys} \\
F_4=& (u_1-x_{11}+\h)(x_{11}-x_{31}-x_{32}+u_2+\h), \\
F_5=& (u_1-x_{11}+\h)(-x_{11}+x_{31}+x_{32}-u_3), 
\end{aligned}
\end{equation}
where $x_{i1},x_{i2},\ldots,x_{i,\mult_{X_i}}$ are the Chern roots of the $i$'th tautological bundle.

\begin{remark}
The polynomials in \eqref{eq:polys} are not unique, they are only defined up to $\cap_i \ker(\Loc_i)$. We chose `nice' representatives $F_j$, which in this case factor to linear factors. The existence of such nice representatives is not expected for more complicated brane diagrams. If $\Ch(\DD)$ is the cotangent bundle of a partial flag variety, then there are reasonably nice representatives of stable envelopes called ``weight functions,'' see \cite{RTV, RTVtrig, RTV_EllK, RV}.
\end{remark}

\begin{remark}
In fact we could argue differently to prove (i)---because this particular $\Ch(\DD)$ shares properties with GKM spaces. Namely, the five-tuple $(p_1,p_2,p_3,p_4,p_5)$ of polynomials in $\C[u_1,u_2,u_3,\h]$ is the $(\Loc_1,\ldots,\Loc_5)$-image of a polynomial 
\[
F \in \C[u_1,u_2,u_3,\h][x_{11},x_{21},x_{31},x_{32},x_{41},x_{42},x_{51},x_{52}]^{S_1\times S_1 \times S_1 \times S_2 \times S_2 \times S_2},
\]
if and only if 
\begin{equation}\label{eq:GKMconditions}
\begin{tabular}{lllll}
$(p_1-p_2)|_{u_2=u_3}=0$, && $(p_1-p_3)|_{u_1=u_3}=0$, && $(p_2-p_3)|_{u_1=u_2}=0$, 
\\
$(p_3-p_4)|_{u_2=u_1+\h}=0$, && $(p_3-p_5)|_{u_3=u_1+\h}=0$, &&  $(p_4-p_5)|_{u_2=u_3}=0$
\end{tabular}
\end{equation}
(these equations are read from the edges of the graph in Figure \ref{fig:GKM1}). To prove this statement consider a linear space $\C^{12}$ with coordinates 
\[
u_1,u_2,u_3,\h,x_{11},x_{21},x_{31}+x_{32}, x_{31}x_{32}, x_{41}+x_{42}, x_{41}x_{42},x_{51}+x_{52}, x_{51}x_{52}
\]  
and in it the five subvarieties $H_i$ defined by 
\[
x_{11}=\Loc_i(x_{11}), \ldots, x_{51}+x_{52}=\Loc_i(x_{51}+x_{52}), x_{51}x_{52}=\Loc_i(x_{51}x_{52})
\]
for $i=1,\ldots, 5$. The polynomials $p_i$ can be considered to be polynomials on the $H_i$'s, and the existence of $F$ is rephrased as the existence of a polynomial on $\C^{12}$ that {\em restricts} to the  given five-tuple. A necessary condition is, of course, that the $p_i$'s  agree on their pairwise intersections. These conditions are exactly \eqref{eq:GKMconditions}. It can be shown that our varieties $H_i$ intersect in such a general way that guarantees that the named necessary conditions are also sufficient. (For more sophisticated intersections of varieties such a statement does not hold, for example consider the polynomials $c_jx$ on the lines $y=jx$ for $j=1,2,3$, in the $x,y$-plane. They agree on their intersection, but they only extend to a polynomial in $x,y$ if $c_1+c_3=2c_2$.) In fact the point of view of this Remark is used in the {\em definition} of equivariant elliptic cohomology, namely the elliptic counterpart of $\cup H_i$ is called the {\em elliptic cohomology scheme} cf. \cite[\S 2.2-2.3]{AO}, \cite[\S 4]{FRV}, \cite[\S 7]{RTV_EllK}, \cite[\S 2]{RSVZ1}.
\end{remark}

Now that (i) is verified for the table (*) above, it is worth verifying property (ii), that is the axioms of stable envelopes. 

The {\em normalization axiom} is about the diagonal entries: for each vertex on the graph the directions of $N^-$ are those that are {\em not} covered by the pink shading in the figure. Hence the diagonal entries need to be the products of their weights. 

The {\em boundary axiom} holds because all below-diagonal entries are divisible by $\h$. 

The {\em support-1 axiom} holds because the above-diagonal entries are all 0. 

The {\em support-2 axiom} is a divisibility requirement for below-diagonal entries. Continuing Example~\ref{ex:strangeN} we see that the axiom requires that the (5,4), (5,3), (5,2), (5,1) entries of the table (*) are divisible by $u_1-u_2+2\h$, $1$, $u_2-u_3+\h$, $1$, respectively. 

\begin{figure}
\[
 \begin{tikzpicture}[scale=1.6]
 
    \draw[fill=pink,pink] (3,0) to [out=-45, in=135]  (3.3,-.3) to [out=-160, in=-20] (2.7,-.3) to [out=45, in=-135] (3,0);  
    \draw[fill=pink,pink] (2,1) to [out=-45, in=135]  (2.3,0.7) to [out=-160, in=-20] (1.7,0.7) to [out=45, in=-135] (2,1);  
    \draw[fill=pink,pink] (1,2) to [out=-45, in=135]  (1.3,1.7) to [out=-160, in=-20] (0.7,1.7) to [out=45, in=-135] (1,2);  
    \draw[fill=pink,pink] (0,3) to [out=-45, in=135]  (0.3,2.7) to [out=-160, in=-20] (-.3,2.7) to [out=45, in=-135] (0,3);  
    \draw[fill=pink,pink] (3,2) to [out=-45, in=135]  (3.3,1.7) to [out=-160, in=-20] (2.7,1.7) to [out=45, in=-135] (3,2);  
    \draw[fill=pink,pink] (2,3) to [out=-45, in=135]  (2.3,2.7) to [out=-160, in=-20] (1.7,2.7) to [out=45, in=-135] (2,3);  
    \draw[fill=pink,pink] (1,4) to [out=-45, in=135]  (1.3,3.7) to [out=-160, in=-20] (0.7,3.7) to [out=45, in=-135] (1,4);      
    \draw[fill=pink,pink] (3,4) to [out=-45, in=135]  (3.3,3.7) to [out=-160, in=-20] (2.7,3.7) to [out=45, in=-135] (3,4);      
    \draw[fill=pink,pink] (2,5) to [out=-45, in=135]  (2.3,4.7) to [out=-160, in=-20] (1.7,4.7) to [out=45, in=-135] (2,5);      
    \draw[fill=pink,pink] (3,6) to [out=-45, in=135]  (3.3,5.7) to [out=-160, in=-20] (2.7,5.7) to [out=45, in=-135] (3,6);

\draw[] (3.5,-.5) -- (-0.5,3.5);
\draw[] (3.5,1.5) -- (.5,4.5);
\draw[] (3,4) -- (1.5,5.5);     

\draw [](3,0) to [out=40,in=-40] (3,4);    
\draw [](3,2) to [out=40,in=-40] (3,6);   
     
\draw[] (1.5,.5) -- (3,2);
\draw[] (.5,1.5) -- (3.5,4.5);     
\draw[] (-.5,2.5) -- (3.5,6.5); 
\draw[] (3,0) -- (2.5,-.5);
\draw[] (3,6) -- (2.5,6.5);

   \node[] at (2.5,.2) {\scalebox{.6}[.6]{$\frac{\uu_1}{\uu_2}\hb^{-2}$}};
   \node[] at (2.6,-.2) {\scalebox{.6}[.6]{$\frac{\uu_2}{\uu_1}\hb^{2}$}};
   \node[] at (3.5,.2) {\scalebox{.6}[.6]{$\frac{\uu_1}{\uu_2}\hb^{-1}$}};
   \node[] at (3.44,-.2) {\scalebox{.6}[.6]{$\frac{\uu_2}{\uu_1}\hb^3$}};
   
   \node[] at (1.5,1.2) {\scalebox{.6}[.6]{$\frac{\uu_1}{\uu_2}\hb^{-1}$}};
   \node[] at (1.6,.8) {\scalebox{.6}[.6]{$\frac{\uu_2}{\uu_1}\hb^{2}$}};
   \node[] at (2.55,1.2) {\scalebox{.6}[.6]{$\frac{\uu_1}{\uu_2}\hb^{-1}$}};
   \node[] at (2.45,.8) {\scalebox{.6}[.6]{$\frac{\uu_2}{\uu_1}\hb^2$}};

   \node[] at (0.6,2.2) {\scalebox{.6}[.6]{$\frac{\uu_1}{\uu_2}$}};
   \node[] at (0.6,1.8) {\scalebox{.6}[.6]{$\frac{\uu_2}{\uu_1}\hb^2$}};
   \node[] at (1.15,2.32) {\scalebox{.6}[.6]{$\frac{\uu_1}{\uu_2}\hb^{-1}$}};
   \node[] at (1.2,1.6) {\scalebox{.6}[.6]{$\frac{\uu_2}{\uu_1}\hb$}};

   \node[] at (2.9,2.4) {\scalebox{.6}[.6]{$\frac{\uu_1}{\uu_2}\hb^{-1}$}};
   \node[] at (2.8,1.55) {\scalebox{.6}[.6]{$\frac{\uu_2}{\uu_1}\hb$}};
   \node[] at (3.4,2.2) {\scalebox{.6}[.6]{$\frac{\uu_1}{\uu_2}$}};
   \node[] at (3.45,1.8) {\scalebox{.6}[.6]{$\frac{\uu_2}{\uu_1}\hb^2$}};
   
   \node[] at (1.6,3.2) {\scalebox{.6}[.6]{$\frac{\uu_1}{\uu_2}$}};
   \node[] at (1.6,2.8) {\scalebox{.6}[.6]{$\frac{\uu_2}{\uu_1}\hb$}};
   \node[] at (2.4,3.2) {\scalebox{.6}[.6]{$\frac{\uu_1}{\uu_2}$}};
   \node[] at (2.4,2.8) {\scalebox{.6}[.6]{$\frac{\uu_2}{\uu_1}\hb$}};
   
   \node[] at (-.4,3.2) {\scalebox{.6}[.6]{$\frac{\uu_1}{\uu_2}\hb$}};
   \node[] at (-.4,2.8) {\scalebox{.6}[.6]{$\frac{\uu_2}{\uu_1}\hb^2$}};
   \node[] at (.5,3.2) {\scalebox{.6}[.6]{$\frac{\uu_1}{\uu_2}\hb^{-1}$}};
   \node[] at (.4,2.8) {\scalebox{.6}[.6]{$\frac{\uu_2}{\uu_1}$}};
   
   \node[] at (.6,4.2) {\scalebox{.6}[.6]{$\frac{\uu_1}{\uu_2}\hb$}};
   \node[] at (.6,3.8) {\scalebox{.6}[.6]{$\frac{\uu_2}{\uu_1}\hb$}};
   \node[] at (1.2,4.35) {\scalebox{.6}[.6]{$\frac{\uu_1}{\uu_2}$}};
   \node[] at (1.25,3.6) {\scalebox{.6}[.6]{$\frac{\uu_2}{\uu_1}$}};
   
   \node[] at (2.85,4.3) {\scalebox{.6}[.6]{$\frac{\uu_1}{\uu_2}$}};
   \node[] at (2.75,3.55) {\scalebox{.6}[.6]{$\frac{\uu_2}{\uu_1}$}};
   \node[] at (3.45,4.2) {\scalebox{.6}[.6]{$\frac{\uu_1}{\uu_2}\hb$}};
   \node[] at (3.4,3.8) {\scalebox{.6}[.6]{$\frac{\uu_2}{\uu_1}\hb$}};
   
   \node[] at (1.6,5.2) {\scalebox{.6}[.6]{$\frac{\uu_1}{\uu_2}\hb$}};
   \node[] at (1.6,4.8) {\scalebox{.6}[.6]{$\frac{\uu_2}{\uu_1}$}};
   \node[] at (2.45,5.2) {\scalebox{.6}[.6]{$\frac{\uu_1}{\uu_2}\hb$}};
   \node[] at (2.4,4.8) {\scalebox{.6}[.6]{$\frac{\uu_2}{\uu_1}$}};
   
   \node[] at (2.6,6.2) {\scalebox{.6}[.6]{$\frac{\uu_1}{\uu_2}\hb$}};
   \node[] at (2.6,5.8) {\scalebox{.6}[.6]{$\frac{\uu_2}{\uu_1}\hb^{-1}$}};
   \node[] at (3.45,6.2) {\scalebox{.6}[.6]{$\frac{\uu_1}{\uu_2}\hb^2$}};
   \node[] at (3.4,5.8) {\scalebox{.6}[.6]{$\frac{\uu_2}{\uu_1}$}};

   \draw [](2.2,1.2) to [out=53,in=-53] (2.2,2.8);  
   \draw [](2.1,1.12) to [out=76,in=-76] (2.1,2.88);  
   \draw [](2,1.15) to [out=90,in=-90] (2,2.85);  
   \draw [](1.9,1.12) to [out=104,in=-104] (1.9,2.88);  
   \draw [](1.8,1.2) to [out=127,in=-127] (1.8,2.8);  
   
   \draw [](2.2,3.2) to [out=53,in=-53] (2.2,4.8);  
   \draw [](2.1,3.12) to [out=76,in=-76] (2.1,4.88);  
   \draw [](2,3.15) to [out=90,in=-90] (2,4.85);  
   \draw [](1.9,3.12) to [out=104,in=-104] (1.9,4.88);  
   \draw [](1.8,3.2) to [out=127,in=-127] (1.8,4.8);  
   
    \draw [](2,1) to (2.35,0.5);  
    \draw [](2,1) to (2.2,0.5);  
    \draw [](2,1) to (2 ,0.5); 
    \draw [](2,1) to (1.8,0.5);  
    \draw [](2,1) to (1.65,0.5); 
    
    \draw [](2,5) to (2.35,5.5);  
    \draw [](2,5) to (2.2,5.5);  
    \draw [](2,5) to (2 ,5.5); 
    \draw [](2,5) to (1.8,5.5);  
    \draw [](2,5) to (1.65,5.5); 

  \node[fill,white,draw,circle,minimum size=.5cm,inner sep=0pt] at (3,0) {\footnotesize$45$};
  \node[fill,white,draw,circle,minimum size=.5cm,inner sep=0pt] at (2,1) {\footnotesize$35$};
  \node[fill,white,draw,circle,minimum size=.5cm,inner sep=0pt] at (3,2) {\footnotesize$34$};
  \node[fill,white,draw,circle,minimum size=.5cm,inner sep=0pt] at (1,2) {\footnotesize$25$};
  \node[fill,white,draw,circle,minimum size=.5cm,inner sep=0pt] at (2,3) {\footnotesize$24$};
  \node[fill,white,draw,circle,minimum size=.5cm,inner sep=0pt] at (3,4) {\footnotesize$23$};
  \node[fill,white,draw,circle,minimum size=.5cm,inner sep=0pt] at (0,3) {\footnotesize$15$};
  \node[fill,white,draw,circle,minimum size=.5cm,inner sep=0pt] at (1,4) {\footnotesize$14$};
  \node[fill,white,draw,circle,minimum size=.5cm,inner sep=0pt] at (2,5) {\footnotesize$13$};
  \node[fill,white,draw,circle,minimum size=.5cm,inner sep=0pt] at (3,6) {\footnotesize$12$};
  \node[draw,circle,minimum size=.5cm,inner sep=0pt] at (3,0) {\footnotesize$45$};
  \node[draw,circle,minimum size=.5cm,inner sep=0pt] at (2,1) {\footnotesize$35$};
  \node[draw,circle,minimum size=.5cm,inner sep=0pt] at (3,2) {\footnotesize$34$};
  \node[draw,circle,minimum size=.5cm,inner sep=0pt] at (1,2) {\footnotesize$25$};
  \node[draw,circle,minimum size=.5cm,inner sep=0pt] at (2,3) {\footnotesize$24$};
  \node[draw,circle,minimum size=.5cm,inner sep=0pt] at (3,4) {\footnotesize$23$};
  \node[draw,circle,minimum size=.5cm,inner sep=0pt] at (0,3) {\footnotesize$15$};
  \node[draw,circle,minimum size=.5cm,inner sep=0pt] at (1,4) {\footnotesize$14$};
  \node[draw,circle,minimum size=.5cm,inner sep=0pt] at (2,5) {\footnotesize$13$};
  \node[draw,circle,minimum size=.5cm,inner sep=0pt] at (3,6) {\footnotesize$12$};

  \end{tikzpicture}
 \]
\caption{Illustration of $\T$ fixed points, invariant curves (with their $\T$ weights), poset structure, $\Leaf$s, $\Slope$s, and $N_f^+$ spaces on $\Ch(\ttt{/1/2/3/4/5$\backslash$2$\backslash$})$, which is the 3d mirror dual of $T^*\!\Gr(2,5)$. See Section \ref{sec:ex2}.}  \label{fig:GKM2} 
\end{figure}

\subsection{Stable envelopes for $\Ch($\ttt{\fs 1\fs 2\fs 3\fs 4\fs 5\bs 2\bs}$)$.} \label{sec:ex2}
Consider $\DD=\ttt{\fs 1\fs 2\fs 3\fs 4\fs 5\bs 2\bs}$. The corres\-ponding bow variety is (Hanany-Witten isomorphic to) the 3d mirror dual of $T^*\!\Gr(2,5)$. Its $\T$ fixed points are in bijection with the 2-element subsets of  $\{1,\ldots,5\}$. The tie diagram corresponding to $\{k,l\}$ consists of 5 ties: $U_2$ is connected with $V_k$ and $V_l$, and $U_1$ is connected with $V_i$ for $i\not= k,l$. We denote this fixed point by $kl$. Figure \ref{fig:GKM2} illustrates relevant information on the fixed point data.

In the table below we name the stable envelopes in the same manner as in Section \ref{sec:ex1}. We used the following conventions: both horizontally and vertically we used the  45, 35, 34, 25, 24, 23, 15, 14, 13, 12 order of the vertices, and for brevity we write $u_{ij}^{(k)}$ for $(u_i-u_j+k\h)$.

\begin{equation*}
\begin{tabular}{|c|c|c|c|c|c|c|c|c|c|} 
\hline 45 &  35 & 34& 25& 24& 23& 15& 14& 13& 12 \\
\hline\hline
\tiny $u_{12}^{(-1)}u_{12}^{(-2)} $ & \tiny 0 & \tiny 0 & \tiny 0 & \tiny 0 & \tiny 0 & \tiny 0 & \tiny 0  & \tiny 0 & \tiny 0 
\\
\hline
\tiny $u_{12}^{(-1)}\h$ & \tiny $u_{12}^{(-1)}u_{12}^{(-1)}$ & \tiny 0 & \tiny 0 & \tiny 0 & \tiny 0 & \tiny 0 & \tiny 0  & \tiny 0 & \tiny 0 \\
\hline
\tiny $u_{12}^{(-1)}\h$ & \tiny $u_{12}^{(-1)}\h$ & \tiny $u_{12}^{(0)}u_{12}^{(-1)}$ & \tiny 0 & \tiny 0 & \tiny 0 & \tiny 0 & \tiny 0  & \tiny 0 & \tiny 0 
\\
\hline
\tiny $u_{12}^{(-1)}\h$ & \tiny $u_{12}^{(-1)}\h$ & \tiny 0 & \tiny $u_{12}^{(0)}u_{12}^{(-1)}$ &  \tiny 0 & \tiny 0 & \tiny 0 & \tiny 0  & \tiny 0 & \tiny 0 
\\
\hline
\tiny $u_{12}^{(-1)}\h$ & \tiny $\h^2$ & \tiny $u_{12}^{(0)}\h$ & \tiny $u_{12}^{(0)}\h$ &  \tiny $u_{12}^{(0)}u_{12}^{(0)}$ & \tiny 0 & \tiny 0 & \tiny 0  & \tiny 0 & \tiny 0 
\\
\hline
\tiny $2\h^2$ & \tiny $u_{12}^{(0)}\h$ & \tiny $u_{12}^{(0)}\h$ & \tiny $u_{12}^{(0)}\h$ &  \tiny $u_{12}^{(0)}\h$ & \tiny $u_{12}^{(0)}u_{12}^{(1)}$ & \tiny 0 & \tiny 0  & \tiny 0 & \tiny 0 
\\
\hline
\tiny $u_{12}^{(-1)}\h$ & \tiny $u_{12}^{(-1)}\h$ & \tiny $0$ & \tiny $u_{12}^{(-1)}\h$ &  \tiny $0$ & \tiny $0$ & \tiny $u_{12}^{(-1)}u_{12}^{(1)}$ & \tiny 0  & \tiny 0 & \tiny 0 
\\
\hline
\tiny $u_{12}^{(-1)}\h$ & \tiny $\h^2$ & \tiny $u_{12}^{(0)}\h$ & \tiny $\h^2$ &  \tiny $u_{12}^{(0)}\h$ & \tiny $0$ & \tiny $u_{12}^{(1)}\h$ & \tiny $u_{12}^{(0)} u_{12}^{(1)}$  & \tiny 0 & \tiny 0 
\\
\hline
\tiny $2\h^2$ & \tiny $u_{12}^{(0)}\h$ & \tiny $u_{12}^{(0)}\h$ & \tiny $\h^2$ &  \tiny $\h^2$ & \tiny $u_{12}^{(1)}\h$ & \tiny $u_{12}^{(1)}\h$ &\tiny $u_{12}^{(1)}\h$ &\tiny $u_{12}^{(1)}u_{12}^{(1)}$ & \tiny $0$ 
\\
\hline
\tiny $2\h^2$ & \tiny $2\h^2$ & \tiny $2\h^2$ & \tiny $u_{12}^{(1)}\h$ &  \tiny $u_{12}^{(1)}\h$ & \tiny $u_{12}^{(1)}\h$ & \tiny $u_{12}^{(1)}\h$ &\tiny $u_{12}^{(1)}\h$ &\tiny $ u_{12}^{(1)}\h$ & \tiny $u_{12}^{(1)}u_{12}^{(2)}$ 
\\ 
\hline
\end{tabular}
\end{equation*} 

To prove that the values of this table are correct we need to prove the properties (i) and (ii) as in Section \ref{sec:ex1}. Property (ii) is done by observation. Property (i) is more sophisticated: either we use computer to find representatives of the rows as polynomials in the $x_{ij}, u_i, \h$ variables, 
or one can use a formula presented in \cite[\S 5]{RSVZ1} specifically for stable envelopes on 3d mirror duals of $T^*\!\Gr$ spaces.

\begin{remark}\label{rem:nonGKM} 
Note that property (i) can {\em not} be concluded by checking that neighboring components agree up to a substitution. This bow variety has infinitely many invariant curves, so it is not a GKM space. The subvarieties analogous to those called $H_i$ in Section \ref{sec:ex1} do not intersect transversally. Hence the GKM conditions on the components of an element in the image of $\Loc$ must be generalized to some coincidences of higher derivatives.
\end{remark}

\subsection{Stable envelopes for $\Ch($\ttt{\bs 1\fs 2\fs 2\bs 2\bs 1\fs}$)$.} \label{sec:ex3}
Since the examples of Sections \ref{sec:ex1}, \ref{sec:ex2} were both quiver varieties, we present one more example, for which neither $\DD$ nor its 3d mirror dual are Hanany-Witten equivalent to a quiver variety: $\DD=\ttt{\bs 1\fs 2\fs 2\bs 2\bs 1\fs}$. Calculation shows that its fixed point data relevant for stable envelopes is in Figure \ref{fig:GKM3}, where the five fixed points denoted by 1, 2, 3, 4, 5 are
\[
\begin{tikzpicture}[baseline=0pt,scale=.2]
\draw[thick] (0,1)--(15,1) ;
\draw[thick,blue] (.5,0)--(-.5,2);
\draw[thick,red] (2.5,0)--(3.5,2);
\draw[thick,red] (5.5,0)--(6.5,2);
\draw[thick,blue] (9.5,0)--(8.5,2);
\draw[thick,blue] (12.5,0)--(11.5,2);
\draw[thick,red] (14.5,0)--(15.5,2);
\draw [dashed, black](.5,-.2) to [out=-45,in=225] (5.5,-.2);
\draw [dashed, black](9.5,-.2) to [out=-45,in=225] (14.5,-.2);
\draw [dashed, black](3.5,2.2) to [out=45,in=-225] (8.5,2.2);
\draw [dashed, black](6.5,2.2) to [out=45,in=-225] (11.5,2.2);
\end{tikzpicture}
\qquad
\begin{tikzpicture}[baseline=0pt,scale=.2]
\draw[thick] (0,1)--(15,1) ;
\draw[thick,blue] (.5,0)--(-.5,2);
\draw[thick,red] (2.5,0)--(3.5,2);
\draw[thick,red] (5.5,0)--(6.5,2);
\draw[thick,blue] (9.5,0)--(8.5,2);
\draw[thick,blue] (12.5,0)--(11.5,2);
\draw[thick,red] (14.5,0)--(15.5,2);
\draw [dashed, black](.5,-.2) to [out=-45,in=225] (14.5,-.2);
\draw [dashed, black](3.5,2.2) to [out=45,in=-225] (11.5,2.2);
\end{tikzpicture}
\qquad
\begin{tikzpicture}[baseline=0pt,scale=.2]
\draw[thick] (0,1)--(15,1) ;
\draw[thick,blue] (.5,0)--(-.5,2);
\draw[thick,red] (2.5,0)--(3.5,2);
\draw[thick,red] (5.5,0)--(6.5,2);
\draw[thick,blue] (9.5,0)--(8.5,2);
\draw[thick,blue] (12.5,0)--(11.5,2);
\draw[thick,red] (14.5,0)--(15.5,2);
\draw [dashed, black](.5,-.2) to [out=-45,in=225] (5.5,-.2);
\draw [dashed, black](9.5,-.2) to [out=-45,in=225] (14.5,-.2);
\draw [dashed, black](3.5,2.2) to [out=45,in=-225] (11.5,2.2);
\draw [dashed, black](6.5,2.2) to [out=45,in=-225] (8.5,2.2);
\end{tikzpicture}
\]
\[
\begin{tikzpicture}[baseline=0pt,scale=.2]
\draw[thick] (0,1)--(15,1) ;
\draw[thick,blue] (.5,0)--(-.5,2);
\draw[thick,red] (2.5,0)--(3.5,2);
\draw[thick,red] (5.5,0)--(6.5,2);
\draw[thick,blue] (9.5,0)--(8.5,2);
\draw[thick,blue] (12.5,0)--(11.5,2);
\draw[thick,red] (14.5,0)--(15.5,2);
\draw [dashed, black](.5,-.2) to [out=-45,in=225] (2.5,-.2);
\draw [dashed, black](9.5,-.2) to [out=-45,in=225] (14.5,-.2);
\draw [dashed, black](3.5,2.2) to [out=45,in=-225] (8.5,2.2);
\draw [dashed, black](3.5,2.2) to [out=45,in=-225] (11.5,2.2);
\end{tikzpicture}
\qquad
\begin{tikzpicture}[baseline=0pt,scale=.2]
\draw[thick] (0,1)--(15,1) ;
\draw[thick,blue] (.5,0)--(-.5,2);
\draw[thick,red] (2.5,0)--(3.5,2);
\draw[thick,red] (5.5,0)--(6.5,2);
\draw[thick,blue] (9.5,0)--(8.5,2);
\draw[thick,blue] (12.5,0)--(11.5,2);
\draw[thick,red] (14.5,0)--(15.5,2);
\draw [dashed, black](.5,-.2) to [out=-45,in=225] (5.5,-.2);
\draw [dashed, black](12.5,-.2) to [out=-45,in=225] (14.5,-.2);
\draw [dashed, black](3.5,2.2) to [out=45,in=-225] (11.5,2.2);
\draw [dashed, black](6.5,2.2) to [out=45,in=-225] (11.5,2.2);
\end{tikzpicture}.
\]
By methods similar to those in Section \ref{sec:ex2}, it can be verified that the stable envelopes are

\begin{equation*}
\begin{tabular}{|c||c|c|c|c|c|} 
\hline
 & 1 & 2 & 3 &4 &5 
 \\
 \hline\hline
1 & \tiny $u_{23}^{(0)}u_{13}^{(2)}u_{23}^{(1)} $ & \tiny 0 & \tiny 0 & \tiny 0 & \tiny 0 
\\
\hline
2 & \tiny 0 & \tiny $u_{13}^{(2)}u_{12}^{(1)}u_{23}^{(1)}$ & \tiny 0 & \tiny 0 & \tiny 0 
\\
\hline
3 & \tiny $u_{13}^{(2)}u_{23}^{(1)}\h$ & \tiny $u_{13}^{(2)}u_{23}^{(1)}\h$ & \tiny $u_{12}^{(2)}u_{23}^{(1)}u_{23}^{(1)}$ & \tiny 0 & \tiny 0 
\\ \hline
4 & \tiny $u_{32}^{(1)}u_{23}^{(1)}\h$ & \tiny $u_{13}^{(2)}u_{23}^{(1)}\h$ & \tiny $u_{23}^{(1)}u_{23}^{(1)} \h $ & \tiny $u_{13}^{(3)}u_{12}^{(3)}u_{23}^{(1)}$ & \tiny 0 
\\ \hline
5 & \tiny $u_{13}^{(2)}u_{23}^{(1)}\h$ & \tiny $u_{21}^{(0)}u_{23}^{(1)}\h$ & \tiny $u_{12}^{(2)}u_{23}^{(1)} \h $ & \tiny 0& \tiny $u_{13}^{(3)}u_{23}^{(1)}u_{23}^{(2)}$  
\\ \hline
\end{tabular}.
\end{equation*}
It is instructive to verify the axioms from Definition \ref{def:SE} just by checking the entries against the structure of Figure \ref{fig:GKM3}. The fact that each line is in fact an element of $\HH_{\T}(\Ch(\DD))$ is verified by computer calculation.

\begin{figure}
\[
\begin{tikzpicture}[scale=2.5]

\draw[fill=pink,pink] (0,0) to (0.2,-0.2) to  [out=-150, in=-30]  (-.2,-0.2) to (0,0);
\draw[fill=pink,pink] (1,1) to (1.2,0.8) to  [out=-150, in=-30]  (0.8,0.8) to (1,1);
\draw[fill=pink,pink] (0,2) to (0.2,1.8) to  [out=-150, in=-30]  (-.2,1.8) to (0,2);
\draw[fill=pink,pink] (2,2) to (2.2,1.8) to  [out=-150, in=-30]  (1.8,1.8) to (2,2);
\draw[fill=pink,pink] (2,0) to (2.2,-0.2) to  [out=-150, in=-30]  (1.8,-0.2) to (2,0);
   
\draw[] (.07,.07) -- (.93,.93);
\draw[] (.07,1.93) -- (.93,1.07);
\draw[] (1.07,0.93) -- (1.93,0.07);
\draw[] (1.07,1.07) -- (1.93,1.93);
\draw[] (0,.1) -- (0,1.9);
\draw[] (2,.1) -- (2,1.9);

\draw[] (-.07,.07) -- (-.5,.5);
\draw[] (-.07,-.07) -- (-.5,-.5);
\draw[] (0,-.1) -- (0,-.5);
\draw[] (.07,-.07) -- (.5,-.5);

\draw[] (2.07,.07) -- (2.5,.5);
\draw[] (2.07,-.07) -- (2.5,-.5);
\draw[] (1.93,-.07) -- (1.5,-.5);
\draw[] (2,-.1) -- (2,-.5);

\draw[] (-.07,2.07) -- (-.5,2.5);
\draw[] (-.07,1.93) -- (-.5,1.5);
\draw[] (0,2.1) -- (0,2.5);
\draw[] (.07,2.07) -- (.5,2.5);

\draw[] (2.07,2.07) -- (2.5,2.5);
\draw[] (2.07,1.93) -- (2.5,1.5);
\draw[] (1.93,2.07) -- (1.5,2.5);
\draw[] (2,2.1) -- (2,2.5);

\draw[] (1,1) -- (1,1.6); \draw[] (1,1) -- (1.1,1.6); \draw[] (1,1) -- (1.2,1.6);   \draw[] (1,1) -- (1.3,1.6);  \draw[] (1,1) -- (1.4,1.6);  \draw[] (1,1) -- (1.5,1.6);
\draw[] (1,1) -- (1,.4);\draw[] (1,1) -- (.9,.4);\draw[] (1,1) -- (.8,.4);\draw[] (1,1) -- (.7,.4);\draw[] (1,1) -- (.6,.4);\draw[] (1,1) -- (.5,.4);

\node[fill,white,draw,circle,minimum size=.5cm,inner sep=0pt] at (0,0) {\footnotesize $1$};
\node[fill,white,draw,circle,minimum size=.5cm,inner sep=0pt] at (1,1) {$3$};
\node[fill,white,draw,circle,minimum size=.5cm,inner sep=0pt] at (2,0) {$2$};
\node[fill,white,draw,circle,minimum size=.5cm,inner sep=0pt] at (0,2) {$4$};
\node[fill,white,draw,circle,minimum size=.5cm,inner sep=0pt] at (2,2) {$5$};
\node[draw,circle,minimum size=.5cm,inner sep=0pt] at (0,0) {\footnotesize $1$};
\node[draw,circle,minimum size=.5cm,inner sep=0pt] at (1,1) {\footnotesize$3$};
\node[draw,circle,minimum size=.5cm,inner sep=0pt] at (2,0) {\footnotesize$2$};
\node[draw,circle,minimum size=.5cm,inner sep=0pt] at (0,2) {\footnotesize$4$};
\node[draw,circle,minimum size=.5cm,inner sep=0pt] at (2,2) {\footnotesize$5$};

\node at (1.38,1.2) {\scalebox{.6}[.6]{$\frac{{\uu}_2}{{\uu}_3}\hb$}};
\node at (.91,1.4) {\scalebox{.6}[.6]{$\frac{{\uu}_2}{{\uu}_3}\hb$}};
\node at (.6,1.22) {\scalebox{.6}[.6]{$\frac{{\uu}_1}{{\uu}_2}\hb^2$}};
\node at (.65,.75) {\scalebox{.6}[.6]{$\frac{{\uu}_3}{{\uu}_2}$}};
\node at (1.05,.6) {\scalebox{.6}[.6]{$\frac{{\uu}_3}{{\uu}_2}$}};
\node at (1.4,.75) {\scalebox{.6}[.6]{$\frac{{\uu}_2}{{\uu}_1}\hb^{-1}$}};

\node at (2.4,2.2) {\scalebox{.6}[.6]{$\frac{{\uu}_2}{{\uu}_3}\hb^2$}};
\node at (1.91,2.4) {\scalebox{.6}[.6]{$\frac{{\uu}_2}{{\uu}_3}\hb$}};
\node at (1.6,2.22) {\scalebox{.6}[.6]{$\frac{{\uu}_1}{{\uu}_3}\hb^3$}};
\node at (1.65,1.75) {\scalebox{.6}[.6]{$\frac{{\uu}_3}{{\uu}_2}\hb^{-1}$}};
\node at (2.13,1.6) {\scalebox{.6}[.6]{$\frac{{\uu}_3}{{\uu}_1}\hb^{-2}$}};
\node at (2.4,1.75) {\scalebox{.6}[.6]{$\frac{{\uu}_3}{{\uu}_2}$}};

\node at (2.4,.2) {\scalebox{.6}[.6]{$\frac{{\uu}_2}{{\uu}_3}\hb$}};
\node at (1.9,.4) {\scalebox{.6}[.6]{$\frac{{\uu}_1}{{\uu}_3}\hb^2$}};
\node at (1.6,.22) {\scalebox{.6}[.6]{$\frac{{\uu}_1}{{\uu}_2}\hb$}};
\node at (1.65,-.25) {\scalebox{.6}[.6]{$\frac{{\uu}_3}{{\uu}_1}\hb^{-1}$}};
\node at (2.05,-.4) {\scalebox{.6}[.6]{$\frac{{\uu}_2}{{\uu}_1} $}};
\node at (2.4,-0.25) {\scalebox{.6}[.6]{$\frac{{\uu}_3}{{\uu}_2}$}};

\node at (.32,.2) {\scalebox{.6}[.6]{$\frac{{\uu}_2}{{\uu}_3}$}};
\node at (-.1,.4) {\scalebox{.6}[.6]{$\frac{{\uu}_1}{{\uu}_3}\hb^2$}};
\node at (-.4,.22) {\scalebox{.6}[.6]{$\frac{{\uu}_2}{{\uu}_3}\hb$}};
\node at (-0.4,-.25) {\scalebox{.6}[.6]{$\frac{{\uu}_3}{{\uu}_2}\hb$}};
\node at (.05,-.4) {\scalebox{.6}[.6]{$\frac{{\uu}_3}{{\uu}_2} $}};
\node at (.41,-0.25) {\scalebox{.6}[.6]{$\frac{{\uu}_3}{{\uu}_1}\hb^{-1}$}};

\node at (.35,2.2) {\scalebox{.6}[.6]{$\frac{{\uu}_1}{{\uu}_3}\hb^3$}};
\node at (-.1,2.4) {\scalebox{.6}[.6]{$\frac{{\uu}_1}{{\uu}_2}\hb^3$}};
\node at (-.4,2.24) {\scalebox{.6}[.6]{$\frac{{\uu}_2}{{\uu}_3}\hb$}};
\node at (-0.35,1.75) {\scalebox{.6}[.6]{$\frac{{\uu}_3}{{\uu}_2}$}};
\node at (-.14,1.5) {\scalebox{.6}[.6]{$\frac{{\uu}_3}{{\uu}_1} \hb^{-2}$}};
\node at (.42,1.75) {\scalebox{.6}[.6]{$\frac{{\uu}_2}{{\uu}_1}\hb^{-2}$}};
\end{tikzpicture}
\]
\caption{Illustration of $\T$ fixed points, invariant curves (with their $\T$ weights), poset structure, $\Leaf$s, $\Slope$s, and $N_f^+$ spaces on $\Ch($\ttt{$\setminus$1/2/2$\setminus$2$\setminus$1/}$)$. See Section \ref{sec:ex3}.}  \label{fig:GKM3} 
\end{figure}
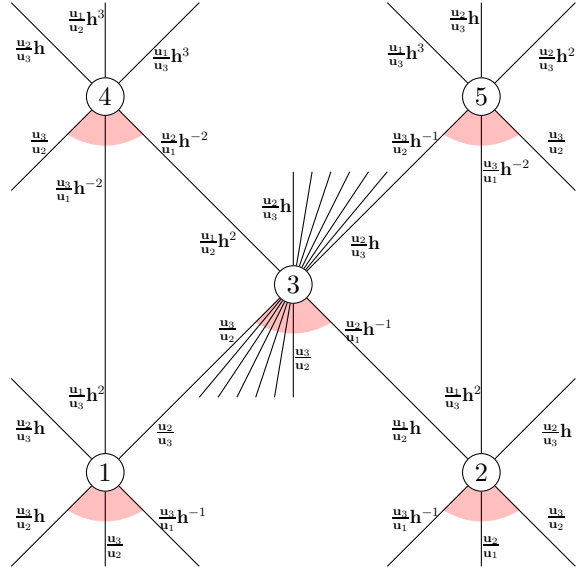

\subsection{Conjectured formula for cohomological stable envelopes for bow varieties}
In the last three sections we presented formulas for stable envelopes that can be verified. 
The question remains how we came up with these formulas. We used the conjecture we present now.

Let us fix a brane diagram $\DD$, and a fixed point $f \in \Ch(\DD)^{\T}$. Recall that 
\[
\Loc^{K}_f: K_{\T}(\Ch(\DD))\to K_{\T}(f)=\C[\uu_1^{\pm1}, \uu_2^{\pm1}, \ldots, \uu_n^{\pm1},\hb^{\pm1}]
\]
is the restriction map in K theory that we calculated explicitly in Section \ref{sec:fixrest}. The formula for $T\Ch(\DD)$ from Section \ref{sec:tangent} has the form
\[
T\Ch(\DD)=\sum_{x,y,k} \alpha_{x,y,k} \frac{x}{y} \hb^k
\]
where $x$ and $y$ are one of the Grothendieck roots of one of the tautological bundles, or one of $u_i$, and $\alpha_{x,y,k}\in \Z$. Define a term $ \alpha_{x,y,k} \frac{x}{y} \hb^k$ to be {\em $f$-small}, if $\Loc^K_f( \frac{x}{y})=\frac{u_i}{u_j}$ with $i<j$. Let
\[
\tilde{W}_f=e\left( \sum_{f\text{-small}} \alpha_{x,y,k} \frac{x}{y} \hb^k \right).
\]
Now $\tilde{W}_f$ is a rational expression in the Chern roots $x_{ij}$ 
of the tautological bundles as well as $u_i$ and $\h$. We define
\[
W_f=\frac{1}{N_f} \Sym \left( \tilde{W}_f \right)
\]
where $\Sym$ is the symmetrizing operator $\Sym_1 \Sym_2 \cdots \Sym_s$ and 
\[
\Sym_i (f(x_{i1},\ldots,x_{i,\mult_{X_i}}))=\sum_{\sigma\in S_{\mult_{X_i}}} f( x_{i,\sigma(1)}, x_{i,\sigma(2)}, \ldots, x_{i,\sigma(d_{X_i})}).
\]
The normalizing factor $N_f$ is defined as follows. Consider the tie diagram of the fixed point~$f$. For each D5 brane $U$ the ties whose one end is $U$ cover the D3 branes $(d^U_1, d^U_2,\ldots,d^U_s)$ times. (For example, in Figure \ref{YiyansButterfly}, for $U=U_1$ we get $(1,1,1,1,0,0,0,0,0,0)$, and for $U=U_2$ we get $(1,1,1,3,2,1,1,1,0,0)$.) We define 
\[
N_f=\prod_{U\text{ D5}} \prod_{i=1}^s d^U_i!.
\]

\begin{conjecture} \label{con:Stab}
Cohomological stable envelopes exist for bow varieties. The stable envelope for the fixed point $f$ is represented by $W_f$.
\end{conjecture}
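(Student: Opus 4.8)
The plan is to leverage the uniqueness of stable envelopes, which, as noted after Definition~\ref{def:SE}, transfers verbatim from \cite{MO}: once a class satisfying all four axioms is exhibited, existence follows and that class is forced to be $\Stab(f)$. Thus proving Conjecture~\ref{con:Stab} reduces to showing (a) that $W_f$ is a well-defined element of $\HH_{\T}(\Ch(\DD))$, and (b) that its fixed-point restrictions $\Loc_{f'}(W_f)$ satisfy the normalization, support-1, support-2, and boundary conditions. The guiding structural remark, which I would establish first, is that the ``$f$-small'' selection picks out exactly $N^-_f$: since $\sigma(z)=(z,z^2,\ldots,z^n,1)$ assigns $\sigma$-weight $i$ to $\uu_i$ and weight $0$ to $\hb$, a tangent term $\tfrac{x}{y}\hb^k$ with $\Loc^{K}_f(\tfrac{x}{y})=\tfrac{\uu_i}{\uu_j}$ has $\sigma$-weight $i-j$, negative precisely when $i<j$. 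Because virtual cancellations in the tangent bundle preserve the restricted weight, hence the sign of $i-j$, the $f$-small part restricts at $f$ to the honest subrepresentation $N^-_f$, so that $\Loc_f(\tilde{W}_f)=e(N^-_f)$ on the nose. One must also verify here that $\sigma$ is a genuine polarization, i.e.\ no fixed-point tangent weight is a pure power of $\hb$, so that $T_f\Ch(\DD)=N^+_f\oplus N^-_f$ with no $\sigma$-fixed directions.

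With this in hand I would attack the axioms as follows. For well-definedness (a), the poles of $\tilde{W}_f$ arise only from the $\ominus$-terms of the tangent formula of Section~\ref{sec:tangent} (the $-\hb\Hom$ and $-(1+\hb)\End$ contributions), which produce denominators along diagonals $x_{ij}=x_{ij'}$; the symmetrizer $\Sym$ antisymmetrizes the numerator across these diagonals, and the standard ``symmetrization kills poles'' mechanism, as for weight functions in \cite{RTV, RTVtrig}, shows $\tfrac{1}{N_f}\Sym(\tilde{W}_f)$ is a genuine symmetric polynomial, hence a class in the Chern subalgebra whose $\Loc$-image lies in $\HH_{\T}$. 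The normalization axiom is then a counting statement: restricting $\Sym(\tilde{W}_f)$ to $f$, the permutations preserving the block-multiplicity pattern $(d^U_1,\ldots,d^U_s)$ of the butterfly of $f$ each reproduce $e(N^-_f)$ and number exactly $N_f=\prod_U\prod_i d^U_i!$, while every other permutation forces a vanishing factor; dividing by $N_f$ yields $\Loc_f(W_f)=e(N^-_f)$.

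The support-1, boundary, and support-2 conditions require controlling $\Loc_{f'}(W_f)$ for $f'\neq f$ through the explicit butterfly substitution rules of Section~\ref{sec:fixrest}. Triangularity (support-1), namely $\Loc_{f'}(W_f)=0$ unless $f'\le f$, together with the $\h$-divisibility (boundary), I would obtain by a resonance analysis: for each permutation $\sigma$ the substituted factors of $\sigma\cdot\tilde{W}_f$ contain a forced zero (an instance of $u_i-u_j+k\h$ specializing to $0$, or a surviving factor of $\h$) whenever the combinatorial data of $f'$ is incompatible with $\Slope(f)$, exactly as in the flag-variety computations of \cite{RTV_EllK, RV}. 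The genuinely new difficulty---and the step I expect to be the main obstacle---is the fine support-2 divisibility by $e(N_{f'}(\Slope(f)))$, because bow varieties are generically \emph{not} GKM: as Example~\ref{ex:strangeN} shows, $N_{f'}(\Slope(f))$ can jump in dimension and is not read off from invariant curves, so one cannot reduce to edge-by-edge checks. Handling this requires two ingredients not yet present in the paper: a purely combinatorial description of the order $\le$ and of the spaces $N_{f'}(\Slope(f))$ in terms of tie diagrams (equivalently BCTs), and a matching lemma showing that $\tilde{W}_f$ carries, after symmetrization, precisely the zeros prescribed by that normal bundle.

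As a complementary and possibly more tractable route for part of the statement, I would run an induction on Hanany-Witten transitions. For a brane diagram whose margin vector $c$ is weakly decreasing, Theorem~\ref{thm:cobalanced} identifies $\Ch(\DD)$ with a Nakajima quiver variety, where $W_f$ should specialize to the RTV weight functions already known to represent stable envelopes; one would then show, using the exact sequence and reparametrization $\rho$ of Theorem~\ref{thm:HW}, that $W_f$ transforms correctly under a single HW move, propagating the result across the equivalence class. This would settle the conjecture for all diagrams HW-equivalent to a co-balanced one, but leaves the general non-quiver case---precisely the regime illustrated in Section~\ref{sec:ex3}---to the harder direct argument above.
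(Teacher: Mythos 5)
There is no ``paper proof'' to compare against: the statement you are proving is stated in the paper as Conjecture~\ref{con:Stab} and is left open there. The paper's only evidence is computational---verification of the axioms in the examples of Sections~\ref{sec:ex1}--\ref{sec:ex3} and the worked substitutions of Section~\ref{sec:StabEx}---so any complete argument would be new mathematics. Your proposal is a roadmap rather than a proof, and you say so yourself: the support-2 divisibility is deferred to ``two ingredients not yet present in the paper'' (a combinatorial description of the order $\le$ and of $N_{f'}(\Slope(f))$, plus a matching lemma). That is precisely the open core of the conjecture, so the proposal cannot be counted as settling it.

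Beyond the acknowledged gap, one step you treat as routine is actually false as stated. You claim the poles of $\tilde{W}_f$ occur only along diagonals $x_{ij}=x_{ij'}$ of a single tautological bundle, so that $\Sym$ kills them and $W_f$ is ``a genuine symmetric polynomial.'' But the negative terms in the tangent bundle formula of Section~\ref{sec:tangent} include $\ominus\,\hb\Hom(\xi_{U^+},\xi_{U^-})$, a Hom between \emph{different} tautological bundles, so the denominator of $\tilde{W}_f$ contains cross-bundle factors; the symmetrizers, which permute the roots of each bundle separately, cannot cancel these. This is visible in the paper's own example in Section~\ref{sec:StabEx}: the denominator of $\tilde{W}_2$ contains $(x_{31}-x_{42}+\h)$ and $(x_{31}-x_{42})$, mixing roots of the third and fourth bundles, and the paper explicitly notes that certain fixed-point substitutions are of $0/0$ type, with only the \emph{sum} of terms having a limit. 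This is exactly why the conjecture is formulated in terms of limits of the substitution map and why well-definedness is itself part of the conjecture; your step (a) assumes away this difficulty. Finally, your fallback route through Hanany--Witten induction is weaker than suggested: the paper states the coincidence of $W_f$ with the weight functions of \cite{RTV} only for \emph{separated} diagrams of $T^*\!\cF$ varieties (not for all co-balanced diagrams), and it warns that transporting class formulas across a HW move is nontrivial because the identification involves the difference bundle $\xi_3\oplus\xi_1\oplus\C_U\ominus\xi_2$ of Theorem~\ref{thm:HW}(3); establishing the required transformation law for $W_f$ under one HW move is itself an unproven lemma of the same order of difficulty as the conjecture.
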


We need to explain what ``represented'' means. The formula $W_f$ is a rational function in the Chern roots of the tautological bundles as well as $u_i,\h$. The cohomological restriction map 
\[
\Loc_{f'}: H^*_{\T}(\Ch(\DD))\to H^*_{\T}(f')=\C[u_1, u_2, \ldots, u_n,\h],
\]
is a ``substitution'' map: we substitute certain $u,\h$ polynomials into the Chern roots. Hence, it can be applied to $W_f$. However, the result might have 0/0 terms. Part of the conjecture is that the {\em limit} of this substitution map exists and is a {\em polynomial}. Thus, the first part of the conjecture is that $W_f$ {\em defines} an element in
$\oplus_{f'} H^*_{\T}(f')$. The second part is that this element is in $\HH_{\T}(\Ch(\DD))$, and the third part is that it satisfies the axioms of Definition \ref{def:SE}. 

\begin{remark}\rm
The phenomenon of naming a rational function whose fixed point restrictions are polynomials is not new in the theory of stable envelopes: the so-called weight functions of \cite{RTV, Rh} are also examples for that. The phenomenon that we need to take limits in the substitution is new.
\end{remark}

Some examples of $W_f$ will be presented in Section \ref{sec:StabEx}

\begin{remark}
The analogous conjecture for K theoretic stable envelopes also holds in many special cases we checked by computer, see more details in \cite{Sh}.
\end{remark}

\begin{remark}
Although the definition of $W_f$ may sound technical, in plain language, it is just the natural formula we obtain if we want an expression symmetric in the Grothendieck roots that satisfies the normalization axiom of Definition \ref{def:SE}. Remarkably, all our computations support the conjecture that the rest of the axioms also hold. The expressions in Sections \ref{sec:ex1}--\ref{sec:ex3} are results of calculating the relevant $W_f$ formulas. 
\end{remark}

\begin{remark} 
Hanany-Witten equivalent brane diagrams have tautological bundles of different ranks. Hence, the symmetrization part of the definition of $W_f$ may be computationally much easier for certain representatives in the HW equivalence class than for others. Our choice e.g. in Section \ref{sec:ex2} was made for this reason.
\end{remark}

\begin{remark}
Let us call a brane diagram {\em separated} if all the $m$ NS5 branes are to the left of all the $n$ D5 branes. Equivalently, if the $m\times n$ table-with-margin code for the diagram has the separating line at the left and bottom edge of the table. 
\[
\begin{tikzpicture}[scale=.25]
\draw[thick] (0,1)--(33,1) ;
\draw[thick,red] (-.5,0)--(.5,2);
\draw[thick,red] (2.5,0)--(3.5,2);
\draw[thick,red] (5.5,0)--(6.5,2);
\draw[thick,red] (11.5,0)--(12.5,2);
\draw[thick,red] (14.5,0)--(15.5,2);
\draw[thick,blue] (18.5,0)--(17.5,2);
\draw[thick,blue] (21.5,0)--(20.5,2);
\draw[thick,blue] (24.5,0)--(23.5,2);
\draw[thick,blue] (30.5,0)--(29.5,2);
\draw[thick,blue] (33.5,0)--(32.5,2);
\draw  [thin](-1,0) to [out=290,in=90] (7,-2);
\draw [ thin](7,-2) to [out=90,in=250] (15,0);
\node at (7,-2.5) {\tiny $m$};
\draw  [thin](18,0) to [out=290,in=90] (26,-2);
\draw [ thin](26,-2) to [out=90,in=250] (34,0);
\node at (26,-2.5) {\tiny $n$};
\end{tikzpicture}
\qquad\qquad
\begin{tikzpicture}[scale=.5, baseline=-10pt]
\draw[ultra thick] (0,2)--(0,0) --(3,0);
\draw[thin] (0,2)--(3,2) --(3,0);
\draw[thin] (0,1.7)--(3,1.7);
\draw[thin] (0,1.4)--(3,1.4);
\draw[thin] (.3,2) --(.3,0);
\draw[thin] (.6,2) --(.6,0);
\node at (-.8,1) {\tiny $m$};
\node at (1.5,2.5) {\tiny $n$};
\end{tikzpicture}
\]
Every HW equivalence class has exactly one separated brane diagram. These diagrams have many special properties. One is that the $\Loc^{K}_f$ maps restricted to the $\xi_{m+1}, \xi_{m+2}, \ldots, \xi_{m+n}$ tautological bundles do not depend on $f$. This fact has computational consequences. For example, the definition of $W_f$ can be simplified so that we deal only with  the first $m-1$ bundles (see more details in \cite{Sh}). For the separated representative of the HW equivalence class of $T^*\!\cF$ varieties, our conjectured $W_f$ coincides with the weight functions of \cite{RTV}.
\end{remark}

\subsection{An example for Conjecture \ref{con:Stab}} \label{sec:StabEx}
Consider the brane diagram $\DD=\ttt{\bs 1\fs 2\fs 2\bs 2\bs 1\fs}$ of Section \ref{sec:ex3} (see Figure~\ref{fig:GKM3}). In this section we show how Conjecture \ref{con:Stab} produces some entries in the second line of the stable envelope table of Section \ref{sec:ex3}.

According to Section \ref{sec:tangent} the first few terms of the tangent bundle $T\Ch(\DD)$ expressed in Grothen\-dieck roots are
\begin{equation}\label{eq:extan}
\frac{\uu_1}{\xi_1}\hb+\frac{\xi_1}{\xi_2^{(1)}}\hb+\frac{\xi_1}{\xi_2^{(2)}}\hb+\frac{\xi_2^{(1)}}{\xi_1}+\frac{\xi_2^{(2)}}{\xi_1}-1-(1+\hb)\left(\frac{\xi_2^{(1)}}{\xi_2^{(2)}}+\frac{\xi_2^{(2)}}{\xi_2^{(1)}}+2\right)+\ldots,
\end{equation}
where $\xi_1$ is the first tautological bundle, and $\xi_2^{(1)}, \xi_2^{(2)}$ are the Grothendieck roots of the second tautological bundle, etc. According to Section \ref{sec:fixrest} the localization map to the second fixed point maps $\xi_1\mapsto \uu_1\hb, \xi_2^{(1)}\mapsto \uu_1\hb, \xi_2^{(2)}\mapsto \uu_3\hb^{-1}$. Under this substitution only a few terms from \eqref{eq:extan} will be of the form $\uu_{small}/\uu_{large}$, from the displayed ones only $\xi_1/\xi_2^{(2)}\hb$ and $-(1+\hb)\xi_2^{(1)}/\xi_2^{(2)}$; for the whole sum we obtain 
\begin{multline*}
\frac{\xi_1}{\xi_2^{(2)}}\hb-\frac{\xi_2^{(1)}}{\xi_2^{(2)}}-\frac{\xi_2^{(1)}}{\xi_2^{(2)}}\hb-\frac{\xi_3^{(1)}}{\xi_3^{(2)}}+\frac{\xi_3^{(1)}}{u_2}+\frac{\xi_4^{(1)}}{u_3}+\frac{\xi_2^{(1)}}{\xi_3^{(2)}}\hb-\frac{\xi_4^{(1)}}{\xi_4^{(2)}}+\frac{\xi_3^{(1)}}{\xi_4^{(2)}}-\frac{\xi_3^{(1)}}{\xi_4^{(2)}}\hb+\frac{u_2}{\xi_4^{(2)}}\hb+\frac{\xi_3^{(1)}}{\xi_2^{(2)}}
+\frac{\xi_4^{(1)}}{\xi_4^{(2)}}\hb.
\end{multline*}
Hence we have 
\[
\textstyle
\tilde{W}_2=
{\frac{ (x_{11}-x_{22}+\h) (x_{31}-u_{2}) (x_{41}-u_{3}) (x_{21}-x_{32}+ \h) (x_{31}-x_{42})(u_{2}-x_{42}+ \h) (x_{31}-x_{22}) (x_{41}-x_{42}+ \h)}
{(x_{21}-x_{22})(x_{21}-x_{22}+ \h)(x_{31}-x_{32}) (x_{41}-x_{42})  (x_{31}-x_{42}+ \h) }},
\]
and $W_2$ is its $S_2\times S_2 \times S_2$ symmetrization with respect to $(x_{2,1},x_{2,2})$, $(x_{3,1},x_{3,2})$, $(x_{4,1},x_{4,2})$. Hence, e.g. the (2,3) entry of the stable envelope table of Section \ref{sec:ex3} is the 

\centerline{
\begin{tabular}{lllll}
$x_{11}\mapsto u_1+\h$  &  $x_{21}\mapsto u_1+\h$ & $x_{31}\mapsto u_2$ &   $x_{41}\mapsto u_2$ & $x_{51}\mapsto u_2$ \\
                                 &  $x_{22}\mapsto u_3-\h$  & $x_{32}\mapsto u_3$&   $x_{42}\mapsto u_3$ &  
\end{tabular}
}

\noindent substitution into this 8-term rational function (the substitutions are determined in Section \ref{sec:fixrest}). We get termwise 0, hence the (2,3) entry is 0.

The (2,2) entry in the table is the 

\centerline{
\begin{tabular}{lllll}
$x_{11}\mapsto u_1+\h$  &  $x_{21}\mapsto u_1+\h$ & $x_{31}\mapsto u_1+\h$ &   $x_{41}\mapsto u_1+\h$ & $x_{51}\mapsto u_1+\h$ \\
                                 &  $x_{22}\mapsto u_3-\h$  & $x_{32}\mapsto u_3$       &   $x_{42}\mapsto u_3$      &  
\end{tabular}
}

\noindent substitution into the 8-term rational expression. The first term maps to $(u_1-u_3+2\h)(u_1-u_2+\h)(u_2-u_3+\h)$ (a polynomial!) and all the other terms map to 0.  

The (2,5) entry in the table is the 

\centerline{
\begin{tabular}{lllll}
$x_{11}\mapsto u_1+\h$  &  $x_{21}\mapsto u_1+\h$ & $x_{31}\mapsto u_3$           &   $x_{41}\mapsto u_3$ & $x_{51}\mapsto u_3-\h$ \\
                                 &  $x_{22}\mapsto u_3-\h$  & $x_{32}\mapsto u_3-\h$       &   $x_{42}\mapsto u_3-\h$      &  
\end{tabular}
}

\noindent substitution into the 8-term rational expression. Six of those terms map to 0. However, the substitution does not make sense for the remaining two terms, because of the presence of the $(x_{32}-x_{41}+\h)$ factor in the denominator of these two terms. Yet, the sum of the terms {\em has} a limit, and it is 0. Thus we obtain that the (2,5) entry of the table is 0. The other entries follow similarly.

The example of this section illustrates that Conjecture \ref{con:Stab} typically does not provide the simplest representatives for the stable envelope classes. The $F_j$ functions of \eqref{eq:polys} are for example much simpler representatives (they are polynomials to start with, not rational functions) than the formula of Conjecture \ref{con:Stab}.

\subsection{Elliptic stable envelopes, 3d mirror symmetry for characteristic classes} \label{sec:elliptic}
The main goal of the subject---not achieved in this paper yet---is to prove that 3d mirror symmetry is displayed in the stable envelopes, namely, elliptic stable envelopes. In this section we give an example of the sought statement.

Consider the 3d mirror dual brane diagrams 
\[
\DD=
\begin{tikzpicture}[baseline=2pt,scale=.2]
\draw[thick] (0,1)--(12,1) ;
\draw[thick,red] (-.5,0)--(.5,2);
\draw[thick,blue] (3.5,0)--(2.5,2);
\draw[thick,blue] (6.5,0)--(5.5,2);
\draw[thick,blue] (9.5,0)--(8.5,2);
\draw[thick,red] (11.5,0)--(12.5,2);
\node at (1.5,1.6) {\tiny $1$};\node at (4.5,1.6) {\tiny $1$};\node at (7.5,1.6) {\tiny $1$};\node at (10.5,1.6) {\tiny $1$};
\node at (0,-1) {\tiny $V_1$};\node at (3,-1) {\tiny $U_1$};\node at (6,-1) {\tiny $U_2$};\node at (9,-1) {\tiny $U_3$};\node at (12,-1) {\tiny $V_2$};
\end{tikzpicture}
\qquad\qquad
\DD'=
\begin{tikzpicture}[baseline=2pt,scale=.2]
\draw[thick] (0,1)--(12,1) ;
\draw[thick,blue] (.5,0)--(-.5,2);
\draw[thick,red] (2.5,0)--(3.5,2);
\draw[thick,red] (5.5,0)--(6.5,2);
\draw[thick,red] (8.5,0)--(9.5,2);
\draw[thick,blue] (12.5,0)--(11.5,2);
\node at (1.5,1.6) {\tiny $1$};\node at (4.5,1.6) {\tiny $1$};\node at (7.5,1.6) {\tiny $1$};\node at (10.5,1.6) {\tiny $1$};
\node at (0,-1) {\tiny $U'_1$};\node at (3,-1) {\tiny $V'_1$};\node at (6,-1) {\tiny $V'_2$};\node at (9,-1) {\tiny $V'_3$};\node at (12,-1) {\tiny $U'_2$};
\end{tikzpicture}
\]
and let us fix their corresponding fixed points
\[
f_1=\begin{tikzpicture}[baseline=2pt,scale=.2]
\draw[thick] (0,1)--(12,1) ;
\draw[thick,red] (-.5,0)--(.5,2);
\draw[thick,blue] (3.5,0)--(2.5,2);
\draw[thick,blue] (6.5,0)--(5.5,2);
\draw[thick,blue] (9.5,0)--(8.5,2);
\draw[thick,red] (11.5,0)--(12.5,2);
\draw [dashed, black](3.5,-.2) to [out=-45,in=225] (11.5,-.2);
\draw [dashed, black](.5,2.2) to [out=45,in=-225] (2.5,2.2);
\end{tikzpicture}
\qquad
f_2=\begin{tikzpicture}[baseline=2pt,scale=.2]
\draw[thick] (0,1)--(12,1) ;
\draw[thick,red] (-.5,0)--(.5,2);
\draw[thick,blue] (3.5,0)--(2.5,2);
\draw[thick,blue] (6.5,0)--(5.5,2);
\draw[thick,blue] (9.5,0)--(8.5,2);
\draw[thick,red] (11.5,0)--(12.5,2);
\draw [dashed, black](6.5,-.2) to [out=-45,in=225] (11.5,-.2);
\draw [dashed, black](.5,2.2) to [out=45,in=-225] (5.5,2.2);
\end{tikzpicture}
\qquad
f_3=\begin{tikzpicture}[baseline=2pt,scale=.2]
\draw[thick] (0,1)--(12,1) ;
\draw[thick,red] (-.5,0)--(.5,2);
\draw[thick,blue] (3.5,0)--(2.5,2);
\draw[thick,blue] (6.5,0)--(5.5,2);
\draw[thick,blue] (9.5,0)--(8.5,2);
\draw[thick,red] (11.5,0)--(12.5,2);
\draw [dashed, black](9.5,-.2) to [out=-45,in=225] (11.5,-.2);
\draw [dashed, black](.5,2.2) to [out=45,in=-225] (8.5,2.2);
\end{tikzpicture}
\]
\[
f'_1=
\begin{tikzpicture}[baseline=2pt,scale=.2]
\draw[thick] (0,1)--(12,1) ;
\draw[thick,blue] (.5,0)--(-.5,2);
\draw[thick,red] (2.5,0)--(3.5,2);
\draw[thick,red] (5.5,0)--(6.5,2);
\draw[thick,red] (8.5,0)--(9.5,2);
\draw[thick,blue] (12.5,0)--(11.5,2);
\draw [dashed, black](.5,-.2) to [out=-45,in=225] (2.5,-.2);
\draw [dashed, black](3.5,2.2) to [out=45,in=-225] (11.5,2.2);
\end{tikzpicture}
\qquad
f'_2=
\begin{tikzpicture}[baseline=2pt,scale=.2]
\draw[thick] (0,1)--(12,1) ;
\draw[thick,blue] (.5,0)--(-.5,2);
\draw[thick,red] (2.5,0)--(3.5,2);
\draw[thick,red] (5.5,0)--(6.5,2);
\draw[thick,red] (8.5,0)--(9.5,2);
\draw[thick,blue] (12.5,0)--(11.5,2);
\draw [dashed, black](.5,-.2) to [out=-45,in=225] (5.5,-.2);
\draw [dashed, black](6.5,2.2) to [out=45,in=-225] (11.5,2.2);
\end{tikzpicture}
\qquad
f'_3=
\begin{tikzpicture}[baseline=2pt,scale=.2]
\draw[thick] (0,1)--(12,1) ;
\draw[thick,blue] (.5,0)--(-.5,2);
\draw[thick,red] (2.5,0)--(3.5,2);
\draw[thick,red] (5.5,0)--(6.5,2);
\draw[thick,red] (8.5,0)--(9.5,2);
\draw[thick,blue] (12.5,0)--(11.5,2);
\draw [dashed, black](.5,-.2) to [out=-45,in=225] (8.5,-.2);
\draw [dashed, black](9.5,2.2) to [out=45,in=-225] (11.5,2.2);
\end{tikzpicture}.
\]

The graphs of their fixed points are 
\[
\begin{tikzpicture}[scale=1.5]
\draw[fill=pink,pink] (1,0) to (1.2,-0.2) to  [out=-150, in=-30]  (.8,-0.2) to (1,0);
\draw[fill=pink,pink] (0,1) to (.2,.8) to  [out=-150, in=-30]  (-.2,.8) to (0,1);
\draw[fill=pink,pink] (1,2) to (1,1.7) to  [out=-180, in=-30]  (.8,1.8) to (1,2);
\draw[] (1,0) -- (0,1);
\draw[] (0,1) -- (1,2);
\draw[] (1,0)--(1,2);
\draw[] (1,0)--(1.5,-.5);
\draw[] (1,0)--( .5,-.5);
\draw[] (0,1)--(-.5,0.5);
\draw[] (0,1)--(-.5,1.5);
\draw[] (1,2)--(1.5,2.5);
\draw[] (1,2)--(  .5,2.5);
\node[fill,white,draw,circle,minimum size=.5cm,inner sep=0pt] at (1,0) {\footnotesize $f_1$};
\node[fill,white,draw,circle,minimum size=.5cm,inner sep=0pt] at (0,1) {\footnotesize$f_2$};
\node[fill,white,draw,circle,minimum size=.5cm,inner sep=0pt] at (1,2) {\footnotesize$f_3$};
\node[draw, circle,minimum size=.5cm,inner sep=0pt] at (1,0) {\footnotesize $f_1$};
\node[draw, circle,minimum size=.5cm,inner sep=0pt] at (0,1) {\footnotesize$f_2$};
\node[draw, circle,minimum size=.5cm,inner sep=0pt] at (1,2) {\footnotesize$f_3$};
\node at (1.1,.35) {\scalebox{.6}[.6]{$\frac{{\uu}_1}{{\uu}_3}$}};
\node at (1.1,1.65) {\scalebox{.6}[.6]{$\frac{{\uu}_3}{{\uu}_1}$}};
\node at (0.6,.2) {\scalebox{.6}[.6]{$\frac{{\uu}_1}{{\uu}_2}$}};
\node at (0.2,.6) {\scalebox{.6}[.6]{$\frac{{\uu}_2}{{\uu}_1}$}};
\node at (0.2,1.4) {\scalebox{.6}[.6]{$\frac{{\uu}_2}{{\uu}_3}$}};
\node at (0.65,1.85) {\scalebox{.6}[.6]{$\frac{{\uu}_3}{{\uu}_2}$}};
\node at (1.4,-.2) {\scalebox{.6}[.6]{$\frac{{\uu}_3}{{\uu}_1}\hb$}};
\node at (.55,-.22) {\scalebox{.6}[.6]{$\frac{{\uu}_2}{{\uu}_1}\hb$}};
\node at (-.4,.8) {\scalebox{.6}[.6]{$\frac{{\uu}_3}{{\uu}_2}\hb$}};
\node at (-.4,1.2) {\scalebox{.6}[.6]{$\frac{{\uu}_1}{{\uu}_2}\hb$}};
\node at (1.45,2.2) {\scalebox{.6}[.6]{$\frac{{\uu}_1}{{\uu}_3}\hb$}};
\node at (.55,2.22) {\scalebox{.6}[.6]{$\frac{{\uu}_2}{{\uu}_3}\hb$}};
\end{tikzpicture}
\qquad\qquad\qquad\qquad
\begin{tikzpicture}[scale=1.5]
\draw[fill=pink,pink] (-.03,0) to (-0.03,-0.35) to (.03,-0.35) to (.03,0);
\draw[fill=pink,pink] (-.03,1) to (-0.03,.65) to (.03,.65) to (.03,1);
\draw[fill=pink,pink] (-.03,2) to (-0.03,1.65) to (.03,1.65) to (.03,2);
\draw[] (0,-.5) -- (0,2.5);
\node[fill,white,draw,circle,minimum size=.5cm,inner sep=0pt] at (0,0) {\footnotesize $f'_3$};
\node[fill,white,draw,circle,minimum size=.5cm,inner sep=0pt] at (0,1) {\footnotesize$f'_2$};
\node[fill,white,draw,circle,minimum size=.5cm,inner sep=0pt] at (0,2) {\footnotesize$f'_1$};
\node[draw, circle,minimum size=.5cm,inner sep=0pt] at (0,0) {\footnotesize $f'_3$};
\node[draw, circle,minimum size=.5cm,inner sep=0pt] at (0,1) {\footnotesize$f'_2$};
\node[draw, circle,minimum size=.5cm,inner sep=0pt] at (0,2) {\footnotesize$f'_1$};
\node at (0.2,.35) {\scalebox{.6}[.6]{$\frac{{\uu'}_1}{{\uu'}_2}\hb^2$}};
\node at (0.2,1.35) {\scalebox{.6}[.6]{$\frac{{\uu'}_1}{{\uu'}_2}\hb^3$}};
\node at (0.2,2.35) {\scalebox{.6}[.6]{$\frac{{\uu'}_1}{{\uu'}_2}\hb^4$}};
\node at (-.27,-.35) {\scalebox{.6}[.6]{$\frac{{\uu'}_2}{{\uu'}_1}\hb^{-1}$}};
\node at (-.27,.65) {\scalebox{.6}[.6]{$\frac{{\uu'}_2}{{\uu'}_1}\hb^{-2}$}};
\node at (-.27,1.65) {\scalebox{.6}[.6]{$\frac{{\uu'}_2}{{\uu'}_1}\hb^{-3}$}};
\end{tikzpicture}.
\]
Based on these graphs the cohomological stable envelopes of the two varieties can be calculated:
\begin{equation}\label{eq:Ah}
\begin{tabular}{|c||c|c|c|} 
\hline
 & $f_1$ & $f_2$ & $f_3$ 
 \\
 \hline\hline
$f_1$ & $(u_1-u_2)(u_1-u_3)$ &  0 &  0
\\
\hline
$f_2$ &  $(u_1-u_3)\h$ &  $(u_1-u_2+\h)(u_2-u_3)$ &  0 
\\
\hline
$f_3$ &  $(u_2-u_1+\h)\h$ &  $(u_1-u_2+\h)\h$ &  $(u_2-u_3+\h)(u_1-u_3+\h)$ \\
\hline
\end{tabular}
\end{equation}

\begin{equation}\label{eq:Bh}
\begin{tabular}{|c||c|c|c|} 
\hline
 & $f'_1$ & $f'_2$ & $f'_3$ 
 \\
 \hline\hline
$f'_1$ &  $u'_1-u'_2+4\h$ &  $\h$ &  $\h$
\\
\hline
$f'_2$ &  0 &  $u'_1-u'_2+3\h$ &  $\h$ 
\\
\hline
$f'_3$ & 0 & 0 & $u'_1-u'_2+2\h$ \\
\hline
\end{tabular}.
\end{equation}

There is however an elliptic cohomology extension of the notion of stable envelopes, due to Aganagic-Okounkov \cite{AO} (see also \cite[\S 5.5]{FRV}, \cite[\S 7.7]{RTV_EllK}, \cite[\S 1.2]{RSVZ1}, \cite{Sm}). We do not attempt to recall the axiomatic definition of elliptic stable envelopes, or their relevance in representation theory and physics (the Introduction in \cite{AO} is a great start). What we want to emphasize is that the elliptic stable envelopes {\em necessarily} depend on a new set of variables---named K\"ahler or dynamical variables. In the language of this paper, for bow varieties, these new variables are naturally attached to the NS5 branes. We will denote the K\"ahler variable attached to the NS5 brane $V_i$ by $\vv_i$ (just like we denoted the equivariant parameter attached to the D5 brane $U_i$ by $\uu_i$ in K theory or $u_i$ in cohomology). The elliptic stable envelopes on $\Ch(\DD)$ and $\Ch(\DD')$ are calculated to be
\begin{equation}\label{eq:Ae}
\begin{tabular}{|c||c|c|c|} 
\hline
 & $f_1$ & $f_2$ & $f_3$ 
 \\
 \hline\hline
$f_1$ & {} $\theta(\frac{\uu_1}{\uu_2})\theta(\frac{\uu_1}{\uu_3})\theta(\frac{\vv_2}{\vv_1}\hb^4)$ & {} 0 & {} 0
\\
\hline
$f_2$ & {} $\theta(\hb)\theta(\frac{\uu_1}{\uu_3})\theta(\frac{\uu_2\vv_2}{\uu_1\vv_1}\hb^3)$ & {} $\theta(\frac{\uu_1}{\uu_2}\hb) \theta(\frac{\uu_2}{\uu_3})\theta(\frac{\vv_2}{\vv_1}\hb^3)$ &{}  0 
\\
\hline
$f_3$ & {} $\theta(\hb)\theta(\frac{\uu_2}{\uu_1}\hb)\theta(\frac{\uu_3\vv_2}{\uu_1\vv_1}\hb^2)$ & {} $\theta(\hb)\theta(\frac{\uu_1}{\uu_2}\hb)\theta(\frac{\uu_3\vv_2}{\uu_2\vv_1}\hb^2)$ & {} $\theta(\frac{\uu_2}{\uu_3}\hb) \theta(\frac{\uu_1}{\uu_3}\hb)\theta(\frac{\vv_2}{\vv_1}\hb^2)$ \\
\hline
\end{tabular}
\end{equation}
\begin{equation}\label{eq:Be}
\begin{tabular}{|c||c|c|c|} 
\hline
 & $f'_1$ & $f'_2$ & $f'_3$ 
 \\
 \hline\hline
$f'_1$ & {} $\theta(\frac{\uu'_1}{\uu'_2}\hb^{4})\theta(\frac{\vv'_2}{\vv'_1})\theta(\frac{\vv'_3}{\vv'_1})$ & {} $\theta(\hb)\theta(\frac{\vv'_3}{\vv'_1})\theta(\frac{\vv'_2\uu'_2}{\vv'_1\uu'_1}\hb^{-3})$ & {} $\theta(\hb)\theta(\frac{\vv'_2}{\vv'_1}\hb^{-1})\theta(\frac{\vv'_3\uu'_2}{\vv'_1\uu'_1}\hb^{-2})$
\\
\hline
$f'_2$ & {} 0 & {} $\theta(\frac{\uu'_1}{\uu'_2}\hb^{3})\theta(\frac{\vv'_2}{\vv'_1}\hb) \theta(\frac{\vv'_3}{\vv'_2})$ &{} $\theta(\hb)\theta(\frac{\vv'_2}{\vv'_1}\hb)\theta(\frac{\vv'_3\uu'_2}{\vv'_2\uu'_1}\hb^{-2})$ 
\\
\hline
$f'_3$ &  {} 0 & {} 0 & {} $\theta(\frac{\uu'_1}{\uu'_2}\hb^2)\theta(\frac{\vv'_3}{\vv'_2}\hb) \theta(\frac{\vv'_3}{\vv'_1}\hb)$ \\
\hline
\end{tabular}
\end{equation}
where
\[
\theta(x)=(x^{1/2}-x^{-1/2})\prod_{s=1}^\infty (1-q^sx)(1-q^s/x).
\]
For an intuitive introduction to equivariant elliptic cohomology, as well as the necessity of K\"ahler parameters in the elliptic characteristic classes see \cite[\S 5-6]{Rh}.
\begin{remark} \rm
The cohomology stable envelopes can be calculated from the elliptic ones, by a limiting procedure. The natural prediction, namely $\theta(\frac{\vv_i}{\vv_j}\cdot p(\uu,\h))\to 1$($i>j$) and $\theta(\uu)=\theta(e^u)\to \sin(u)\to u$, coming from comparing tables \eqref{eq:Ae}, \eqref{eq:Be} with \eqref{eq:Ah}, \eqref{eq:Bh} is correct.
\end{remark}

The novelty that is only visible for the elliptic stable envelopes is that the stable envelopes of $\Ch(\DD)$ and those of $\Ch(\DD')$ are related: one table is obtained from the other one (up to sign) by transpose, switch of equivariant and K\"ahler variables, as well as $\h \leftrightarrow \h^{-1}$. That is, let $S$ be table \eqref{eq:Ae} and let $S'$ be table \eqref{eq:Be}. Then we have
\begin{equation}\label{eq:3d}
\begin{tikzcd}
S_{kl} \ar[r,equal, "\begin{matrix} \ \ \h \leftrightarrow \h^{-1} \\ \uu_i \leftrightarrow \vv'_i \\ \vv_j \leftrightarrow \uu'_j \end{matrix}"] &   (-1)^{k+l+1}S'_{lk}.
\end{tikzcd}
\end{equation}
We call this equality the {\em 3d mirror symmetry for characteristic classes}. The expectation is that \eqref{eq:3d} is a general phenomenon. The known results are the following: 
\begin{itemize}
\item elliptic stable envelopes are defined for Nakajima quiver varieties, as well as so-called abelianization formulas are known for them \cite{AO}, \cite[\S 5]{RSVZ1}, \cite{RTV_EllK}, \cite{Sm}; 
\item using those abelianization formulas, the analogue of \eqref{eq:3d} is proved for $T^*\!\Gr(k,n)$ and its 3d mirror dual if $k\leq n/2$ \cite{RSVZ1}, cf. \eqref{eq:quiverGr}; 
\item the analogue of \eqref{eq:3d} is proved for the full flag variety and its 3d dual (which is the Langland dual full flag variety) in \cite{RSVZ2, RW2}, cf. \eqref{eq:fullflag}.
\item the analogue of \eqref{eq:3d} is proved for hypertoric varieties in \cite{SZ}, see also \cite{KS}.
\end{itemize}

The goal of the project starting with this paper is to prove the analogue of \eqref{eq:3d} in the full generality of bow varieties. 

There are indications that bow varieties are the right land where 3d mirror symmetry for stable envelopes should be studied. Namely, the equivariant and K\"ahler variables come on an equal footing for bow varieties: associated with the two kinds of 5-branes. Also, under the 3d mirror symmetry of brane diagrams the two kinds of 5-branes switch spaces, as \eqref{eq:3d} requires. Moreover, the sophisticated and technical proof in \cite{RSVZ1} does not do justice to the simple elegance of the statement. The reason seems to be that in \cite{RSVZ1} two quiver varieties are considered which are not 3d mirror duals of each other on the nose, but rather one is Hanany-Witten equivalent to the mirror dual of the other one. Relating characteristic class formulas for  Hanany-Witten equivalent varieties is not expected to be simple, because HW equivalence involves difference bundles of tautological bundles, see Theorem \ref{thm:HW}(3). Once elliptic stable envelopes are defined for bow varieties, and (say abelianization) formulas are know for them, the comparison between elliptic stable envelopes of 3d mirror dual varieties should be combinatorial, and the comparison between elliptic stable envelopes on the two sides of a single Hanany-Witten transition should be a theta-function identity. We plan to pursue this project in the future.

\begin{remark}\label{rem:resonance} \rm
The 3d mirror symmetry for characteristic classes gives us a new answer to the question: what is the geometric meaning of the K\"ahler variables in elliptic characteristic classes?  K\"ahler variables are the equivariant variables of the 3d mirror. This innocent remark has a nice application. Actual formulas for characteristic classes often play a role in other areas, eg. if partial differential equations. The so-called weight functions are examples. In those roles the {\em resonance} phenomenon is important \cite{FV}: values of certain weight functions (or their derivatives) agree at certain substitutions of their dynamical parameters. By interpreting the dynamical variables as equivariant variables on the 3d mirror dual, the sought after resonance equations translate to (generalized) GKM conditions that ``neighboring'' fixed point restrictions of the same cohomology class must satisfy. The farther the mirror space is from being GKM the more resonance equations we obtain.
\end{remark}

\begin{figure}
\[
\begin{tikzpicture}[baseline=0pt,scale=.2]
\draw[thick] (0,1)--(9,1) ;
\draw[thick,red] (-.5,0)--(.5,2);
\draw[thick,red] (2.5,0)--(3.5,2);
\draw[thick,blue] (6.5,0)--(5.5,2);
\draw[thick,blue] (9.5,0)--(8.5,2);
\draw [dashed, black](0.5,2.2) to [out=45,in=-225] (5.5,2.2);
\draw [dashed, black](3.5,2.2) to [out=45,in=-225] (8.5,2.2);
\node at (11,1) {$\leftrightarrow$};
\draw [ultra thick] (2,9)--(2,5)-- (6,5);   
\node at (3,6) {\footnotesize $0$}; \node at (3,8) {\footnotesize $1$}; \node at (5,6) {\footnotesize $1$}; \node at (5,8) {\footnotesize $0$}; 
\node at (11,7) {$\leftrightarrow$};
\begin{scope}[xshift=13cm]
\draw[thick] (0,1)--(9,1) ;
\draw[thick,red] (-.5,0)--(.5,2);
\draw[thick,red] (2.5,0)--(3.5,2);
\draw[thick,blue] (6.5,0)--(5.5,2);
\draw[thick,blue] (9.5,0)--(8.5,2);
\draw [dashed, black](0.5,2.2) to [out=45,in=-225] (8.5,2.2);
\draw [dashed, black](3.5,2.2) to [out=45,in=-225] (5.5,2.2);
\draw [ultra thick] (2,9)--(2,5)-- (6,5);   
\node at (3,6) {\footnotesize $1$}; \node at (3,8) {\footnotesize $0$}; \node at (5,6) {\footnotesize $0$}; \node at (5,8) {\footnotesize $1$}; 
\end{scope}
\end{tikzpicture}
\qquad\qquad\qquad
\begin{tikzpicture}[baseline=0pt,scale=.2]
\draw[thick] (0,1)--(9,1) ;
\draw[thick,blue] (.5,0)--(-.5,2);
\draw[thick,blue] (3.5,0)--(2.5,2);
\draw[thick,red] (5.5,0)--(6.5,2);
\draw[thick,red] (8.5,0)--(9.5,2);
\draw [dashed, black](0.5,-.2) to [out=-45,in=225] (8.5,-.2);
\draw [dashed, black](3.5,-.2) to [out=-45,in=225] (5.5,-.2);
\node at (11,1) {\footnotesize $\leftrightarrow$};
\draw [ultra thick] (2,9)--(6,9)-- (6,5);   
\node at (3,6) {\footnotesize $0$}; \node at (3,8) {\footnotesize $1$}; \node at (5,6) {\footnotesize $1$}; \node at (5,8) {\footnotesize $0$}; 
\node at (11,7) {$\leftrightarrow$};
\begin{scope}[xshift=13cm]
\draw[thick] (0,1)--(9,1) ;
\draw[thick,blue] (.5,0)--(-.5,2);
\draw[thick,blue] (3.5,0)--(2.5,2);
\draw[thick,red] (5.5,0)--(6.5,2);
\draw[thick,red] (8.5,0)--(9.5,2);
\draw [dashed, black](0.5,-.2) to [out=-45,in=225] (5.5,-.2);
\draw [dashed, black](3.5,-.2) to [out=-45,in=225] (8.5,-.2);
\draw [ultra thick] (2,9)--(6,9)-- (6,5);   
\node at (3,6) {\footnotesize $1$}; \node at (3,8) {\footnotesize $0$}; \node at (5,6) {\footnotesize $0$}; \node at (5,8) {\footnotesize $1$}; 
\end{scope}
\end{tikzpicture}
\]
\[
\begin{tikzpicture}[baseline=0pt,scale=.2]
\draw[thick] (0,1)--(9,1) ;
\draw[thick,red] (-.5,0)--(.5,2);
\draw[thick,blue] (3.5,0)--(2.5,2);
\draw[thick,red] (5.5,0)--(6.5,2);
\draw[thick,blue] (9.5,0)--(8.5,2);
\draw [dashed, black](0.5,2.2) to [out=45,in=-225] (2.5,2.2);
\draw [dashed, black](6.5,2.2) to [out=45,in=-225] (8.5,2.2);
\draw [dashed, black](3.5,-.2) to [out=-45,in=225] (5.5,-.2);
\node at (11,1) {\footnotesize $\leftrightarrow$};
\draw [ultra thick] (2,9)--(2,7) --(4,7)--(4,5)-- (6,5);   
\node at (3,6) {\footnotesize $0$}; \node at (3,8) {\footnotesize $1$}; \node at (5,6) {\footnotesize $1$}; \node at (5,8) {\footnotesize $0$}; 
\node at (11,7) {$\leftrightarrow$};
\begin{scope}[xshift=13cm]
\draw[thick] (0,1)--(9,1) ;
\draw[thick,red] (-.5,0)--(.5,2);
\draw[thick,blue] (3.5,0)--(2.5,2);
\draw[thick,red] (5.5,0)--(6.5,2);
\draw[thick,blue] (9.5,0)--(8.5,2);
\draw [dashed, black](0.5,2.2) to [out=45,in=-225] (8.5,2.2);
\draw [ultra thick] (2,9)--(2,7) --(4,7)--(4,5)-- (6,5);  
\node at (3,6) {\footnotesize $1$}; \node at (3,8) {\footnotesize $0$}; \node at (5,6) {\footnotesize $0$}; \node at (5,8) {\footnotesize $1$}; 
\end{scope}
\end{tikzpicture}
\qquad\qquad\qquad
\begin{tikzpicture}[baseline=0pt,scale=.2]
\draw[thick] (0,1)--(9,1) ;
\draw[thick,blue] (.5,0)--(-.5,2);
\draw[thick,red] (2.5,0)--(3.5,2);
\draw[thick,blue] (6.5,0)--(5.5,2);
\draw[thick,red] (8.5,0)--(9.5,2);
\draw [dashed, black](0.5,-.2) to [out=-45,in=225] (8.5,-.2);
\node at (11,1) {\footnotesize $\leftrightarrow$};
\draw [ultra thick] (2,9)--(4,9) --(4,7)--(6,7)-- (6,5);   
\node at (3,6) {\footnotesize $0$}; \node at (3,8) {\footnotesize $1$}; \node at (5,6) {\footnotesize $1$}; \node at (5,8) {\footnotesize $0$}; 
\node at (11,7) {$\leftrightarrow$};
\begin{scope}[xshift=13cm]
\draw[thick] (0,1)--(9,1) ;
\draw[thick,blue] (.5,0)--(-.5,2);
\draw[thick,red] (2.5,0)--(3.5,2);
\draw[thick,blue] (6.5,0)--(5.5,2);
\draw[thick,red] (8.5,0)--(9.5,2);
\draw [dashed, black](0.5,-.2) to [out=-45,in=225] (2.5,-.2);
\draw [dashed, black](6.5,-.2) to [out=-45,in=225] (8.5,-.2);
\draw [dashed, black](3.5,2.2) to [out=45,in=-225] (5.5,2.2);
\draw [ultra thick] (2,9)--(4,9) --(4,7)--(6,7)-- (6,5);   
\node at (3,6) {\footnotesize $1$}; \node at (3,8) {\footnotesize $0$}; \node at (5,6) {\footnotesize $0$}; \node at (5,8) {\footnotesize $1$}; 
\end{scope}
\end{tikzpicture}
\]
\[
\begin{tikzpicture}[baseline=0pt,scale=.2]
\draw[thick] (0,1)--(9,1) ;
\draw[thick,red] (-.5,0)--(.5,2);
\draw[thick,blue] (3.5,0)--(2.5,2);
\draw[thick,blue] (6.5,0)--(5.5,2);
\draw[thick,red] (8.5,0)--(9.5,2);
\draw [dashed, black](0.5,2.2) to [out=45,in=-225] (2.5,2.2);
\draw [dashed, black](3.5,-.2) to [out=-45,in=225] (8.5,-.2);
\node at (11,1) {\footnotesize $\leftrightarrow$};
\draw [ultra thick] (2,9)--(2,7) -- (6,7)-- (6,5);   
\node at (3,6) {\footnotesize $0$}; \node at (3,8) {\footnotesize $1$}; \node at (5,6) {\footnotesize $1$}; \node at (5,8) {\footnotesize $0$}; 
\node at (11,7) {$\leftrightarrow$};
\begin{scope}[xshift=13cm]
\draw[thick] (0,1)--(9,1) ;
\draw[thick,red] (-.5,0)--(.5,2);
\draw[thick,blue] (3.5,0)--(2.5,2);
\draw[thick,blue] (6.5,0)--(5.5,2);
\draw[thick,red] (8.5,0)--(9.5,2);
\draw [dashed, black](0.5,2.2) to [out=45,in=-225] (5.5,2.2);
\draw [dashed, black](6.5,-.2) to [out=-45,in=225] (8.5,-.2);
\draw [ultra thick] (2,9)--(2,7) -- (6,7)-- (6,5); 
\node at (3,6) {\footnotesize $1$}; \node at (3,8) {\footnotesize $0$}; \node at (5,6) {\footnotesize $0$}; \node at (5,8) {\footnotesize $1$}; 
\end{scope} 
\end{tikzpicture}
\qquad\qquad\qquad
\begin{tikzpicture}[baseline=0pt,scale=.2]
\draw[thick] (0,1)--(9,1) ;
\draw[thick,blue] (.5,0)--(-.5,2);
\draw[thick,red] (2.5,0)--(3.5,2);
\draw[thick,red] (5.5,0)--(6.5,2);
\draw[thick,blue] (9.5,0)--(8.5,2);
\draw [dashed, black](0.5,-.2) to [out=-45,in=225] (5.5,-.2);
\draw [dashed, black](6.5,2.2) to [out=45,in=-225] (8.5,2.2);
\node at (11,1) {\footnotesize $\leftrightarrow$};
\draw [ultra thick] (2,9)--(4,9) --(4,5)-- (6,5);   
\node at (3,6) {\footnotesize $0$}; \node at (3,8) {\footnotesize $1$}; \node at (5,6) {\footnotesize $1$}; \node at (5,8) {\footnotesize $0$}; 
\node at (11,7) {$\leftrightarrow$};
\begin{scope}[xshift=13cm]
\draw[thick] (0,1)--(9,1) ;
\draw[thick,blue] (.5,0)--(-.5,2);
\draw[thick,red] (2.5,0)--(3.5,2);
\draw[thick,red] (5.5,0)--(6.5,2);
\draw[thick,blue] (9.5,0)--(8.5,2);
\draw [dashed, black](0.5,-.2) to [out=-45,in=225] (2.5,-.2);
\draw [dashed, black](3.5,2.2) to [out=45,in=-225] (8.5,2.2);
\draw [ultra thick] (2,9)--(4,9) --(4,5)-- (6,5);    
\node at (3,6) {\footnotesize $1$}; \node at (3,8) {\footnotesize $0$}; \node at (5,6) {\footnotesize $0$}; \node at (5,8) {\footnotesize $1$}; 
\end{scope}
\end{tikzpicture}
\]
\caption{Surgeries on tie diagrams that reflect the 
move \eqref{eq:1001} for BCTs. There are six possibilities, depending on the relative position between the $2\times 2$ submatrix and the separating line.}
\label{fig:TieSurg}
\end{figure}
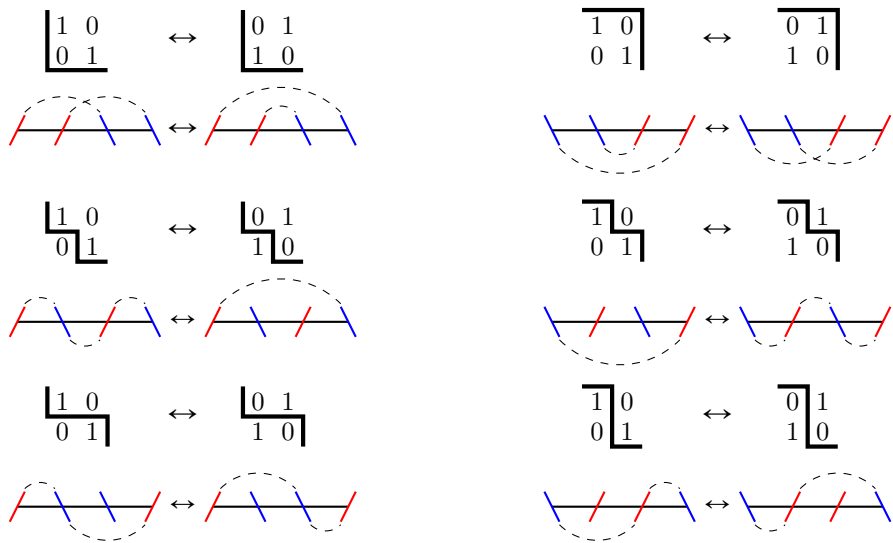

\section{The combinatorics of invariant curves}
Graphs like those in Figures \ref{fig:GKM1}--\ref{fig:GKM3} (generalized GKM graphs) that illustrate fixed point data are important in understanding characteristic classes on bow varieties. Key components of these graphs are the edges, the invariant curves on $\Ch(\DD)$. It would be interesting to find complete combinatorial descriptions of these invariant curves. For a detailed analysis see \cite{Sh}.

Here we only describe the combinatorics of {\em some} of the edges in order to illustrate the similarity with knot diagram (or skein-) combinatorics. Namely, {\em some} of the invariant curves  connect two fixed points whose BCT codes only differ in a $2 \times 2$ submatrix (the two rows or the two columns do {\em not} need to be consecutive) by 
\begin{equation} \label{eq:1001}
\begin{bmatrix} 1 & 0 \\ 0 & 1 \end{bmatrix} \leftrightarrow \begin{bmatrix} 0 & 1 \\ 1 & 0 \end{bmatrix}.
\end{equation}
In this case the tie diagrams encoding the two fixed points are also easily related, but this relation depends on the relative position between this $2\times 2$ submatrix and the separating line. Figure~\ref{fig:TieSurg} illustrates the six different relative positions, and the corresponding surgeries on tie diagrams. 
 
\section{Switching consecutive 5-branes of the same kind} \label{sec:OtherTransitions}
Consider the combinatorial transition of brane diagrams from Remark \ref{rem:OtherTransitions}, that is, for $d_1+d_3=d_2+\tilde{d}_2$ the local changes 
\begin{equation*}
\begin{tikzpicture}[baseline=(current  bounding  box.center), scale=.35]
\draw[thick] (0,1)--(8,1);
\draw[thick,blue] (3,0)--(2,2);
\draw[thick,blue] (6,0)--(5,2);
\node[] at (1.25,1.7) {$d_1$}; \node[] at (4,1.7) {$d_2$}; \node[] at (6.75,1.7) {$d_3$};
\draw[ultra thick, <->] (9,1)--(10.5,1) node[above]{$(TU)$} -- (12,1);
\draw[thick] (13,1)--(21,1);
\draw[thick,blue] (16,0)--(15,2);
\draw[thick,blue] (19,0)--(18,2);
\node[] at (14.25,1.7) {$d_1$}; \node[] at (17,1.75) {$\tilde{d}_2$}; \node[] at (19.75,1.7) {$d_3$};
\begin{scope}[xshift=24cm]
\draw[thick] (0,1)--(8,1);
\draw[thick,red] (2,0)--(3,2);
\draw[thick,red] (5,0)--(6,2);
\node[] at (1.25,1.7) {$d_1$}; \node[] at (4,1.7) {$d_2$}; \node[] at (6.75,1.7) {$d_3$};
\draw[ultra thick, <->] (9,1)--(10.5,1) node[above]{$(TV)$} -- (12,1);
\draw[thick] (13,1)--(21,1);
\draw[thick,red] (15,0)--(16,2);
\draw[thick,red] (18,0)--(19,2);
\node[] at (14.25,1.7) {$d_1$}; \node[] at (17,1.75) {$\tilde{d}_2$}; \node[] at (19.75,1.7) {$d_3$};
\end{scope}
\end{tikzpicture}
\end{equation*}
Under these transitions the charges of the branes do not change, but the branes themselves switch places. Hence, the table-with-margins code for the diagram changes by switching two consecutive components either in $c$ (for $(TU)$ transition) or in $r$ (for $(TV)$ transition). Hence, permitting $(TU)$ transition we may achieve that $c$ is weakly decreasing, and permitting $(TV)$ transition we may achieve that $r$ is weakly increasing. Comparing with Theorem \ref{thm:cobalanced} and Corollary \ref{cor:balanced} we obtain

\begin{proposition}\label{prop:HWTUTV}
Any brane diagram is equivalent to a balanced one using Hanany-Witten and $(TU)$ transitions. Any brane diagram is equivalent to a co-balanced one (ie. whose associated variety is a quiver variety) using Hanany-Witten and $(TV)$ transitions.
\end{proposition}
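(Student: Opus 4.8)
The plan is to reduce both assertions to the Hanany--Witten classification results Theorem~\ref{thm:cobalanced} and Corollary~\ref{cor:balanced}, using that a transition interchanging two 5-branes of the \emph{same} type permutes exactly one margin vector. I would first record the bookkeeping. A $(TU)$ transition interchanges two adjacent D5 branes; since no NS5 brane lies between them, for every NS5 brane $V$ both $\#\{\text{D5 branes left of }V\}$ and the multiplicities $\mult_{V^{\pm}}$ are unchanged, so $r$ is fixed, while the two interchanged D5 branes keep their charges and only trade positions, so $(TU)$ transposes two adjacent entries of $c$. Symmetrically $(TV)$ fixes $c$ and transposes two adjacent entries of $r$. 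Since permuting the columns (respectively rows) of a $01$-matrix is a bijection on binary contingency tables, $\#\BCT(r,c)$ is unchanged under these transpositions; hence Assumption~\ref{assume} persists and every intermediate object is a genuine brane diagram.

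Granting this, the two assertions are 3d mirror images of one another: 3d mirror symmetry exchanges D5 and NS5 branes, hence swaps $(TU)$ with $(TV)$ and the notions co-balanced and balanced, transforming the margins by \eqref{eq:rc}. It therefore suffices to establish one of them, and I would treat the co-balanced case. To reach a co-balanced diagram we must, by Theorem~\ref{thm:cobalanced}, make $c$ weakly decreasing, and this is accomplished precisely by the transitions that reorder $c$, namely $(TU)$; as adjacent transpositions generate the symmetric group, every ordering of $c$ is attainable, and Theorem~\ref{thm:cobalanced} then furnishes a co-balanced diagram in the resulting Hanany--Witten class. Applying the mirror-dual argument -- sorting $r$ into weakly increasing order by $(TV)$ transitions and quoting Corollary~\ref{cor:balanced} -- yields the balanced assertion.

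The main obstacle is performability: unlike Hanany--Witten moves (guaranteed by Lemma~\ref{lem:noneg}), a same-type transition must respect $\tilde{\mult}_2=\mult_1+\mult_3-\mult_2\ge 0$, which can fail for an arbitrary adjacent pair. I would circumvent this by carrying out all $(TU)$ transitions inside the \emph{separated} representative of the Hanany--Witten class (all NS5 branes to the left of all D5 branes, reached by Hanany--Witten moves and preserved by $(TU)$). There no NS5 brane lies to the right of any D5 brane, so the left charge of an adjacent pair is $c_i=\mult_1-\mult_2$, whence $\tilde{\mult}_2=\mult_1+\mult_3-\mult_2=c_i+\mult_3\ge 0$ automatically, both summands being non-negative. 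Thus every adjacent $(TU)$ swap is performable in the separated picture, the sorting of $c$ goes through step by step, and the mirror manoeuvre handles $(TV)$. This is the step needing the most care, and it is exactly where Assumption~\ref{assume} -- the existence of the relevant diagrams -- is used.
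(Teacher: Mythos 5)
Your proof is correct and, at its core, is the paper's own argument: a $(TU)$ (resp.\ $(TV)$) transition preserves every brane charge and transposes two adjacent entries of $c$ (resp.\ $r$), adjacent transpositions generate all reorderings, and Theorem~\ref{thm:cobalanced} (resp.\ Corollary~\ref{cor:balanced}) then finishes. One important point of comparison: what you actually proved is the pairing $(TU)\rightsquigarrow$ co-balanced and $(TV)\rightsquigarrow$ balanced, which is the \emph{opposite} of the pairing printed in the Proposition. Your pairing is the only tenable one: Hanany--Witten moves and $(TU)$ transitions preserve the margin vector $r$ exactly, while by Corollary~\ref{cor:balanced} reaching a balanced diagram forces $r$ to be weakly increasing. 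For instance \ttt{{\fs}2\bs 2\bs 2{\fs}} has $r=(2,0)$ and $c=(1,1)$ (a legitimate diagram under Assumption~\ref{assume}, as the BCT with rows $(1,1)$ and $(0,0)$ shows), so it can never be made balanced using HW and $(TU)$ moves, although it is HW-equivalent to a co-balanced one. The printed statement is thus a typo, inconsistent with the paragraph in the paper that derives it; your silent correction is the right reading.

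Beyond that, your performability check is a genuine improvement over the paper's proof, which is silent about the constraint $\tilde{\mult}_2=\mult_1+\mult_3-\mult_2\ge 0$. Your resolution is sound: in the separated representative the charge of the left brane of an adjacent D5 pair is $c_i=\mult_1-\mult_2\ge 0$, hence $\tilde{\mult}_2=c_i+\mult_3\ge 0$, and both separatedness and Assumption~\ref{assume} (via the column-permuted BCT) persist after each swap, so bubble-sorting $c$ goes through; the mirror reduction via \eqref{eq:rc} and complement-transpose of BCTs then correctly transports the result to the other assertion. A shortcut worth knowing, which avoids normalizing to the separated representative: a $(TU)$ transition leaves the sequence of 5-brane types, hence the separating line, unchanged, and only swaps two adjacent entries of $c$; swapping the corresponding two columns of any BCT for the old margins yields a BCT for the new ones, and by the counting rule of Section~\ref{sec:margin} the multiplicities of the new diagram are read off from it as $\#\{1\text{s NE of }P\}+\#\{0\text{s SW of }P\}$ at points $P$ of the separating line. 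Being counts, these are automatically non-negative, so every adjacent $(TU)$ or $(TV)$ swap is performable on \emph{any} brane diagram satisfying Assumption~\ref{assume}, with no preliminary normalization needed.
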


By permitting $(TV)$ transitions (as well as HW transitions) cotangent bundles of different partial flag varieties become equivalent---which are $C^\infty$ but not algebraically isomorphic. Namely, let $\lambda_1,\lambda_2,\ldots,\lambda_N$ and $\mu_1,\mu_2,\ldots,\mu_N$ be sequences of non-negative integers that are permutations of each other. Then as bow varieties $T^*\!\cF_{\lambda_1,\lambda_1+\lambda_2,\ldots,\lambda_1+\lambda_2+\ldots+\lambda_N}$ and $T^*\!\cF_{\mu_1,\mu_1+\mu_2,\ldots,\mu_1+\mu_2+\ldots+\mu_N}$ are equivalent using HW and $(TV)$ transitions (their tables-with-margins only differ by permuting~$c$).

\smallskip

Figure \ref{fig:TUfix} illustrates a natural bijection between torus fixed points of $\Ch(\DD)$ and torus fixed points of $\Ch(\tilde{\DD})$ for a $(TU)$ transition. It is worth verifying the $d_1+d_3=d_2+\tilde{d}_2$ relation in the figure. An analogous picture (in fact, this one upside down) provides the bijection for a $(TV)$ transition.

\begin{theorem} \label{thm:TU}
Let $\DD$ and $\tilde{\DD}$ be related by $(TU)$ transition. We have
\[
\KK_{\T}(\Ch(\DD)) \cong \KK_{\T}(\Ch(\tDD)),  \qquad\qquad 
\HH_{\T}(\Ch(\DD)) \cong \HH_{\T}(\Ch(\tDD)).
\]
\end{theorem}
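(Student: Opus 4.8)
The plan is to prove the statement entirely at the level of torus fixed points, since a $(TU)$ transition is in general only a $C^\infty$ equivalence and \emph{not} an algebraic isomorphism; there is therefore no honest bundle map on the varieties to exploit, and everything must be read off from the explicit fixed-point restriction data of Section \ref{sec:fixrest}. Let $U=U_j$ and $U'=U_{j+1}$ be the two interchanged D5 branes, with intervening D3 branes of multiplicities $d_1$ (left), $d_2$ (middle, call it $X$), $d_3$ (right), so that $d_2+\tilde d_2=d_1+d_3$. Let $\phi\colon\T\to\T$ be the automorphism swapping the $\C^{\times}$ factors attached to $U$ and $U'$, so that $\phi^{*}$ exchanges $\uu_j\leftrightarrow\uu_{j+1}$ and fixes all other equivariant parameters and $\hb$. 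First I would fix the fixed-point bijection $\Phi\colon\Ch(\DD)^{\T}\to\Ch(\tilde\DD)^{\T}$ of Figure~\ref{fig:TUfix}; combinatorially this is the swap of the two columns $j,j+1$ of the associated BCT, which visibly preserves $\BCT$-membership since a $(TU)$ transition only interchanges the corresponding two components of $c$. Via $\Phi$ and $\phi$ the ambient localization rings are identified, $\prod_f K_{\T}(f)\cong\prod_{\tilde f}K_{\T}(\tilde f)\cong\C[\uu_1^{\pm1},\ldots,\uu_n^{\pm1},\hb^{\pm1}]^N$, and under this identification $\KK_{\T}(\Ch(\DD))$ and $\KK_{\T}(\Ch(\tilde\DD))$ become two subrings generated respectively by the fixed-point restrictions of the tautological bundles $\{\xi_X\}$ and $\{\tilde\xi_X\}$. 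It then suffices to show these two subrings coincide.

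The heart of the argument is a Key Lemma computing the restrictions $\tilde\xi_X|_{\tilde f}$ in terms of the classes $\xi_{X'}|_f$. For every D3 brane away from $X$ the attached butterfly data is essentially unaffected by the swap, so after the relabeling absorbed into $\phi$ one has $\tilde\xi_X|_{\tilde f}=\phi^{*}\!\left(\xi_X|_f\right)$ directly. For the middle bundle I claim the identity
\[
\tilde\xi_2\big|_{\tilde f}=\phi^{*}\Big(\big(\xi_1+\xi_3-\xi_2\big)\big|_f\Big)
\]
in $K_{\T}(f)$, up to an explicit $\hb$-twist dictated by weight bookkeeping, exactly as the rank count $\tilde d_2=d_1+d_3-d_2$ predicts. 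This is the $(TU)$-analogue of the Hanany--Witten sequence of Theorem~\ref{thm:HW}(3), with the framing summand $\C_U$ deleted (the rank offset here being $d_1+d_3-d_2$ rather than $d_1+d_3+1-d_2$). I would establish it either by running the linear-algebra bookkeeping of that proof on the local data $\xi_1\xleftarrow{A_U}\xi_2\xleftarrow{A_{U'}}\xi_3$ together with the maps $a,b$ and the common endomorphism $B=B'_U=B_{U'}$ on $\xi_2$, or, more transparently, by directly comparing the multisets of monomials assigned below the two sides of the equation by the butterfly recipe of Sections~\ref{sec:butterfly} and~\ref{sec:fixrest}.

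Granting the Key Lemma, each generator $\tilde\xi_X$ of $\KK_{\T}(\Ch(\tilde\DD))$ is an integral polynomial (using $K$-theoretic differences) in the generators $\xi_1,\xi_2,\xi_3$ of $\KK_{\T}(\Ch(\DD))$, while conversely $\xi_2=\xi_1+\xi_3-\tilde\xi_2$ expresses the only altered generator of $\KK_{\T}(\Ch(\DD))$ back in the $\tilde\DD$ generators; since $\phi^{*}$ is a ring automorphism of the ambient ring carrying one set of generators to the other, the two subrings are equal and we obtain the asserted isomorphism $\KK_{\T}(\Ch(\DD))\cong\KK_{\T}(\Ch(\tilde\DD))$. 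The cohomological statement follows by the identical argument with Chern roots in place of Grothendieck roots: the same bijection $\Phi$, the same parameter swap $\phi^{*}$, and the additive relation $\tilde\xi_2|_{\tilde f}=\phi^{*}((\xi_1+\xi_3-\xi_2)|_f)$ read now through the ``logarithm'' of Section~\ref{sec:fixrest} exhibit $\HH_{\T}(\Ch(\DD))$ and $\HH_{\T}(\Ch(\tilde\DD))$ as the same subalgebra of $\C[u_1,\ldots,u_n,\h]^N$.

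The main obstacle I anticipate is the uniform verification of the Key Lemma across all fixed points: one must check that the recombination of ties at the two interchanged D5 branes reassembles the associated monomials precisely according to $\tilde\xi_2+\xi_2=\xi_1+\xi_3$ (after $\phi^{*}$), pinning down the correct $\hb$-powers, since the base-level conventions for the butterflies attached to $U$ and $U'$ shift when the two branes swap sides. Proving this monomial identity in full generality---rather than merely confirming it on the examples one can compute by hand---is where the real work lies; the ring-theoretic conclusion is then formal.
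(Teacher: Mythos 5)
Your proposal follows essentially the same route as the paper's proof: the fixed-point bijection of Figure \ref{fig:TUfix}, the plain swap of the two equivariant parameters, and your Key Lemma is precisely the paper's identity \eqref{eq:thatone}, which the paper verifies exactly as you suggest---by comparing the multisets of monomials that the butterfly recipe of Section \ref{sec:fixrest} attaches to the Grothendieck roots of $\xi_1,\xi_2,\xi_3$ at corresponding fixed points, and then concluding the ring isomorphism formally. The only discrepancy is your hedge about an ``explicit $\hb$-twist'': the paper's computation shows the identity holds on the nose under the plain parameter swap (unlike the Hanany--Witten case \eqref{eq:reparametrize}, no $\hb$-shift is needed for a $(TU)$ transition).
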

 
 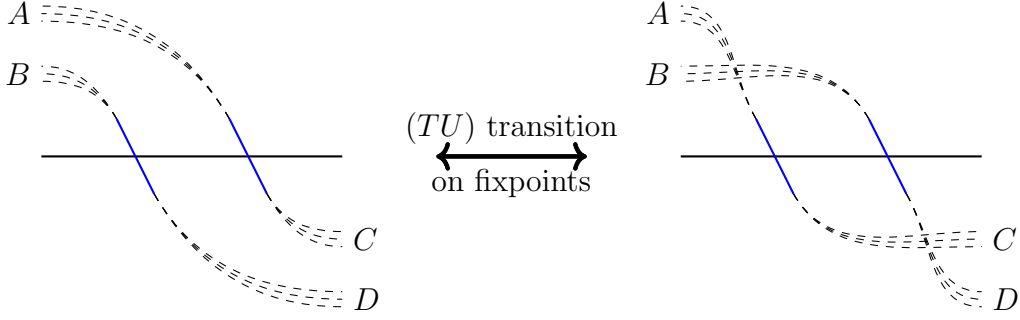
\begin{figure}
\[
\begin{tikzpicture}[scale=.5]
\draw [thick] (0,1) --(8,1);  
\draw [thick, blue] (3,0) -- (2,2);
\draw[thick, blue] (6,0)--(5,2);
\draw [dashed](2,2) to [out=120,in=0] (0,3) ;
\draw [dashed](2,2) to [out=120,in=0] (0,3.2) node [left] {$B$} ;
\draw [dashed](2,2) to [out=120,in=0] (0,3.4) ;
\draw [dashed](6,0) to [out=-60,in=180] (8,-1)  ;
\draw [dashed](6,0) to [out=-60,in=180] (8,-1.2)  node [right] {$C$} ;
\draw [dashed](6,0) to [out=-60,in=180] (8,-1.4);
\draw [dashed](3,0) to [out=-60,in=180] (8,-2.6);
\draw [dashed](3,0) to [out=-60,in=180] (8,-2.8) node [right]{$D$};
\draw [dashed](3,0) to [out=-60,in=180] (8,-3) ;
\draw [dashed](5,2) to [out=120,in=0] (0,4.6) ;
\draw [dashed](5,2) to [out=120,in=0] (0,4.8) node [left]{$A$};
\draw [dashed](5,2) to [out=120,in=0] (0,5);
\draw[ultra thick, <->] (10.5,1)--(12.5,1) node[above]{$(TU)$ transition} -- (14.5,1);
\draw[ultra thick, <->] (10.5,1)--(12.5,1) node[below]{on fixpoints} -- (14.5,1);
\begin{scope}[xshift=17cm]
\draw [thick] (0,1) --(8,1);  
\draw [thick, blue] (3,0) -- (2,2);
\draw[thick, blue] (6,0)--(5,2);
\draw [dashed](5,2) to [out=120,in=0] (0,3) ;
\draw [dashed](5,2) to [out=120,in=0] (0,3.2) node [left] {$B$} ;
\draw [dashed](5,2) to [out=120,in=0] (0,3.4) ;
\draw [dashed](3,0) to [out=-60,in=180] (8,-1)  ;
\draw [dashed](3,0) to [out=-60,in=180] (8,-1.2)  node [right] {$C$} ;
\draw [dashed](3,0) to [out=-60,in=180] (8,-1.4);
\draw [dashed](6,0) to [out=-60,in=180] (8,-2.6);
\draw [dashed](6,0) to [out=-60,in=180] (8,-2.8) node [right]{$D$};
\draw [dashed](6,0) to [out=-60,in=180] (8,-3) ;
\draw [dashed](2,2) to [out=120,in=0] (0,4.6) ;
\draw [dashed](2,2) to [out=120,in=0] (0,4.8) node [left]{$A$};
\draw [dashed](2,2) to [out=120,in=0] (0,5);
\end{scope}
\end{tikzpicture}
\]
\caption{Bijection between fixed point codes for brane diagrams related by a $(TU)$ transition.}\label{fig:TUfix}
\end{figure}
 
\begin{proof} The proof depends on the combinatorics of fixed point restrictions, namely the structure of the butterfly diagrams of Section \ref{sec:fixrest}. Let $U_k$ and $U_{k+1}$ be the two consecutive D5 branes switched at the $(TU)$ transition, and let $X_1, X_2, X_3$ be the D3 branes adjacent to these 5-branes, in this order. We define a map 
\[
s:
\bigoplus_{f\in \Ch(\DD)^{\T}} \C[\uu_1^{\pm 1},\ldots,\uu^{\pm 1}_n,\hb^{\pm 1}]
\to
\bigoplus_{\tilde{f}\in \Ch(\tilde{\DD})^{\T}} \C[\tilde{\uu}_1^{\pm 1},\ldots,\tilde{\uu}^{\pm 1}_n,\hb^{\pm 1}]
\]
as follows. The $f$-component maps to the $\tilde{f}$-component where $f$ and $\tilde{f}$ are related as in Figure~\ref{fig:TUfix}. The map between these components is $\uu_i\mapsto \tilde{\uu}_i$  for $i\not=k, k+1$, and $\uu_k\mapsto \tilde{\uu}_{k+1}$, $\uu_{k+1}\mapsto \tilde{\uu}_k$. This map restricts to a map $s': \KK_{\T}(\Ch(\DD)) \to \KK_{\T}(\Ch(\tDD))$ because we claim that 
\begin{equation} \label{eq:thatone}
s\left( \Loc_f( \xi_{X_2} ) \right) = \Loc_{\tilde{f}}( \xi_{X_1}  \oplus  \xi_{X_3} \ominus \xi_{X_2} ).
\end{equation}
holds for corresponding fixed points $f$ and $\tilde{f}$. Indeed, according to Section \ref{sec:fixrest}, $\Loc^K_f$ maps the relevant Grothendieck roots of $\xi_1, \xi_2,\xi_3$ to 
\[
\begin{tabular}{lll}
$\{\uu_k, \uu_k\hb^{-1},\ldots, \uu_k\hb^{1-b},$ && $\uu_{k+1}, \uu_{k+1}\hb^{-1},\ldots,\uu_{k+1}\hb^{1-a}\}$ \\
$\{\uu_k\hb^{d-b}, \ldots, \uu_k\hb^{1-b},$ && ${\uu}_{k+1}, {\uu}_{k+1}\hb^{-1},\ldots,{\uu}_{k+1}\hb^{1-a}\}$\\
$\{\uu_k\hb^{d-b}, \ldots, \uu_k\hb^{1-b},$ && ${\uu}_{k+1}\hb^{c-a}, \ldots,{\uu}_{k+1}\hb^{1-a}\}$
\end{tabular}
\]
respectively, where $a,b,c,d$ are the number of ties in $A,B,C,D$ in the figure. Similarly $\Loc^K_{\tilde{f}}$ maps the relevant Grothendieck roots of $\xi_1, \tilde{\xi}_2,\xi_3$ to 
\[
\begin{tabular}{lll}
$\{\tilde{\uu}_k, \tilde{\uu}_k\hb^{-1},\ldots, \tilde{\uu}_k\hb^{1-a},$ && $\tilde{\uu}_{k+1}, \tilde{\uu}_{k+1}\hb^{-1},\ldots,\tilde{\uu}_{k+1}\hb^{1-b}\}$ \\
$\{\tilde{\uu}_k\hb^{c-a}, \ldots, \tilde{\uu}_k\hb^{1-a},$ && $\tilde{\uu}_{k+1}, \tilde{\uu}_{k+1}\hb^{-1},\ldots,\tilde{\uu}_{k+1}\hb^{1-b}\}$\\
$\{\tilde{\uu}_k\hb^{c-a}, \ldots, \tilde{\uu}_k\hb^{1-a},$ && $\tilde{\uu}_{k+1}\hb^{d-b}, \ldots,\tilde{\uu}_{k+1}\hb^{1-b}\}$,
\end{tabular}
\]
and \eqref{eq:thatone} indeed holds. The map $s'$ is clearly invertible hence the isomorphism in K theory is proved. The isomorphism in cohomology is proved similarly.
\end{proof} 

For $(TV)$ transition the counterpart of Theorem \ref{thm:TU} does not hold; it holds only after substituting $\hb=1$ ($\h=0$), that is, turning off the $\C^\times_{\h}$ action.

\section*{Appendix: fixed point codes for affine type A bow varieties}

\begin{figure}
\[
\begin{tikzpicture}[scale=.35]
\draw[] (-1,0) -- (43,0);

\coordinate (a) at (0,0); \draw[thick,red]  ($(a) - (.4,.7)$) -- ($(a) + (.4,.7)$);
\coordinate (a) at (1,0); \draw[thick,red]  ($(a) - (.4,.7)$) -- ($(a) + (.4,.7)$);
\coordinate (a) at (2,0); \draw[thick,red]  ($(a) - (.4,.7)$) -- ($(a) + (.4,.7)$);

\coordinate (a) at (4,0); \draw[thick,blue]  ($(a) - (.4,-.7)$) -- ($(a) + (.4,-.7)$);
\coordinate (a) at (5,0); \draw[thick,blue]  ($(a) - (.4,-.7)$) -- ($(a) + (.4,-.7)$);
\coordinate (a) at (6,0); \draw[thick,blue]  ($(a) - (.4,-.7)$) -- ($(a) + (.4,-.7)$);

\coordinate (a) at (8,0); \draw[thick,red]  ($(a) - (.4,.7)$) -- ($(a) + (.4,.7)$);
\coordinate (a) at (9,0); \draw[thick,red]  ($(a) - (.4,.7)$) -- ($(a) + (.4,.7)$);
\coordinate (a) at (10,0); \draw[thick,red]  ($(a) - (.4,.7)$) -- ($(a) + (.4,.7)$);

\coordinate (a) at (12,0); \draw[thick,blue]  ($(a) - (.4,-.7)$) -- ($(a) + (.4,-.7)$);
\coordinate (a) at (13,0); \draw[thick,blue]  ($(a) - (.4,-.7)$) -- ($(a) + (.4,-.7)$);
\coordinate (a) at (14,0); \draw[thick,blue]  ($(a) - (.4,-.7)$) -- ($(a) + (.4,-.7)$);

\coordinate (a) at (16,0); \draw[thick,red]  ($(a) - (.4,.7)$) -- ($(a) + (.4,.7)$);
\coordinate (a) at (17,0); \draw[thick,red]  ($(a) - (.4,.7)$) -- ($(a) + (.4,.7)$);
\coordinate (a) at (18,0); \draw[thick,red]  ($(a) - (.4,.7)$) -- ($(a) + (.4,.7)$);

\coordinate (a) at (20,0); \draw[thick,blue]  ($(a) - (.4,-.7)$) -- ($(a) + (.4,-.7)$);
\coordinate (a) at (21,0); \draw[thick,blue]  ($(a) - (.4,-.7)$) -- ($(a) + (.4,-.7)$);
\coordinate (a) at (22,0); \draw[thick,blue]  ($(a) - (.4,-.7)$) -- ($(a) + (.4,-.7)$);

\coordinate (a) at (24,0); \draw[thick,red]  ($(a) - (.4,.7)$) -- ($(a) + (.4,.7)$);
\coordinate (a) at (25,0); \draw[thick,red]  ($(a) - (.4,.7)$) -- ($(a) + (.4,.7)$);
\coordinate (a) at (26,0); \draw[thick,red]  ($(a) - (.4,.7)$) -- ($(a) + (.4,.7)$);

\coordinate (a) at (28,0); \draw[thick,blue]  ($(a) - (.4,-.7)$) -- ($(a) + (.4,-.7)$);
\coordinate (a) at (29,0); \draw[thick,blue]  ($(a) - (.4,-.7)$) -- ($(a) + (.4,-.7)$);
\coordinate (a) at (30,0); \draw[thick,blue]  ($(a) - (.4,-.7)$) -- ($(a) + (.4,-.7)$);

\coordinate (a) at (32,0); \draw[thick,red]  ($(a) - (.4,.7)$) -- ($(a) + (.4,.7)$);
\coordinate (a) at (33,0); \draw[thick,red]  ($(a) - (.4,.7)$) -- ($(a) + (.4,.7)$);
\coordinate (a) at (34,0); \draw[thick,red]  ($(a) - (.4,.7)$) -- ($(a) + (.4,.7)$);

\coordinate (a) at (36,0); \draw[thick,blue]  ($(a) - (.4,-.7)$) -- ($(a) + (.4,-.7)$);
\coordinate (a) at (37,0); \draw[thick,blue]  ($(a) - (.4,-.7)$) -- ($(a) + (.4,-.7)$);
\coordinate (a) at (38,0); \draw[thick,blue]  ($(a) - (.4,-.7)$) -- ($(a) + (.4,-.7)$);

\coordinate (a) at (40,0); \draw[thick,red]  ($(a) - (.4,.7)$) -- ($(a) + (.4,.7)$);
\coordinate (a) at (41,0); \draw[thick,red]  ($(a) - (.4,.7)$) -- ($(a) + (.4,.7)$);
\coordinate (a) at (42,0); \draw[thick,red]  ($(a) - (.4,.7)$) -- ($(a) + (.4,.7)$);

\draw [dashed, black](20.4,-.8) to [out=-45,in=225] (24.6,-.8);
\draw [dashed, black](20.4,-.8) to [out=-45,in=225] (25.6,-.8);
\draw [dashed, black](21.4,-.8) to [out=-45,in=225] (25.6,-.8);
\draw [dashed, black](22.4,-.8) to [out=-45,in=225] (24.6,-.8);

\draw [dashed, black](20.4,-.8) to [out=-45,in=225] (32.6,-.8);
\draw [dashed, black](20.4,-.8) to [out=-45,in=225] (33.6,-.8);
\draw [dashed, black](21.4,-.8) to [out=-45,in=225] (33.6,-.8);

\draw [dashed, black](20.4,-.8) to [out=-45,in=225] (39.6,-.8);
\draw [dashed, black](20.4,-.8) to [out=-45,in=225] (41.6,-.8);

\draw [dashed, black](19.6,.8) to [out=135,in=45] (16.4,.8);
\draw [dashed, black](19.6,.8) to [out=135,in=45] (17.4,.8);
\draw [dashed, black](20.6,.8) to [out=135,in=45] (18.4,.8);
\draw [dashed, black](21.6,.8) to [out=135,in=45] (16.4,.8);

\draw [dashed, black](19.6,.8) to [out=135,in=45] (10.4,.8);
\draw [dashed, black](21.6,.8) to [out=135,in=45] (9.4,.8);

\draw [dashed, black](21.6,.8) to [out=135,in=45] (0.4,.8);
\draw [dashed, black](21.6,.8) to [out=135,in=45] (1.4,.8);
\draw [dashed, black](20.6,.8) to [out=135,in=45] (2.4,.8);

\node[] at (-1.6,-0.05) {$\cdots$};
\node[] at (43.8,-0.05) {$\cdots$};

\node[rotate=38] at (41.2,-2) {\tiny block $\frac{5}{2}$};
\node[rotate=33] at (33.2,-2) {\tiny block $\frac{3}{2}$};
\node[rotate=33] at (26,-1.4) {\tiny block $\frac{1}{2}$};
\node[rotate=40] at (1,2.5) {\tiny block $\frac{-5}{2}$};
\node[rotate=35] at (10,2.2) {\tiny block $\frac{-3}{2}$};

\node[] at (19,-.8) {\tiny $X$}; \node[] at (11,-.8) {\tiny $X$}; \node[] at (3,-.8) {\tiny $X$}; \node[] at (27,.6) {\tiny $X$}; \node[] at (35,.6) {\tiny $X$};

\end{tikzpicture}
\]
\[
\begin{tikzpicture}[scale=.6]
\draw[ultra thin] (-9.5,0) -- (9.5,0);\draw[ultra thin] (-9.5,1) -- (9.5,1);\draw[ultra thin] (-9.5,2) -- (9.5,2);\draw[ultra thin] (-9.5,3) -- (9.5,3);

\draw[ultra thick] (0,0) -- (0,3); \draw[ultra thin] (1,0) -- (1,3); \draw[ultra thin] (2,0) -- (2,3);
\draw[ultra thick] (3,0) -- (3,3); \draw[ultra thin] (4,0) -- (4,3); \draw[ultra thin] (5,0) -- (5,3);
\draw[ultra thick] (6,0) -- (6,3); \draw[ultra thin] (7,0) -- (7,3); \draw[ultra thin] (8,0) -- (8,3);
\draw[ultra thick] (9,0) -- (9,3);

\draw[ultra thick] (-0,0) -- (-0,3); \draw[ultra thin] (-1,0) -- (-1,3); \draw[ultra thin] (-2,0) -- (-2,3);
\draw[ultra thick] (-3,0) -- (-3,3); \draw[ultra thin] (-4,0) -- (-4,3); \draw[ultra thin] (-5,0) -- (-5,3);
\draw[ultra thick] (-6,0) -- (-6,3); \draw[ultra thin] (-7,0) -- (-7,3); \draw[ultra thin] (-8,0) -- (-8,3);
\draw[ultra thick] (-9,0) -- (-9,3);

\node[blue] at (11.7,.5) {\small $U_3$};\node[blue] at (11.7,1.5) {\small $U_2$};\node[blue] at (11.7,2.5) {\small $U_1$};

\coordinate (a) at (0,0); \node[violet] at ($(a)+(.5,0.5)$) {$1$}; \node[violet] at ($(a)+(.5,1.5)$) {$1$}; \node[violet] at ($(a)+(.5,2.5)$) {$1$}; \node[red] at ($(a)+(.5,3.5)$) {\small $V_1$};
\coordinate (a) at (1,0); \node[violet] at ($(a)+(.5,0.5)$) {$0$}; \node[violet] at ($(a)+(.5,1.5)$) {$1$}; \node[violet] at ($(a)+(.5,2.5)$) {$0$}; \node[red] at ($(a)+(.5,3.5)$) {\small $V_2$};
\coordinate (a) at (2,0); \node[violet] at ($(a)+(.5,0.5)$) {$1$}; \node[violet] at ($(a)+(.5,1.5)$) {$0$}; \node[violet] at ($(a)+(.5,2.5)$) {$0$}; \node[red] at ($(a)+(.5,3.5)$) {\small $V_3$};

\coordinate (a) at (3,0); \node[violet] at ($(a)+(.5,0.5)$) {$1$}; \node[violet] at ($(a)+(.5,1.5)$) {$1$}; \node[violet] at ($(a)+(.5,2.5)$) {$1$}; \node[red] at ($(a)+(.5,3.5)$) {\small $V_1$};
\coordinate (a) at (4,0); \node[violet] at ($(a)+(.5,0.5)$) {$1$}; \node[violet] at ($(a)+(.5,1.5)$) {$1$}; \node[violet] at ($(a)+(.5,2.5)$) {$0$}; \node[red] at ($(a)+(.5,3.5)$) {\small $V_2$};
\coordinate (a) at (5,0); \node[violet] at ($(a)+(.5,0.5)$) {$1$}; \node[violet] at ($(a)+(.5,1.5)$) {$0$}; \node[violet] at ($(a)+(.5,2.5)$) {$0$}; \node[red] at ($(a)+(.5,3.5)$) {\small $V_3$};

\coordinate (a) at (6,0); \node[violet] at ($(a)+(.5,0.5)$) {$1$}; \node[violet] at ($(a)+(.5,1.5)$) {$1$}; \node[violet] at ($(a)+(.5,2.5)$) {$0$}; \node[red] at ($(a)+(.5,3.5)$) {\small $V_1$};
\coordinate (a) at (7,0); \node[violet] at ($(a)+(.5,0.5)$) {$1$}; \node[violet] at ($(a)+(.5,1.5)$) {$1$}; \node[violet] at ($(a)+(.5,2.5)$) {$1$}; \node[red] at ($(a)+(.5,3.5)$) {\small $V_2$};
\coordinate (a) at (8,0); \node[violet] at ($(a)+(.5,0.5)$) {$1$}; \node[violet] at ($(a)+(.5,1.5)$) {$1$}; \node[violet] at ($(a)+(.5,2.5)$) {$0$}; \node[red] at ($(a)+(.5,3.5)$) {\small $V_3$};

\coordinate (a) at (-3,0); \node[violet] at ($(a)+(.5,0.5)$) {$1$}; \node[violet] at ($(a)+(.5,1.5)$) {$0$}; \node[violet] at ($(a)+(.5,2.5)$) {$1$}; \node[red] at ($(a)+(.5,3.5)$) {\small $V_1$};
\coordinate (a) at (-2,0); \node[violet] at ($(a)+(.5,0.5)$) {$0$}; \node[violet] at ($(a)+(.5,1.5)$) {$0$}; \node[violet] at ($(a)+(.5,2.5)$) {$1$}; \node[red] at ($(a)+(.5,3.5)$) {\small $V_2$};
\coordinate (a) at (-1,0); \node[violet] at ($(a)+(.5,0.5)$) {$0$}; \node[violet] at ($(a)+(.5,1.5)$) {$1$}; \node[violet] at ($(a)+(.5,2.5)$) {$0$}; \node[red] at ($(a)+(.5,3.5)$) {\small $V_3$};

\coordinate (a) at (-6,0); \node[violet] at ($(a)+(.5,0.5)$) {$0$}; \node[violet] at ($(a)+(.5,1.5)$) {$0$}; \node[violet] at ($(a)+(.5,2.5)$) {$0$}; \node[red] at ($(a)+(.5,3.5)$) {\small $V_1$};
\coordinate (a) at (-5,0); \node[violet] at ($(a)+(.5,0.5)$) {$1$}; \node[violet] at ($(a)+(.5,1.5)$) {$0$}; \node[violet] at ($(a)+(.5,2.5)$) {$0$}; \node[red] at ($(a)+(.5,3.5)$) {\small $V_2$};
\coordinate (a) at (-4,0); \node[violet] at ($(a)+(.5,0.5)$) {$0$}; \node[violet] at ($(a)+(.5,1.5)$) {$0$}; \node[violet] at ($(a)+(.5,2.5)$) {$1$}; \node[red] at ($(a)+(.5,3.5)$) {\small $V_3$};

\coordinate (a) at (-9,0); \node[violet] at ($(a)+(.5,0.5)$) {$1$}; \node[violet] at ($(a)+(.5,1.5)$) {$0$}; \node[violet] at ($(a)+(.5,2.5)$) {$0$}; \node[red] at ($(a)+(.5,3.5)$) {\small $V_1$};
\coordinate (a) at (-8,0); \node[violet] at ($(a)+(.5,0.5)$) {$1$}; \node[violet] at ($(a)+(.5,1.5)$) {$0$}; \node[violet] at ($(a)+(.5,2.5)$) {$0$}; \node[red] at ($(a)+(.5,3.5)$) {\small $V_2$};
\coordinate (a) at (-7,0); \node[violet] at ($(a)+(.5,0.5)$) {$0$}; \node[violet] at ($(a)+(.5,1.5)$) {$1$}; \node[violet] at ($(a)+(.5,2.5)$) {$0$}; \node[red] at ($(a)+(.5,3.5)$) {\small $V_3$};

\coordinate (a) at (-10,0); \node[violet] at ($(a)+(.5,0.5)$) {$0$}; \node[violet] at ($(a)+(.5,1.5)$) {$0$}; \node[violet] at ($(a)+(.5,2.5)$) {$0$}; 
\coordinate (a) at (9,0); \node[violet] at ($(a)+(.5,0.5)$) {$1$}; \node[violet] at ($(a)+(.5,1.5)$) {$1$}; \node[violet] at ($(a)+(.5,2.5)$) {$1$}; 

\coordinate (a) at (0,0); \draw  [thin] ($(a)+(0.1,-0.2)$) to [out=290,in=90] ($(a)+(1.5,-.8)$); \draw [ thin] ($(a)+(1.5,-.8)$) to [out=90,in=250] ($(a)+(2.9,-0.2)$); \node at ($(a)+(1.5,-1.2)$) {\tiny block $\frac{1}{2}$};

\coordinate (a) at (3,0); \draw  [thin] ($(a)+(0.1,-0.2)$) to [out=290,in=90] ($(a)+(1.5,-.8)$); \draw [ thin] ($(a)+(1.5,-.8)$) to [out=90,in=250] ($(a)+(2.9,-0.2)$); \node at ($(a)+(1.5,-1.2)$) {\tiny block $\frac{3}{2}$};

\coordinate (a) at (6,0); \draw  [thin] ($(a)+(0.1,-0.2)$) to [out=290,in=90] ($(a)+(1.5,-.8)$); \draw [ thin] ($(a)+(1.5,-.8)$) to [out=90,in=250] ($(a)+(2.9,-0.2)$); \node at ($(a)+(1.5,-1.2)$) {\tiny block $\frac{5}{2}$};

\coordinate (a) at (-3,0); \draw  [thin] ($(a)+(0.1,-0.2)$) to [out=290,in=90] ($(a)+(1.5,-.8)$); \draw [ thin] ($(a)+(1.5,-.8)$) to [out=90,in=250] ($(a)+(2.9,-0.2)$); \node at ($(a)+(1.5,-1.2)$) {\tiny block $\frac{-1}{2}$};

\coordinate (a) at (-6,0); \draw  [thin] ($(a)+(0.1,-0.2)$) to [out=290,in=90] ($(a)+(1.5,-.8)$); \draw [ thin] ($(a)+(1.5,-.8)$) to [out=90,in=250] ($(a)+(2.9,-0.2)$); \node at ($(a)+(1.5,-1.2)$) {\tiny block $\frac{-3}{2}$};

\coordinate (a) at (-9,0); \draw  [thin] ($(a)+(0.1,-0.2)$) to [out=290,in=90] ($(a)+(1.5,-.8)$); \draw [ thin] ($(a)+(1.5,-.8)$) to [out=90,in=250] ($(a)+(2.9,-0.2)$); \node at ($(a)+(1.5,-1.2)$) {\tiny block $\frac{-5}{2}$};

\draw[thin] (-11,3) to [out=190,in=0] (-11.6,1.5) to [out=0,in=170] (-11,0); \node at (-12,1.5) {$n$}; 
 
\coordinate (a) at (-9,0); \draw [thin] ($(a)+(0.1,3.9)$) to [out=-290,in=-90] ($(a)+(1.5,4.5)$); \draw [ thin] ($(a)+(1.5,4.5)$) to [out=-90,in=-250] ($(a)+(2.9,3.9)$); \node at ($(a)+(1.5,4.7)$) {\tiny $m$}; 
 \node at (-10.2,1.5) {$\cdots$}; \node at (-10.2,2.5) {$\cdots$}; \node at (-10.2,0.5) {$\cdots$}; 
 \node at (10.25,1.5) {$\cdots$};\node at (10.25,2.5) {$\cdots$};\node at (10.25,0.5) {$\cdots$};
\end{tikzpicture}
\]
\caption{Top: a tie diagram of a fixed point in $\Ch(\DD)$ where $\DD$ is a separated brane diagram of affine type~A. Bottom: the corresponding Maya diagram of \cite[Appendix A]{Nakajima_Satake}.} \label{fig:Maya}
\end{figure}

In this appendix we describe the relation between our combinatorial codes and the ones called Maya diagrams in \cite[Appendix A]{Nakajima_Satake}.

Let us consider an affine type A brane diagram, that is, let the 5-branes be arranged around a cycle.  Instead of drawing the diagram on the cycle, we draw it on the universal cover, that is, a periodic brane diagram on an infinite line, see Figure~\ref{fig:Maya}. By applying some Hanany-Witten transitions we may assume that the NS5 branes and the D5 branes are separated on the cycle---we call this brane diagram {\em separated}. In our figure then the D5 branes come in groups, say, around integer positions, and the NS5 branes come in groups positioned at half-integer positions. 

Tie diagrams of fixed point codes for such a brane diagram continue to make sense. However, the notion of brane charge is not defined; hence the table-with-margin code does not make sense. Instead, the analogous code is described in \cite{Nakajima_Satake}, that we sketch now, together with the corresponding tie diagram.

\smallskip 

Consider a representative of a tie diagram of a fixed point where all the ties are attached to the D5 branes in the group {\em at position 0}. Then the ties come in blocks corresponding to half integers, according to the position of the other end of the tie---see the figure. Consider the ``BCTs'' of the ties in block $\frac{k}{2}$ as follows:
\begin{itemize}
\item for $k>0$, the $(U,V)$ entry is 0 if there is a $U$-$V$ tie, otherwise 1;
\item for $k<0$, the $(U,V)$ entry is 1 if there is a $U$-$V$ tie, otherwise 0.
\end{itemize}
Then glue these ``BCTs'' together to form an $n\times \infty$ table, called {\em Maya diagram}, see the bottom table in Figure~\ref{fig:Maya}. 
By abuse of language, the `BCT' corresponding to block $\frac{k}{2}$ will also be called ``block $\frac{k}{2}$''. 

The following properties of the obtained Maya diagram can be read from the corresponding tie diagram. 
\begin{enumerate} 
\item For large enough $k$ all entries of block $\frac{k}{2}$ are 1, and all entries of block $\frac{-k}{2}$ are 0.
\item For $1\leq i \leq n$ we have that $\mult_{U_i^+}-\mult_{U_i^-}=$
\[
\#\{0\text{s in row $i$ of positive blocks}\}-  \#\{1\text{s in row $i$ of negative blocks}\}.
\]  
\item For $1\leq j \leq m$ we have that $\mult_{V_j^+}-\mult_{V_j^-}=$
\[
\#\{1\text{s in a $V_j$ column of negative blocks}\}-  \#\{0\text{s in a $V_j$ column of negative blocks}\}.
\]
\item Let $X$ be the D3 brane for which $X^+$ is a D5 brane and $X^-$ is an NS5 brane. Then
\begin{align*}
\mult_X=&\#\{1s \text{ in block } \frac{-1}{2}\}+2\#\{1s \text{ in block } \frac{-3}{2}\}+3\#\{1s \text{ in block } \frac{-5}{2}\}+\ldots
\\
&+\#\{0s \text{ in block } \frac{3}{2}\}+2\#\{0s \text{ in block } \frac{5}{2}\}+3\#\{0s \text{ in block } \frac{7}{2}\}+\ldots.
\end{align*}
\end{enumerate}
It is proved in \cite{Nakajima_Satake} that Maya diagrams (ie. $n\times \infty$ tables that come as a union of $\infty$ many $n\times m$ tables called blocks, that have properties (1)--(4)) are in bijection with the torus fixed points of a bow variety associated with a separated affine type $A$ brane diagram. 

Since Figure \ref{fig:HWfixpoints} still holds as a proof of bijection between torus fixed points of HW equivalent brane diagrams, this statement describes the fixed points of all affine type A bow varieties.

\end{document}